\definecolor{OliveGreen}{rgb}{0,0.6,0}
\newcommandx{\unsure}[2][1=]{\todo[linecolor=red,backgroundcolor=red!25,bordercolor=red,#1]{#2}}
\newcommandx{\change}[2][1=]{\todo[linecolor=blue,backgroundcolor=blue!25,bordercolor=blue,#1]{#2}}
\newcommandx{\info}[2][1=]{\todo[linecolor=OliveGreen,backgroundcolor=OliveGreen!25,bordercolor=OliveGreen,#1]{#2}}
\newcommand{\centre}[1]{\begin{array}{c} #1 \end{array}}
\newtheorem{theorem}{Theorem}[section]
\newtheorem{proposition}[theorem]{Proposition}
\newtheorem{lemma}[theorem]{Lemma}
\newtheorem{corollary}[theorem]{Corollary}
\theoremstyle{definition}
\newtheorem{example}[theorem]{Example}
\theoremstyle{remark}
\newtheorem{remark}[theorem]{Remark}
\numberwithin{equation}{section}
\newcommand{\KPP}[1]{%
  \begin{tikzpicture}[scale=0.8, baseline=-\dimexpr\fontdimen22\textfont2\relax]
  #1
  \end{tikzpicture}%
}
\newcommand{\PC}{%
  \KPP{
    \draw[color=black] (0.2,0.2) -- (-0.2,-0.2);
    \draw[color=black] (0.2,-0.2) -- (0.05,-0.05);
    \draw[color=black] (-0.05,0.05) -- (-0.2, 0.2);
     \draw[color=black] (-0.05,0.05) -- (-0.2, 0.2);
      \draw[color=black] (0.1,0.2) -- (0.21, 0.2);
      \draw[color=black] (0.2,0.1) -- (0.2, 0.21);
   \draw[color=black] (-0.21,0.2) -- (-0.1, 0.2);
    \draw[color=black] (-0.2,0.21) -- (-0.20, 0.1);
  }%
}
\newcommand{\NC}{%
  \KPP{
    \draw[color=black] (-0.2,0.2) -- (0.2,-0.2);
    \draw[color=black] (-0.2,-0.2) -- (-0.05,-0.05);
    \draw[color=black] (0.05,0.05) -- (0.2, 0.2);
          \draw[color=black] (0.1,0.2) -- (0.21, 0.2);
      \draw[color=black] (0.2,0.1) -- (0.2, 0.21);
   \draw[color=black] (-0.21,0.2) -- (-0.1, 0.2);
    \draw[color=black] (-0.2,0.21) -- (-0.20, 0.1);
  }%
}
\newcommand{\R}{\mathbb{R}}
\newcommand{\Tup}{\mathcal{T}^{\mathrm{up}}}
\newcommand{\id}{\mathrm{Id}}
\renewcommand{\to}{\longrightarrow}
\DeclareFontFamily{U}{mathx}{}
\DeclareFontShape{U}{mathx}{m}{n}{ <-> mathx10 }{}
\DeclareSymbolFont{mathx}{U}{mathx}{m}{n}
\DeclareMathAccent{\widecheck}{0}{mathx}{"71}
\tikzset{double line with arrow/.style args={#1,#2}{decorate,decoration={markings,%
mark=at position 0 with {\coordinate (ta-base-1) at (0,1pt);
\coordinate (ta-base-2) at (0,-1pt);},
mark=at position 1 with {\draw[#1] (ta-base-1) -- (0,1pt);
\draw[#2] (ta-base-2) -- (0,-1pt);
}}}}
\begin{document}


\title{Minimal generating sets of rotational Reidemeister moves}

\date{\today}

\author{Jorge Becerra}
\address{Université Bourgogne Europe, CNRS, IMB UMR 5584, 21000 Dijon, France}
\email{\href{mailto:Jorge.Becerra-Garrido@ube.fr}{Jorge.Becerra-Garrido@ube.fr}}
\urladdr{ \href{https://sites.google.com/view/becerra/}{https://sites.google.com/view/becerra/}} 

\author{Kevin van Helden}
\address{University of Groningen, Nijenborgh 9, 9747 AG, Groningen, The Netherlands}
\email{\href{mailto:k.s.van.helden@rug.nl}{k.s.van.helden@rug.nl}}
\urladdr{ \href{https://sites.google.com/view/ksvanhelden}{https://sites.google.com/view/ksvanhelden}} 





\begin{abstract}
Rotational tangle diagrams have been proven to be extremely important in the study of quantum invariants, as they provide a natural passage between topology and quantum algebra. In this paper, we give a detailed description of several generating sets of rotational Reidemeister moves for rotational  diagrams of both unframed and framed tangles. In particular, we prove that the minimal number of moves needed to generate all oriented unframed (resp. framed) rotational Reidemeister moves is 8 (resp. 9). The latter implies that a minimal generating set of Reidemeister moves for oriented, framed links contains 5 moves.
\end{abstract}

\keywords{Reidemeister moves, rotational tangle diagram}
\subjclass{57K10, 57K16}


\maketitle

\setcounter{tocdepth}{1}
\tableofcontents


\section{Introduction}

The Reidemeister theorem is one of the most foundational and classical results in knot theory \cite{reidemeister}. It states that two diagrams $D,D' \subset \R^2$ of unoriented, unframed links in $\R^3$   are isotopic if and only if they are related by a sequence of planar isotopies and the so-called \textit{Reidemeister moves} $\Omega 1$, $\Omega 2$ and $\Omega 3$ depicted below:
\begin{equation}\label{eq:unfr_unor_Rmoves}
\centre{
\labellist \small \hair 2pt
\pinlabel{$\leftrightsquigarrow$}  at 439 190
\pinlabel{{\scriptsize $\Omega 1$}}  at 445 260
\pinlabel{$\leftrightsquigarrow$}  at 1620 190
\pinlabel{{\scriptsize $\Omega 2$}}  at 1625 260
\pinlabel{$\leftrightsquigarrow$}  at 2865 190
\pinlabel{{\scriptsize $\Omega 3$}}  at 2869 260
\endlabellist
\centering
\includegraphics[width=0.9\textwidth]{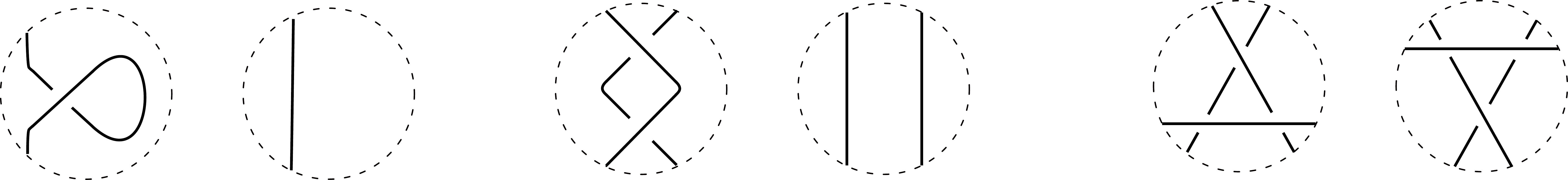}}.
\end{equation}
Each of the depicted equivalences should be understood as identifying two link diagrams that are identical except in some open neighbourhoods of the diagrams, represented by the dashed circles, where they look as shown. It is immediate to see that the moves $\Omega 1'$ and $\Omega 3'$ below are consequences of the other three:
\begin{equation}\label{eq:unfr_unor_Rmoves_2}
\centre{
\labellist \small \hair 2pt
\pinlabel{$\leftrightsquigarrow$}  at 439 190
\pinlabel{{\scriptsize $\Omega 1'$}}  at 445 260
\pinlabel{$\leftrightsquigarrow$}  at 1860 190
\pinlabel{{\scriptsize $\Omega 3'$}}  at 1865 260
\endlabellist
\centering
\includegraphics[width=0.6\textwidth]{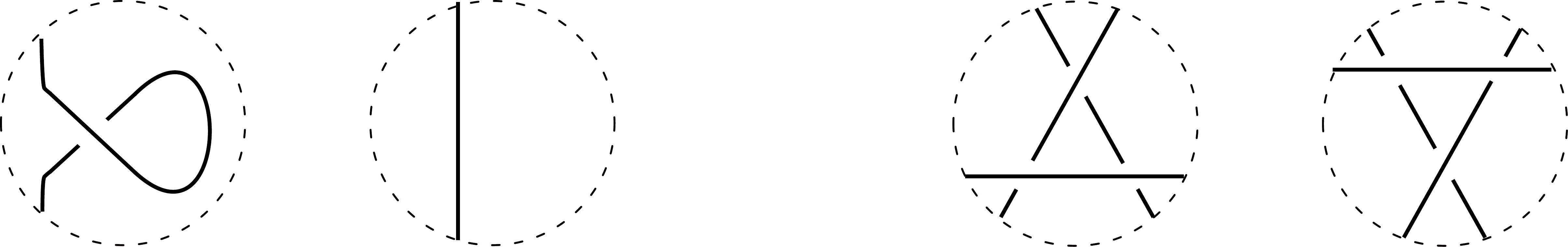}}.
\end{equation}

In the case of unframed, oriented links, one would have to consider --a priori--  all possible diagrams resulting from assigning arbitrary orientations to the strands of the diagrams depicted in   \eqref{eq:unfr_unor_Rmoves} and \eqref{eq:unfr_unor_Rmoves_2}. This yields four different oriented versions of the $\Omega 1$ move, four of the $\Omega 2$ moves and eight of the $\Omega 3$ move. Of course,  it is expected that some of these moves are consequence of a smaller subset of  moves. For instance, the oriented versions of  $\Omega 1'$ and $\Omega 3'$ are consequence of the oriented counterparts of the other three moves.

In a beautiful paper \cite{polyak10}, Polyak studied minimal generating sets of Reidemeister moves for unframed, oriented links. He found that one needs at least four moves:  two versions of the $\Omega 1$, and one version of $\Omega 2$ and $\Omega 3$ each,
\begin{equation}\label{eq:polyak_gen_1}
\centre{
\labellist \small \hair 2pt
\pinlabel{$\leftrightsquigarrow$}  at 439 190
\pinlabel{{\scriptsize $\Omega 1a$}}  at 445 260
\pinlabel{$\leftrightsquigarrow$}  at 1860 190
\pinlabel{{\scriptsize $\Omega 1b$}}  at 1865 260
\endlabellist
\centering
\includegraphics[width=0.6\textwidth]{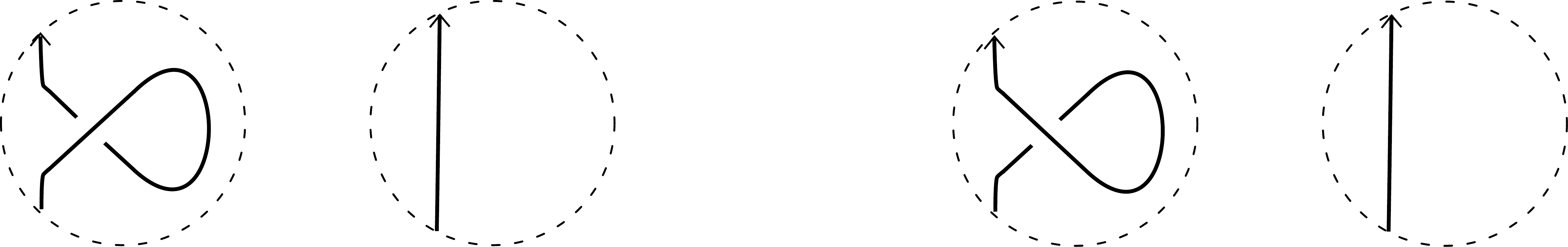}}
\end{equation}
\begin{equation}\label{eq:polyak_gen_2}
\centre{
\labellist \small \hair 2pt
\pinlabel{$\leftrightsquigarrow$}  at 439 190
\pinlabel{{\scriptsize $\Omega 2a$}}  at 445 260
\pinlabel{$\leftrightsquigarrow$}  at 1860 190
\pinlabel{{\scriptsize $\Omega 3a$}}  at 1865 260
\endlabellist
\centering
\includegraphics[width=0.6\textwidth]{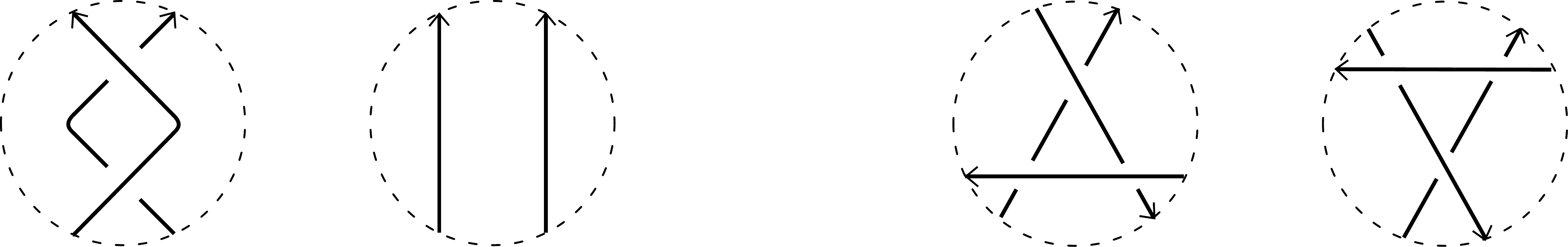}}.
\end{equation}
The $\Omega 3$ move in this generating set is the so-called \textit{cyclic} $\Omega 3$ move. Recently, Caprau and Scott explicitly described all minimal (i.e. 4-element) sets of generating oriented Reidemeister moves \cite{CS}. There are 12 such sets, all of them with two $\Omega 1$ moves, one $\Omega 2$ move and  with either the same type 3 move $\Omega 3a$  as in Polyak's set or the version $\Omega 3h$ resulting from mirror imaging $\Omega 3a$.   It is surprising that this move has different properties from the more common type 3 move $\Omega 3b$ with three positive crossings  coming from braid theory. If one wants to include the latter in a set of generators, it turns out that one needs at least five different moves to obtain a generating set, see \cite[Theorem 1.2]{polyak10}.

It is puzzling that, despite the Reidemeister moves lying at the foundations of knot theory, providing a (minimal) generating set for their oriented version is non-trivial and has led a number of authors to use sets that are, in fact, not generating, see \cite{polyak10} for an account. 

In this paper, we study the analogous problem of finding a minimal generating set of Reidemeister moves for rotational diagrams of knots, links and tangles. These diagrams have proven to be extremely convenient in the study of quantum invariants \cite{barnatanveenpolytime,barnatanveengaussians, barnatanveenAPAI,becerra_thesis,becerra_refined}. An oriented link or tangle diagram is said to be rotational if all endpoints and crossings point up and and maxima and minima appear in ``full rotation'' pairs, as depicted  in \eqref{eq:full_rotations}.

We regard these diagrams up to Morse planar isotopy, that is, isotopy that preserves maxima and minima. A large class of objects admits rotational diagrams: knots, links, upwards tangles and more generally  tangles in the bigon $D^2 - \{ (0, \pm 1)  \}$.

Focusing on these diagrams, it is therefore important to seek a rotational analogue of the  Reidemeister theorem, that is, a theorem that contains a list of moves in rotational form such that two rotational diagrams represent the same object if and only if they are related by a sequence of Morse planar isotopies and those moves.

Contrary to what one could think, adapting the arguments of \cite{polyak10} to obtain the analogue theorem is not straightforward. Firstly, restricting planar isotopy to Morse isotopy implies that we will surely need additional rotational Reidemeister moves that encode planar isotopies that create or destroy maxima and minima. Secondly,  the unoriented Reidemeister moves from \eqref{eq:unfr_unor_Rmoves} and \eqref{eq:unfr_unor_Rmoves_2} split, in the oriented, rotational case, into even more cases than in the oriented case. We obtain four different versions of the $\Omega 1$ move,
\begin{equation*}
\centre{
\labellist \small \hair 2pt
\pinlabel{$\leftrightsquigarrow$}  at 439 190
\pinlabel{{\scriptsize $\Omega 1a$}}  at 445 260
\pinlabel{$\leftrightsquigarrow$}  at 1860 190
\pinlabel{{\scriptsize $\Omega 1b$}}  at 1865 260
\endlabellist
\centering
\includegraphics[width=0.6\textwidth]{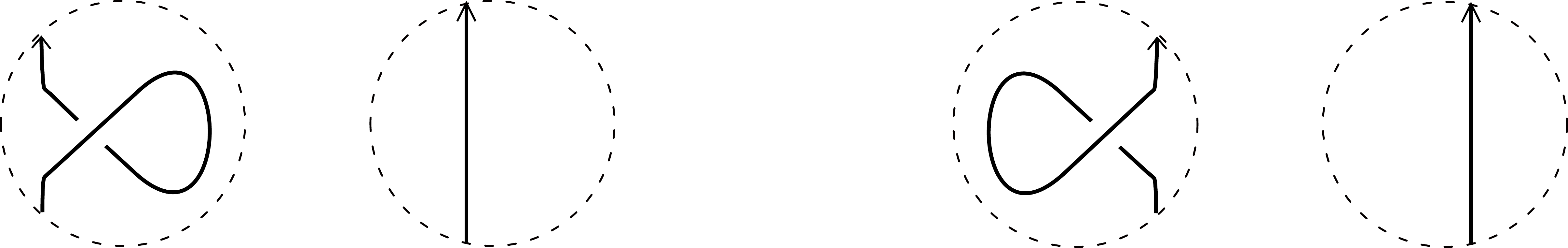}}
\end{equation*}
\begin{equation*}
\centre{
\labellist \small \hair 2pt
\pinlabel{$\leftrightsquigarrow$}  at 439 190
\pinlabel{{\scriptsize $\Omega 1c$}}  at 445 260
\pinlabel{$\leftrightsquigarrow$}  at 1860 190
\pinlabel{{\scriptsize $\Omega 1d$}}  at 1865 260
\endlabellist
\centering
\includegraphics[width=0.6\textwidth]{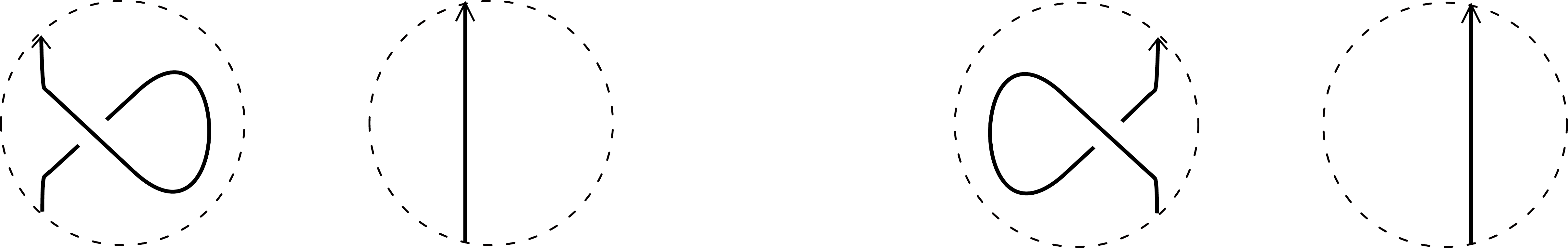}},
\end{equation*}
six different versions of the $\Omega 2$ move,
\begin{equation*}
\centre{
\labellist \small \hair 2pt
\pinlabel{$\leftrightsquigarrow$}  at 439 190
\pinlabel{{\scriptsize $\Omega 2a$}}  at 445 260
\pinlabel{$\leftrightsquigarrow$}  at 1625 190
\pinlabel{{\scriptsize $\Omega 2b$}}  at 1630 260
\pinlabel{$\leftrightsquigarrow$}  at 2865 190
\pinlabel{{\scriptsize $\Omega 2c1$}}  at 2869 260
\endlabellist
\centering
\includegraphics[width=0.9\textwidth]{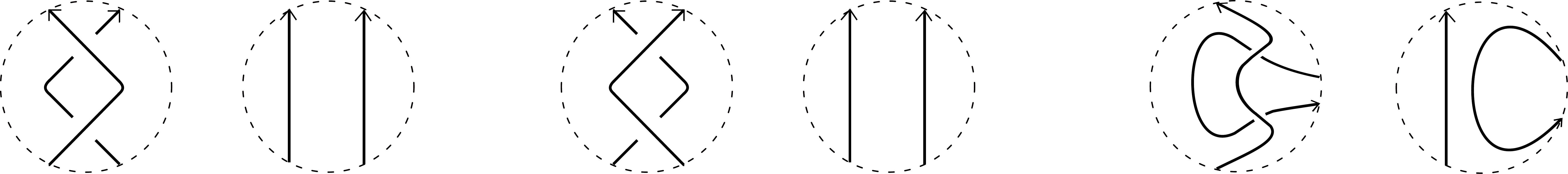}}
\end{equation*}
\begin{equation*}
\centre{
\labellist \small \hair 2pt
\pinlabel{$\leftrightsquigarrow$}  at 439 190
\pinlabel{{\scriptsize $\Omega 2c2$}}  at 445 260
\pinlabel{$\leftrightsquigarrow$}  at 1625 190
\pinlabel{{\scriptsize $\Omega 2d1$}}  at 1630 260
\pinlabel{$\leftrightsquigarrow$}  at 2865 190
\pinlabel{{\scriptsize $\Omega 2d2$}}  at 2869 260
\endlabellist
\centering
\includegraphics[width=0.9\textwidth]{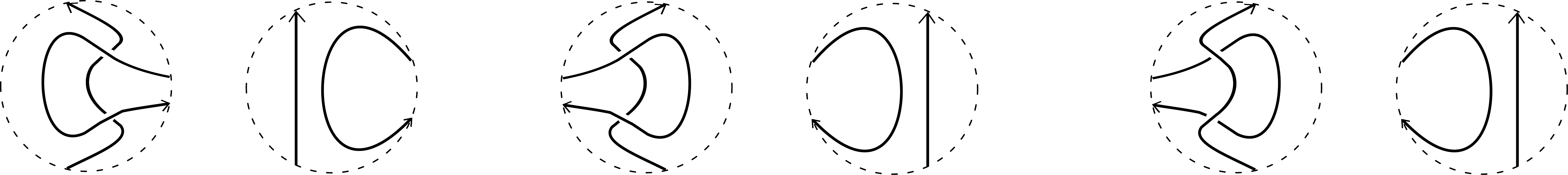}}
\end{equation*}
and eighteen different versions of the  $\Omega 3$ move,
\begin{equation*}
\centre{
\labellist \small \hair 2pt
\pinlabel{$\leftrightsquigarrow$}  at 439 190
\pinlabel{{\scriptsize $\Omega 3a1$}}  at 445 260
\pinlabel{$\leftrightsquigarrow$}  at 1625 190
\pinlabel{{\scriptsize $\Omega 3a2$}}  at 1630 260
\pinlabel{$\leftrightsquigarrow$}  at 2865 190
\pinlabel{{\scriptsize $\Omega 3a3$}}  at 2869 260
\endlabellist
\centering
\includegraphics[width=0.9\textwidth]{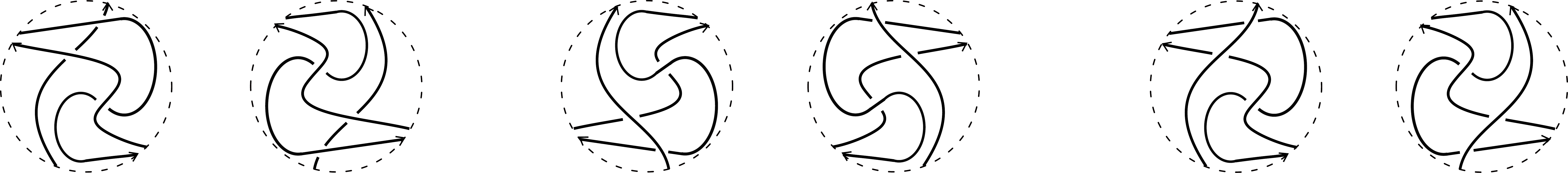}}
\end{equation*}
\begin{equation*}
\centre{
\labellist \small \hair 2pt
\pinlabel{$\leftrightsquigarrow$}  at 439 190
\pinlabel{{\scriptsize $\Omega 3a4$}}  at 445 260
\pinlabel{$\leftrightsquigarrow$}  at 1625 190
\pinlabel{{\scriptsize $\Omega 3a5$}}  at 1630 260
\pinlabel{$\leftrightsquigarrow$}  at 2865 190
\pinlabel{{\scriptsize $\Omega 3a6$}}  at 2869 260
\endlabellist
\centering
\includegraphics[width=0.9\textwidth]{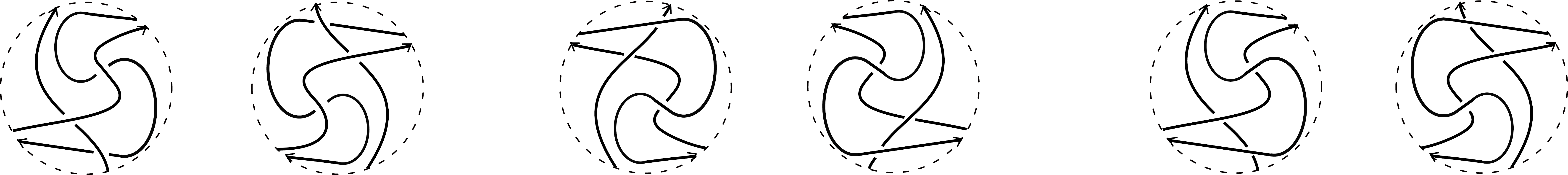}}
\end{equation*}
\begin{equation*}
\centre{
\labellist \small \hair 2pt
\pinlabel{$\leftrightsquigarrow$}  at 439 190
\pinlabel{{\scriptsize $\Omega 3b$}}  at 445 260
\pinlabel{$\leftrightsquigarrow$}  at 1625 190
\pinlabel{{\scriptsize $\Omega 3c$}}  at 1630 260
\pinlabel{$\leftrightsquigarrow$}  at 2865 190
\pinlabel{{\scriptsize $\Omega 3d$}}  at 2869 260
\endlabellist
\centering
\includegraphics[width=0.9\textwidth]{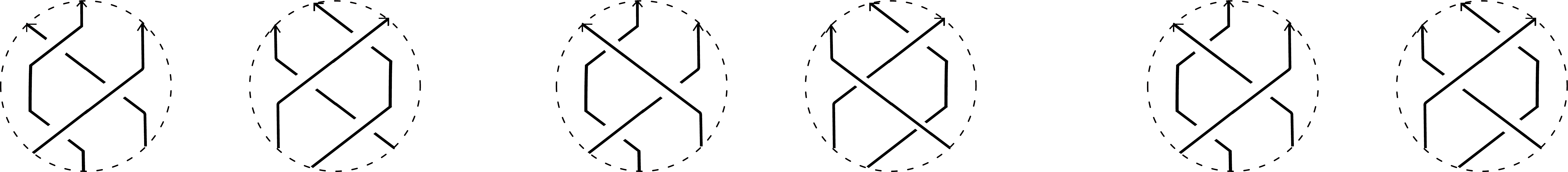}}
\end{equation*}
\begin{equation*}
\centre{
\labellist \small \hair 2pt
\pinlabel{$\leftrightsquigarrow$}  at 439 190
\pinlabel{{\scriptsize $\Omega 3e$}}  at 445 260
\pinlabel{$\leftrightsquigarrow$}  at 1625 190
\pinlabel{{\scriptsize $\Omega 3f$}}  at 1630 260
\pinlabel{$\leftrightsquigarrow$}  at 2865 190
\pinlabel{{\scriptsize $\Omega 3g$}}  at 2869 260
\endlabellist
\centering
\includegraphics[width=0.9\textwidth]{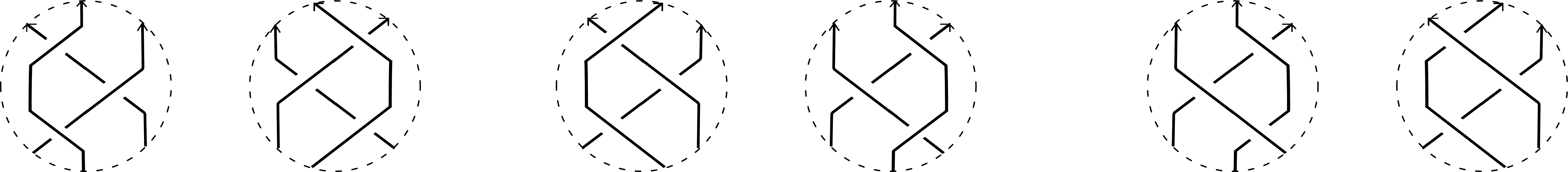}}
\end{equation*}
\begin{equation*}
\centre{
\labellist \small \hair 2pt
\pinlabel{$\leftrightsquigarrow$}  at 439 190
\pinlabel{{\scriptsize $\Omega 3h1$}}  at 445 260
\pinlabel{$\leftrightsquigarrow$}  at 1625 190
\pinlabel{{\scriptsize $\Omega 3h2$}}  at 1630 260
\pinlabel{$\leftrightsquigarrow$}  at 2865 190
\pinlabel{{\scriptsize $\Omega 3h3$}}  at 2869 260
\endlabellist
\centering
\includegraphics[width=0.9\textwidth]{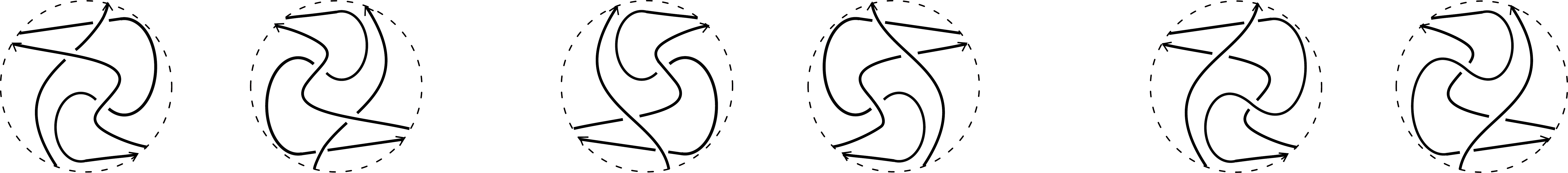}}
\end{equation*}
\begin{equation*}
\centre{
\labellist \small \hair 2pt
\pinlabel{$\leftrightsquigarrow$}  at 439 190
\pinlabel{{\scriptsize $\Omega 3h4$}}  at 445 260
\pinlabel{$\leftrightsquigarrow$}  at 1625 190
\pinlabel{{\scriptsize $\Omega 3h5$}}  at 1630 260
\pinlabel{$\leftrightsquigarrow$}  at 2865 190
\pinlabel{{\scriptsize $\Omega 3h6$}}  at 2869 260
\endlabellist
\centering
\includegraphics[width=0.9\textwidth]{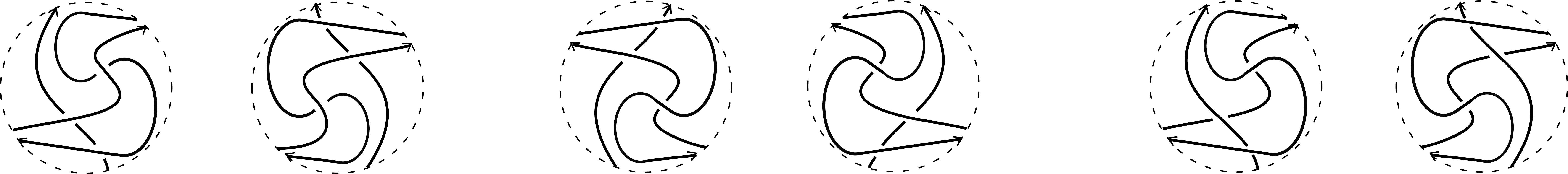}}.
\end{equation*}
The reason why we obtain so many more moves in the rotational setting is that not only we have to assign arbitrary orientations to the diagrams displayed in \eqref{eq:unfr_unor_Rmoves} and \eqref{eq:unfr_unor_Rmoves_2} but also  for each of the moves the rotational diagrams resulting from viewing the oriented \eqref{eq:unfr_unor_Rmoves} and \eqref{eq:unfr_unor_Rmoves_2} from all possible angles, given that we have a preferred, fixed vertical direction. For instance, Polyak's $\Omega 3a$ move splits into six different moves, each of them obtained from Polyak's picture rotated by $2 \pi k/6$ for $0 \leq k \leq 5$.

In order to obtain a generating set of rotational Reidemeister moves, one could simply make Polyak's minimal generating set of moves \eqref{eq:polyak_gen_1} and \eqref{eq:polyak_gen_2} rotational and then add whatever is needed to obtain planar isotopy from Morse planar isotopy (which, by \cref{cor:planar->Morse} below, amounts to four additional moves). The main downside is that, as explained in the previous paragraph, Polyak's $\Omega 3a$ move splits into six different moves, which yields a total of 2+1+6+4=13 moves, far more than desirable. Bar-Natan and van der Veen considered in \cite{barnatanveenpolytime} a set with 16 generators, six of them braid-like  Reidemeister 3 moves, which suffers from the same problem. For sets with fewer Reidemeister 3 moves, different sets of generators have appeared in the literature. In \cite{barnatanveenAPAI}, they claim that a set of six rotational Reidemeister moves suffices (including a single braid-like Reidemeister 3 move), but this is discarded by \cite[Figure 8]{polyak10} even in the non-rotational setting  (and essentially the same issue appears in \cite[Figure 10]{barnatanveengaussians}). The first author adds two extra generators to the former generating set (in the framed case) in \cite{becerra_gaussians}, but \cite[Figure 8]{polyak10} again rules out this set as generating. With the same purpose as Polyak's paper, we believe it is time for a careful treatment.

In this paper we present  a truly generating set of rotational Reidemeister moves with two Reidemeister 1 moves,  one Reidemeister 2 move, one Reidemeister 3 move and additionally four more moves to account for the restriction to Morse planar isotopy.

\begin{theorem}\label{thm:1}
Two rotational tangle diagrams represent the same oriented, unframed tangle if and only if they are related by Morse planar isotopy and a finite sequence of the rotational Reidemeister moves $\Omega 0 a$, $\Omega 0 b$, $\Omega 0 c$, $\Omega 0 d$, $\Omega 1 a$, $\Omega 1b$, $\Omega 2a$ and $\Omega 3 a1$, shown below:
\begin{equation*}
\centre{
\labellist \small \hair 2pt
\pinlabel{$\leftrightsquigarrow$}  at 439 190
\pinlabel{{\scriptsize $\Omega 0a$}}  at 445 260
\pinlabel{$\leftrightsquigarrow$}  at 1625 190
\pinlabel{{\scriptsize $\Omega 0b$}}  at 1630 260
\pinlabel{$\leftrightsquigarrow$}  at 2865 190
\pinlabel{{\scriptsize $\Omega 0c$}}  at 2869 260
\endlabellist
\centering
\includegraphics[width=0.9\textwidth]{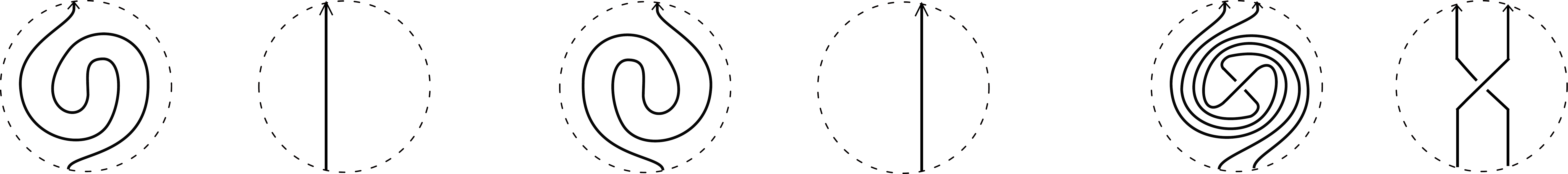}}
\end{equation*}
\begin{equation*}
\centre{
\labellist \small \hair 2pt
\pinlabel{$\leftrightsquigarrow$}  at 439 190
\pinlabel{{\scriptsize $\Omega 0d$}}  at 445 260
\pinlabel{$\leftrightsquigarrow$}  at 1625 190
\pinlabel{{\scriptsize $\Omega 1a$}}  at 1630 260
\pinlabel{$\leftrightsquigarrow$}  at 2865 190
\pinlabel{{\scriptsize $\Omega 1b$}}  at 2869 260
\endlabellist
\centering
\includegraphics[width=0.9\textwidth]{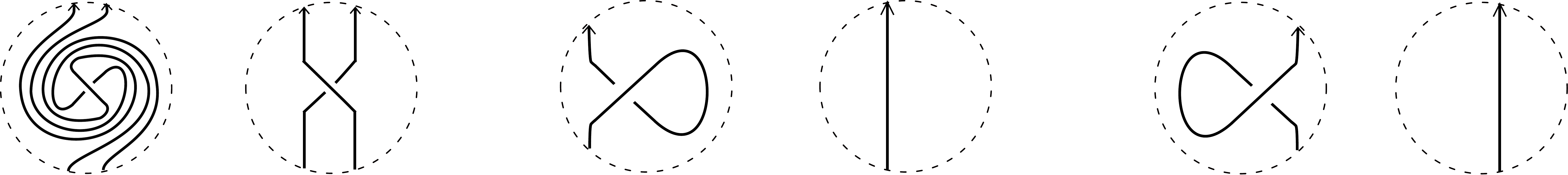}}
\end{equation*}
\begin{equation*}
\centre{
\labellist \small \hair 2pt
\pinlabel{$\leftrightsquigarrow$}  at 439 190
\pinlabel{{\scriptsize $\Omega 2a$}}  at 445 260
\pinlabel{$\leftrightsquigarrow$}  at 1675 190
\pinlabel{{\scriptsize $\Omega 3a1$}}  at 1680 260
\endlabellist
\centering
\includegraphics[width=0.6\textwidth]{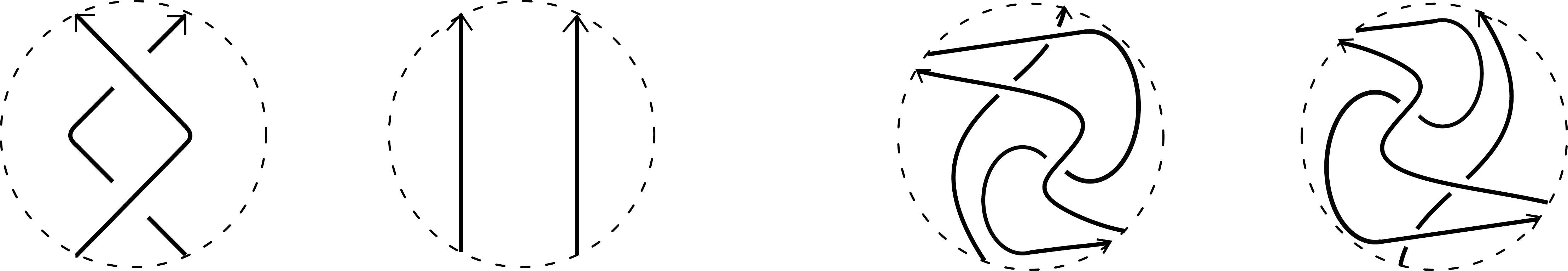}}. \phantom{.}
\end{equation*}
\end{theorem}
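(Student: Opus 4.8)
The plan is to prove the two implications of the stated equivalence separately. The ``if'' direction (soundness) is immediate: each of the eight moves $\Omega 0a$, $\Omega 0b$, $\Omega 0c$, $\Omega 0d$, $\Omega 1a$, $\Omega 1b$, $\Omega 2a$, $\Omega 3a1$ visibly depicts an ambient isotopy of the underlying tangle, and Morse planar isotopy is in particular a planar isotopy; hence any two rotational diagrams related by these operations represent the same oriented, unframed tangle.

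For the ``only if'' direction (completeness) I would take as the starting point the classical oriented Reidemeister theorem for tangles in the bigon, which asserts that two diagrams represent the same tangle if and only if they are related by planar isotopy together with the oriented Reidemeister moves. Drawn in rotational form, and recording every viewing angle separately, these moves are precisely the four $\Omega 1$, six $\Omega 2$ and eighteen $\Omega 3$ moves listed in the introduction. By \cref{cor:planar->Morse}, planar isotopy on rotational diagrams is generated by Morse planar isotopy together with the four moves $\Omega 0a$--$\Omega 0d$. Completeness therefore reduces to the purely combinatorial statement that all twenty-eight rotational Reidemeister moves are consequences of the generating set $\{\Omega 0a, \Omega 0b, \Omega 0c, \Omega 0d, \Omega 1a, \Omega 1b, \Omega 2a, \Omega 3a1\}$ modulo Morse planar isotopy.

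The first device I would set up is that the $\Omega 0$ moves allow one to ``rotate'' a local tangle: by encircling a crossing with a cup--cap pair produced by an $\Omega 0$ move and sliding along it, one shows that the distinct viewing-angle variants of any single oriented move are mutually derivable using only the $\Omega 0$ moves and Morse planar isotopy. This collapses the six rotational versions $\Omega 3a1,\dots,\Omega 3a6$ (and likewise $\Omega 3h1,\dots,\Omega 3h6$) to a single representative each, and normalises the $\Omega 1$ and $\Omega 2$ families similarly, so that it suffices to derive one rotational representative of each oriented Reidemeister type. The remaining reductions then follow Polyak's strategy in rotational guise: using $\Omega 2a$, aided by $\Omega 1a$, $\Omega 1b$ and the rotation device, one creates and cancels auxiliary crossing pairs to convert the leftover $\Omega 1$ and $\Omega 2$ variants into $\Omega 1a$, $\Omega 1b$, $\Omega 2a$, and each remaining $\Omega 3$ variant into $\Omega 3a1$.

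I expect the main obstacle to be the $\Omega 3$ reductions, and specifically deriving the braid-like moves $\Omega 3b$--$\Omega 3g$ (whose three crossings share the same sign) and the mirror family $\Omega 3h$ from the single cyclic move $\Omega 3a1$. As in the non-rotational case these derivations proceed by introducing a crossing pair with $\Omega 2a$, applying $\Omega 3a1$, and cancelling the pair; but the rotational constraint that every crossing point upward forces one to interleave $\Omega 0$ moves at each step to return intermediate crossings to admissible position, so the bookkeeping is considerably more delicate than in \cite{polyak10}. I would organise the argument into explicit diagrammatic sequences treating the cyclic family, the braid-like family and the mirror family in turn, verifying at each stage that every intermediate diagram is genuinely rotational so that only the eight generators are ever invoked.
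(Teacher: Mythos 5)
Your proposal takes essentially the same route as the paper: soundness is immediate, and completeness is reduced via \cref{cor:planar->Morse} to deriving all 28 rotational variants from the eight generators, using a crossing-rotation device built solely from the $\Omega 0$ moves (this is precisely the paper's \cref{lem:the_lemma}) to collapse the viewing-angle families $\Omega 3a$, $\Omega 3h$, $\Omega 2c$, $\Omega 2d$, and then adapting Polyak's derivations for the remaining oriented types. The paper's actual content is the execution of the explicit diagrammatic sequences you defer (Lemmas \ref{lem:the_lemma}--\ref{lem:O3h}), including the point your sketch glosses over: the leftover moves $\Omega 1c$, $\Omega 1d$ are not obtained by rotation alone but require the previously derived $\Omega 2c1$, $\Omega 2d1$, which in turn need $\Omega 3a$ moves.
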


This set of rotational Reidemeister moves is in fact minimal. Indeed, we include rotational versions of Polyak's minimal set of generators, and besides we show in \cref{cor:planar->Morse} that the change from planar isotopy to Morse planar isotopy amounts to adding the four $\Omega 0$ moves in the theorem. Furthermore,  we make use of \cite{CS} to describe all minimal (i.e. 8-element) sets of rotational Reidemeister moves, see \cref{thm:all_min_sets}. If one insists in using the braid-like $\Omega 3b$ move as the only Reidemeister 3 move, then  \cite[Theorem 1.2]{polyak10} admits a straightforward rotational analogue, see \cref{thm:polyak_1.2_rot}.

Another case of interest is to consider rotational tangle diagrams to represent isotopy classes of oriented, framed tangles in $D^2 - \{(0, \pm 1) \}$. In this situation, the tangle diagram is endowed with the blackboard framing as it represents the core of a band in $D^3$. Therefore,  the Reidemeister moves $\Omega 1a$ -- $\Omega 1d$ need to be replaced by the framed Reidemeister one moves $\Omega 1\text{f}a$ -- $\Omega 1\text{f}f$ depicted below:
\begin{equation*}
\centre{
\labellist \small \hair 2pt
\pinlabel{$\leftrightsquigarrow$}  at 439 190
\pinlabel{{\scriptsize $\Omega 1\text{f}a$}}  at 445 260
\pinlabel{$\leftrightsquigarrow$}  at 1860 190
\pinlabel{{\scriptsize $\Omega 1\text{f}b$}}  at 1865 260
\endlabellist
\centering
\includegraphics[width=0.6\textwidth]{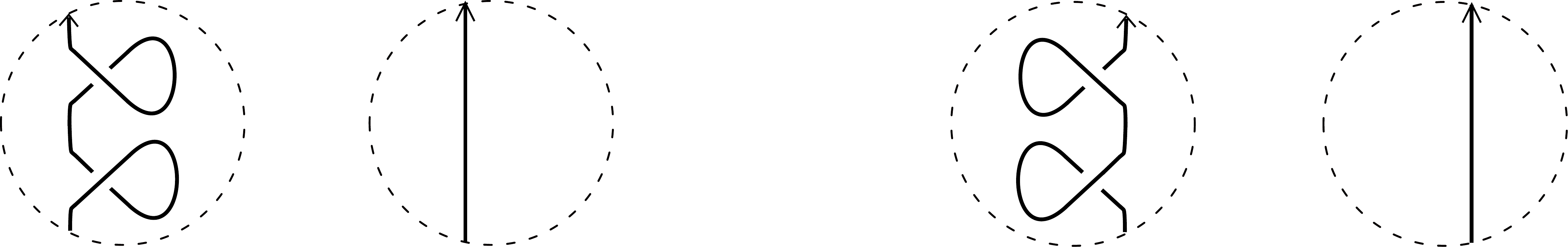}}
\end{equation*}
\begin{equation*}
\centre{
\labellist \small \hair 2pt
\pinlabel{$\leftrightsquigarrow$}  at 439 190
\pinlabel{{\scriptsize $\Omega 1\text{f}c$}}  at 445 260
\pinlabel{$\leftrightsquigarrow$}  at 1860 190
\pinlabel{{\scriptsize $\Omega 1\text{f}d$}}  at 1865 260
\endlabellist
\centering
\includegraphics[width=0.6\textwidth]{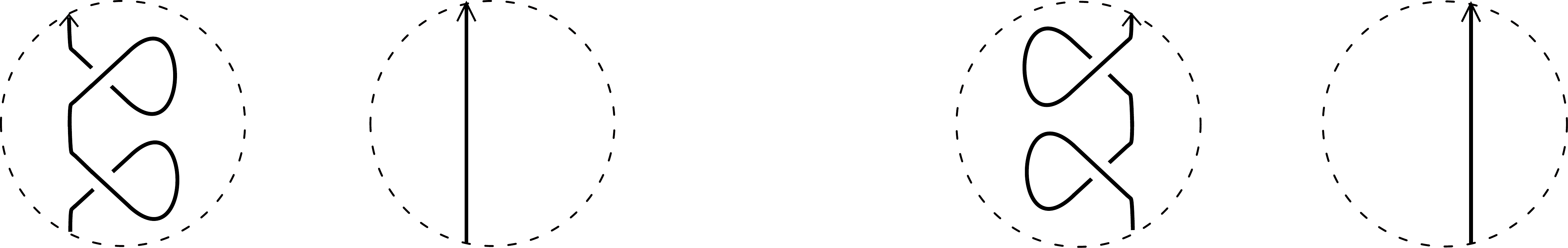}}
\end{equation*}
\begin{equation*}
\centre{
\labellist \small \hair 2pt
\pinlabel{$\leftrightsquigarrow$}  at 439 190
\pinlabel{{\scriptsize $\Omega 1\text{f}e$}}  at 445 260
\pinlabel{$\leftrightsquigarrow$}  at 1860 190
\pinlabel{{\scriptsize $\Omega 1\text{f}f$}}  at 1865 260
\endlabellist
\centering
\includegraphics[width=0.6\textwidth]{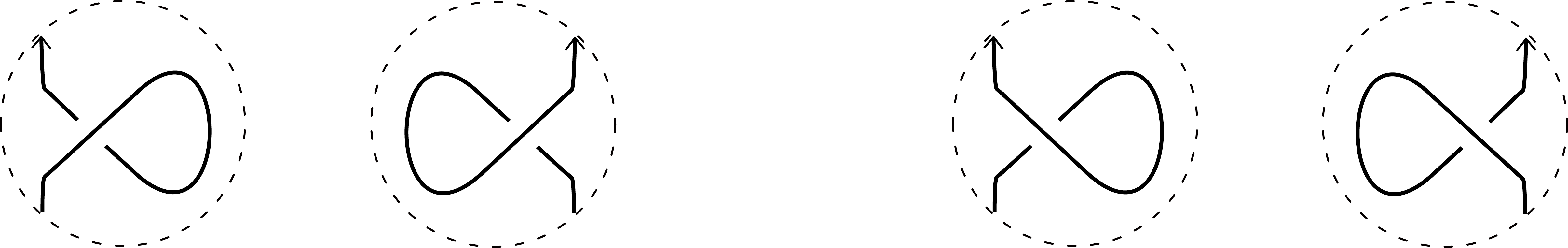}}.
\end{equation*}

Our second main result is to give a family of genuine generating sets of rotational Reidemeister moves for framed, oriented, rotational tangles:

\begin{theorem}\label{thm:2}
Let $S$ be a set of rotational Reidemeister moves formed by all $\Omega 0$, one of the moves $\Omega 2a$ or $\Omega 2b$, one of the pairs $$(\Omega 1 \textup{f}a, \Omega 2di)  \qquad , \qquad (\Omega 1 \textup{f}c, \Omega 2di)  \qquad , \qquad  (\Omega 1 \textup{f}b, \Omega 2ci)  \qquad , \qquad  (\Omega 1 \textup{f}d, \Omega 2ci)  $$ with $i=1,2$,
and  one move of each of the types $\Omega 3a$ and $\Omega 3h$.

Then $S$ is generating, that is, two rotational tangle diagrams represent the same oriented, framed tangle if and only if they are related by Morse planar isotopy and a finite sequence of the rotational Reidemeister moves in $S$ (an instance is shown below):
\begin{equation*}
\centre{
\labellist \small \hair 2pt
\pinlabel{$\leftrightsquigarrow$}  at 439 190
\pinlabel{{\scriptsize $\Omega 0a$}}  at 445 260
\pinlabel{$\leftrightsquigarrow$}  at 1625 190
\pinlabel{{\scriptsize $\Omega 0b$}}  at 1630 260
\pinlabel{$\leftrightsquigarrow$}  at 2865 190
\pinlabel{{\scriptsize $\Omega 0c$}}  at 2869 260
\endlabellist
\centering
\includegraphics[width=0.9\textwidth]{figures/gens_1}}
\end{equation*}
\begin{equation*}
\centre{
\labellist \small \hair 2pt
\pinlabel{$\leftrightsquigarrow$}  at 439 190
\pinlabel{{\scriptsize $\Omega 0d$}}  at 445 260
\pinlabel{$\leftrightsquigarrow$}  at 1625 190
\pinlabel{{\scriptsize $\Omega 1 \textup{f}a$}}  at 1630 260
\pinlabel{$\leftrightsquigarrow$}  at 2865 190
\pinlabel{{\scriptsize $\Omega 2a$}}  at 2869 260
\endlabellist
\centering
\includegraphics[width=0.9\textwidth]{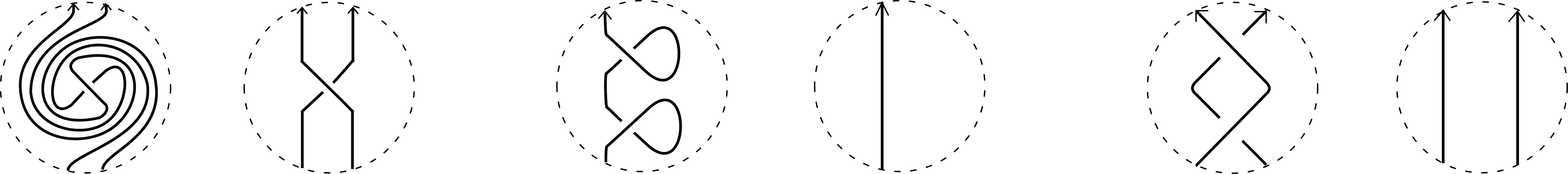}}
\end{equation*}
\begin{equation*}
\centre{
\labellist \small \hair 2pt
\pinlabel{$\leftrightsquigarrow$}  at 439 190
\pinlabel{{\scriptsize $\Omega 2d1$}}  at 445 260
\pinlabel{$\leftrightsquigarrow$}  at 1635 190
\pinlabel{{\scriptsize $\Omega 3a1$}}  at 1650 260
\pinlabel{$\leftrightsquigarrow$}  at 2885 190
\pinlabel{{\scriptsize $\Omega 3h1$}}  at 2895 260
\endlabellist
\centering
\includegraphics[width=0.9\textwidth]{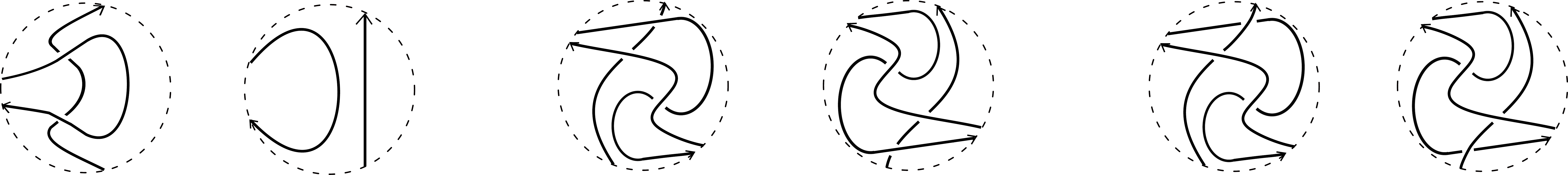}}.
\end{equation*}
\end{theorem}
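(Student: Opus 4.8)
The plan is to argue by reduction to the complete list of rotational Reidemeister moves. Relying on the framed rotational Reidemeister theorem established earlier---that the full collection of all $\Omega 0$, $\Omega 1\textup{f}$, $\Omega 2$ and $\Omega 3$ moves is generating---it suffices to exhibit every move on that list as a consequence of the moves in $S$ together with Morse planar isotopy. Since the four $\Omega 0$ moves lie in $S$, \cref{cor:planar->Morse} lets me pass from Morse planar isotopy to full planar isotopy throughout the derivations; in particular I may rotate any local tangle picture, so that the six rotations of $\Omega 3a$, the six of $\Omega 3h$, and likewise the rotational variants of the $\Omega 2$ and $\Omega 1\textup{f}$ moves, become mutually derivable. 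This collapses the problem to the non-rotational oriented generation problem treated in \cite{polyak10} and \cite{CS}, now with framed R1 moves.

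I would then dispose of the remaining blocks in turn. For the framed R1 moves, I would start from the single curl in the chosen pair and the accompanying $\Omega 2c$ or $\Omega 2d$ move, and produce the other five $\Omega 1\textup{f}$ moves by reflecting and reorienting the curl, with the curved R2 supplying the detour that slides the curl past a cap or cup. The pairing of each $\Omega 1\textup{f}$ move with a specific $\Omega 2c$ or $\Omega 2d$ is forced by chirality: only the matching curved R2 yields an orientation-compatible curl. For the $\Omega 2$ moves, I would recover all six variants from the chosen $\Omega 2a$ or $\Omega 2b$ together with the curved move in the pair, using $\Omega 0$-rotation and reversal of orientation.

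The main obstacle is the block of eighteen $\Omega 3$ moves, which I would treat following the Caprau--Scott analysis. Having reduced each oriented type to a single representative, the task is to derive every oriented R3---the cyclic ones and the braid-like ones $\Omega 3b$--$\Omega 3g$---from one $\Omega 3a$, one $\Omega 3h$ and the available $\Omega 2$ moves. The standard detour argument converts a braid-like R3 into a cyclic R3 by sliding a strand with $\Omega 2$; the delicate point, and the very reason the framed theorem needs both an $\Omega 3a$ and an $\Omega 3h$ whereas the unframed \cref{thm:1} needs only one cyclic move, is that the derivations of \cite{polyak10} and \cite{CS} use $\Omega 1$ curls which are forbidden here, since adding or removing a curl changes the framing. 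Supplying both a type-$a$ and a type-$h$ cyclic move compensates for this loss, letting each braid-like move be reached without altering the framing.

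Finally, I would observe that it suffices to carry out these derivations for one member of the stated family. The four admissible pairs and the two choices between $\Omega 2a$ and $\Omega 2b$ are permuted by the evident reflection and orientation-reversal symmetries of the entire system of moves, under which both $S$ and the complete list are invariant; the remaining cases then follow formally.
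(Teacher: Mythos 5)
Your high-level skeleton agrees with the paper's: use the $\Omega 0$ moves to collapse the six rotational variants within each move type, reduce to an oriented generation problem, observe that both an $\Omega 3a$ and an $\Omega 3h$ move are needed because the curl tricks of \cite{polyak10} and \cite{CS} change the framing, and invoke the mirror/orientation symmetries of the whole system only at the very end to reduce to a single member of the family $S$. Two caveats on the first step: \cref{cor:planar->Morse} by itself only provides swirl moves for \emph{upwards} tangles, so the statement that one may ``rotate any local tangle picture'' is really the content of \cref{lem:O3a}, \cref{lem:O2c2_O2d2} and \cref{lem:O3h} (which, importantly, avoid R1 moves and hence survive in the framed setting); this is citable and not a serious problem.

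The genuine gaps are in the middle, which is where the actual content of the theorem lies. First, you claim to recover all six $\Omega 2$ variants from the chosen flat move and the chosen curved move ``using $\Omega 0$-rotation and reversal of orientation''. Orientation reversal (like reflection) is a symmetry of the entire system of moves, not a derivation that can be performed inside a fixed set $S$; it maps one admissible $S$ to another, which is exactly why it is only legitimate in your final symmetry paragraph. $\Omega 0$-rotation relates $\Omega 2c1\leftrightarrow\Omega 2c2$ and $\Omega 2d1\leftrightarrow\Omega 2d2$ but never changes the oriented type, so it cannot produce the missing curved type ($\Omega 2c$ from $\Omega 2d$, or vice versa) nor the missing flat move. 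In the paper, producing the missing curved type is the first and hardest step: it uses the framed R1 move of the pair, the flat R2, the given curved R2 and \emph{both} cyclic R3 moves $\Omega 3a$, $\Omega 3h$; the missing flat R2 is then obtained from the braid-like moves $\Omega 3e$, $\Omega 3f$ (which themselves require both curved types) together with the framed R1 move. None of these derivations appears in your plan. Second, you propose to derive the remaining five $\Omega 1\textup{f}$ moves \emph{first}, from the single given curl plus one curved R2 alone. This cannot work: relating curls of opposite sign or different style requires cancelling a kink/anti-kink pair, and in the rotational framed setting this cancellation (\cref{lem:O23}) consumes flat and curved R2 moves of all types \emph{and} braid-like R3 moves; accordingly, in the paper the generation of the remaining $\Omega 1\textup{f}$ moves (\cref{lem:23->1f}) is the \emph{last} step, carried out only once every R2 and R3 move is available. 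As written, your dependency order is inverted, and the two lemmas that make the framed theorem true are missing.
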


It is surprising that there is a striking difference between the framed and unframed cases. Whereas in the unframed case at least two Reidemeister 1 moves are needed, cf. \cite[Lemma 3.1]{polyak10}, in the framed situation we find that at least two Reidemeister 2 moves are needed, see \cref{lem:2_O2}.

Moreover, it turns out that any of the generating 9-element sets described in the previous theorem is in fact minimal, we prove this in  \cref{thm:1.2_minimal}. This automatically gives a family of minimal generating sets of framed, oriented Reidemeister moves, see \cref{cor:pol_fr}. To the authors' knowledge, no such minimal sets were known in the framed setting. In particular it implies that such minimal generating set has 5 elements. If one wants to include a single Reidemeister 3 move (e.g. the braid-like $\Omega 3b$), then at least one extra generator is needed, see \cref{thm:another_fr}.

One important consequence of these results, and really the main motivation of this paper, is that we obtain a minimal set of relations that needs to be imposed to define a genuine isotopy invariant of framed, oriented tangles built as  the universal tangle invariant \cite{barnatanveengaussians,becerra_gaussians,becerra_thesis}. In the rotational setting, each diagram can be interpreted as an algebraic expression, and each of the rotational Reidemeister moves can be translated as an algebraic equation, see \cref{sec:application}.


\subsection*{Acknowledgments} The authors would like to thank Carmen Caprau for a helpful conversation. The first author was supported by the ARN project CPJ number ANR-22-CPJ1-0001-0  at the Institut de Mathématiques de Bourgogne (IMB). The IMB receives support from the EIPHI Graduate School (contract ANR-17-EURE-0002).

\section{Rotational diagrams}

In this section we introduce rotational diagrams of link and tangle diagrams. This type of diagrams appeared first in \cite{barnatanveenpolytime} in the virtual setting and in \cite{becerra_thesis,becerra_refined} in the upwards tangle setting.

\subsection{Tangle diagrams in discs}

Let $D^n$ denote the $n$-disc in $\R^n$. A  \textit{tangle diagram} in $D^2$ is an oriented immersion $$D: \left( \coprod_n D^1 \right) \amalg \left( \coprod_m  S^1 \right) \to D^2 - \{ (0,\pm 1) \}$$   which has only finitely many double points of transversal self-intersections and which is such that the image of $\amalg_n \partial D^1$ lies on $\partial (D^2- \{ (0,\pm 1) \})$. Double points are called  crossings and  carry additionally  the under/over-pass information. If $n=0$, we say that the tangle diagram is a link diagram. Note that we opt to use the bigon $D^2 - \{ (0,\pm 1)\}$ in our definition of a tangle diagram instead of a disc, as this is necessary for the upcoming discussion on rotational diagrams. We will, however, refer to those bigons as discs in this paper.

Typically, we will regard tangles diagrams up to planar isotopy. More precisely, two tangle diagrams $D_0, D_1$ as above are \textit{planar isotopic} if there exists a smooth map
\begin{equation}\label{eq:isotopy}
 H: \left[ \left( \coprod_n D^1 \right) \amalg \left( \coprod_m  S^1 \right) \right] \times [0,1] \to D^2 - \{ (0,\pm 1) \}   
\end{equation}
with $H(-,0)=D_0$,  $H(-,1)=D_1$, and  $H(-,t)$ is a tangle diagram for all $t \in [0,1]$ and the crossing type of the double points is preserved throughout the isotopy.

Note that our definition does not fix endpoints of the open components of the tangle diagram, they can freely move within each of the connected components of $\partial (D^2 - \{ (0,\pm 1) \})$ in an orderly fashion.

\subsection{Rotational diagrams}

We now introduce the class of tangle diagrams that concerns this paper.  A tangle diagram $D$ is said to be  \textit{rotational} if all endpoints and crossings of $D$ point upwards and all maxima and minima appear in pairs of the following two forms,
\begin{equation}\label{eq:full_rotations}
\centre{
\centering
\includegraphics[width=0.30\textwidth]{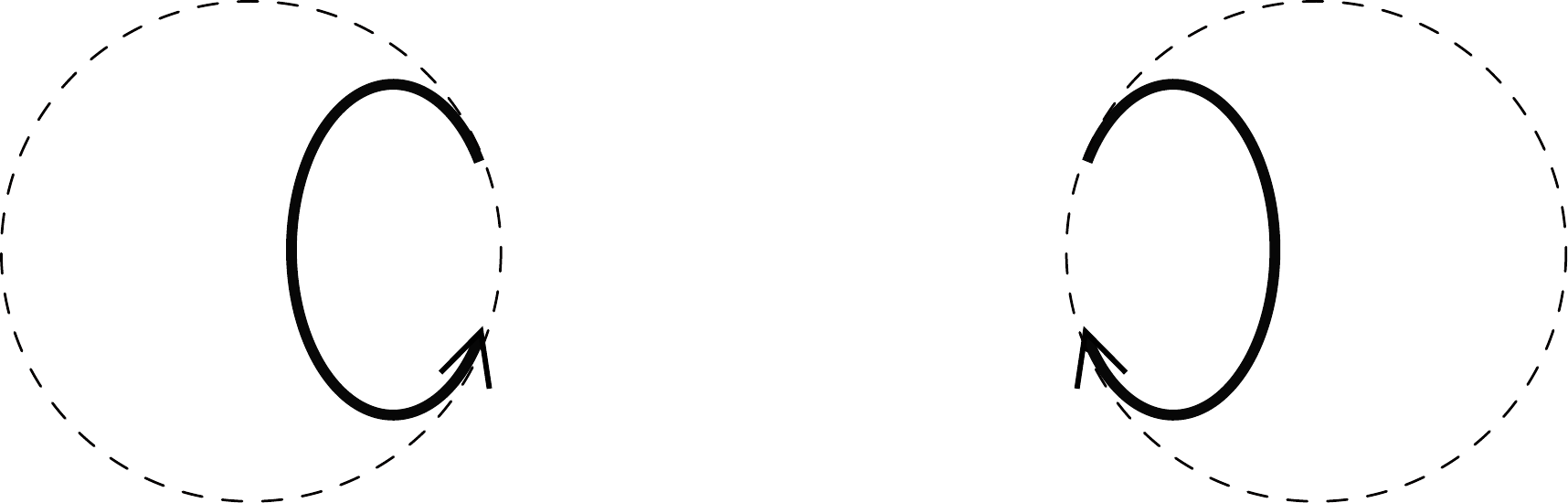}},
\end{equation}
where the dashed discs denote some neighbourhoods of that piece of strand in the tangle diagram.  In essence, these are diagrams that have a special form with respect to the vertical direction. (Partial) closures of braid diagrams are particular examples of rotational diagrams, but the former are far more general.  In this definition, ``pointing upwards'' at a point means that the tangent vector at the given point has positive $y$-component.

Of course, we need a refined notion of planar isotopy that respects the extra properties. Given $D_0, D_1$ rotational tangle diagrams, a planar isotopy $H$ as in \eqref{eq:isotopy} between $D_0$ and $D_1$ is called to be \textit{Morse} if $H(-,t)$ is a rotational diagram for all $t \in [0,1]$ and the number of maxima and minima (with respect to the vertical direction) is constant throughout the isotopy (that is, no maxima or minima can be created or destroyed).

Rotational diagrams are abundant. It was shown in  \cite[Lemma 3.2]{becerra_refined} and  \cite[Lemma 2.2.1]{becerra_thesis} that any upwards tangle (that is, a tangle in a cube with only open components oriented from bottom to top) has a rotational diagram. This is also true in the more general tangle setting that we are dealing with:

\begin{lemma}\label{lem:rot_diagram}
    Any tangle diagram  is planar isotopic to a rotational diagram.
\end{lemma}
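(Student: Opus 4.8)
The plan is to argue by putting the diagram into general position with respect to the height function and then repairing crossings, endpoints and critical points one elementary piece at a time.

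First I would perturb $D$ by a small planar isotopy so that the height function $h(x,y)=y$ restricts to a Morse function on $D$: finitely many nondegenerate maxima and minima, all at pairwise distinct heights, with no maximum or minimum occurring at the height of a crossing or of an endpoint, and with all crossings and all endpoints at pairwise distinct heights as well. After this the diagram is a concatenation of horizontal slabs, each containing exactly one elementary event (a crossing, a maximum, a minimum, or an endpoint) joined by arcs that are monotone in $h$.

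Next I would make every crossing and every endpoint point upwards. At a crossing the two strands meet transversally, so a planar isotopy supported in a small disc about the crossing rotates it into an upward-pointing $X$; the turning this forces is pushed out into the four arcs leaving the disc, where it is recorded as ordinary maxima and minima to be dealt with in the last step. The same local adjustment near each point of $\amalg_n \partial D^1$ arranges that the tangent there has positive $y$-component. None of these local moves creates or destroys a crossing, and they keep the diagram in Morse general position.

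The crux is the final step: converting the remaining maxima and minima, which after the previous step are bare caps and cups, into the two model full-rotation pairs of \eqref{eq:full_rotations}. Here I would use that we only need a planar (not Morse) isotopy, so we are free to create and cancel maximum--minimum pairs and to reshape arcs at will. The key local claim is that a neighbourhood of any cap (respectively cup) whose ends have been arranged to point upwards is planar isotopic, rel its boundary, to one of the two prescribed full rotations; this is checked by a short case analysis on the handedness of the cap or cup (two cases each), producing in each case an explicit embedded isotopy that realises the full $2\pi$ turn without introducing a crossing. Performing these replacements locally and leaving the rest of the diagram fixed assembles into a global planar isotopy carrying $D$ to a rotational diagram. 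I expect this last step to be the main obstacle, precisely because one must exhibit the reshaping isotopies concretely and verify that the resulting critical points match exactly the two allowed models; the earlier general-position and upward-crossing arguments are routine, and one may alternatively reduce part of the work to the upward-tangle case already treated in \cite[Lemma 3.2]{becerra_refined}.
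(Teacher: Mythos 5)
Your first two steps (Morse general position; rotating crossings and endpoints to point upwards while pushing the turning out into the adjacent arcs) are routine and essentially correct, although note that making endpoints point upwards uses that the points $(0,\pm 1)$ are removed from the disc, a fact you never invoke. The genuine gap is the ``key local claim'' of your final step, and it is false as stated. First, it is incoherent for a single critical point: a small disc around one cap meets the strand in one ascending and one descending end, so its two ends can never both be ``arranged to point upwards''; any neighbourhood whose ends both point up necessarily contains a cap \emph{and} a cup. The objects to be normalised are therefore adjacent cap--cup pairs (a cap, a crossing-free descending arc, a cup), not individual caps or cups. Second, such a pair has tangent turning either $0$ (a zig-zag) or $\pm 2\pi$ (a C-shape), and the turning of an embedded arc rel fixed boundary points and boundary directions is invariant under planar isotopy rel boundary. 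Hence a zig-zag pair can never be isotoped rel boundary into either full rotation of \eqref{eq:full_rotations}: it must be \emph{cancelled}, whereas only the $\pm 2\pi$ pairs can be normalised into full rotations (and those, conversely, can never be cancelled). Your proposed ``short case analysis on the handedness, two cases each'' cannot yield the conclusion you claim, because the statement it is meant to verify is obstructed by this turning-number invariant.

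Moreover, even with the correct dichotomy, the final step is not local: other strands of the diagram may run through the region bounded by a cap--cup pair (entering and leaving through upward crossings with the neighbouring ascending arcs, possibly carrying their own critical points inside that region), so neither cancelling a zig-zag nor normalising a $\pm 2\pi$ pair can in general be performed ``leaving the rest of the diagram fixed''; one needs a global or inductive re-routing argument. This is exactly what the paper does instead: it runs the explicit algorithm of \cite[Lemma 3.2]{becerra_refined} component by component, first arranging the head and tail of each open component to point up (using the removed points $(0,\pm 1)$), inserting a full rotation when the tail lies above the head, adjusting endpoint heights to kill isolated cups and caps, and treating each closed component by cutting it at a point, applying the algorithm, and reclosing with a full rotation. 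You mention this reduction only as an optional afterthought; as written, your final step --- which you yourself identify as the crux --- does not go through, and it is precisely where all the content of the lemma lies.
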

\begin{proof}
This follows almost verbatim from the algorithm described in \cite[Lemma 3.2]{becerra_refined} only with minor modifications that we describe now. Firstly, we must slightly isotope the endpoints of the open components so that the head and the tail of each component point up; this is always possible as the points $\{ (0,\pm 1) \}$ are removed. If the tail of a given component is higher than the head, then one of the full rotations from \eqref{eq:full_rotations} will necessarily have to be included in the algorithm at the beginning or the end of the component. Finally we change if necessary the height of the endpoints to remove undesired isolated cups and caps.

For the closed components of the tangle diagram, it suffices to choose an arbitrary point in each of the components, follow the algorithm described in  described in \cite[Lemma 3.2]{becerra_refined}, and finally join the endpoints with a full rotation.
\end{proof}

\begin{example}
The tangle diagrams
\begin{equation*}
\centre{
\centering
\includegraphics[width=0.3\textwidth]{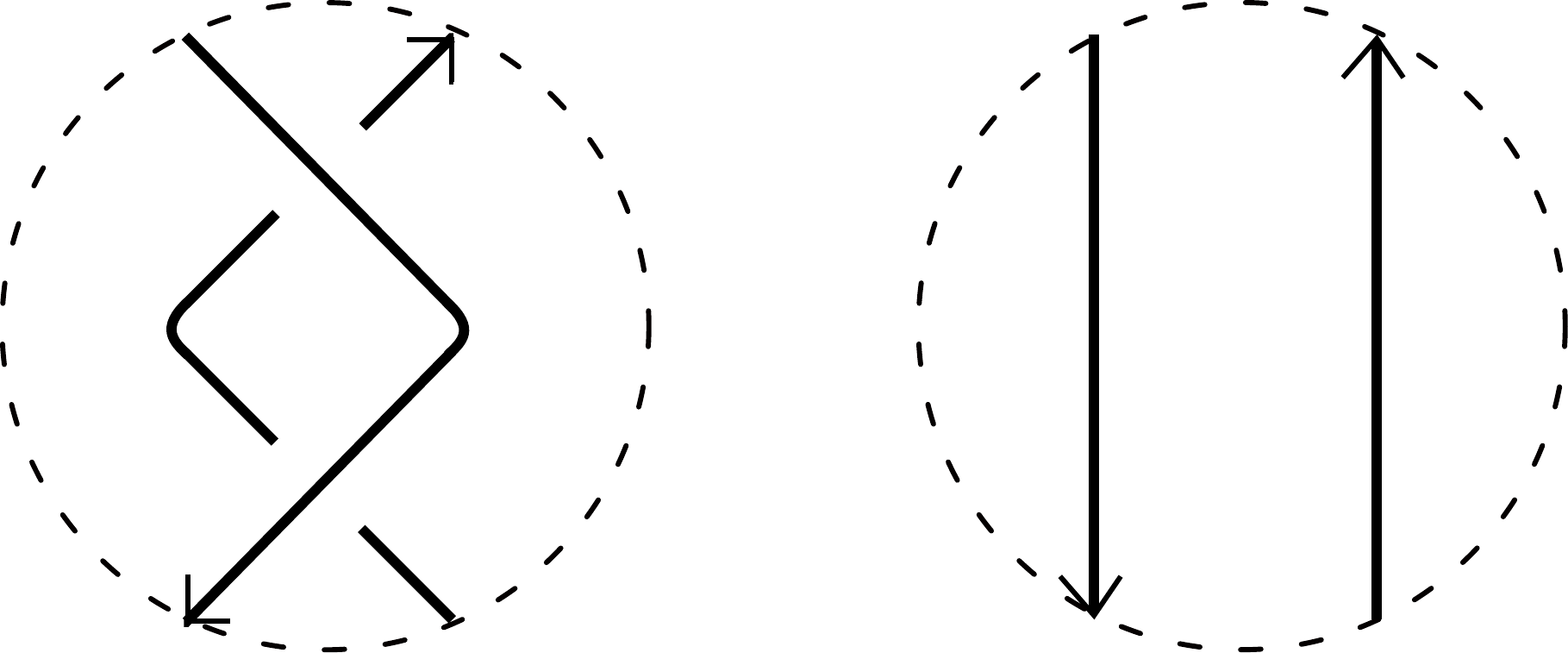}}
\end{equation*}
(which can be viewed as one of the Reidemeister 2 moves) can be planar isotoped, following the algorithm from \cref{lem:rot_diagram}, to the following rotational tangle diagrams:
\begin{equation*}
\centre{
\centering
\includegraphics[width=0.3\textwidth]{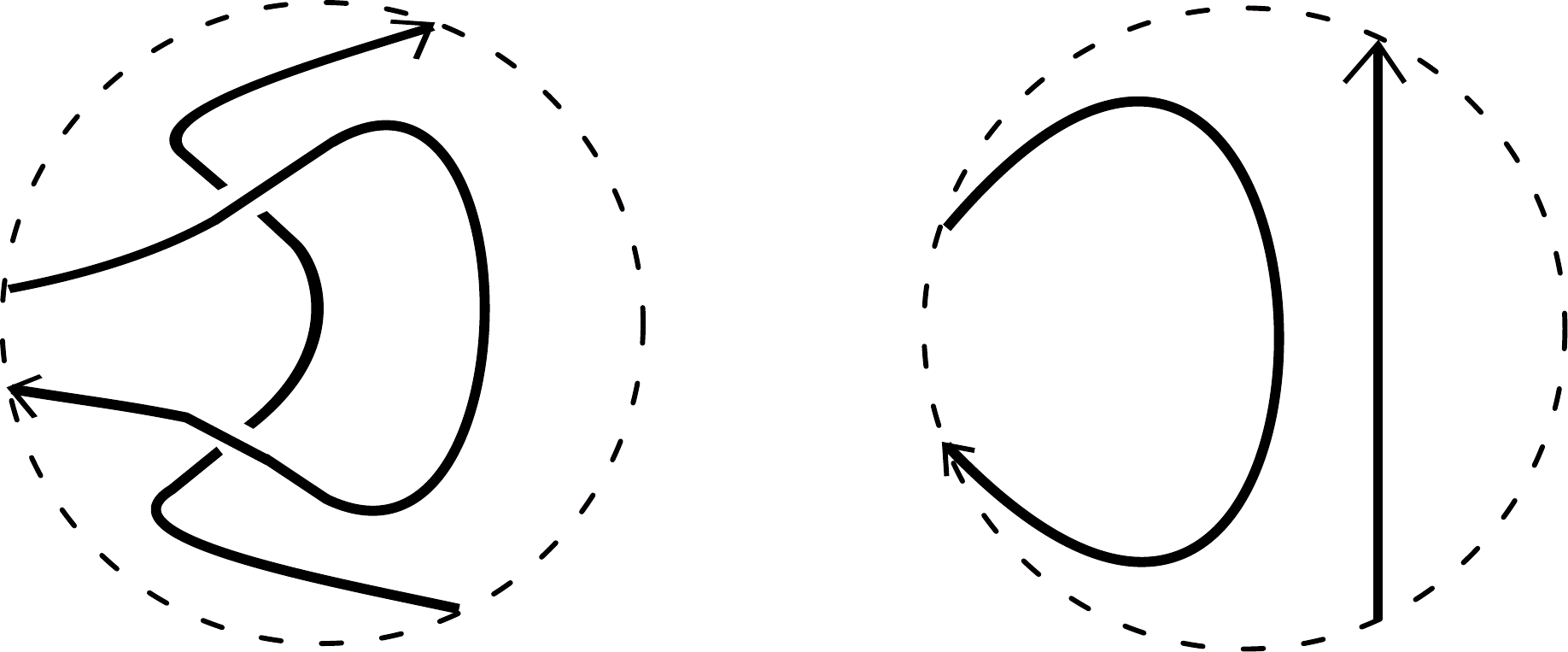}}.
\end{equation*}
Of course, the rotational diagram to which a given tangle diagram can be planar isotoped to is far from unique. For example, the left-hand side tangle diagrams can be planar isotoped to the following rotational diagram as well:
\begin{equation*}
\centre{
\centering
\includegraphics[width=0.14\textwidth]{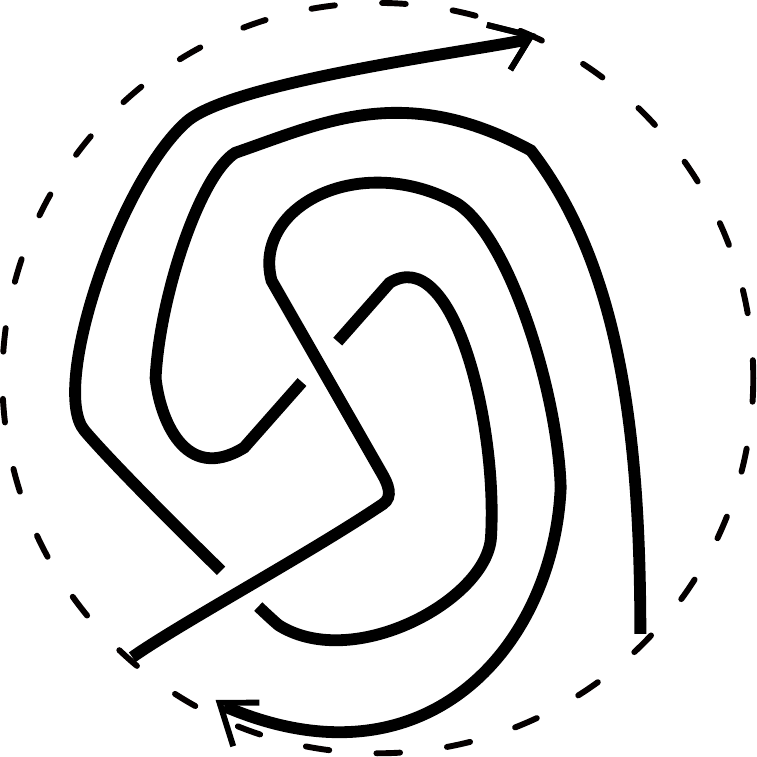}}
\end{equation*}
(which is in fact the diagram resulting from applying the algorithm from \cref{lem:rot_diagram} starting from the other strand).

For a step-wise example of the algorithm, see \cite[Example 3.3]{becerra_refined}.
\end{example}

\subsection{Planar isotopy vs. Morse planar isotopy}

Now we would like to compare the two notions of isotopy we have at hand. It follows from \cref{lem:rot_diagram} that there is a bijection 
$$\begin{tikzcd}[column sep=3em]
\frac{ \left\{ \parbox[c][2.5em]{7em}{\centering {\small  \textnormal{tangle diagrams in $D^2$ }}} \right\} }{  \parbox[c][1.5em]{8em}{\centering  {\small   \textnormal{planar isotopy} }}}  \ar[-,double line with arrow={-,-}]{r} &    \frac{ \left\{ \parbox[c][2.5em]{7em}{\centering     {\small  \textnormal{rotational tangle diagrams in $D^2$}}} \right\} }{ \parbox[c][1.5em]{7em}{\centering  {\small  \textnormal{planar isotopy} }}}
\end{tikzcd}.$$

We would like to mod out by Morse planar isotopy in the right-hand side of this bijection. Among rotational diagrams, every Morse isotopy is a planar isotopy; but the converse is obviously not true. The following proposition fully describes what is the additional data that one needs to add to obtain arbitrary planar isotopies from Morse ones.

\begin{proposition}\label{prop:swirls}
Let $D,D'$ be rotational diagrams. Then $D,D'$ are planar isotopic if and only if they are related by a sequence of  Morse planar isotopies and the swirl moves $Sw^\pm_T$ below for arbitrary $n$-tangle diagrams $T$,
\begin{equation*}
\centre{
\labellist \small \hair 2pt
\pinlabel{$ T$} at 410 410
\pinlabel{$ T$} at 1450 410
\pinlabel{$ T$} at 2795 410
\pinlabel{$ T$} at 3854 410
\pinlabel{$ \leftrightsquigarrow$} at 3320 410
\pinlabel{$ Sw_T^-$} at 3330 520
\pinlabel{$ \leftrightsquigarrow$} at 932 410
\pinlabel{$  Sw_T^+$} at 932 520
\pinlabel{$ ,$} at 2150 410
\pinlabel{$ \cdots$} at 430 50
\pinlabel{$ \cdots$} at 407 303
\pinlabel{$ \cdots$} at 428 521
\pinlabel{$ \cdots$} at 418 762
\pinlabel{$ \cdots$} at 1465 164
\pinlabel{$ \cdots$} at 1465 636
\pinlabel{$ \cdots$} at 2800 40
\pinlabel{$ \cdots$} at 2800 300
\pinlabel{$ \cdots$} at 2785 516
\pinlabel{$ \cdots$} at 2800 768
\pinlabel{$ \cdots$} at 3873 152
\pinlabel{$ \cdots$} at 3873 654
\endlabellist
\centering
\includegraphics[width=0.85\textwidth]{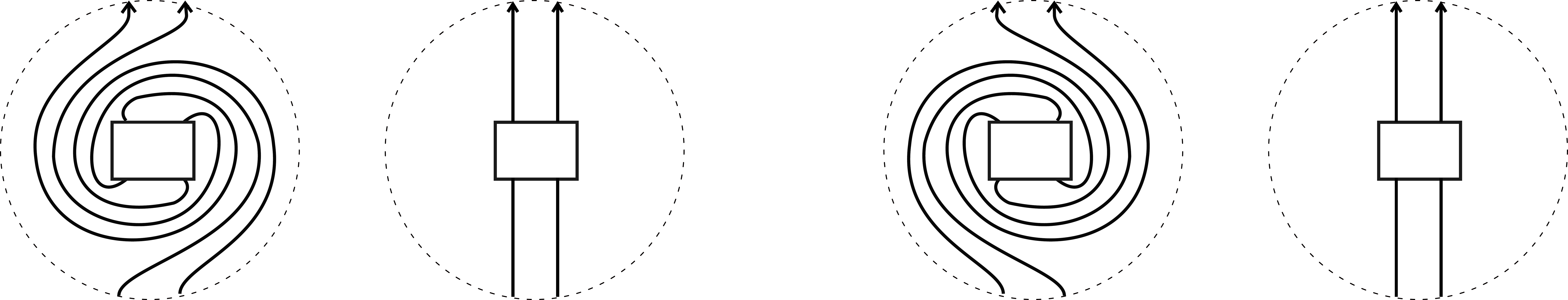}}.
\end{equation*}
\end{proposition}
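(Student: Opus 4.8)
The plan is to prove the two implications separately, the forward one being routine and the converse requiring a general‑position analysis of the isotopy.

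For the \textbf{(if)} direction, a Morse planar isotopy is by definition a planar isotopy, so it is enough to realise each swirl $Sw^{\pm}_T$ by a planar isotopy supported near $T$. This is visibly an isotopy of the underlying diagram: it drags the strands adjacent to $T$ into the crossingless full rotations of \eqref{eq:full_rotations}, preserving all crossings and their types, and the resulting diagram is again rotational. Composing such moves with Morse isotopies produces planar isotopies, which is the content of the easy implication.

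For the \textbf{(only if)} direction, I would start from a planar isotopy $H$ as in \eqref{eq:isotopy} from $D$ to $D'$ and first normalise the freely moving endpoints to fixed positions on the two boundary arcs of the bigon, a modification realised by Morse isotopies. Using the isotopy extension theorem I would upgrade $H$ to an ambient isotopy $\{\phi_t\}$ of the bigon with $\phi_0=\id$ and $\phi_t(D)=H(-,t)$, and then put the family in general position with respect to the height function $h=\mathrm{pr}_y$. Since planar isotopy preserves crossings and their type (so no Reidemeister‑type events occur), the only degenerations along a generic path are the elementary ones: (i) a birth or death of a cancelling maximum–minimum pair, (ii) two critical values, or a crossing and a critical value, exchanging heights, and (iii) the tangent at a crossing or at an endpoint rotating through the horizontal. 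Between consecutive such times the combinatorial type is constant and that stretch of the isotopy is readily normalised to a Morse isotopy.

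The obstruction therefore concentrates at the events of type (i) and at the net turning produced by events of type (iii). To measure it I would attach to each rotational diagram its total turning $\rho$, the winding of the unit tangent relative to the upward direction at crossings and endpoints, recorded per component; by the full‑rotation normal form \eqref{eq:full_rotations} this equals the number of full rotations, and hence is determined by the number of maxima and minima. A Morse isotopy keeps $\rho$ fixed, whereas $Sw^{\pm}_T$ changes it by one full unit on each strand of $T$ (taking $T$ to be a single strand, or a single crossing, recovers the turning of a strand or of a crossing frame, which explains why the move is phrased for arbitrary $T$). Applying suitable swirls to $D$ I would produce $D''$ with $\rho(D'')=\rho(D')$ on every component; since swirls are planar isotopies, $D''$ and $D'$ are planar isotopic rotational diagrams with equal turning, and it remains to prove they are Morse isotopic.

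This last step is the crux and the place I expect the real difficulty. Because the turning agrees and both diagrams are rotational, along any planar isotopy between them the net number of births equals that of deaths and the net type‑(iii) rotation vanishes. I would then invoke the Cerf‑theoretic cancellation of birth–death pairs along a path of functions, adapted to immersed $1$–manifolds in the strip with the height function and with the crossings held fixed: each transient maximum–minimum pair is removed by a homotopy of the isotopy that simply never creates it, and the residual tangent rotations are unwound in cancelling pairs. The output is a planar isotopy that is rotational at every time with constant maxima/minima count, i.e.\ a Morse isotopy from $D''$ to $D'$, completing the argument. The main obstacle lies precisely in making this cancellation compatible with the embedding into the bigon and with the rotational constraint, so that eliminating a transient pair never forces one to leave the class of rotational diagrams except through full rotations already accounted for by the swirls.
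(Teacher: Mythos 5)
Your general-position framework for the ``only if'' direction is reasonable in outline, but the invariant bookkeeping at its heart is wrong, and this breaks the proof. You claim that $Sw^{\pm}_T$ changes the total turning $\rho$ by one full unit on each strand of $T$. It does not: the swirl inserts a full rotation below $T$ and a full rotation above $T$ of \emph{opposite} senses (it is the move realised by rotating the box $T$ through $2\pi$), so it preserves the turning of every strand. Indeed it must: swirls are planar isotopies, the turning of a closed component is a planar-isotopy invariant (Whitney), and for open components the bigon forces the same conclusion --- the tangent direction at an endpoint is confined to the inward half-circle of directions, which, as the endpoint slides along one boundary arc of $D^2-\{(0,\pm 1)\}$, sweeps out a window containing exactly one lift of ``vertical up'', so an endpoint tangent that is vertical at both ends of the isotopy has zero net rotation and $\rho$ of every open strand is unchanged (this is precisely why the paper works in the bigon rather than the disc). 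Hence $\rho(D)=\rho(D')$ holds automatically, your step of ``applying swirls to match $\rho$'' is vacuous, and $\rho$ carries none of the discrepancy that the swirls are needed for. The quantity the swirls actually change, and which you never match, is the number of maxima and minima (equivalently the number of full-rotation pairs of \eqref{eq:full_rotations}); you conflate this count with $\rho$ when you assert that $\rho$ ``is determined by the number of maxima and minima'', which is false: one clockwise plus one counterclockwise rotation gives $\rho=0$ but four critical points.

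Because of this, your final reduction is to a false statement. Planar-isotopic rotational diagrams with equal turning need \emph{not} be Morse isotopic: the straight strand and the strand carrying a clockwise followed by a counterclockwise full rotation (the two sides of $\Omega 0a$) are planar isotopic, both rotational, and both have $\rho=0$, yet they have $0$ and $2$ maxima respectively, so no Morse isotopy can relate them. The Cerf-style cancellation you invoke to produce a genuine Morse isotopy from $D''$ to $D'$ therefore cannot work in general: cancelling a birth--death pair while staying rotational is exactly what is impossible without a swirl, and quantifying that obstruction is the entire content of \cref{prop:swirls}. The paper's proof goes the other way around: it localises the failure of rotationality of the isotopy, observes that maxima and minima must be created or destroyed in equal numbers (since both $D$ and $D'$ pair them up), and notes that an isolated created zig-zag would violate rotationality, so every creation/destruction is forced to come with a partner forming a cancelling pair of clockwise and counterclockwise full rotations, i.e.\ a swirl. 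You flagged the cancellation step as the crux and left it open; as you have set it up, it is not merely open but unprovable, since the statement you reduced it to is false.
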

In the previous statement, an \textit{$n$-tangle diagram} is an upwards tangle diagram with possibly closed components included.
\begin{proof}
 The ``if'' is evident, so we only have to prove the ``only if''. Without loss of generality we can assume that $D$ and $D'$ are $n$-tangles, that $H(-,t)$ is not a rotational diagram only in some interval $(a,b) \subset [0,1]$ and that $D$ and $D'$ are not Morse planar isotopic. Indeed we could rearrange the isotopy to be the identity outside a neighbourhood of a given point during some subinterval of $[0,1]$.

Since $D$ and $D'$ are rotational diagrams, each of them must have the same number of maxima and minima. In particular, the number of maxima created or destroyed during the isotopy must be equal to the number of minima created or destroyed. This means that $D$ and $D'$ are related by adding or removing the same number of cups and caps.  Since isolated zig-zags of cups and caps (with perhaps a tangle diagram in between)  cannot happen as they would destroy the property of being  rotational, if one such zig-zag is created then it must be accompanied by an extra pair of cup and cap making  a pair of clockwise and counterclockwise full rotations   as in \eqref{eq:full_rotations}.
\end{proof}

As an immediate consequence, we have a bijection
$$\begin{tikzcd}[column sep=3em]
\frac{ \left\{ \parbox[c][2.5em]{7em}{\centering {\small  \textnormal{tangle diagrams in $D^2$ }}} \right\} }{  \parbox[c][1.5em]{8em}{\centering  {\small   \textnormal{planar isotopy} }}}  \ar[-,double line with arrow={-,-}]{r} &    \frac{ \left\{ \parbox[c][2.5em]{7em}{\centering     {\small  \textnormal{rotational tangle diagrams in $D^2$}}} \right\} }{ \parbox[c][3.5em]{7em}{\centering  {\small  \textnormal{Morse planar isotopy and swirl moves} }}}
\end{tikzcd}.$$

A downside of the previous result is that one needs to add infinitely many swirl moves to upgrade Morse planar isotopy to general planar isotopy. Fortunately, only the swirl moves of three tangles are needed to obtain the rest.

\begin{corollary}\label{cor:planar->Morse}
Let $D,D'$ be rotational diagrams. Then $D,D'$ are planar isotopic if and only if they are related by a sequence of  Morse planar isotopies and the swirl moves $Sw^\pm_T$ from \cref{prop:swirls} for $T$ equals the positive crossing $\PC$, the negative crossing $\NC$ and the trivial one-component tangle $\uparrow$. 

In particular,  $Sw^{-}(\PC)$ (resp. $Sw^{-}(\NC)$) is consequence of $Sw^{+}(\PC)$ (resp. $Sw^{+}(\NC)$) and $Sw^{\pm}(\uparrow)$.
\end{corollary}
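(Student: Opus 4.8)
The plan is to reduce the infinite family of swirl moves supplied by \cref{prop:swirls} to the three generating tangles by an induction on the combinatorial complexity of $T$. By \cref{prop:swirls} it suffices to prove that, for every $n$-tangle diagram $T$, both $Sw^{+}_{T}$ and $Sw^{-}_{T}$ are consequences of the six moves $Sw^{\pm}_{\PC}$, $Sw^{\pm}_{\NC}$, $Sw^{\pm}_{\uparrow}$ together with Morse planar isotopy; the displayed statement then follows at once. To set up the induction I would fix a Morse decomposition of $T$ as a vertical stack of horizontal layers, each layer a juxtaposition of the elementary tangles $\PC$, $\NC$, $\uparrow$ and the cups and caps coming from the maxima and minima of the closed components of $T$.

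The heart of the argument is a factorisation lemma describing how a swirl interacts with composition of tangles. Geometrically $Sw^{\pm}_{T}$ drags an arc around the whole of $T$ through a full rotation, and the idea is to build this loop incrementally: I would show that $Sw_{T_{1}\otimes T_{2}}$ (horizontal juxtaposition) and $Sw_{T_{1}\circ T_{2}}$ (vertical stacking) are each obtained by first swirling around $T_{1}$ and then enlarging the encircling arc past $T_{2}$, where passing the arc across a single elementary piece $e$ is exactly one application of $Sw_{e}$ after a Morse isotopy placing the arc in standard position. Iterating over the decomposition, $Sw_{T}$ becomes a finite product of the elementary swirls $Sw_{e}$ with $e\in\{\PC,\NC,\uparrow,\cup,\cap\}$. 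It then remains to eliminate the cups and caps: since a cup (resp.\ cap) is a local minimum (resp.\ maximum), the encircling arc can be slid past it by a Morse isotopy, so that $Sw_{\cup}$ and $Sw_{\cap}$ reduce to at most two copies of $Sw_{\uparrow}$. This leaves precisely the three generating tangles $\PC$, $\NC$ and $\uparrow$.

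For the final assertion I would exhibit a direct diagrammatic identity that flips the sign of the swirl around a crossing. The moves $Sw^{+}_{T}$ and $Sw^{-}_{T}$ differ only in the sense in which the dragged arc winds around $T$; for $T=\PC$ one checks that pre- and post-composing $Sw^{+}_{\PC}$ with the strand swirls $Sw^{\pm}_{\uparrow}$ applied to the individual strands entering and leaving the crossing absorbs this difference in winding sense and produces $Sw^{-}_{\PC}$. The same picture with the crossing reversed gives $Sw^{-}_{\NC}$ from $Sw^{+}_{\NC}$, which is the stated consequence.

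I expect the main obstacle to be the factorisation lemma, and in particular the ``enlarge the arc past one elementary piece'' step. One must verify that sliding the encircling arc across a crossing is genuinely captured by a single $Sw_{\PC}$ or $Sw_{\NC}$ -- this is what forces the crossings to appear as generators, since $Sw_{\uparrow}$ alone cannot carry an arc through a crossing -- and that the auxiliary Morse isotopies used to normalise the arc before each step neither create nor destroy maxima or minima. Threading the enlarging arc past the full rotations of the closed components while keeping the Morse data intact is the most delicate point and will require drawing the intermediate diagrams explicitly.
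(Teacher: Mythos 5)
Your high-level plan (invoke \cref{prop:swirls}, then show that every $Sw^{\pm}_{T}$ follows from the six elementary swirls) coincides with the paper's, and your horizontal/vertical factorisation steps are essentially the paper's first two observations. The genuine gap is the step you yourself flag as delicate and then dispatch in one sentence: the treatment of cups and caps. The swirl moves of \cref{prop:swirls} are defined only for $n$-tangle diagrams, i.e.\ \emph{upwards} tangle diagrams (with possibly closed components); a Morse layer containing an isolated cup or cap is not such a diagram, and is not even a rotational diagram, since in a rotational diagram maxima and minima must occur in full-rotation pairs. Consequently ``$Sw_{\cup}$'' and ``$Sw_{\cap}$'' are not moves in the framework at all, and your layered factorisation passes through intermediate moves relating non-rotational diagrams --- precisely the kind of object the corollary is trying to eliminate. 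Moreover, the claim that the rotation can be ``slid past'' a critical point by a Morse isotopy is exactly what cannot happen: pushing a full rotation through a maximum or minimum either creates/destroys critical points or reverses the sense of the rotation. This sign reversal is visible in the paper's own argument, where carrying the swirl past the closure arcs costs an application of $Sw^{-}(\uparrow_n)$ rather than $Sw^{+}(\uparrow_n)$; it cannot be absorbed into Morse isotopy and needs a genuine proof.

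The paper avoids the problem by never cutting the diagram into Morse layers. Instead of cups and caps it uses a third operation that stays inside the class of $n$-tangle diagrams: the partial closure $\mathrm{cl}_m(T)$ of the $m$ rightmost components. One picture shows that $Sw^{\pm}_{T}$ implies $Sw^{\pm}(\mathrm{cl}_m(T))$ (the closure arcs are carried around and then cancelled with $Sw^{-}(\uparrow_n)$), and every $n$-tangle diagram is reached by combining horizontal juxtaposition, vertical stacking and left/right partial closures applied to $\PC$, $\NC$ and $\uparrow$. To salvage your layer-by-layer scheme you would have to extend the definition of the swirl to tangles with unequal numbers of top and bottom endpoints and re-prove the vertical factorisation in that generality; replacing your cup/cap step by the paper's closure trick is the cleaner repair. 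Your argument for the final assertion (flipping the sign of a crossing swirl by composing with strand swirls) does match the paper's concluding figure, which obtains $Sw^{-}(\PC)$ from $Sw^{+}(\PC)$ and two applications of $Sw^{-}(\uparrow_2)$.
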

\begin{proof}
The first observation is that given two (rotational) $n$-tangle diagrams $T,T'$, if $T \otimes T'$ denotes the tangle resulting from placing $T'$ to the right of $T$, then the swirl moves $Sw_T^\pm$ and $Sw_{T'}^\pm$ imply $Sw_{T \otimes T'}^\pm$. This implies the swirl moves $Sw_T^\pm$ for $T =\uparrow_n := \uparrow \overset{n}{\cdots} \uparrow$ the trivial $n$-tangle diagram.

Next,  given two $n$-tangle diagrams $T,T'$ with the same number of components, let $T' \circ T$ be the upwards tangle diagram resulting from placing $T'$ on top of $T$. Then the swirl moves $Sw_T^\pm$ and $Sw_{T'}^\pm$ imply $Sw_{T' \circ T}^\pm$. Indeed, $T' \circ T$ is equivalent to the diagram resulting from placing the left-hand side of $Sw_{T'}^\pm$ on top of the left-hand side of $Sw_{T}^\pm$, and then applying $Sw_{\uparrow_n}^\pm$ in the middle.

Lastly, if $T$ is an $(n+m)$-tangle, let $\mathrm{cl}_m(T)$ the the $n$-tangle resulting from closing up the $m$ rightmost components of $T$. Then $Sw_T^\pm$ implies $Sw^\pm (\mathrm{cl}_m(T))$. Indeed
\begin{equation*}
\centre{
\labellist \small \hair 2pt
\pinlabel{$\leftrightsquigarrow$}  at 600 500
\pinlabel{{\scriptsize $Sw_T^+$}}  at 605 570
\pinlabel{$T$}  at 107 495
\pinlabel{$T$}  at 1126 495
\pinlabel{$T$}  at 2264 495
\pinlabel{$\leftrightsquigarrow$}  at 1733 500
\pinlabel{{\scriptsize $Sw^-(\uparrow_n)$}}  at 1738 570
\endlabellist
\centering
\includegraphics[width=0.6\textwidth]{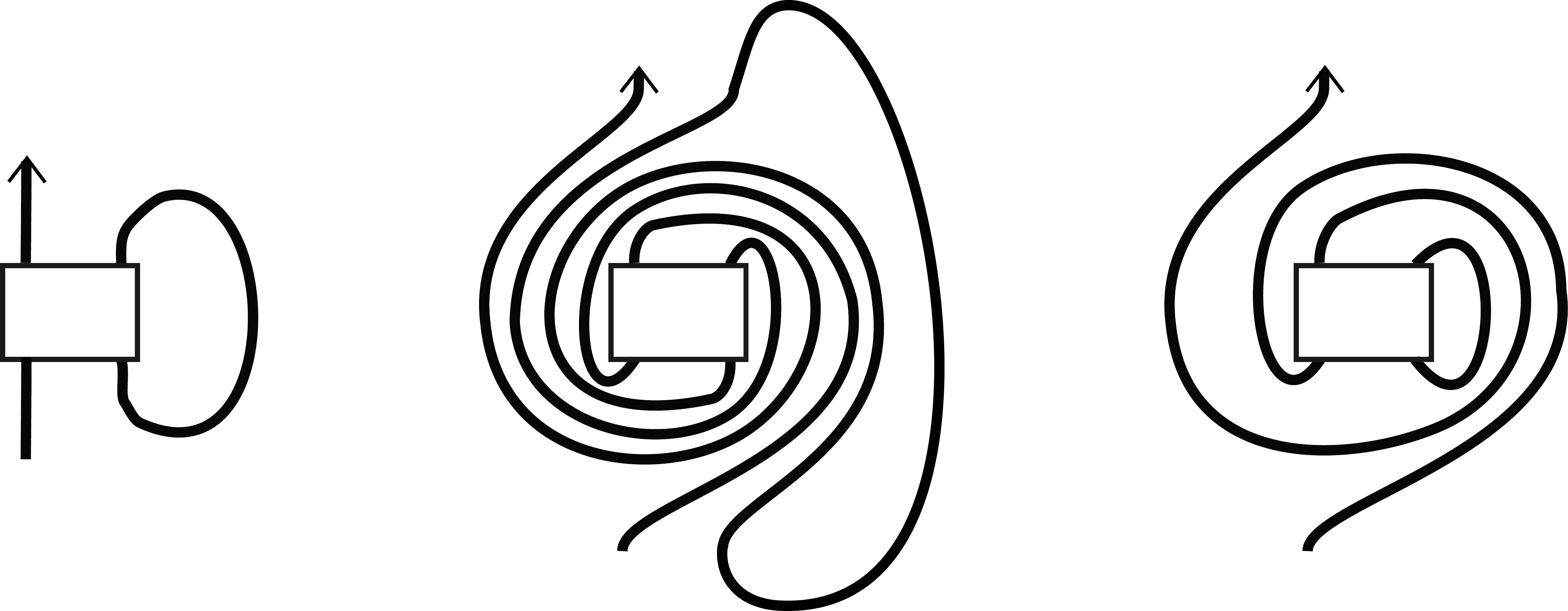}},
\end{equation*}
where the thick lines denote a bundle of strands. 
A similar argument follows when we close up the $n$ leftmost components of $T$. Combining horizontal and vertical stacking with left and right partial closure, we exhaust all possible $n$-tangle diagrams.

For the last part, it suffices to note that
\begin{equation*}
\centre{
\labellist \small \hair 2pt
\pinlabel{$\leftrightsquigarrow$}  at 400 330
\pinlabel{{\scriptsize $Sw^+(\PC)$}}  at 405 400
\pinlabel{$\leftrightsquigarrow$}  at 1110 330
\pinlabel{{\scriptsize $Sw^-(\uparrow_2)$}}  at 1115 400
\pinlabel{{\scriptsize $(\times 2)$}}  at 1115 250
\endlabellist
\centering
\includegraphics[width=0.45\textwidth]{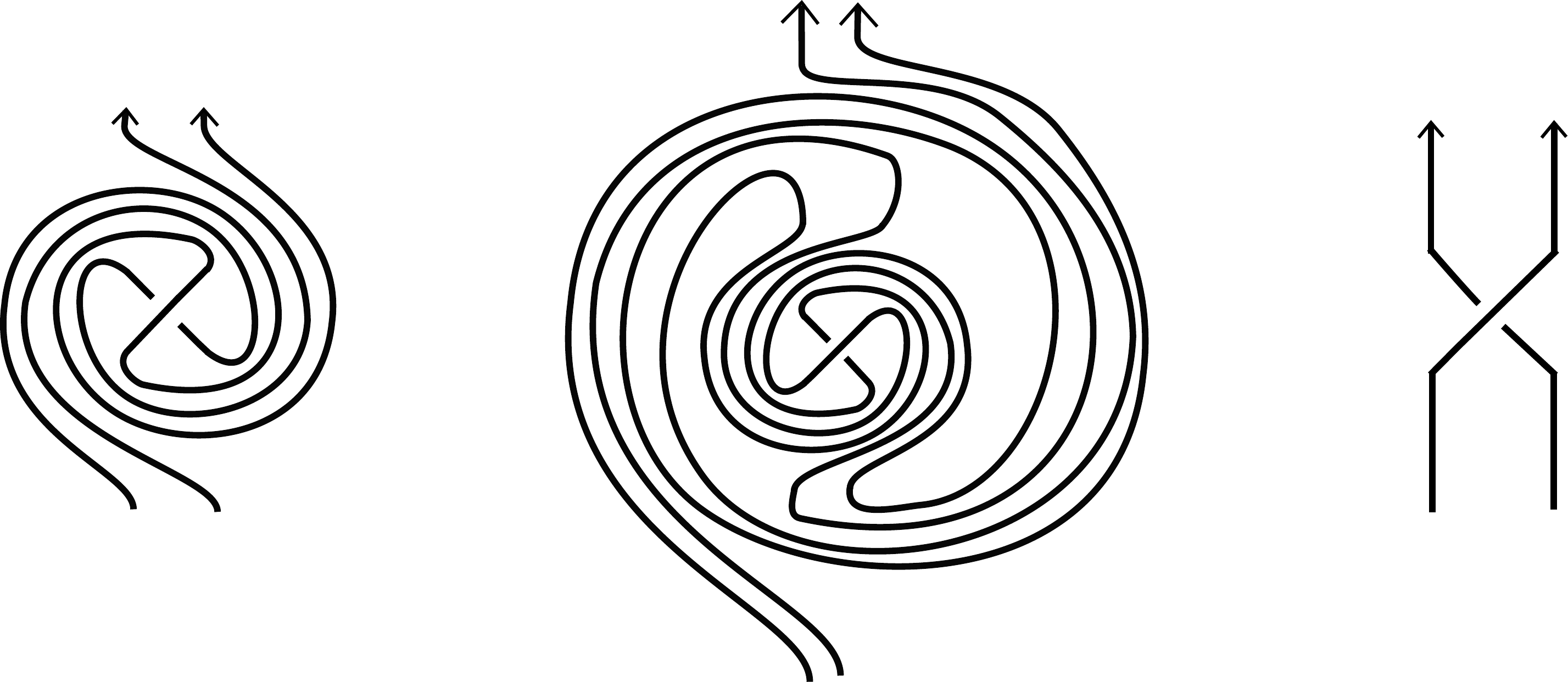}}
\end{equation*}
and similarly for the negative crossing.
\end{proof}

Let us call for convenience 
$$\Omega 0 a := Sw^+(\uparrow) \qquad , \qquad \Omega 0 b := Sw^-(\uparrow)  \qquad , \qquad \Omega 0 c := Sw^+(\PC)  \qquad , \qquad \Omega 0 d := Sw^+(\NC).$$ 

Then, the previous bijection can be rewritten as
\begin{equation}
\begin{tikzcd}[column sep=3em]
\frac{ \left\{ \parbox[c][2.5em]{7em}{\centering {\small  \textnormal{tangle diagrams in $D^2$ }}} \right\} }{  \parbox[c][1.5em]{8em}{\centering  {\small   \textnormal{planar isotopy} }}}  \ar[-,double line with arrow={-,-}]{r} &    \frac{ \left\{ \parbox[c][2.5em]{7em}{\centering     {\small  \textnormal{rotational tangle diagrams in $D^2$}}} \right\} }{ \parbox[c][4.5em]{6em}{\centering  {\small  \textnormal{Morse planar isotopy and $\Omega 0 a$ -- $\Omega 0 d$} }}}
\end{tikzcd}
\end{equation}
Note that this set of generating moves for the swirls is minimal, as one clearly cannot realise any of them from the other three.

\section{A minimal set of rotational Reidemeister moves}\label{sec:3}

\subsection{Proof of \cref{thm:1}}\label{sec:proof_1.1}
The goal of this subsection is to give a complete proof of \cref{thm:1}. Essentially, it consists of a careful adaptation of Polyak's arguments to the rotational setting. Also, by means of clarity, from now on we will not explicitly draw the disc where the tangle sits.

We start by generating $\Omega 3a2$ -- $\Omega 3a6$ from $\Omega 3a1$ and the swirls. To this end, and to obtain the moves $\Omega 2c2$ and $\Omega 2d2$, we will make use of an auxiliary

\begin{lemma}\label{lem:the_lemma}
The moves $\Omega 0a$ and $Sw^{-}(\PC)$ imply the following move
\begin{equation}\label{eq:lemma1}
\centre{
\labellist \small \hair 2pt
\pinlabel{$\leftrightsquigarrow$}  at 275 130
\endlabellist
\centering
\includegraphics[width=0.2\textwidth]{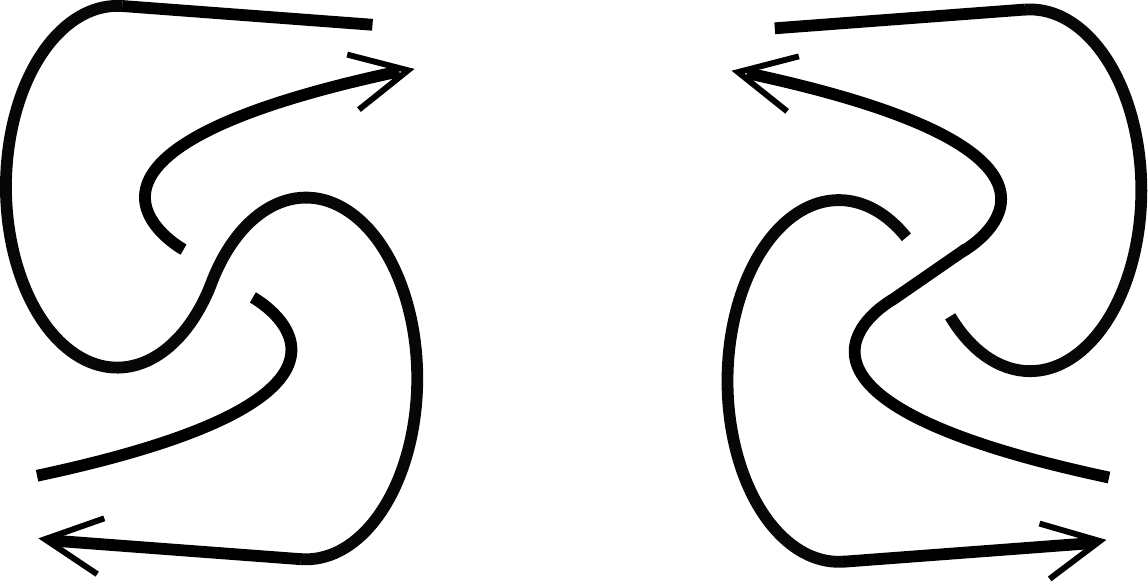}}.
\end{equation}
Similarly, the moves $\Omega 0b$ and $\Omega 0d$ imply the mirror image of \eqref{eq:lemma1}:
\begin{equation}\label{eq:lemma2}
\centre{
\labellist \small \hair 2pt
\pinlabel{$\leftrightsquigarrow$}  at 275 130
\endlabellist
\centering
\includegraphics[width=0.2\textwidth]{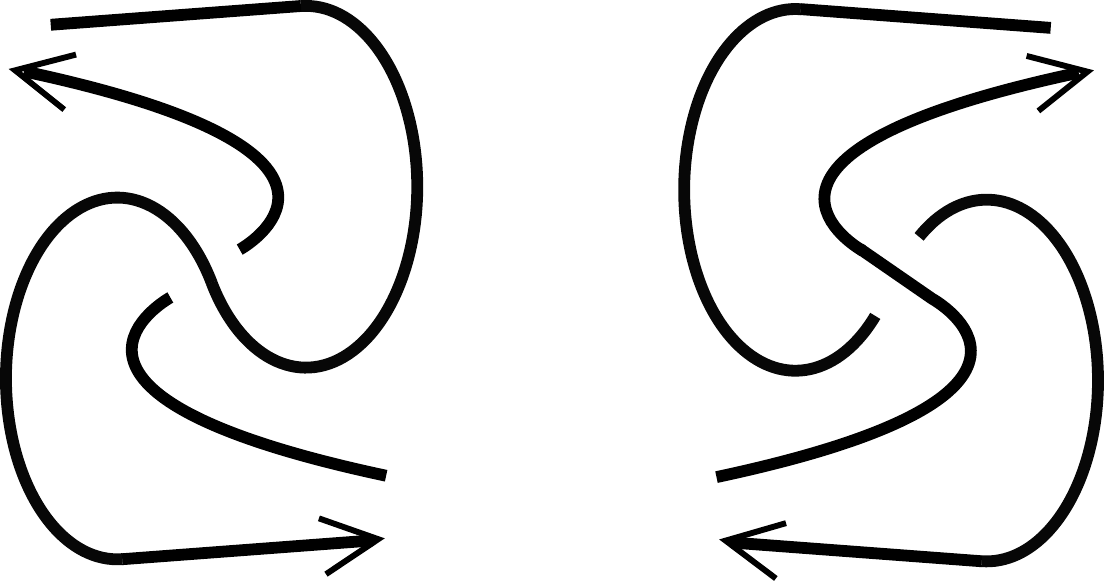}}.
\end{equation}
\end{lemma}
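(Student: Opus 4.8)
The plan is to prove \eqref{eq:lemma1} directly, by exhibiting an explicit finite sequence of rotational tangle diagrams that starts at the left-hand side and ends at the right-hand side, in which each consecutive pair is related either by a Morse planar isotopy or by a single application of $\Omega 0a$ or of $Sw^{-}(\PC)$. Once \eqref{eq:lemma1} is in hand, \eqref{eq:lemma2} will follow formally by mirror symmetry: reflecting every diagram across a vertical axis fixes the trivial strand $\uparrow$, interchanges $\PC$ and $\NC$, and reverses the sense of each full rotation. Thus reflection sends $\Omega 0a = Sw^{+}(\uparrow)$ to $\Omega 0b = Sw^{-}(\uparrow)$ and sends $Sw^{-}(\PC)$ to $Sw^{+}(\NC) = \Omega 0d$, so the reflected derivation proves precisely the mirror statement using the moves $\Omega 0b$ and $\Omega 0d$. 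It therefore suffices to treat \eqref{eq:lemma1}.

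For the derivation of \eqref{eq:lemma1} itself, I would first apply $\Omega 0a = Sw^{+}(\uparrow)$ to introduce a full rotation on the appropriate strand adjacent to the crossing in the left-hand diagram; this creates a matching cup--cap pair while keeping all endpoints and crossings pointing upwards, so the result is still rotational. Next I would use Morse planar isotopy to route the crossing through the newly created loop, reconfiguring the diagram without creating or destroying any maxima or minima, until a single full rotation encircles the positive crossing. The final step is to apply $Sw^{-}(\PC)$ to absorb this full rotation, landing on the right-hand side of \eqref{eq:lemma1}.

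The main subtlety, and the reason a swirl rather than a plain isotopy is forced here, is the rotational constraint: since every strand must point upwards throughout, the crossing cannot be turned over by an ambient isotopy, and the only admissible way to reposition it is to thread it through a full rotation. The bookkeeping I expect to be delicate is matching the sense of the rotation created by $\Omega 0a$ with the sense consumed by $Sw^{-}(\PC)$: one must check that, after the intermediate Morse isotopy, the full rotation encircling the crossing is the clockwise (negative) one, so that $Sw^{-}(\PC)$ --- and not $Sw^{+}(\PC)$ --- is the move that removes it. Verifying this orientation compatibility, together with confirming that the intermediate isotopy genuinely introduces no spurious maxima or minima, is where the care lies; the remainder is a routine diagram chase.
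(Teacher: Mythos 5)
Your reduction of \eqref{eq:lemma2} to \eqref{eq:lemma1} by reflection in a vertical axis is correct, and it is exactly what the paper does (the paper simply draws the mirrored sequence; reflection exchanges $\PC$ with $\NC$ and reverses the sense of every full rotation, so $\Omega 0a \mapsto \Omega 0b$ and $Sw^{-}(\PC) \mapsto Sw^{+}(\NC) = \Omega 0d$). The problem is in your derivation of \eqref{eq:lemma1} itself: the proposed sequence --- one application of $\Omega 0a$, a Morse isotopy, one application of $Sw^{-}(\PC)$ --- cannot exist, for a counting reason. Morse planar isotopy preserves the number of maxima and minima by definition. The move $\Omega 0a$ changes this count by exactly one cup--cap pair, whereas $Sw^{-}(\PC)$ changes it by exactly two pairs: a full rotation of the two-strand cable encircling the crossing contains two maxima and two minima, one pair per strand. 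Since the two sides of \eqref{eq:lemma1} contain equally many maxima and minima (as the paper's own derivation confirms: its net change is $+2-1-1=0$), any derivation must have net change zero, in particular an even one; your sequence has net change $+1-2=-1$, which is odd. Concretely, the step that fails is the middle one: after creating a single one-strand loop with $\Omega 0a$, no Morse isotopy can reach a configuration in which a full rotation encircles the crossing, because that configuration has one more max/min pair than your diagram possesses at that stage.

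The fix is to use $\Omega 0a$ twice: create two one-strand loops, Morse-isotope them into the two-strand swirl around the crossing, and absorb that swirl with $Sw^{-}(\PC)$. This is precisely the paper's derivation read backwards: the paper starts from the left-hand side of \eqref{eq:lemma1}, applies $Sw^{-}(\PC)$ to insert the two-strand swirl around the crossing, Morse-isotopes, and then removes the two resulting one-strand loops by two applications of $\Omega 0a$. So your overall strategy is the right one, but the bookkeeping you yourself flagged as delicate (matching the rotation created with the rotation consumed) is exactly where the proposal breaks: measured in critical points, the rotation consumed by $Sw^{-}(\PC)$ is twice as large as the one created by a single $\Omega 0a$.
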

\begin{proof}
    We have
    \begin{equation*}
\centre{
\labellist \small \hair 2pt
\pinlabel{$\leftrightsquigarrow$}  at 300 300
\pinlabel{{\scriptsize $Sw^{-}(\PC)$}}  at 305 370
\pinlabel{$\leftrightsquigarrow$}  at 960 300
\pinlabel{{\scriptsize \begin{tabular}{c}Morse \\ isotopy\end{tabular} }}  at 970 370
\pinlabel{$\leftrightsquigarrow$}  at 1500 300
\pinlabel{{\scriptsize $\Omega 0a$}}  at 1505 370
\pinlabel{$\leftrightsquigarrow$}  at 2000 300
\pinlabel{{\scriptsize $\Omega 0a$}}  at 2005 370
\endlabellist
\centering
\includegraphics[width=0.9\textwidth]{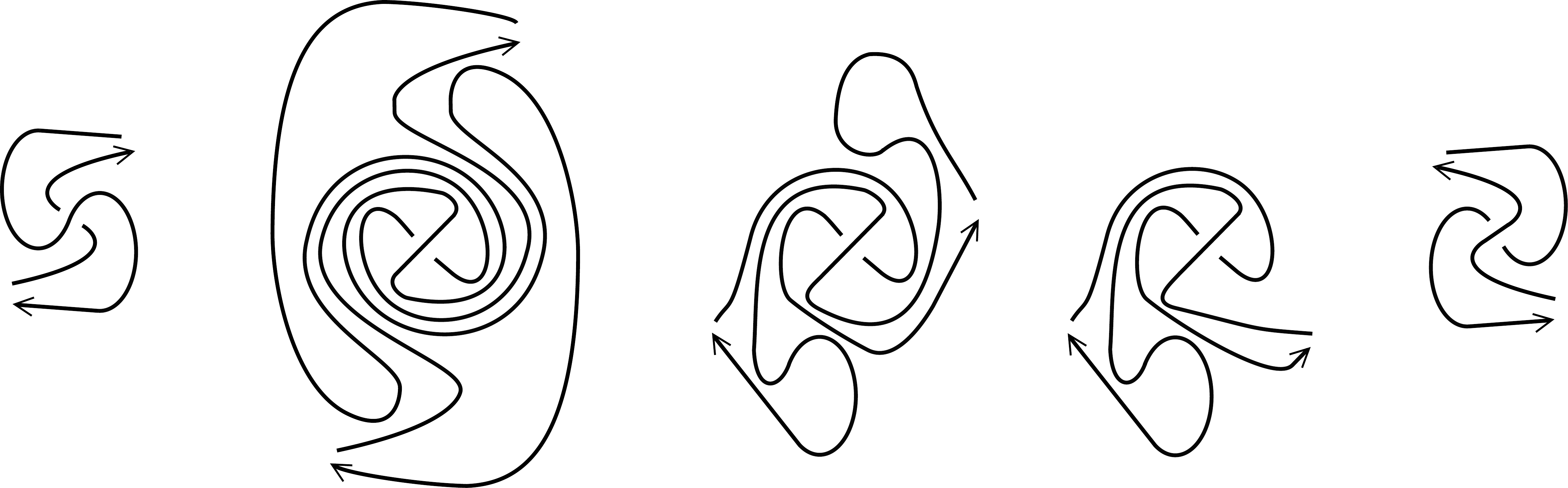}}
\end{equation*}
and its mirror image
    \begin{equation*}
\centre{
\labellist \small \hair 2pt
\pinlabel{$\leftrightsquigarrow$}  at 300 300
\pinlabel{{\scriptsize $\Omega 0d$}}  at 305 370
\pinlabel{$\leftrightsquigarrow$}  at 960 300
\pinlabel{{\scriptsize \begin{tabular}{c}Morse \\ isotopy\end{tabular} }}  at 970 370
\pinlabel{$\leftrightsquigarrow$}  at 1500 300
\pinlabel{{\scriptsize $\Omega 0b$}}  at 1505 370
\pinlabel{$\leftrightsquigarrow$}  at 2000 300
\pinlabel{{\scriptsize $\Omega 0b$}}  at 2005 370
\endlabellist
\centering
\includegraphics[width=0.9\textwidth]{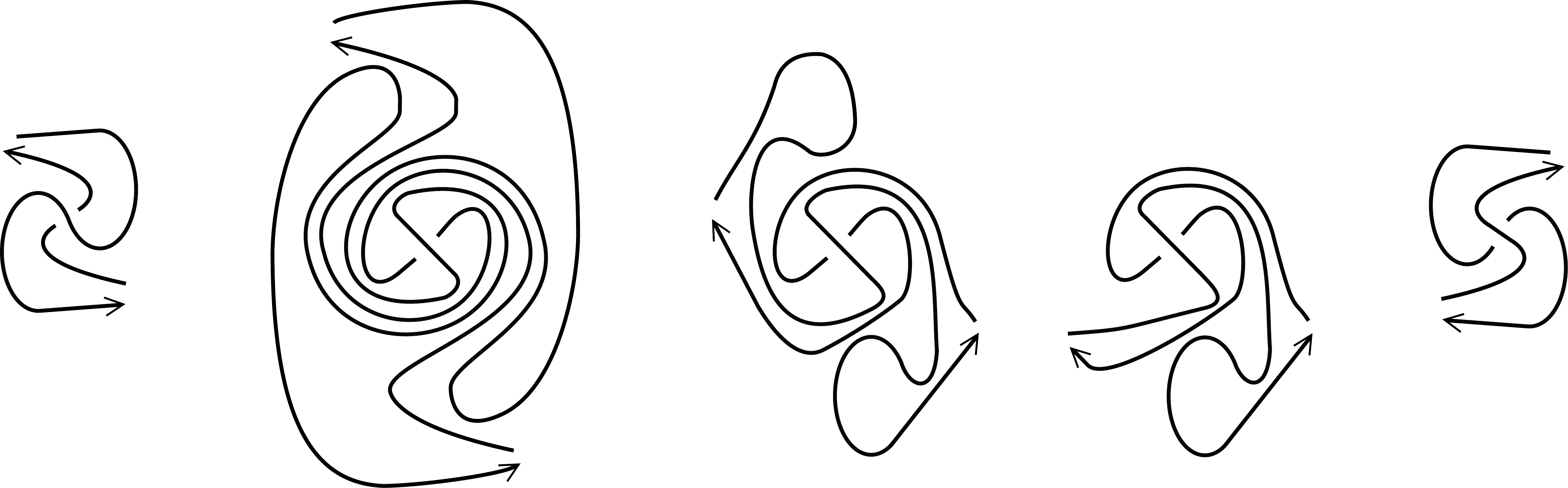}}.
\end{equation*}
\end{proof}

\begin{lemma}\label{lem:O3a}
Each of the moves  $\Omega 3a2$ -- $\Omega 3a6$ can be obtained from $\Omega 3a1$, $\Omega 0a$ and the move in \eqref{eq:lemma1}.
\end{lemma}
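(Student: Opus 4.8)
The plan is to use the fact, recalled in the introduction, that the six moves $\Omega 3a1$--$\Omega 3a6$ are the six rotational incarnations of Polyak's single cyclic $\Omega 3$ move, obtained by viewing its planar picture from the angles $2\pi k/6$ for $0\le k\le 5$; in other words $\Omega 3a(k+1)$ is exactly the local picture of $\Omega 3a1$ turned counterclockwise by $2\pi k/6$. It therefore suffices to manufacture, out of $\Omega 0a$ and the move \eqref{eq:lemma1}, a diagrammatic procedure $R$ that turns an arbitrary rotational tangle by $2\pi/6$ while keeping it rotational at every stage, and then to feed $\Omega 3a1$ into $R$ repeatedly.

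First I would construct this rotation procedure, splitting the local picture into bare arcs, extrema and crossings. The bare arcs and the maximum/minimum structure are turned by inserting and absorbing full rotations, i.e. by applications of $\Omega 0a=Sw^+(\uparrow)$: this is the only device that creates the cap--cup pairs that a partial turn forces on a strand, without violating the rotational condition. Each crossing is reoriented by the move \eqref{eq:lemma1}, which is precisely a ``rotate a crossing past an adjacent extremum'' move — this is exactly how \cref{lem:the_lemma} produced it, from $Sw^{-}(\PC)$ and $\Omega 0a$. Applying \eqref{eq:lemma1} to each crossing and $\Omega 0a$ to each reconnecting arc, and cleaning up the straight pieces with Morse isotopy, realizes the full $2\pi/6$ turn using only the two permitted moves.

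With $R$ in hand the argument is a short induction on $k$. Since $R$ is a finite sequence of the equivalence-moves $\Omega 0a$ and \eqref{eq:lemma1}, it may be run in either direction, and by construction it sends the left- (resp.\ right-) hand side of $\Omega 3ak$ to that of $\Omega 3a(k+1)$ up to Morse isotopy. Hence $\Omega 3a(k+1)$ is read off from the chain
$$\mathrm{LHS}(\Omega 3a(k{+}1)) \xleftarrow{\ R\ } \mathrm{LHS}(\Omega 3ak) \xrightarrow{\ \Omega 3ak\ } \mathrm{RHS}(\Omega 3ak) \xrightarrow{\ R\ } \mathrm{RHS}(\Omega 3a(k{+}1)),$$
and, starting from $\Omega 3a1$, iterating gives $\Omega 3a2,\dots,\Omega 3a6$ in turn (a sixth application of $R$ returns to $\Omega 3a1$, consistent with the $2\pi$-periodicity). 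Because the same $R$ is used at each step, essentially all the content sits in verifying a single $2\pi/6$ turn applied to the three strands of the configuration.

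The main obstacle will be bookkeeping rather than conceptual. One must check move by move that each intermediate diagram stays genuinely rotational (endpoints and crossings pointing up, extrema only in full-rotation pairs) and that the over/under data together with the three distinct strand orientations are tracked correctly as the picture is turned. The most delicate point is that only \eqref{eq:lemma1} is available here, not its mirror \eqref{eq:lemma2} (which \cref{lem:the_lemma} derives from the other swirls): I would have to verify that, as each crossing of the $\Omega 3a$ configuration is carried past the caps and cups created by $\Omega 0a$, the crossing encountered always presents in the sign for which \eqref{eq:lemma1} applies, so that \eqref{eq:lemma2} is never needed. Confirming this compatibility for each of $\Omega 3a2$--$\Omega 3a6$ is where the bulk of the routine work lies, even though every individual step is an elementary picture manipulation.
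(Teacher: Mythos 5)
Your overall strategy --- obtain each $\Omega 3a(k+1)$ from $\Omega 3ak$ by conjugating with moves that implement a $2\pi/6$ turn of the local picture --- is exactly the strategy of the paper, whose proof is a chain of five such conjugations, each one sandwiching $\Omega 3ak$ between two applications of a single auxiliary move. However, your proposal has a genuine gap, and you point at it yourself. The entire content of the lemma is the verification that each $2\pi/6$ increment can be implemented using \emph{only} $\Omega 0a$ and \eqref{eq:lemma1}, never $\Omega 0b$ or the mirror move \eqref{eq:lemma2}; this is precisely the step you defer as ``routine bookkeeping''. It is not routine in the sense of following from anything you have established: it holds because of how the particular crossings and endpoints of the $\Omega 3a$ configurations happen to present themselves as the picture turns, and the only way to see this is to draw the five chains of diagrams --- which is what the paper's proof consists of. Nothing in your text actually constructs the procedure $R$, and the handedness issue is not a formality: for the mirror family the analogous statement genuinely requires the other pair of moves (in \cref{lem:O3h} the paper must use $\Omega 0b$ and \eqref{eq:lemma2} instead), so an argument that never confronts which presentation each crossing has cannot be complete.

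Two of your structural claims are also off. First, there is no single uniform procedure $R$ applied at every step: in the paper consecutive increments are implemented by \emph{different} moves, alternating between conjugation by $\Omega 0a$ (for $\Omega 3a1 \to \Omega 3a2$, $\Omega 3a3 \to \Omega 3a4$, $\Omega 3a5 \to \Omega 3a6$) and conjugation by \eqref{eq:lemma1} (for $\Omega 3a2 \to \Omega 3a3$, $\Omega 3a4 \to \Omega 3a5$), because different $2\pi/6$ turns carry different features of the hexagonal configuration (sometimes a pair of endpoints, sometimes a crossing) past the vertical direction. Hence your assertion that ``the same $R$ is used at each step'' and that ``essentially all the content sits in verifying a single $2\pi/6$ turn'' is incorrect: there are five distinct verifications, and they are the proof. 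Second, your intermediate claim that an \emph{arbitrary} rotational tangle can be turned by $2\pi/6$ using only $\Omega 0a$ and \eqref{eq:lemma1} is both unproved and stronger than the lemma itself; for a tangle containing a crossing presenting in the mirror fashion one would need \eqref{eq:lemma2}, which is not derivable from the two moves you are allowed. What is true, and what must be checked picture by picture, is the statement for the six specific $\Omega 3a$ configurations.
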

\begin{proof}
We realise $\Omega 3a2$ from $\Omega 3a1$ and $\Omega 0a$ via the moves
     \begin{equation*}
\centre{
\labellist \small \hair 2pt
\pinlabel{$\leftrightsquigarrow$}  at 460 330
\pinlabel{{\scriptsize $\Omega 0a$}}  at 465 400
\pinlabel{$\leftrightsquigarrow$}  at 980 330
\pinlabel{{\scriptsize $\Omega 3a1$}}  at 985 400
\pinlabel{$\leftrightsquigarrow$}  at 1500 330
\pinlabel{{\scriptsize $\Omega 0a$}}  at 1505 400
\endlabellist
\centering
\includegraphics[width=0.7\textwidth]{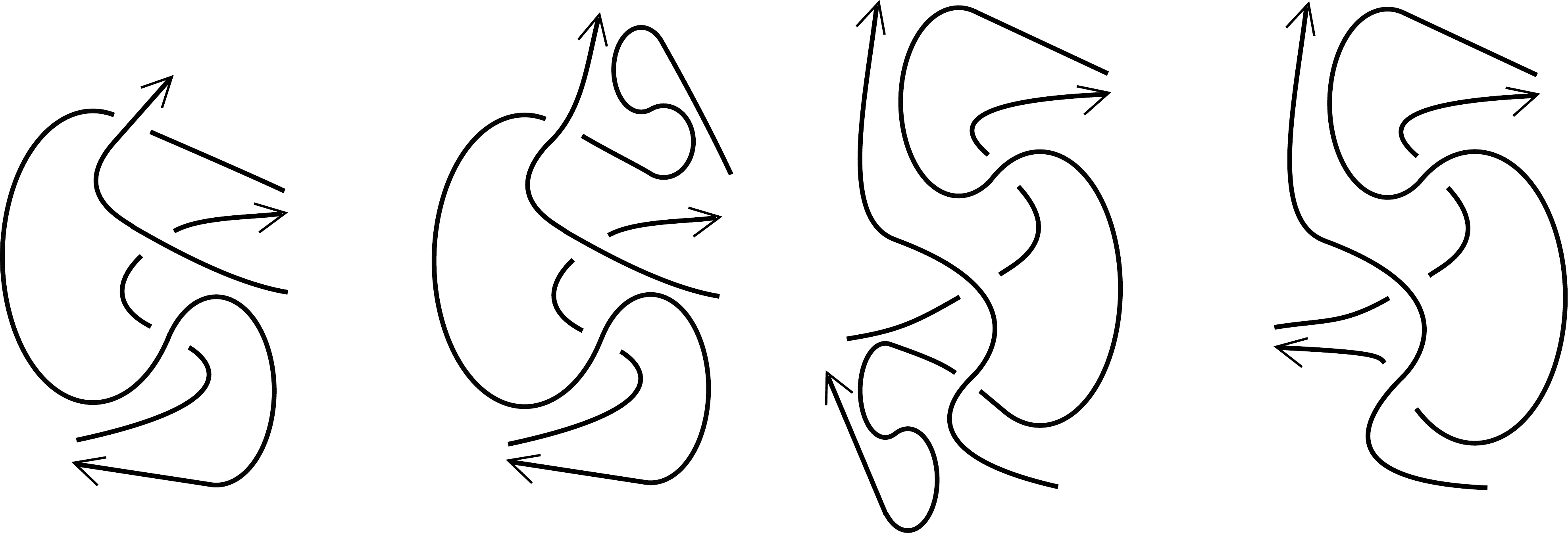}}.
\end{equation*}

We realise $\Omega 3a3$ from $\Omega 3a2$ and \eqref{eq:lemma1} via the moves
     \begin{equation*}
\centre{
\labellist \small \hair 2pt
\pinlabel{$\leftrightsquigarrow$}  at 460 330
\pinlabel{{\scriptsize \eqref{eq:lemma1}}}  at 465 400
\pinlabel{$\leftrightsquigarrow$}  at 980 330
\pinlabel{{\scriptsize $\Omega 3a2$}}  at 985 400
\pinlabel{$\leftrightsquigarrow$}  at 1500 330
\pinlabel{{\scriptsize \eqref{eq:lemma1}}}  at 1505 400
\endlabellist
\centering
\includegraphics[width=0.7\textwidth]{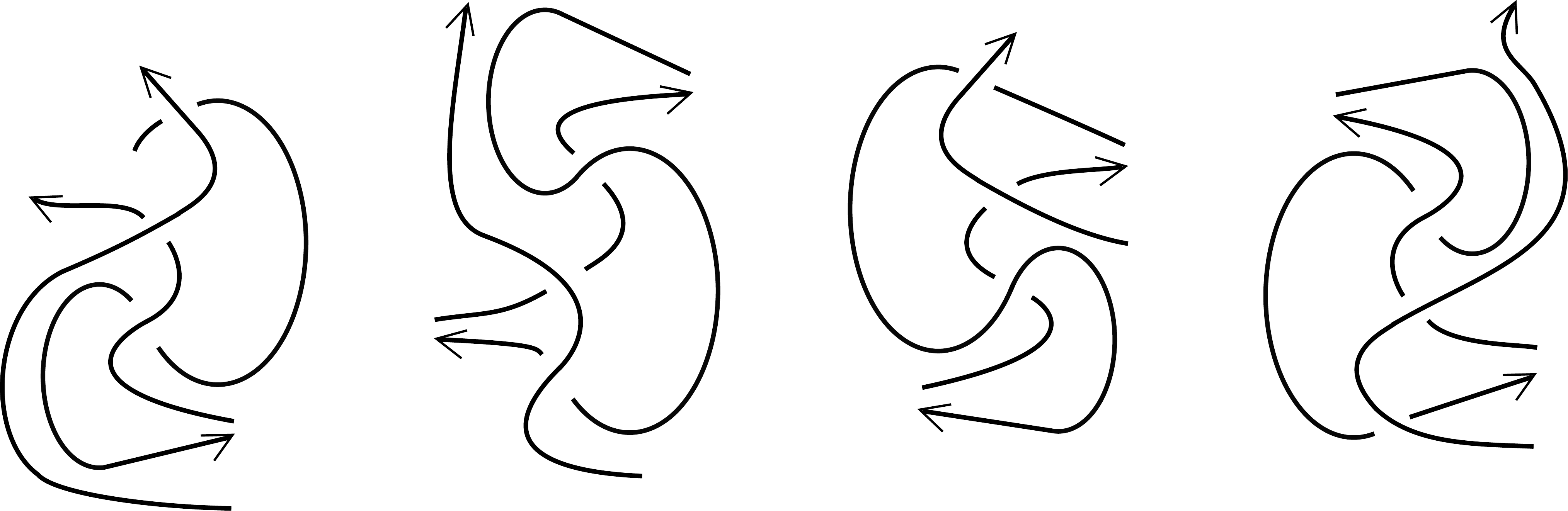}}.
\end{equation*}

We realise $\Omega 3a4$ from $\Omega 3a3$ and $\Omega 0a$ via the moves
     \begin{equation*}
\centre{
\labellist \small \hair 2pt
\pinlabel{$\leftrightsquigarrow$}  at 430 330
\pinlabel{{\scriptsize $\Omega 0a$}}  at 435 400
\pinlabel{$\leftrightsquigarrow$}  at 950 330
\pinlabel{{\scriptsize $\Omega 3a3$}}  at 955 400
\pinlabel{$\leftrightsquigarrow$}  at 1500 330
\pinlabel{{\scriptsize $\Omega 0a$}}  at 1505 400
\endlabellist
\centering
\includegraphics[width=0.7\textwidth]{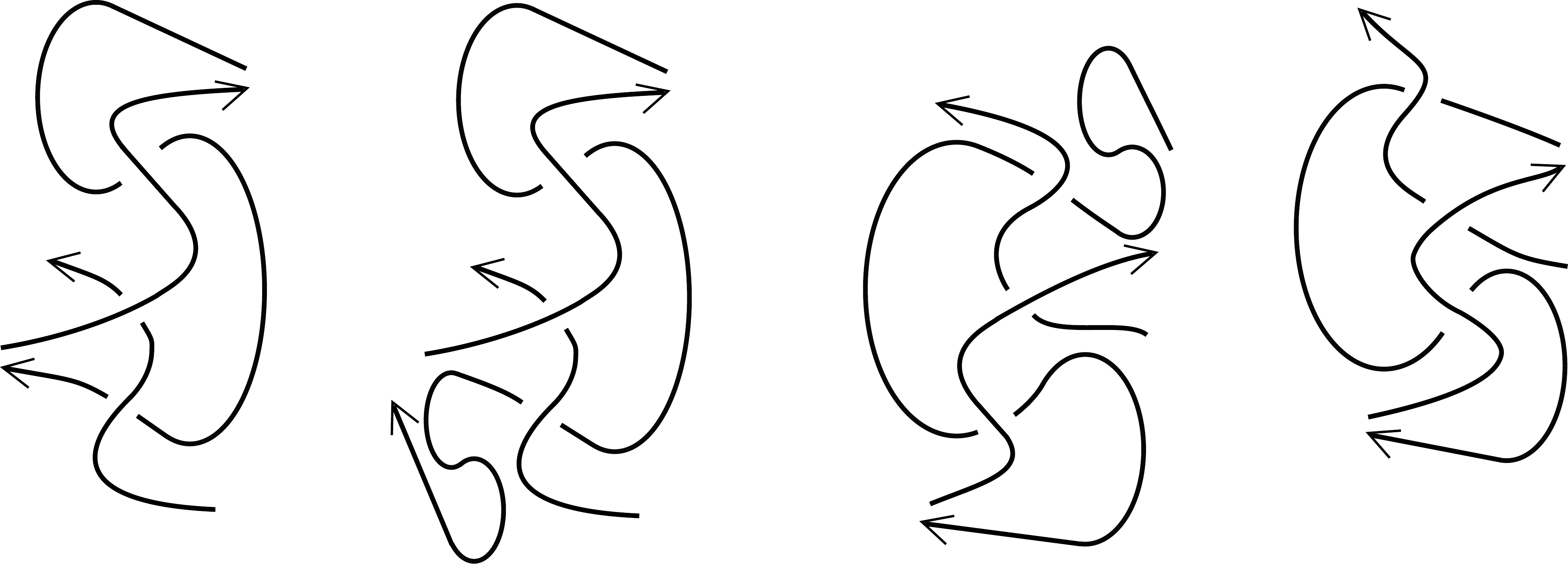}}.
\end{equation*}

We realise $\Omega 3a5$ from $\Omega 3a4$ and \eqref{eq:lemma1} via the moves
     \begin{equation*}
\centre{
\labellist \small \hair 2pt
\pinlabel{$\leftrightsquigarrow$}  at 460 330
\pinlabel{{\scriptsize \eqref{eq:lemma1}}}  at 465 400
\pinlabel{$\leftrightsquigarrow$}  at 980 330
\pinlabel{{\scriptsize $\Omega 3a4$}}  at 985 400
\pinlabel{$\leftrightsquigarrow$}  at 1500 330
\pinlabel{{\scriptsize \eqref{eq:lemma1}}}  at 1505 400
\endlabellist
\centering
\includegraphics[width=0.7\textwidth]{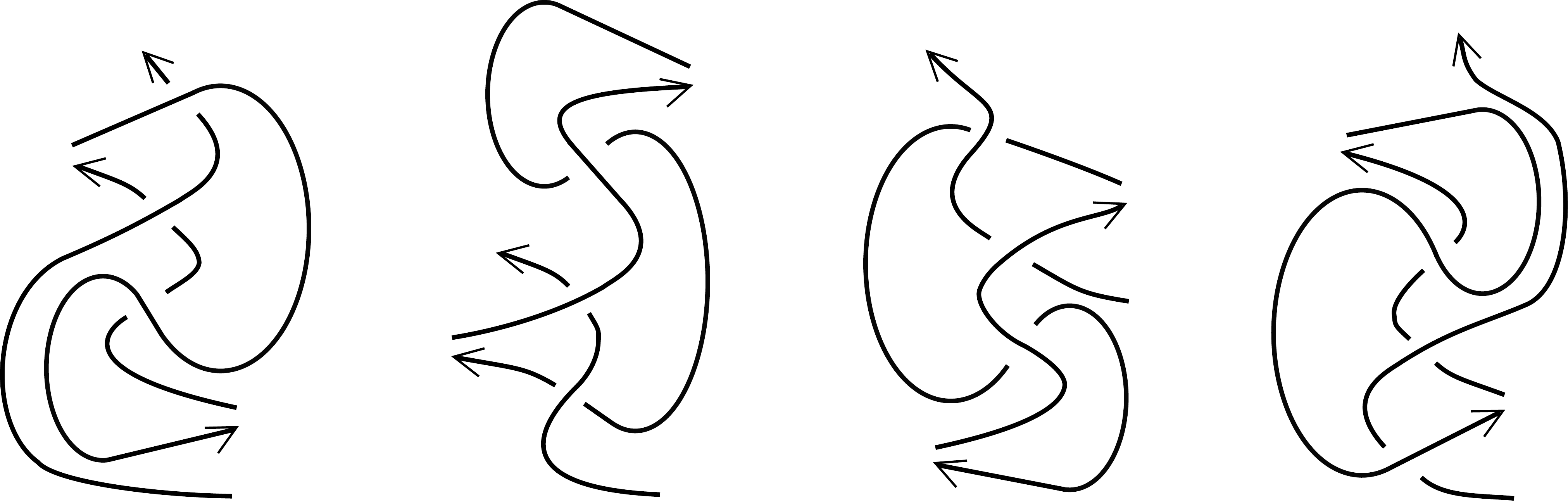}}.
\end{equation*}

We realise $\Omega 3a6$ from $\Omega 3a5$ and $\Omega 0a$ via the moves
     \begin{equation*}
\centre{
\labellist \small \hair 2pt
\pinlabel{$\leftrightsquigarrow$}  at 430 330
\pinlabel{{\scriptsize $\Omega 0a$}}  at 435 400
\pinlabel{$\leftrightsquigarrow$}  at 960 330
\pinlabel{{\scriptsize $\Omega 3a5$}}  at 965 400
\pinlabel{$\leftrightsquigarrow$}  at 1470 330
\pinlabel{{\scriptsize $\Omega 0a$}}  at 1475 400
\endlabellist
\centering
\includegraphics[width=0.7\textwidth]{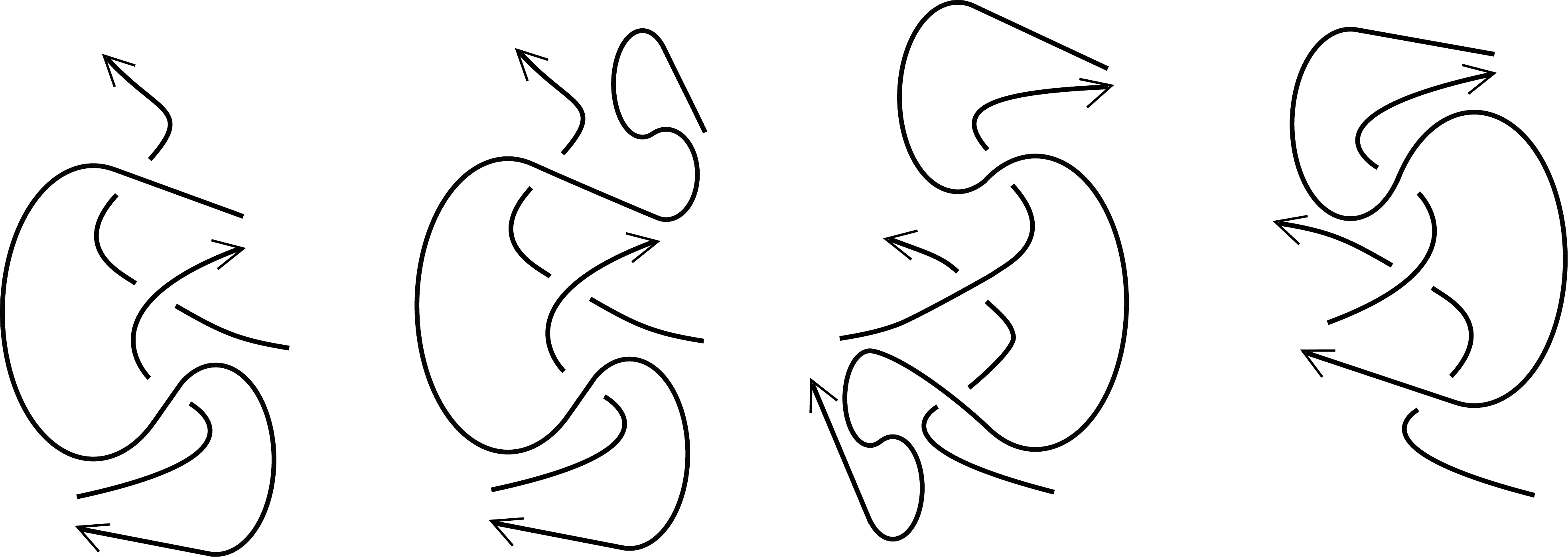}}.
\end{equation*}

\end{proof}

\begin{remark}\label{rem:O3a1_from_O3a6}
    Note that we can realize $\Omega 3a1$ from $\Omega 3a6$ and \eqref{eq:lemma1} via the moves
        \begin{equation*}
\centre{
\labellist \small \hair 2pt
\pinlabel{$\leftrightsquigarrow$}  at 460 330
\pinlabel{{\scriptsize \eqref{eq:lemma1}}}  at 465 400
\pinlabel{$\leftrightsquigarrow$}  at 950 330
\pinlabel{{\scriptsize $\Omega 3a6$}}  at 955 400
\pinlabel{$\leftrightsquigarrow$}  at 1500 330
\pinlabel{{\scriptsize \eqref{eq:lemma1}}}  at 1505 400
\endlabellist
\centering
\includegraphics[width=0.7\textwidth]{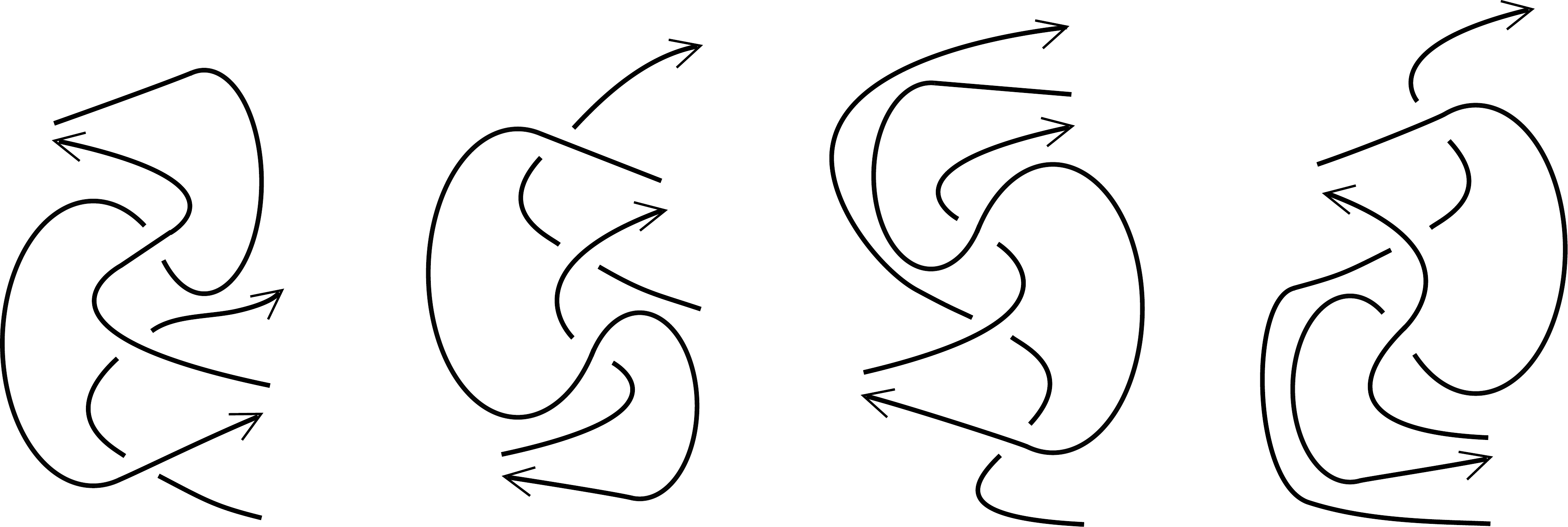}},
\end{equation*}
so given $\Omega 0a$ and \eqref{eq:lemma1}, we can realize all of the $\Omega 3ak$ ($1 \leq k \leq 6$) from just one of them.
\end{remark}

\begin{lemma}
The move $\Omega 2c1$ can be realised as a combination of the moves $\Omega 1a$, $\Omega 2a$ and $\Omega 3a3$. Similarly, the move $\Omega 2d1$ can be realised as a combination of the moves $\Omega 1b$, $\Omega 2a$ and $\Omega 3a6$.
\end{lemma}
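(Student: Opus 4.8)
The plan is to adapt Polyak's \emph{kink-and-slide} derivation of the oriented $\Omega 2$ moves to the rotational setting. Both statements are established by the same procedure, so I would treat the derivation of $\Omega 2c1$ from $\Omega 1a$, $\Omega 2a$ and $\Omega 3a3$ in full and then obtain $\Omega 2d1$ by applying the evident symmetry. Throughout, Morse planar isotopy is free in our equivalence relation, so I may silently rearrange strands provided no maximum or minimum is created or destroyed.

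First I would start from the two-crossing side of $\Omega 2c1$: a bigon of two crossings sitting next to the turnback (cap or cup) that the rotational convention forces on the antiparallel strand. On the strand running past this turnback I introduce an auxiliary kink by a single application of $\Omega 1a$, raising the crossing count from two to three. After a Morse planar isotopy, these three crossings are positioned exactly as the left-hand side of $\Omega 3a3$, so I apply $\Omega 3a3$ to slide the third strand through the triple-point region. The slide separates off, from the three crossings, a two-crossing bigon of the parallel type occurring in $\Omega 2a$, which I remove with one application of $\Omega 2a$. The crossing count drops from three to one, and the surviving crossing is precisely the kink created at the outset; a final application of $\Omega 1a$, read backwards, deletes it and produces the crossing-free side of $\Omega 2c1$. (The arithmetic of crossings $2 \to 3 \to 3 \to 1 \to 0$ confirms that nothing is left over.)

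For $\Omega 2d1$ the argument is the image of the above under the operation of rotating the local picture by $\pi$ and re-expressing the result in rotational form, which is exactly the operation generating the six rotated type-3 moves in the introduction. Under it $\Omega 3a3$ is carried to $\Omega 3a6$ and the kink move $\Omega 1a$ is carried to $\Omega 1b$, while the parallel move $\Omega 2a$, whose two crossings carry opposite signs, is preserved; hence the same four steps go through verbatim with $\Omega 1b$, $\Omega 2a$ and $\Omega 3a6$ in place of $\Omega 1a$, $\Omega 2a$ and $\Omega 3a3$.

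The one genuinely delicate point — as opposed to routine diagrammatic manipulation — is the orientation and over/under bookkeeping. One must check that the kink produced by $\Omega 1a$ has exactly the handedness and loop orientation for which the resulting three-crossing region coincides with the \emph{left}-hand side of $\Omega 3a3$ specifically, rather than with one of the other seventeen type-3 moves, and that the bigon surviving the slide is the one removed by $\Omega 2a$ and not by $\Omega 2b$. This is a finite local verification, best presented as a chain of labelled pictures in the manner of \cref{lem:O3a}, and it is precisely what dictates the pairings $(\Omega 1a, \Omega 3a3)$ and $(\Omega 1b, \Omega 3a6)$ recorded in the statement.
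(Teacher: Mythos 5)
Your treatment of $\Omega 2c1$ is correct and is essentially the paper's own proof: the paper's chain ($\Omega 1a$, Morse isotopy, $\Omega 2a$, $\Omega 3a3$, $\Omega 1a$, travelling from the crossing-free side to the two-crossing side) is exactly your sequence of moves read in the opposite direction, with the same crossing bookkeeping.

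For $\Omega 2d1$, however, your argument has a genuine gap. You obtain it as ``the image of the above under rotation by $\pi$, re-expressed in rotational form'', asserting that this operation sends $\Omega 1a \mapsto \Omega 1b$, $\Omega 3a3 \mapsto \Omega 3a6$ and fixes $\Omega 2a$. But rotation by $\pi$ is not a symmetry of the rotational formalism: it turns upward-pointing crossings into downward-pointing ones, so none of the intermediate diagrams of the rotated chain is a rotational diagram. The ``re-expression'' therefore has to insert full rotations around the crossings, and consecutive re-expressed pictures then differ by the named moves only up to conjugation by those rotations; stripping the spurious rotations costs the swirl moves $\Omega 0a$--$\Omega 0d$ of \cref{cor:planar->Morse}, which are precisely what the statement of this lemma does not grant you (contrast \cref{lem:O1c_O1d}, whose statement explicitly lists the $\Omega 0$ moves it needs). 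Your justification that $\Omega 2a$ survives the operation ``because its two crossings carry opposite signs'' does not address this: the obstruction is strand direction, not crossing sign.

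Nor can the operation be repaired by an honest symmetry. The only planar symmetry compatible with the distinguished vertical direction is reflection in a vertical line, and, as noted in the paper's introduction, that reflection carries type-$\Omega 3a$ moves to type-$\Omega 3h$ moves, not $\Omega 3a3$ to $\Omega 3a6$. The orientation-honest substitute for your rotation --- rotation by $\pi$ composed with reversal of all strand orientations, which does preserve rotational diagrams --- interchanges the roles of the two strands in the parallel bigon, i.e.\ it relates $\Omega 2a$ and $\Omega 2b$ rather than fixing $\Omega 2a$. The paper closes this case not by symmetry but by exhibiting the second chain explicitly ($\Omega 1b$, Morse isotopy, $\Omega 2a$, $\Omega 3a6$, $\Omega 1b$), and that is what your proof should do as well: draw the four pictures for $\Omega 2d1$ directly, analogously to the first half, and perform there the same finite local check of orientations and over/under data that you already describe for $\Omega 2c1$.
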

\begin{proof}
On the one hand
\begin{equation*}
\centre{
\labellist \small \hair 2pt
\pinlabel{$\leftrightsquigarrow$}  at 380 300
\pinlabel{{\scriptsize $\Omega 1a$}}  at 385 370
\pinlabel{$=$}  at 920 300
\pinlabel{$\leftrightsquigarrow$}  at 1450 300
\pinlabel{{\scriptsize $\Omega 2a$}}  at 1455 370
\pinlabel{$\leftrightsquigarrow$}  at 1960 300
\pinlabel{{\scriptsize $\Omega 3a3$}}  at 1965 370
\pinlabel{$\leftrightsquigarrow$}  at 2450 300
\pinlabel{{\scriptsize $\Omega 1a$}}  at 2455 370
\endlabellist
\centering
\includegraphics[width=0.9\textwidth]{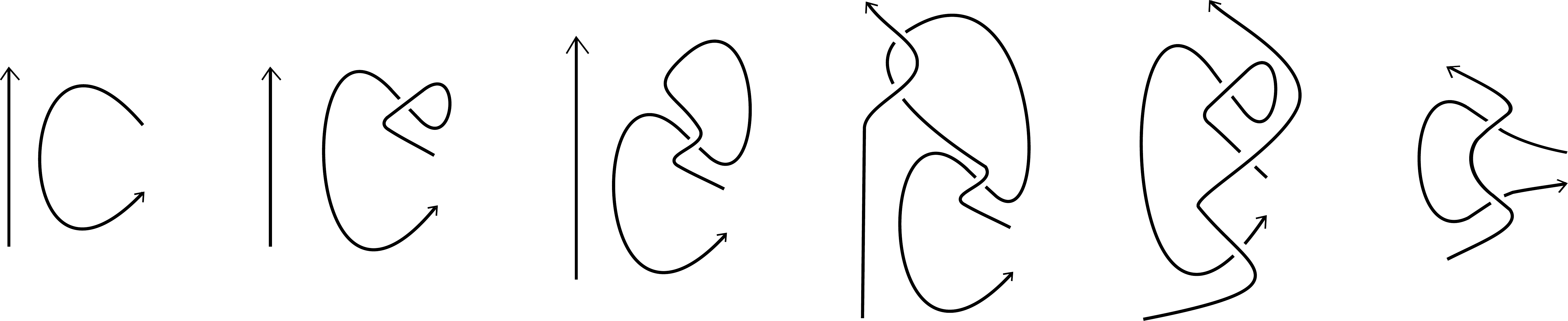}},
\end{equation*}
and on the other hand
\begin{equation*}
\centre{
\labellist \small \hair 2pt
\pinlabel{$\leftrightsquigarrow$}  at 380 300
\pinlabel{{\scriptsize $\Omega 1b$}}  at 385 370
\pinlabel{$=$}  at 920 300
\pinlabel{$\leftrightsquigarrow$}  at 1450 300
\pinlabel{{\scriptsize $\Omega 2a$}}  at 1455 370
\pinlabel{$\leftrightsquigarrow$}  at 1960 300
\pinlabel{{\scriptsize $\Omega 3a6$}}  at 1965 370
\pinlabel{$\leftrightsquigarrow$}  at 2450 300
\pinlabel{{\scriptsize $\Omega 1b$}}  at 2455 370
\endlabellist
\centering
\includegraphics[width=0.9\textwidth]{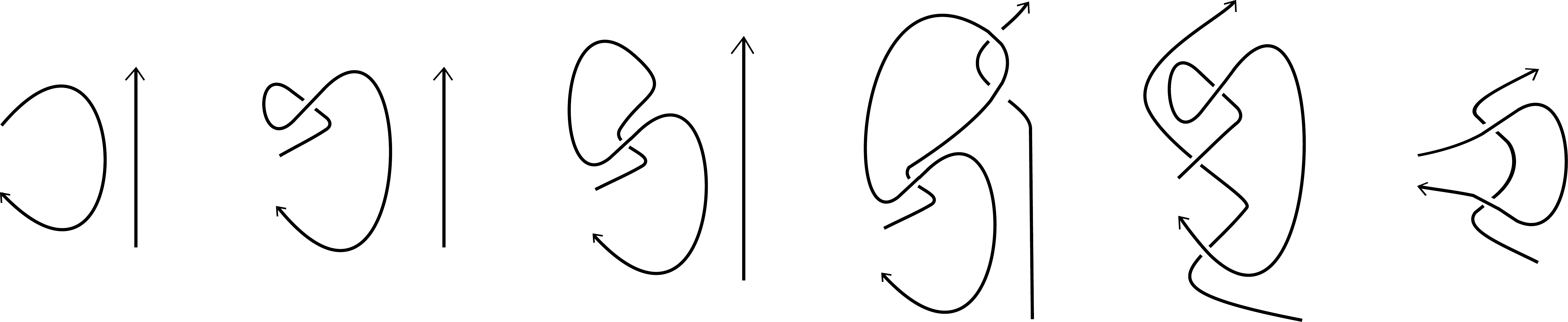}}.
\end{equation*}
\end{proof}

\begin{lemma}\label{lem:O1c_O1d}
The move $\Omega 1c$ can be obtained by a combination of $\Omega 0a$, $\Omega 2d1$ and $\Omega 1b$. Similarly, the move $\Omega 1d$ can be obtained by a combination of $\Omega 0b$, $\Omega 2c1$ and $\Omega 1a$.
\end{lemma}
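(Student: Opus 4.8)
The plan is to prove the first assertion by an explicit chain of moves carrying the left-hand side of $\Omega 1c$ to its right-hand side, and then to obtain the statement about $\Omega 1d$ for free from the first by the mirror symmetry that reverses the sense of full rotations and switches crossing signs. Recall that $\Omega 2d1$ is already available as a derived move from the preceding lemma, and that Morse planar isotopy may be used freely at any step.

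For $\Omega 1c$, I would start from its left-hand side and first apply $\Omega 1b$ to introduce a kink carrying the crossing dictated by that move. The resulting kink is displayed on the ``wrong'' side relative to $\Omega 1c$, so the remaining task is to transport its crossing across the strand. To that end I would insert a full rotation by the swirl $\Omega 0a$, which reorients the local picture so that the crossing of the kink is presented exactly in the configuration demanded by the Reidemeister~2 move $\Omega 2d1$; applying $\Omega 2d1$ then slides the strand past the crossing, relocating it to the opposite side and thereby converting the $\Omega 1b$-type kink into the $\Omega 1c$-type kink. A final Morse planar isotopy tidies the diagram into the right-hand side of $\Omega 1c$. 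Schematically the chain reads
\[
\text{LHS of }\Omega 1c \xrightarrow{\ \Omega 1b\ } \cdots \xrightarrow{\ \Omega 0a\ } \cdots \xrightarrow{\ \Omega 2d1\ } \cdots \longrightarrow \text{RHS of }\Omega 1c,
\]
with the intermediate rotational diagrams drawn out explicitly.

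The delicate point, and the step I expect to be the main obstacle, is the bookkeeping of crossing signs and strand orientations through the chain: one must verify that after inserting the swirl the two strands entering $\Omega 2d1$ carry precisely the relative orientation and over/under data that make that move applicable, and that the inserted full rotation is spun in the correct sense so that it is indeed $\Omega 0a$ rather than $\Omega 0b$ that is required. Confirming this amounts to tracking the Morse data (the positions of the maxima and minima) together with the crossing type across every intermediate diagram. Once the first chain is verified, the statement for $\Omega 1d$ follows with no further work by reflecting the whole argument in a vertical axis: this reflection interchanges $\Omega 0a \leftrightarrow \Omega 0b$, $\Omega 2d1 \leftrightarrow \Omega 2c1$, $\Omega 1a \leftrightarrow \Omega 1b$ and $\Omega 1c \leftrightarrow \Omega 1d$, so the mirror of the chain above realises $\Omega 1d$ from $\Omega 0b$, $\Omega 2c1$ and $\Omega 1a$.
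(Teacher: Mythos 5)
Your overall strategy (a Whitney-trick style chain, plus mirror symmetry to get the $\Omega 1d$ statement for free) is the right one, and the mirror-symmetry half of your argument matches what the paper does. However, the chain you propose for $\Omega 1c$ applies the three moves in an order that cannot work, and the obstruction is not the sign/orientation bookkeeping you flag as the delicate point. Count crossings: Morse planar isotopy preserves the number of crossings, an $\Omega 1$ move changes it by $\pm 1$, the swirl move $\Omega 0a$ changes it by $0$, and the Reidemeister 2 move $\Omega 2d1$ changes it by $\pm 2$. In your chain, starting from the crossingless side of $\Omega 1c$, after the first two steps ($\Omega 1b$, then $\Omega 0a$) the diagram has exactly one crossing, so after the final $\Omega 2d1$ it has either three crossings or minus one --- never the single crossing of the kink side of $\Omega 1c$, and no Morse isotopy can repair this. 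Reading your chain from the other end fares no better: after $\Omega 1b$ creates a cancelling kink next to the $\Omega 1c$ kink, the diagram contains no crossingless full rotation, so $\Omega 0a$ can only \emph{add} a swirl; the concluding $\Omega 2d1$ then removes two crossings together with only one cup--cap pair, leaving two extra maxima/minima that cannot be matched with the crossingless side of $\Omega 1c$ (cap and cup counts are also preserved by Morse isotopy). So the ordering itself is fatally wrong, not just unverified.

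The underlying conceptual slip is your description of $\Omega 2d1$ as ``sliding the strand past the crossing, relocating it to the opposite side'': a Reidemeister 2 move cannot transport a crossing, it only creates or annihilates a \emph{pair} of crossings (together with a cup--cap pair, in its rotational antiparallel form). This is why the R2 move must sit in the middle of the chain, as in the paper's proof: from the crossingless side, $\Omega 0a$ first inserts a full rotation, $\Omega 2d1$ then pushes the strand across it, creating two crossings, and $\Omega 1b$ finally deletes the unwanted one of the two kinks so created; the surviving crossing \emph{is} the kink of $\Omega 1c$ (it is born in the R2 step, not converted from the $\Omega 1b$ kink). Equivalently, read from the kink side: $\Omega 1b$ creates a cancelling kink, $\Omega 2d1$ annihilates the resulting pair of crossings leaving a swirl, and $\Omega 0a$ removes that swirl. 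Your chain, with $\Omega 0a$ sandwiched between $\Omega 1b$ and $\Omega 2d1$, admits neither reading. Once the order is corrected to $\Omega 0a$, $\Omega 2d1$, $\Omega 1b$, your mirror-symmetry derivation of $\Omega 1d$ from $\Omega 0b$, $\Omega 2c1$ and $\Omega 1a$ goes through exactly as you state, and agrees with the paper's ``likewise''.
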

\begin{proof}
We have
\begin{equation*}
\centre{
\labellist \small \hair 2pt
\pinlabel{$\leftrightsquigarrow$}  at 82 185
\pinlabel{{\scriptsize $\Omega 0a$}}  at 87 255
\pinlabel{$\leftrightsquigarrow$}  at 510 185
\pinlabel{{\scriptsize $\Omega 2d1$}}  at 515 255
\pinlabel{$\leftrightsquigarrow$}  at 880 185
\pinlabel{{\scriptsize $\Omega 1b$}}  at 885 255
\endlabellist
\centering
\includegraphics[width=0.4\textwidth]{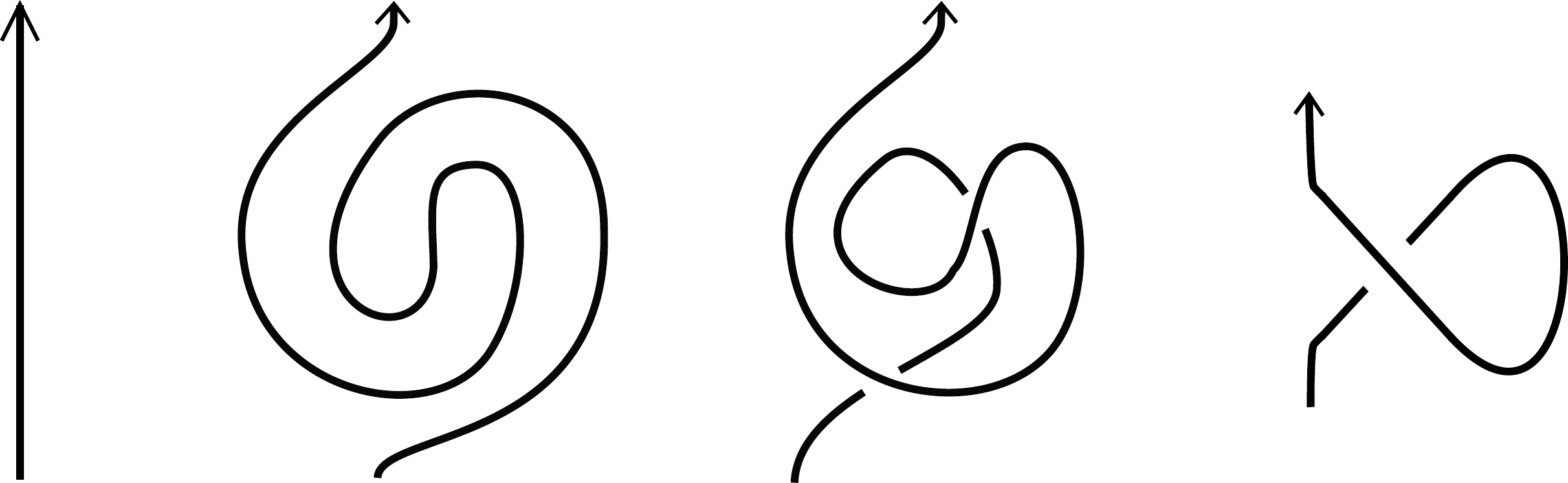}}
\end{equation*}
and likewise
\begin{equation*}
\centre{
\labellist \small \hair 2pt
\pinlabel{$\leftrightsquigarrow$}  at 82 185
\pinlabel{{\scriptsize $\Omega 0b$}}  at 87 255
\pinlabel{$\leftrightsquigarrow$}  at 510 185
\pinlabel{{\scriptsize $\Omega 2c1$}}  at 515 255
\pinlabel{$\leftrightsquigarrow$}  at 880 185
\pinlabel{{\scriptsize $\Omega 1a$}}  at 885 255
\endlabellist
\centering
\includegraphics[width=0.4\textwidth]{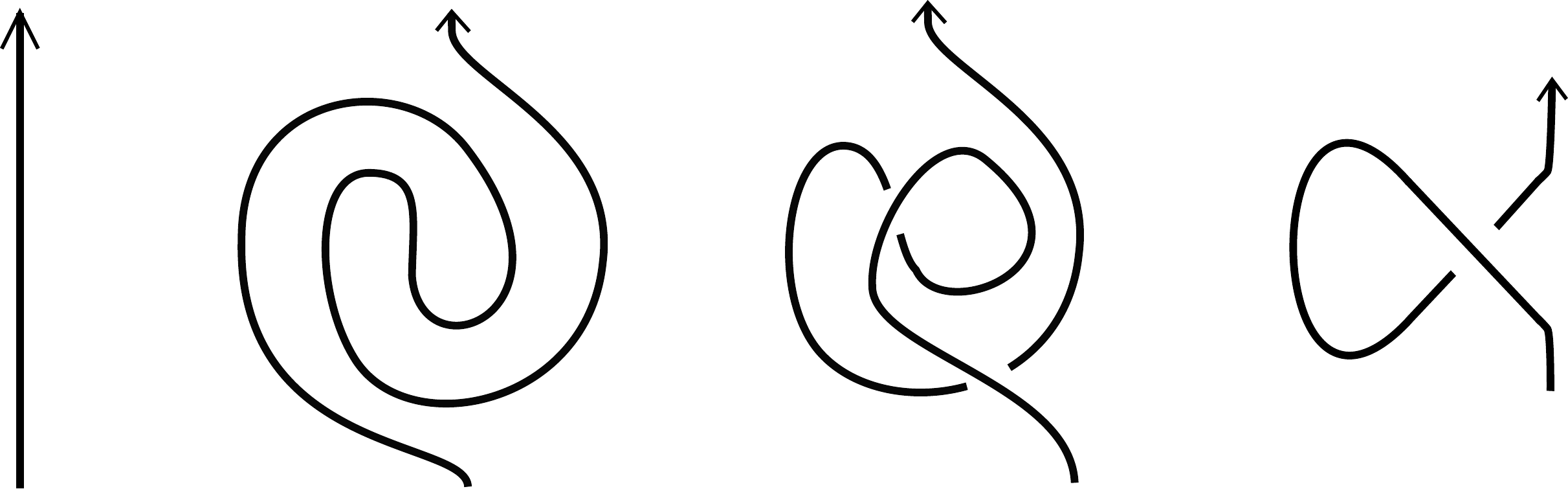}}.
\end{equation*}
\end{proof}

\begin{lemma}\label{lem:O2c2_O2d2}
The move $\Omega 2c2$ can be realised as a combination of the moves $\Omega 0a$, $\Omega 2c1$ and  \eqref{eq:lemma2}. Similarly, the move $\Omega 2d2$ can be realised as a combination of the moves $\Omega 0b$, $\Omega 2d1$ and  \eqref{eq:lemma2}.
\end{lemma}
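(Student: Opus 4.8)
The plan is to continue the picture-calculus of the preceding lemmas: for each of $\Omega 2c2$ and $\Omega 2d2$ I would exhibit a finite chain of rotational diagrams in which consecutive terms differ by one of the permitted moves and the two extreme terms are the two sides of the move to be realised. The conceptual point is that $\Omega 2c2$ encodes the same Reidemeister 2 cancellation as $\Omega 2c1$, only written in a different rotational normal form; hence $\Omega 2c1$ does the actual work of creating or destroying the pair of crossings, while $\Omega 0a$ and \eqref{eq:lemma2} serve only to rearrange the ambient cups, caps and swirls so that the local picture matches the template of $\Omega 2c1$.

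Concretely, starting from one side of $\Omega 2c2$ I would first apply $\Omega 0a$ to introduce an auxiliary full rotation adjacent to the relevant strand, bringing the two crossings into the configuration handled by $\Omega 2c1$; a single application of $\Omega 2c1$ then cancels the crossing pair. A subsequent use of \eqref{eq:lemma2} slides the strand across the auxiliary rotation, and a final $\Omega 0a$ removes that rotation, leaving the other side of $\Omega 2c2$. For $\Omega 2d2$ the argument is identical with $\Omega 0a$ replaced by $\Omega 0b$ and $\Omega 2c1$ replaced by $\Omega 2d1$, the intermediate rearrangement being carried out with the very same move \eqref{eq:lemma2}.

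The step I expect to demand the most care is checking that every intermediate diagram is honestly rotational and that the oriented version of each move being invoked is the correct one. It is easy to produce a chain that is convincing as an unoriented picture but in which, say, the crossing presented to $\Omega 2c1$ carries the wrong over/under information or orientation --- precisely the pitfall that invalidated several earlier candidate generating sets discussed in the introduction. Thus the real content of the proof is to draw the intermediate diagrams explicitly and verify, crossing by crossing, that all endpoints and crossings point upward and that each maximum and minimum occurs in a full-rotation pair, so that the cited instance of $\Omega 2c1$ (resp. $\Omega 2d1$) and of \eqref{eq:lemma2} is genuinely applicable.
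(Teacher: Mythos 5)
Your proposal follows exactly the paper's proof: for $\Omega 2c2$ the paper's figure is precisely the four-move chain $\Omega 0a$, $\Omega 2c1$, \eqref{eq:lemma2}, $\Omega 0a$ in the order you describe, and for $\Omega 2d2$ it is the mirror chain $\Omega 0b$, $\Omega 2d1$, \eqref{eq:lemma2}, $\Omega 0b$. The only difference is that the paper carries out the verification you defer, namely drawing the intermediate rotational diagrams explicitly.
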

\begin{proof}
    We have
    \begin{equation*}
\centre{
\labellist \small \hair 2pt
\pinlabel{$\leftrightsquigarrow$}  at 350 300
\pinlabel{{\scriptsize $\Omega 0a$}}  at 355 370
\pinlabel{$\leftrightsquigarrow$}  at 1060 300
\pinlabel{{\scriptsize $\Omega 2c1$}}  at 1065 370
\pinlabel{$\leftrightsquigarrow$}  at 1760 300
\pinlabel{{\scriptsize \eqref{eq:lemma2}}}  at 1765 370
\pinlabel{$\leftrightsquigarrow$}  at 2360 300
\pinlabel{{\scriptsize $\Omega 0a$}}  at 2365 370
\endlabellist
\centering
\includegraphics[width=0.9\textwidth]{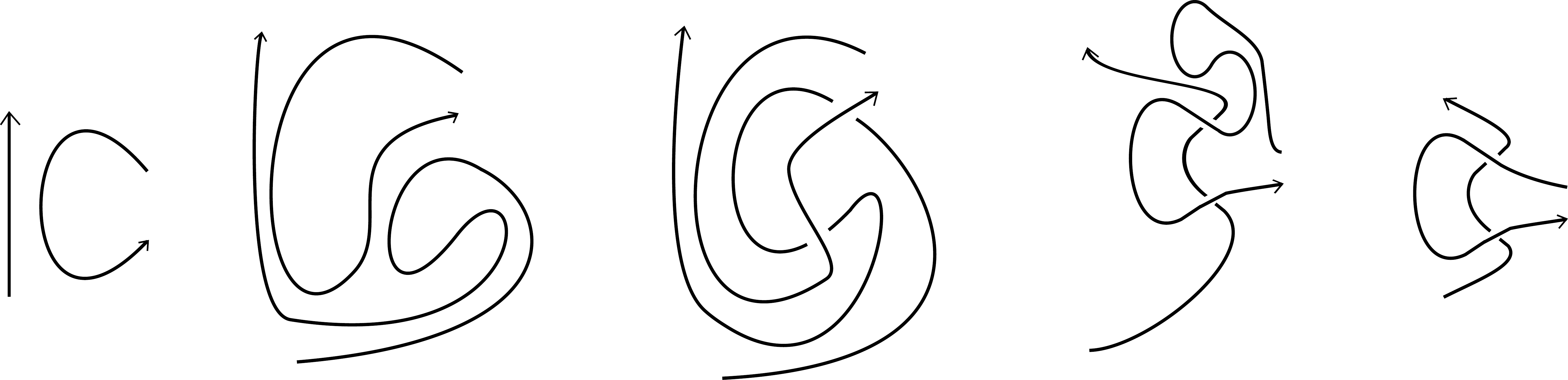}}
\end{equation*}
and likewise
   \begin{equation*}
\centre{
\labellist \small \hair 2pt
\pinlabel{$\leftrightsquigarrow$}  at 350 300
\pinlabel{{\scriptsize $\Omega 0b$}}  at 355 370
\pinlabel{$\leftrightsquigarrow$}  at 1060 300
\pinlabel{{\scriptsize $\Omega 2d1$}}  at 1065 370
\pinlabel{$\leftrightsquigarrow$}  at 1760 300
\pinlabel{{\scriptsize \eqref{eq:lemma2}}}  at 1765 370
\pinlabel{$\leftrightsquigarrow$}  at 2360 300
\pinlabel{{\scriptsize $\Omega 0b$}}  at 2365 370
\endlabellist
\centering
\includegraphics[width=0.9\textwidth]{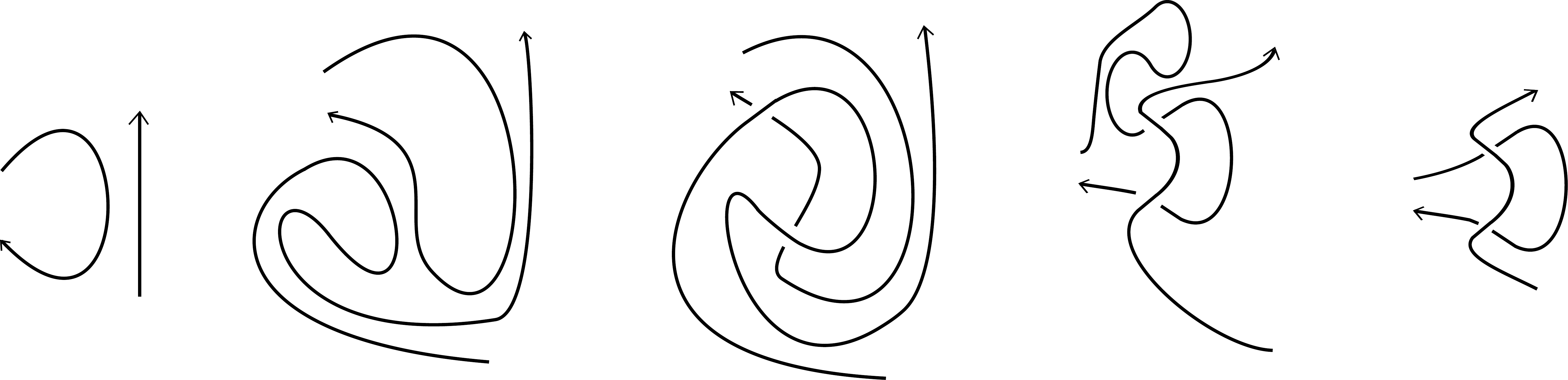}}.
\end{equation*}
\end{proof}

As in \cite{polyak10}, we will take care of the $\Omega 2b$ move later. Before we will obtain two braid-like Reidemeister 3 moves:

\begin{lemma}\label{lem:O3b}
The move $\Omega 3b$ can be realised by a sequence of the moves $\Omega 0a$, $\Omega 2c1$, $\Omega 3a4$ and $\Omega 2d2$.
\end{lemma}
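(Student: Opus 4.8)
The plan is to realise the braid-like $\Omega 3b$ move --- whose three strands are all oriented upwards with three positive crossings --- by a sequence of local modifications, following the classical strategy of ``bending'' one of the three strands so that a cyclic-type $\Omega 3$ move (here $\Omega 3a4$) becomes applicable, and then unbending. First I would observe that the left-hand side of $\Omega 3b$ has all three strands pointing up, whereas $\Omega 3a4$ is one of the six rotational incarnations of Polyak's cyclic move $\Omega 3a$ and therefore has one strand whose orientation (after rotation) runs ``against'' the other two. To bridge this discrepancy, I would use $\Omega 2c1$ and $\Omega 2d2$ to introduce a cap--cup pair on one strand, turning a segment of that upward strand into a short downward detour; this is precisely the reorientation that converts the braid-like configuration into a cyclic one. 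The move $\Omega 0a$ (the swirl $Sw^+(\uparrow)$) is then used to slide the resulting curl or full rotation past a crossing so that the three crossings sit in the cyclic pattern demanded by $\Omega 3a4$.

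Concretely, the key steps in order would be: (1) starting from the left-hand side of $\Omega 3b$, apply $\Omega 2c1$ (or its companion $\Omega 2d2$) to create the requisite cap--cup pair on the appropriate strand, producing a diagram that contains a locally cyclic triple of crossings; (2) use $\Omega 0a$ to rotate/slide the introduced arc into position, so that the neighbourhood of the three crossings matches the left-hand side of $\Omega 3a4$ exactly; (3) apply $\Omega 3a4$ to perform the actual passage of the strand across the crossing; (4) undo the auxiliary data by applying $\Omega 0a$ again and then $\Omega 2d2$ (resp.\ $\Omega 2c1$) in reverse, removing the cap--cup pair and arriving at the right-hand side of $\Omega 3b$. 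Each step is a local replacement inside a disc, and the composite sequence of diagrams should read off directly from a figure analogous to those in Lemmas~\ref{lem:O3a} and~\ref{lem:O2c2_O2d2}.

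The main obstacle I expect is bookkeeping rather than conceptual: one must check that at every intermediate stage the diagram is genuinely rotational (all crossings and endpoints pointing up, maxima and minima occurring only in the admissible full-rotation pairs of~\eqref{eq:full_rotations}), so that the cited moves are actually applicable, and that the orientations and over/under data of the three crossings are tracked correctly through the cap--cup insertion and the cyclic move. In particular I would take care that the crossing signs match: $\Omega 3b$ has three positive crossings, and since the $\Omega 2$ and $\Omega 0$ moves preserve crossing type while $\Omega 3a4$ merely rearranges the crossings, the count should be consistent, but verifying this at the level of the explicit diagram is the step most prone to error. As in Polyak's treatment, once the correct figure is drawn the verification is immediate, so the proof reduces to exhibiting the chain of diagrams with the four moves labelled between consecutive frames.
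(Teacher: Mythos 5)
Your high-level strategy --- bend one strand so that the cyclic move $\Omega 3a4$ becomes applicable, then unbend --- is exactly the paper's (and Polyak's), and the four moves you invoke are the right ones. However, the mechanics as you describe them would fail at your very first step, because you have interchanged the roles of $\Omega 0a$ and the Reidemeister~2 moves. The left-hand side of $\Omega 3b$ is entirely upward-oriented: all three strands point up at every point, and there are no caps, cups, or downward segments anywhere. The moves $\Omega 2c1$ and $\Omega 2d2$ are rotational forms of the \emph{antiparallel} Reidemeister~2 moves; to apply either one you need a locally downward-pointing segment sitting next to an upward strand. Consequently, ``apply $\Omega 2c1$ to the left-hand side of $\Omega 3b$ to create the requisite cap--cup pair'' is not a legal move application: the antiparallel $\Omega 2$ moves cannot manufacture the downward detour, they presuppose it.

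The downward detour is created by $\Omega 0a$, the swirl on a single strand, which replaces a straight strand by a full rotation (one cap, one cup, and a downward segment in between). This is precisely why the paper's sequence begins with $\Omega 0a$: first create the full rotation on one strand, then use $\Omega 2c1$ to push the detour across a neighbouring strand (creating two upward crossings), then apply $\Omega 3a4$, then remove the two crossings with $\Omega 2d2$, and finally remove the full rotation with a second application of $\Omega 0a$. Note also that your description of $\Omega 0a$ as ``sliding the curl or full rotation past a crossing'' is actually the content of the crossing swirls $\Omega 0c$ and $\Omega 0d$, which are not among the four moves this lemma allows; $\Omega 0a = Sw^+(\uparrow)$ only creates or removes a full rotation on a crossingless strand. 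With the order corrected --- swirl, antiparallel $\Omega 2$, cyclic $\Omega 3$, antiparallel $\Omega 2$, swirl --- your plan becomes the paper's proof.
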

\begin{proof}
We have
\begin{equation*}
\centre{
\labellist \small \hair 2pt
\pinlabel{$\leftrightsquigarrow$}  at 330 370
\pinlabel{{\scriptsize $\Omega 0a$}}  at 335 430
\pinlabel{$\leftrightsquigarrow$}  at 980 370
\pinlabel{{\scriptsize $\Omega 2c1$}}  at 985 430
\pinlabel{$\leftrightsquigarrow$}  at 1610 370
\pinlabel{{\scriptsize $\Omega 3a4$}}  at 1615 430
\endlabellist
\centering
\includegraphics[width=0.7\textwidth]{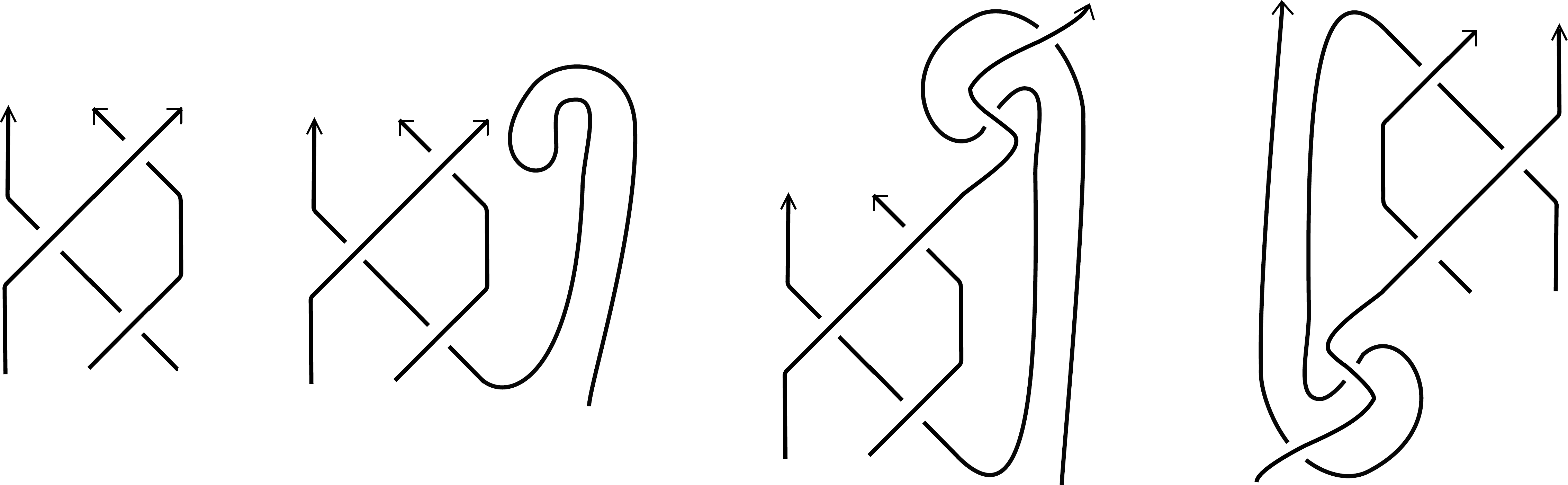}}
\end{equation*}
\begin{equation*}
\centre{
\labellist \small \hair 2pt
\pinlabel{$\leftrightsquigarrow$}  at -60 225
\pinlabel{{\scriptsize $\Omega 2d2$}}  at -55 295
\pinlabel{$\leftrightsquigarrow$}  at 580 225
\pinlabel{{\scriptsize $\Omega 0a$}}  at 585 295
\endlabellist
\centering
\includegraphics[width=0.3\textwidth]{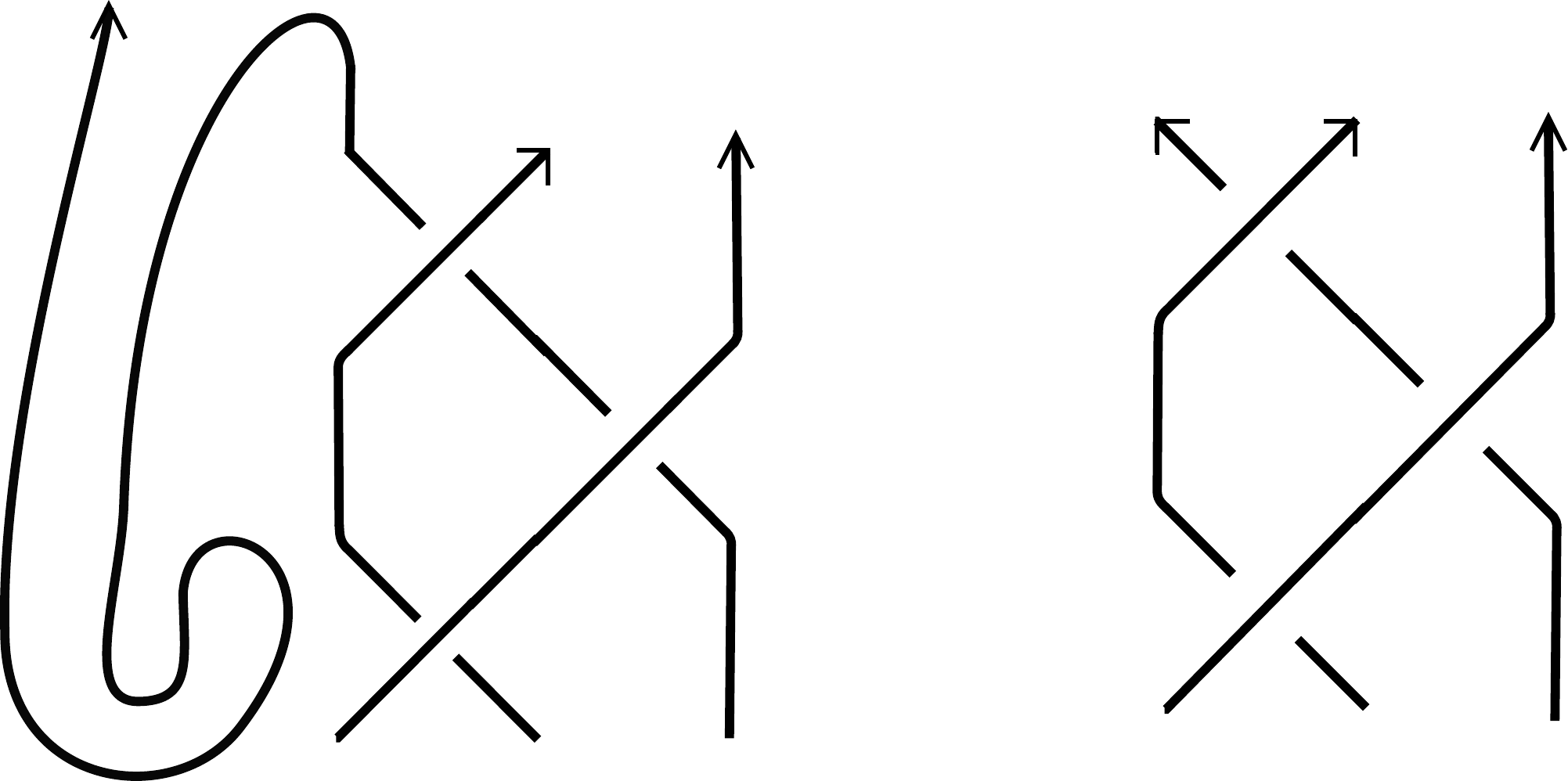}}. \phantom{-----}
\end{equation*}
\end{proof}

In order to realise the $\Omega 2b$ move we need to obtain the $\Omega 3c$ move:

\begin{lemma}\label{lem:O3c}
The move $\Omega 3c$ can be obtained by a sequence of the moves $\Omega 0a$, $\Omega 2c1$, $\Omega 3a6$ and $\Omega 2d1$.
\end{lemma}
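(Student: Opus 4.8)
The plan is to mimic the proof of \cref{lem:O3b} essentially verbatim, only replacing the cyclic move and the closing $\Omega 2$ move by the ones dictated by the crossing data of $\Omega 3c$. As in Polyak's detour argument, the idea is to reposition one of the three strands by a sequence of $\Omega 2$ moves so that the triple point lands in exactly the rotational configuration of one of the cyclic moves $\Omega 3ak$, apply that move, and then retrace the detour to recover the target diagram. The listed ingredients $\Omega 0a$, $\Omega 2c1$, $\Omega 3a6$ and $\Omega 2d1$ tell us precisely which cyclic representative and which closing slide to use.

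Concretely, I would start from one side of $\Omega 3c$ and first apply the swirl $\Omega 0a = Sw^+(\uparrow)$ to a suitable strand, introducing a full-rotation pair as in \eqref{eq:full_rotations}. The resulting cup/cap provides the curvature needed to slide the neighbouring strand across the relevant crossing by $\Omega 2c1$, bringing the configuration into the rotational position in which the cyclic move $\Omega 3a6$ applies. After performing $\Omega 3a6$, I would undo the detour: the move $\Omega 2d1$ slides the strand back, and a final application of $\Omega 0a$ cancels the leftover full rotation, yielding the other side of $\Omega 3c$. Schematically this is a two-row chain $\Omega 0a,\ \Omega 2c1,\ \Omega 3a6,\ \Omega 2d1,\ \Omega 0a$, entirely parallel to the chain used for $\Omega 3b$, with $\Omega 3a4$ and $\Omega 2d2$ there replaced by $\Omega 3a6$ and $\Omega 2d1$ here.

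The step I expect to require the most care is not any single move but the global bookkeeping. One must verify at each stage that the intermediate diagram is genuinely rotational — every crossing and endpoint pointing upwards and every maximum and minimum occurring in a full-rotation pair — and that the three strands carry exactly the orientations for which each named move is defined. In particular, the reason $\Omega 3c$ forces the representative $\Omega 3a6$ together with the closing move $\Omega 2d1$, rather than the $\Omega 3a4$ and $\Omega 2d2$ appearing for $\Omega 3b$, is precisely this orientation-and-rotation bookkeeping; checking that the detour deposits the triple point in the $\Omega 3a6$ configuration (and not in a neighbouring cyclic position) is the crux of the argument, and I would confirm it by tracking the diagram explicitly through the displayed chain of pictures, as in \cref{lem:O3b}.
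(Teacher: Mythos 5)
Your proposal is correct and follows essentially the same route as the paper: the paper's proof is exactly the chain $\Omega 0a$, $\Omega 2c$ move, $\Omega 3a6$, $\Omega 2d1$, $\Omega 0a$, obtained by mirroring the proof of \cref{lem:O3b} with $\Omega 3a4$ and $\Omega 2d2$ replaced by $\Omega 3a6$ and $\Omega 2d1$, which is precisely what you describe. (As a minor aside, the paper's displayed figure labels the second step $\Omega 2c2$ while the lemma statement says $\Omega 2c1$ — an internal inconsistency of the paper, not an issue with your argument.)
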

\begin{proof}
We have
\begin{equation*}
\centre{
\labellist \small \hair 2pt
\pinlabel{$\leftrightsquigarrow$}  at 330 370
\pinlabel{{\scriptsize $\Omega 0a$}}  at 335 430
\pinlabel{$\leftrightsquigarrow$}  at 980 370
\pinlabel{{\scriptsize $\Omega 2c2$}}  at 985 430
\pinlabel{$\leftrightsquigarrow$}  at 1610 370
\pinlabel{{\scriptsize $\Omega 3a6$}}  at 1615 430
\endlabellist
\centering
\includegraphics[width=0.7\textwidth]{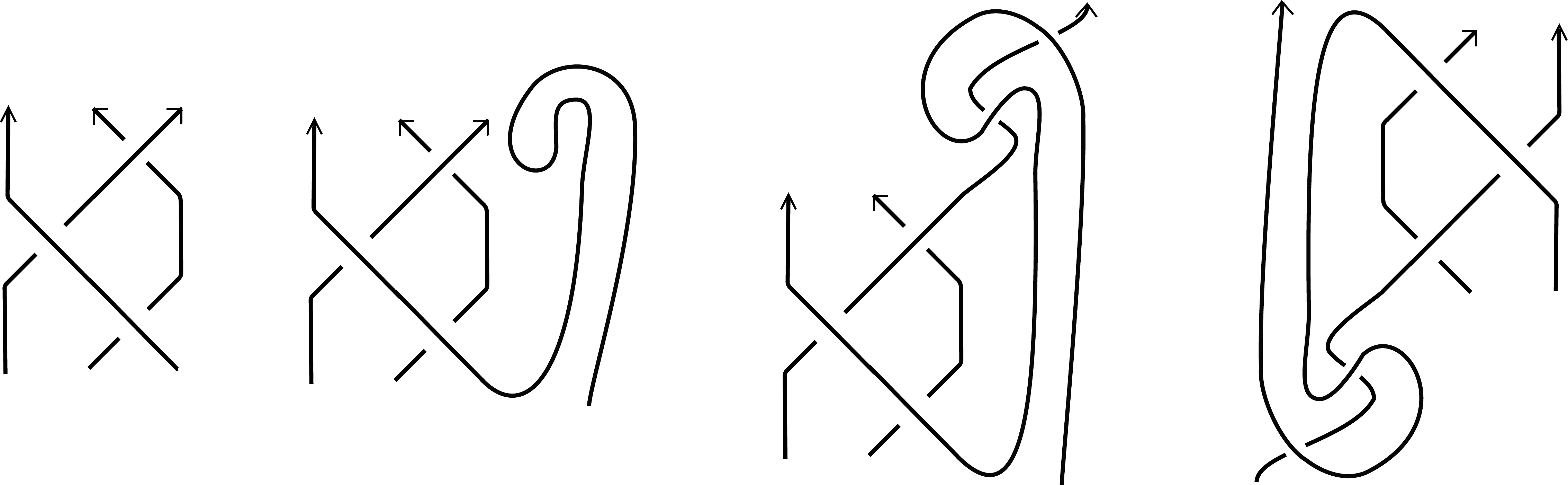}}
\end{equation*}
\begin{equation*}
\centre{
\labellist \small \hair 2pt
\pinlabel{$\leftrightsquigarrow$}  at -60 225
\pinlabel{{\scriptsize $\Omega 2d1$}}  at -55 295
\pinlabel{$\leftrightsquigarrow$}  at 580 225
\pinlabel{{\scriptsize $\Omega 0a$}}  at 585 295
\endlabellist
\centering
\includegraphics[width=0.3\textwidth]{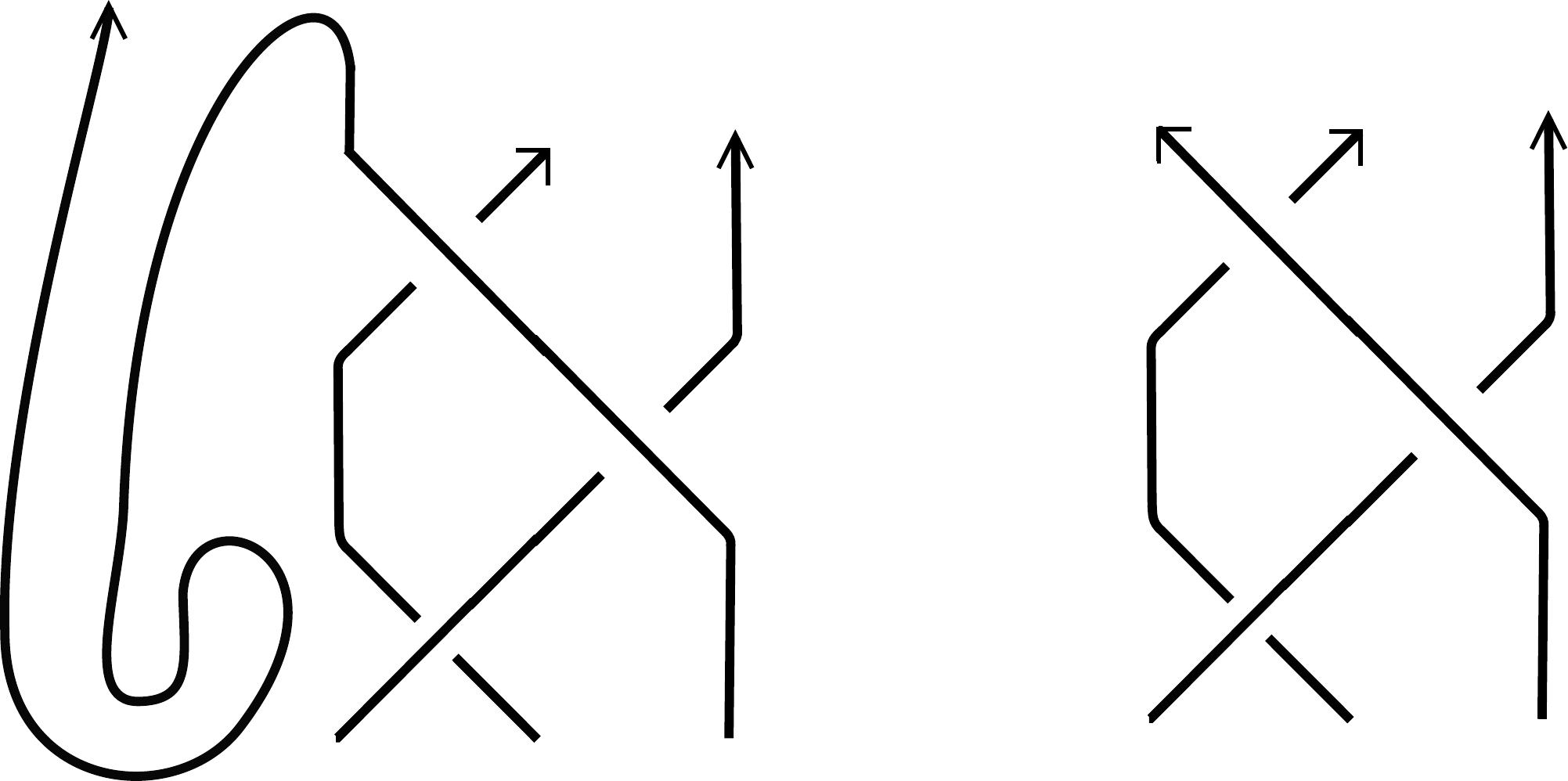}}. \phantom{-----}
\end{equation*}
\end{proof}

We can finally obtain the last Reidemeister 2 move:

\begin{lemma}\label{lem:O2b}
The move $\Omega 2b$ can be realised as a sequence of the moves $\Omega 1d$, $\Omega 2c2$ and $\Omega 3c$.
\end{lemma}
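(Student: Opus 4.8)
The plan is to imitate Polyak's derivation of the ``hard'' Reidemeister~2 move from an ``easy'' one together with a Reidemeister~1 and a braid-like Reidemeister~3 move, transported to the rotational setting. Note that all the Reidemeister~2 moves obtained so far ($\Omega 2a \rightsquigarrow \Omega 2c1 \rightsquigarrow \Omega 2c2$, and their $d$-counterparts) share the same over/under convention, so they are all reachable from one another essentially by sliding strands around. The move $\Omega 2b$, by contrast, is the Reidemeister~2 move whose crossing (over/under) convention is the opposite one, and this is exactly the situation Polyak singles out as impossible to resolve by isotopy and a single Reidemeister~2 move alone: a genuine Reidemeister~3 ingredient is needed. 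This is why the statement invokes the braid-like move $\Omega 3c$ (produced in \cref{lem:O3c}) rather than any of the $\Omega 3a$ cousins.

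The concrete route I would take is the classical \emph{kink trick}. Starting from the left-hand side of $\Omega 2b$, I would first introduce an auxiliary curl on one of the two strands by means of $\Omega 1d$. This replaces the local picture by one with an extra crossing at the price of an $\Omega 1$ kink, but, crucially, the kink carries the crossing convention that is compatible with $\Omega 3c$. I would then apply the braid-like move $\Omega 3c$ to commute the crossing born from the curl past the crossings of the original $\Omega 2b$ picture, rearranging the three crossings so that two of them sit in precisely the position demanded by $\Omega 2c2$.

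The final steps are then forced: applying $\Omega 2c2$ cancels that pair of crossings, and a concluding application of $\Omega 1d$ deletes the leftover auxiliary curl, delivering the right-hand side of $\Omega 2b$. As in the preceding lemmas, the whole argument is simply a strip of labelled local replacements, so the write-up is a single chain of diagrams with the arrows labelled $\Omega 1d$, $\Omega 3c$, $\Omega 2c2$, $\Omega 1d$.

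The main obstacle I anticipate is bookkeeping rather than conceptual. I must verify that at each stage the over/under data and the orientations are exactly those required by the specific moves $\Omega 1d$, $\Omega 2c2$ and $\Omega 3c$, and not one of the many mirror or rotated variants catalogued in the introduction; and I must check that every intermediate diagram stays rotational, i.e.\ all crossings still point upwards and any created maxima or minima occur in full-rotation pairs. The delicate point is placing the first curl on the correct side and with the correct sign, since only then does $\Omega 3c$ become literally applicable and only then does the resulting pair of crossings present itself as an $\Omega 2c2$ pattern; once that choice is made, the remaining moves are determined.
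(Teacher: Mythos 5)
Your proposal is correct and is essentially the paper's own proof: the paper realises $\Omega 2b$ by the diagram chain labelled $\Omega 1d$, $\Omega 2c2$, $\Omega 3c$, $\Omega 1d$, which is exactly your kink-trick chain $\Omega 1d$, $\Omega 3c$, $\Omega 2c2$, $\Omega 1d$ read in the opposite direction (legitimate, since each move is a symmetric equivalence). The bookkeeping you flag — placing the curl with the right sign so that $\Omega 3c$ and $\Omega 2c2$ apply literally and all intermediate diagrams stay rotational — is precisely what the paper's drawn diagrams verify, and it works out as you describe.
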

\begin{proof}
We have
\begin{equation*}
\centre{
\labellist \small \hair 2pt
\pinlabel{$\leftrightsquigarrow$}  at 330 200
\pinlabel{{\scriptsize $\Omega 1d$}}  at 335 270
\pinlabel{$\leftrightsquigarrow$}  at 1000 200
\pinlabel{{\scriptsize $\Omega 2c2$}}  at 1005 270
\pinlabel{$\leftrightsquigarrow$}  at 1550 200
\pinlabel{{\scriptsize $\Omega 3c$}}  at 1555 270
\pinlabel{$\leftrightsquigarrow$}  at 2140 200
\pinlabel{{\scriptsize $\Omega 1d$}}  at 2145 270
\endlabellist
\centering
\includegraphics[width=0.8\textwidth]{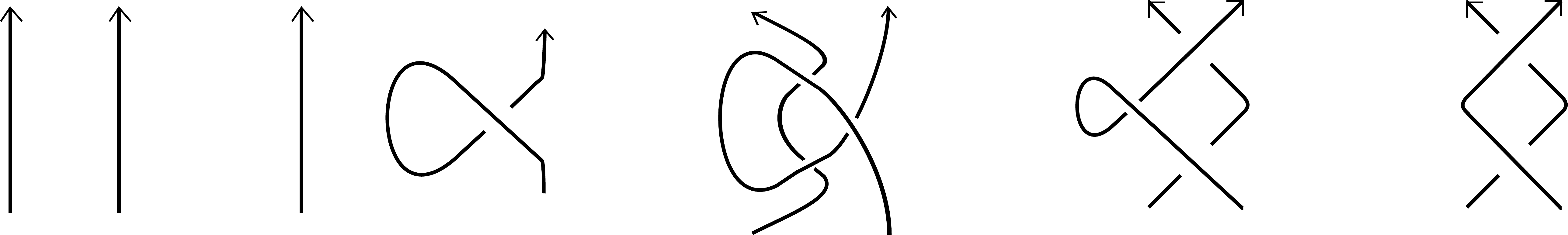}}
\end{equation*}
\end{proof}

It only remains to obtain the moves $\Omega 3d$ -- $\Omega 3g$ and $\Omega 3h1$ --$\Omega 3h6$. For the sake of clarity we treat these two groups separately.

\begin{lemma}\label{lem:O3d_g}
    The braid-like moves $\Omega 3d$ -- $\Omega 3g$ can be realised by a sequence of the moves $\Omega 3b$, $\Omega 2a$ and $\Omega 2b$.
\end{lemma}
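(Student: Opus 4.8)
The plan is to treat the four braid-like moves $\Omega 3d$ -- $\Omega 3g$ uniformly, using the principle that in the rotational setting (all strands pointing up) each of them differs from the all-positive braid move $\Omega 3b$ only by reversing the over/under datum at one or more of the three crossings. The mechanism I would use is the classical \emph{conjugation of a Reidemeister~3 move by Reidemeister~2 moves}, adapted to the rotational picture: to reverse a single crossing inside a braid-like $\Omega 3$ configuration, first insert a cancelling bigon with $\Omega 2a$ (or $\Omega 2b$) so that a local sub-diagram becomes an instance of the left-hand side of $\Omega 3b$; then apply $\Omega 3b$; and finally cancel the auxiliary bigon with the inverse of the same R2 move, arriving at the right-hand side of the desired move.

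Concretely, I would first record that the two available R2 moves $\Omega 2a$ and $\Omega 2b$ each create or destroy a bigon bounded by two upward-pointing strands, the two differing by which strand passes over. Since such a bigon contains neither a maximum nor a minimum, all intermediate diagrams stay rotational and no $\Omega 0$ move is needed. Starting from $\Omega 3b$, one application of this sandwiching procedure flips exactly one crossing and produces the neighbouring braid-like move; iterating along a short chain that visits $\Omega 3d$, $\Omega 3e$, $\Omega 3f$ and $\Omega 3g$ in turn then realises all four from $\Omega 3b$, $\Omega 2a$ and $\Omega 2b$ alone. Each step has the form ``insert bigon / apply $\Omega 3b$ / remove bigon'', exactly as in Polyak's non-rotational treatment.

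The step I expect to require the most care is bookkeeping rather than anything conceptual: for every target move I must verify that the bigon forced by the configuration is genuinely an instance of one of the two \emph{permitted} parallel moves $\Omega 2a$, $\Omega 2b$, and not of an anti-parallel move $\Omega 2c$ or $\Omega 2d$ (which are not allowed here), and that after applying $\Omega 3b$ the residual bigon cancels against a strand of the correct over/under type. Matching the chirality of every inserted crossing to an available R2 move, so that the entire derivation uses only $\Omega 3b$, $\Omega 2a$ and $\Omega 2b$, is the real content, and is most transparently checked by exhibiting the four explicit diagram sequences.
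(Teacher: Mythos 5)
Your proposal is correct and matches the paper's proof: the paper realises $\Omega 3d$ and $\Omega 3e$ from $\Omega 3b$, then $\Omega 3f$ from $\Omega 3d$ and $\Omega 3g$ from $\Omega 3f$, each step being exactly your ``insert bigon with $\Omega 2a$ / apply the (previously realised) braid-like move / remove bigon with $\Omega 2b$'' sandwich, adapted from Polyak. Your observation that all bigons occurring between upward-pointing strands are instances of the parallel moves $\Omega 2a$, $\Omega 2b$ (so no anti-parallel or $\Omega 0$ moves are needed) is precisely the bookkeeping the paper's explicit diagram sequences verify.
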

\begin{proof}
Most sequences are analogous to the ones in \cite{polyak10}, but adapted to the rotational setting. We realise $\Omega 3d$ from $\Omega 3b$, $\Omega 2a$ and $\Omega 2b$ via the moves
\begin{equation*}
\centre{
\labellist \small \hair 2pt
\pinlabel{$\leftrightsquigarrow$}  at 500 220
\pinlabel{{\scriptsize $\Omega 2a$}}  at 505 270
\pinlabel{$\leftrightsquigarrow$}  at 1080 220
\pinlabel{{\scriptsize $\Omega 3b$}}  at 1085 270
\pinlabel{$\leftrightsquigarrow$}  at 1630 220
\pinlabel{{\scriptsize $\Omega 2b$}}  at 1635 270
\endlabellist
\centering
\includegraphics[width=0.7\textwidth]{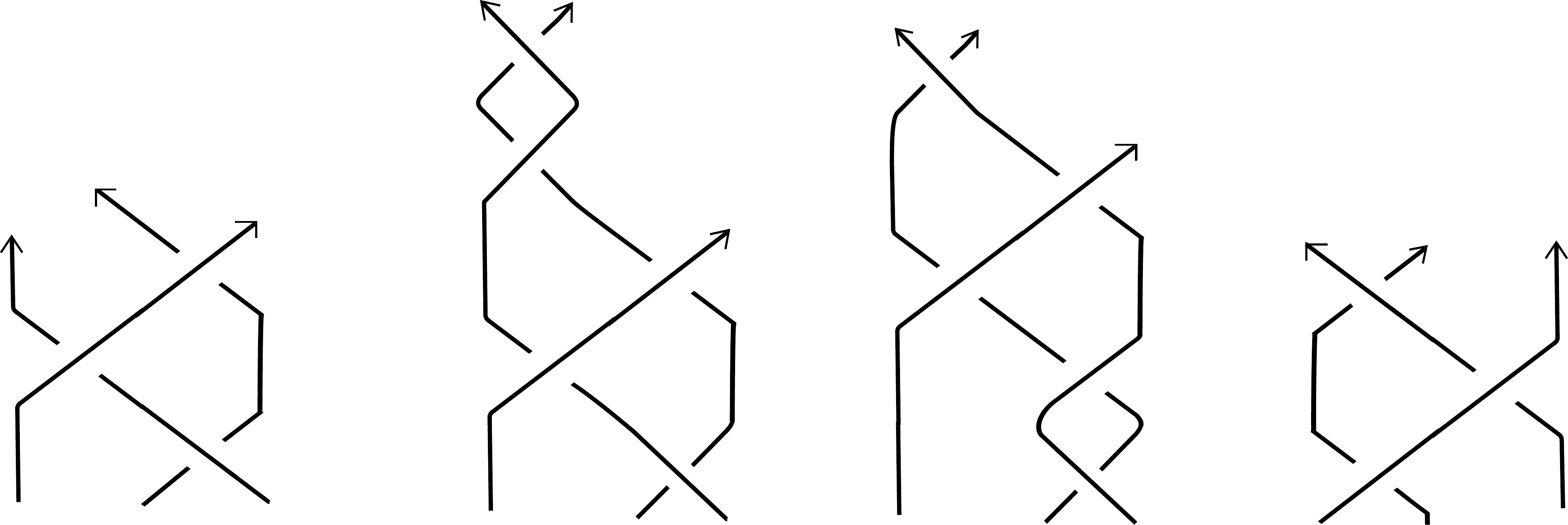}}.
\end{equation*}

We realise $\Omega 3e$ from $\Omega 3b$, $\Omega 2a$ and $\Omega 2b$ via the moves
\begin{equation*}
\centre{
\labellist \small \hair 2pt
\pinlabel{$\leftrightsquigarrow$}  at 500 220
\pinlabel{{\scriptsize $\Omega 2a$}}  at 505 270
\pinlabel{$\leftrightsquigarrow$}  at 1080 220
\pinlabel{{\scriptsize $\Omega 3b$}}  at 1085 270
\pinlabel{$\leftrightsquigarrow$}  at 1630 220
\pinlabel{{\scriptsize $\Omega 2b$}}  at 1635 270
\endlabellist
\centering
\includegraphics[width=0.7\textwidth]{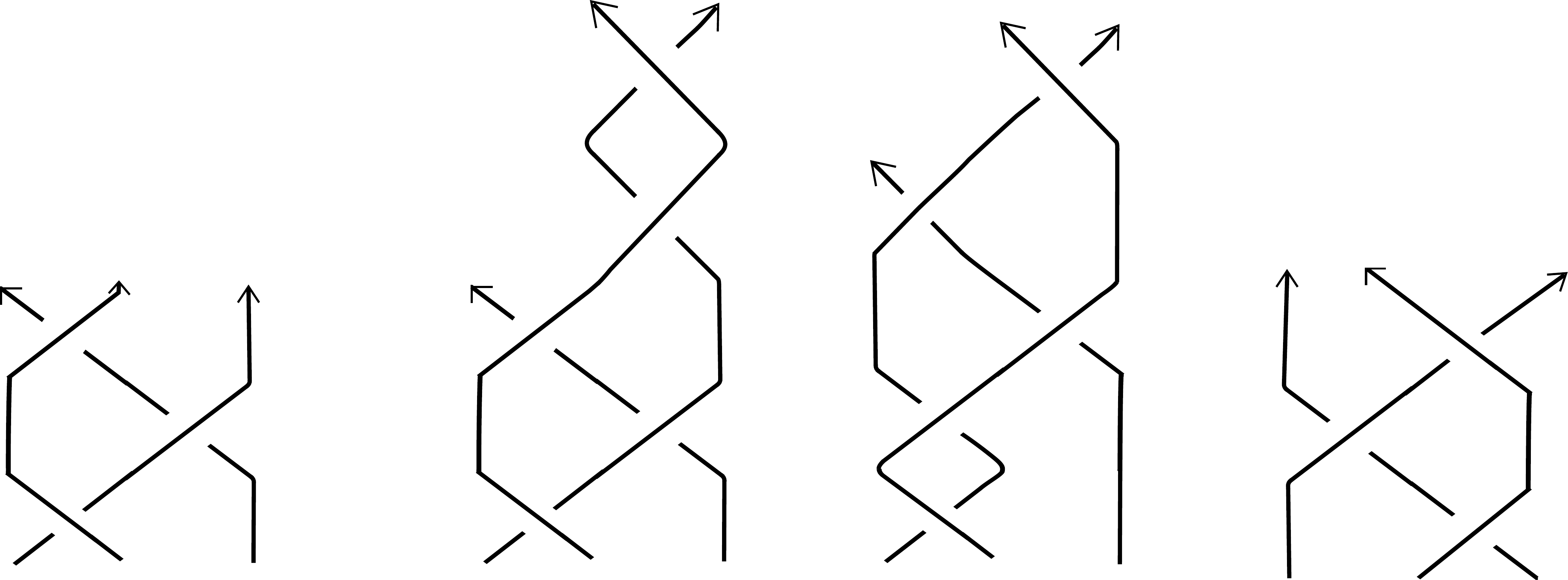}}.
\end{equation*}

We realise $\Omega 3f$ from $\Omega 3d$, $\Omega 2a$ and $\Omega 2b$ via the moves
\begin{equation*}
\centre{
\labellist \small \hair 2pt
\pinlabel{$\leftrightsquigarrow$}  at 500 220
\pinlabel{{\scriptsize $\Omega 2a$}}  at 505 270
\pinlabel{$\leftrightsquigarrow$}  at 1080 220
\pinlabel{{\scriptsize $\Omega 3d$}}  at 1085 270
\pinlabel{$\leftrightsquigarrow$}  at 1630 220
\pinlabel{{\scriptsize $\Omega 2b$}}  at 1635 270
\endlabellist
\centering
\includegraphics[width=0.7\textwidth]{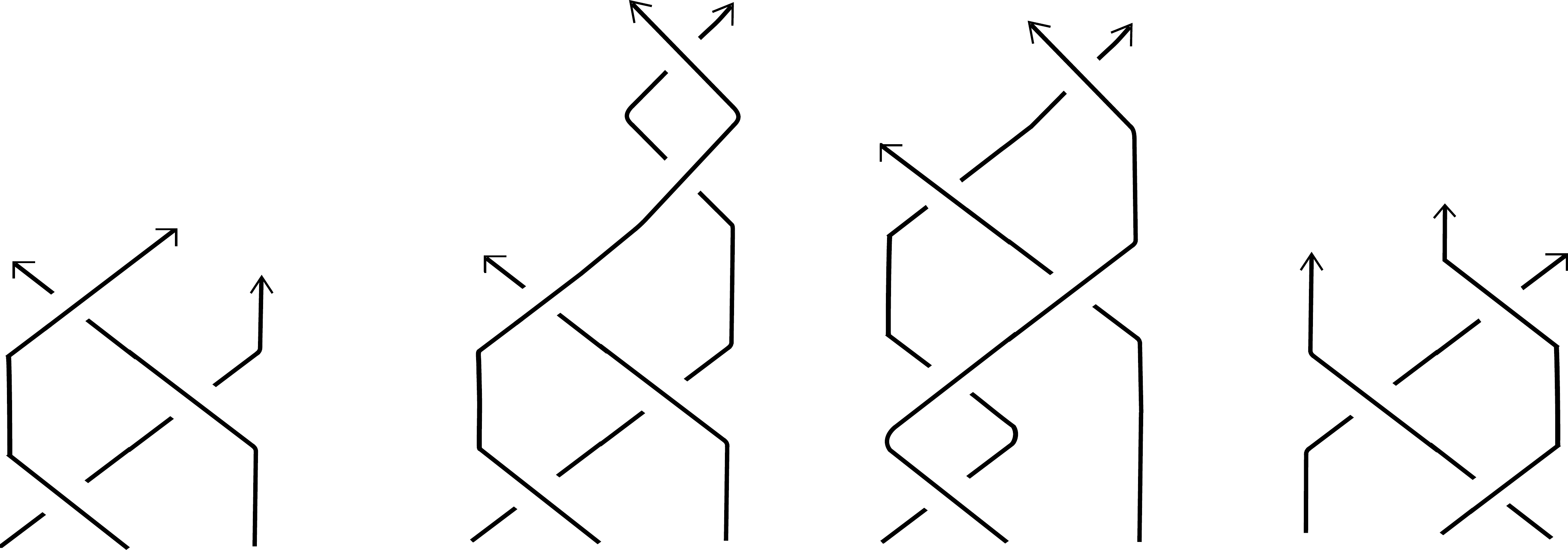}}.
\end{equation*}

We realise $\Omega 3g$ from $\Omega 3f$, $\Omega 2a$ and $\Omega 2b$ via the moves
\begin{equation*}
\centre{
\labellist \small \hair 2pt
\pinlabel{$\leftrightsquigarrow$}  at 500 220
\pinlabel{{\scriptsize $\Omega 2a$}}  at 505 270
\pinlabel{$\leftrightsquigarrow$}  at 1080 220
\pinlabel{{\scriptsize $\Omega 3f$}}  at 1085 270
\pinlabel{$\leftrightsquigarrow$}  at 1630 220
\pinlabel{{\scriptsize $\Omega 2b$}}  at 1635 270
\endlabellist
\centering
\includegraphics[width=0.7\textwidth]{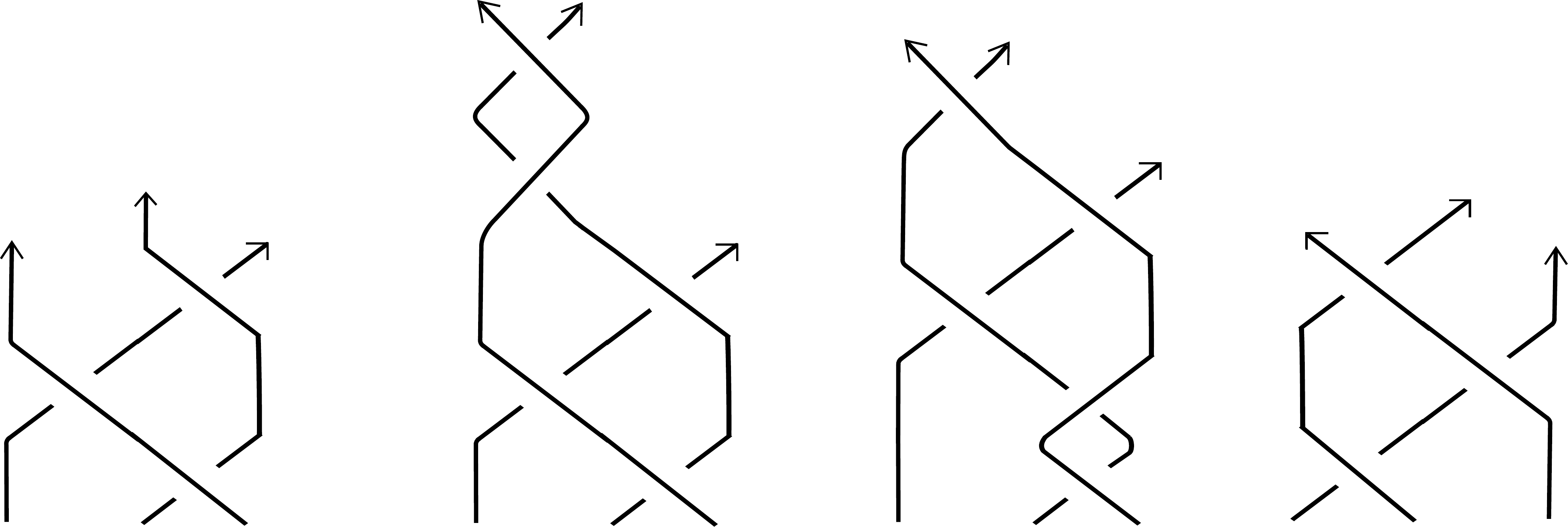}}.
\end{equation*}

\end{proof}

\begin{lemma}\label{lem:O3h1}
    The move $\Omega 3h1$ can be obtained  by a sequence of the moves $\Omega 0a$, $\Omega 2c2$, $\Omega 3g$, $\Omega 2d1$, $\Omega 0b$ and \eqref{eq:lemma1}.
\end{lemma}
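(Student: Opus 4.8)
The plan is to produce an explicit chain of rotational diagrams connecting the two sides of $\Omega 3h1$, in which every consecutive pair differs by exactly one of the six allowed moves, exactly as in the proofs of \cref{lem:O3b} and \cref{lem:O3c}. The underlying idea is that $\Omega 3h1$ is a cyclic (mirror-type) Reidemeister~3 move, whereas $\Omega 3g$ is braid-like; the two can be interchanged by first using a Reidemeister~2 move to bend one of the three strands so that all three crossings sit in a braid-like window, then performing $\Omega 3g$, and finally undoing the bend with another Reidemeister~2 move. The swirl moves $\Omega 0a$, $\Omega 0b$ and the auxiliary move \eqref{eq:lemma1} are there purely to keep every intermediate picture rotational, since bending a strand past a crossing generically creates a maximum--minimum pair that must be absorbed into a full rotation.

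Concretely, I would start from the left-hand side of $\Omega 3h1$ and first apply $\Omega 0a$ to insert a full rotation on the strand that has to be rerouted, then use \eqref{eq:lemma1} to slide the adjacent crossing past that rotation, bringing the diagram into a shape where a braid-like region is available. Next I would apply $\Omega 2c2$ to introduce a cancelling pair of crossings that opens up the triangular region and presents the three crossings in exactly the braid configuration of $\Omega 3g$. Applying $\Omega 3g$ then performs the core exchange of the three strands. Finally I would undo the scaffolding in reverse: $\Omega 2d1$ removes the auxiliary crossing pair, and $\Omega 0b$ together with \eqref{eq:lemma1} absorbs and deletes the full rotation, landing on the right-hand side of $\Omega 3h1$.

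The main obstacle will be the Morse/rotational bookkeeping rather than the topology. Because $\Omega 3h1$ carries the mirror (h-type) orientation, the signs of the three crossings and the induced strand orientations are opposite to those appearing in the a-type constructions, so I must check that the particular type~2 moves chosen, namely $\Omega 2c2$ on the way in and $\Omega 2d1$ on the way out, match the orientations at every crossing and that they are the correct members of the $\Omega 2c$/$\Omega 2d$ families. Equally delicate is the count of maxima and minima: each application of \eqref{eq:lemma1} trades a full rotation against a crossing, and the single pair $\Omega 0a$/$\Omega 0b$ must exactly compensate the net cup--cap creation so that no intermediate diagram ever violates the rotational condition and the endpoints return to their original heights. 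Verifying that these two applications of \eqref{eq:lemma1} and the two swirls balance perfectly is the part that genuinely requires drawing the explicit picture.
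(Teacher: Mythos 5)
Your proposal is correct and follows essentially the same route as the paper: the paper's proof is exactly such a chain of diagrams, applying $\Omega 0a$, then $\Omega 2c2$ to open a braid-like window, then the core move $\Omega 3g$, and then undoing the scaffolding with $\Omega 2d1$, $\Omega 0b$ and \eqref{eq:lemma1}. The only (immaterial) difference is the placement of \eqref{eq:lemma1}, which the paper uses once at the very end rather than on both sides of the sandwich; the remaining work you defer, namely drawing the intermediate rotational diagrams, is precisely what the paper's figure supplies.
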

\begin{proof}
    We have
\begin{equation*}
\centre{
\labellist \small \hair 2pt
\pinlabel{$\leftrightsquigarrow$}  at 420 180
\pinlabel{{\scriptsize $\Omega 0a$}}  at 425 250
\pinlabel{$\leftrightsquigarrow$}  at 1000 180
\pinlabel{{\scriptsize $\Omega 2c2$}}  at 1005 250
\pinlabel{$\leftrightsquigarrow$}  at 1600 180
\pinlabel{{\scriptsize $\Omega 3g$}}  at 1605 250
\endlabellist
\centering
\includegraphics[width=0.75\textwidth]{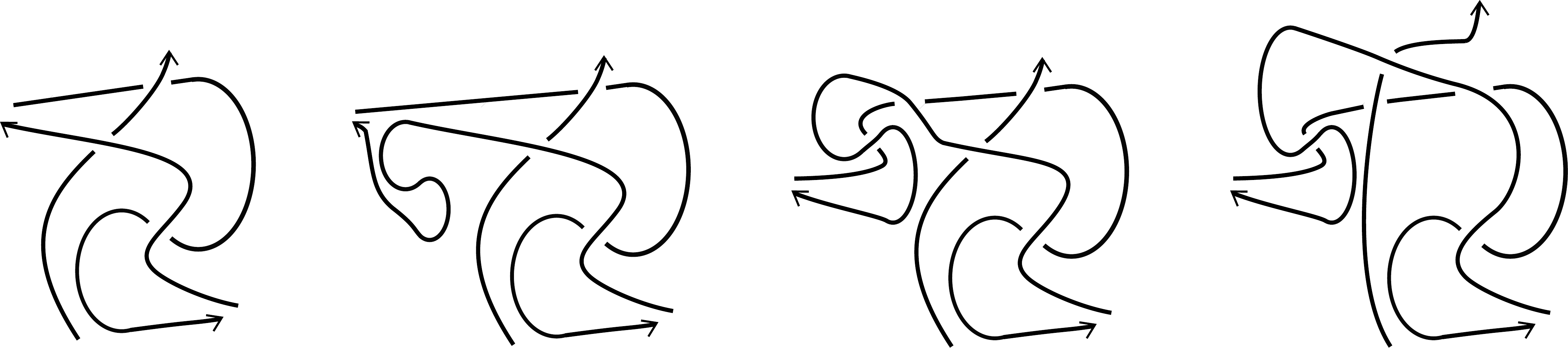}}
\end{equation*}
\begin{equation*}\phantom{------------}
\centre{
\labellist \small \hair 2pt
\pinlabel{$\leftrightsquigarrow$}  at -70 260
\pinlabel{{\scriptsize $\Omega 2d1$}}  at -65 330
\pinlabel{$\leftrightsquigarrow$}  at 570 260
\pinlabel{{\scriptsize $\Omega 0b$}}  at 575 330
\pinlabel{$\leftrightsquigarrow$}  at 1130 260
\pinlabel{{\scriptsize \eqref{eq:lemma1}}}  at 1135 330
\endlabellist
\centering
\includegraphics[width=0.6\textwidth]{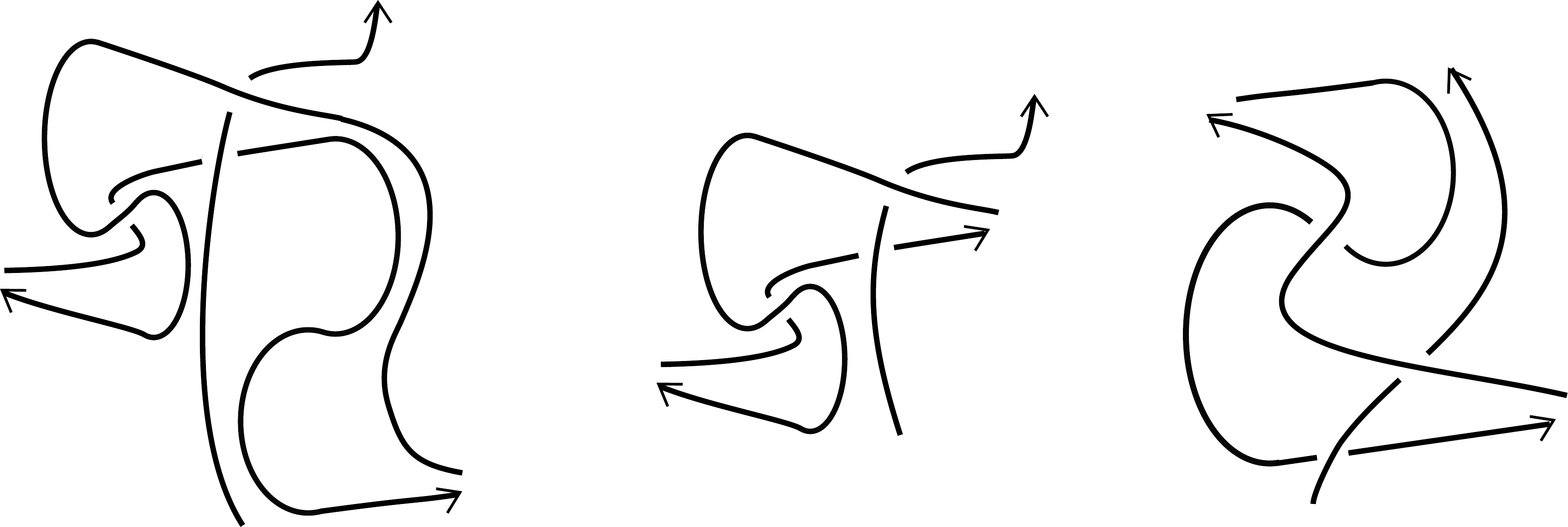}}.
\end{equation*}
\end{proof}


\begin{lemma}\label{lem:O3h}
Each of the moves  $\Omega 3h2$ -- $\Omega 3h6$ can be obtained from $\Omega 3h1$, $\Omega 0b$ and the move in \eqref{eq:lemma2}.
\end{lemma}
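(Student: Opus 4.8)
The plan is to run the mirror image of the argument in \cref{lem:O3a}. Recall from the introduction that the family $\Omega 3h1$--$\Omega 3h6$ consists precisely of the vertical mirror images of $\Omega 3a1$--$\Omega 3a6$. A reflection across a vertical axis preserves the upward direction of every strand (the $y$-component of each tangent vector stays positive), so it sends rotational diagrams to rotational diagrams; it reverses crossing signs, carrying the $\Omega 3a$-type local picture to the $\Omega 3h$-type one; and it interchanges clockwise and counterclockwise full rotations. Under this reflection the swirl $\Omega 0a = Sw^{+}(\uparrow)$ is carried to $\Omega 0b = Sw^{-}(\uparrow)$, and by \cref{lem:the_lemma} the move \eqref{eq:lemma1} is carried to its mirror \eqref{eq:lemma2}.

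Granting this, reflecting each of the five diagrammatic sequences in the proof of \cref{lem:O3a} across a vertical axis yields five valid derivations, now relating the $\Omega 3h$ moves and using only $\Omega 0b$ and \eqref{eq:lemma2}. Concretely, I would obtain $\Omega 3h2$ from $\Omega 3h1$ by two applications of $\Omega 0b$; then $\Omega 3h3$ from $\Omega 3h2$ using \eqref{eq:lemma2} twice; then $\Omega 3h4$ from $\Omega 3h3$ via $\Omega 0b$; then $\Omega 3h5$ from $\Omega 3h4$ via \eqref{eq:lemma2}; and finally $\Omega 3h6$ from $\Omega 3h5$ via $\Omega 0b$, each picture being the vertical reflection of the corresponding one in \cref{lem:O3a}. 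As in \cref{rem:O3a1_from_O3a6}, the resulting chain is reversible, so in fact any single $\Omega 3hk$ together with $\Omega 0b$ and \eqref{eq:lemma2} suffices to generate the whole family.

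The only genuine point to verify --- and the step I would treat most carefully --- is the index bookkeeping: a vertical reflection conjugates a rotation by $2\pi/6$ into its inverse, so a priori it could permute the labels $1,\dots,6$ rather than fix them. This is harmless for the generation statement, since the reflected sequences connect all six $\Omega 3h$ moves into a single chain anchored at the mirror of $\Omega 3a1$; one only needs to confirm that the labelling convention for the $\Omega 3hk$ is chosen so that $\Omega 3h1$ is the mirror of $\Omega 3a1$, after which the alternating pattern above reproduces the statement verbatim. No diagrammatic work beyond reflecting the figures of \cref{lem:O3a} is then required.
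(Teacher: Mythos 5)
Your proposal is correct, and it is in substance the mirror of what the paper does --- but you get there by a symmetry meta-argument where the paper simply redraws the pictures. The paper's own proof of \cref{lem:O3h} consists of five explicit diagrammatic derivations that are precisely the vertical reflections of those in \cref{lem:O3a}: it chains $\Omega 3h1 \rightarrow \Omega 3h6 \rightarrow \Omega 3h5 \rightarrow \Omega 3h4 \rightarrow \Omega 3h3 \rightarrow \Omega 3h2$, alternating \eqref{eq:lemma2} and $\Omega 0b$. Note that this confirms the very subtlety you flagged: the reflection reverses the cyclic order of the rotation labels, so the first step out of $\Omega 3h1$ uses \eqref{eq:lemma2} (landing on $\Omega 3h6$), not $\Omega 0b$ landing on $\Omega 3h2$ as in your concrete bookkeeping. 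Your fallback argument handles this correctly --- the reflected chain still connects all six $\Omega 3h$ moves, each step using only $\Omega 0b$ and \eqref{eq:lemma2}, and each step is a conjugation by invertible auxiliary moves, hence reversible (exactly as \cref{rem:O3a1_from_O3a6} and \cref{rem:O3h1_from_O3h2} record for the two anchors). The ingredients of your reflection argument are all legitimately available: reflection across a vertical axis preserves upward-pointing tangents and Morse structure, swaps $Sw^{+}(\uparrow)=\Omega 0a$ with $Sw^{-}(\uparrow)=\Omega 0b$, carries \eqref{eq:lemma1} to \eqref{eq:lemma2} (stated in \cref{lem:the_lemma}), and carries the $\Omega 3a$ family onto the $\Omega 3h$ family (stated in the introduction). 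What your route buys is economy and a structural explanation of why the two lemmas are "the same"; what the paper's route buys is self-contained verifiability --- a referee can check each picture without trusting that the reflection functor behaves as claimed on every named move.
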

\begin{proof}
We realize $\Omega 3h6$ from $\Omega 3h1$ and \eqref{eq:lemma2} via the moves
        \begin{equation*}
\centre{
\labellist \small \hair 2pt
\pinlabel{$\leftrightsquigarrow$}  at 460 330
\pinlabel{{\scriptsize \eqref{eq:lemma2}}}  at 465 400
\pinlabel{$\leftrightsquigarrow$}  at 980 330
\pinlabel{{\scriptsize $\Omega 3h1$}}  at 985 400
\pinlabel{$\leftrightsquigarrow$}  at 1500 330
\pinlabel{{\scriptsize \eqref{eq:lemma2}}}  at 1505 400
\endlabellist
\centering
\includegraphics[width=0.7\textwidth]{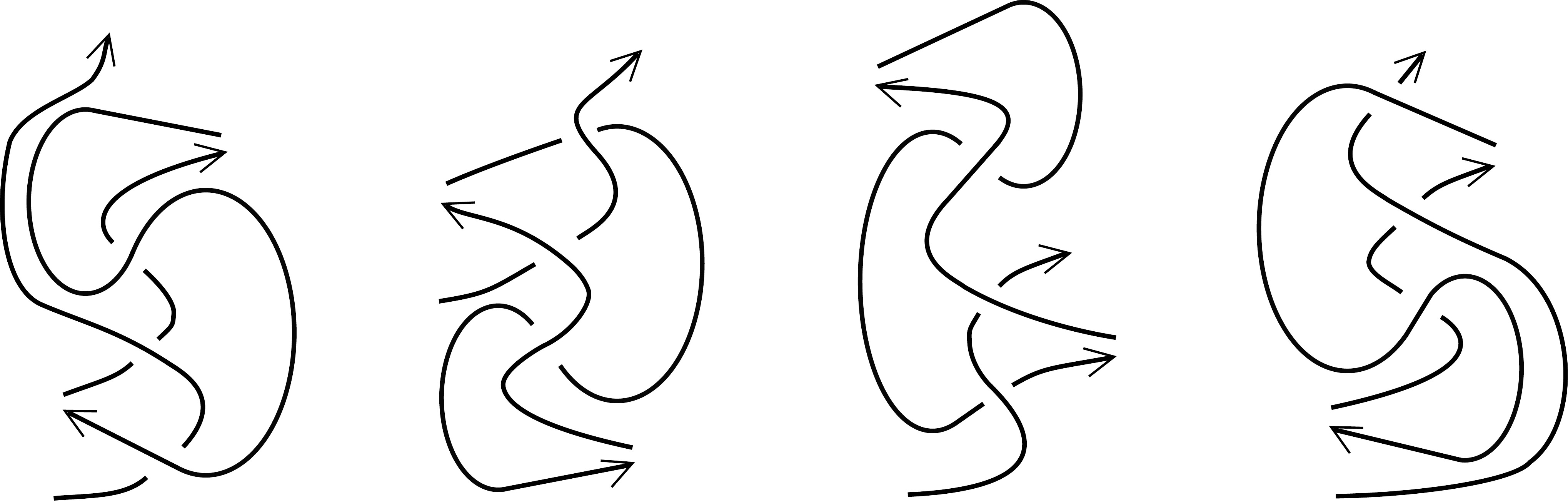}}.
\end{equation*}

We realise $\Omega 3h5$ from $\Omega 3h6$ and $\Omega 0b$ via the moves
     \begin{equation*}
\centre{
\labellist \small \hair 2pt
\pinlabel{$\leftrightsquigarrow$}  at 460 330
\pinlabel{{\scriptsize $\Omega 0b$}}  at 465 400
\pinlabel{$\leftrightsquigarrow$}  at 980 330
\pinlabel{{\scriptsize $\Omega 3h6$}}  at 985 400
\pinlabel{$\leftrightsquigarrow$}  at 1500 330
\pinlabel{{\scriptsize $\Omega 0b$}}  at 1505 400
\endlabellist
\centering
\includegraphics[width=0.7\textwidth]{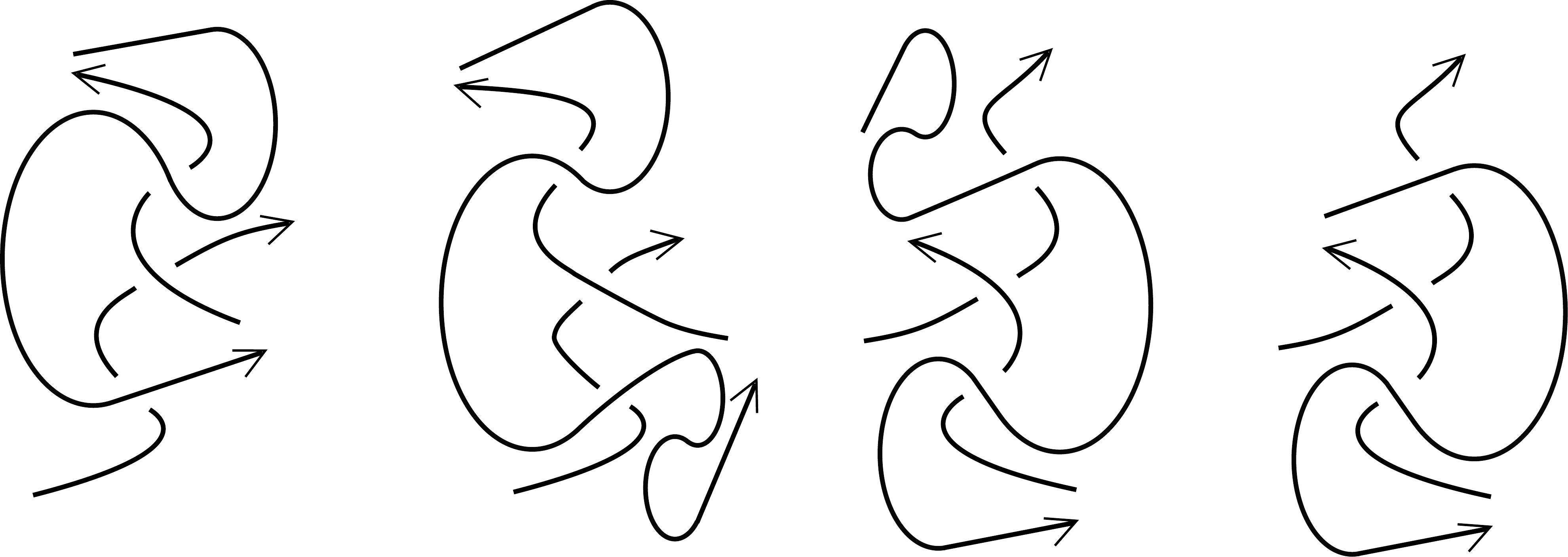}}.
\end{equation*}

We realise $\Omega 3h4$ from $\Omega 3h5$ and \eqref{eq:lemma2} via the moves
     \begin{equation*}
\centre{
\labellist \small \hair 2pt
\pinlabel{$\leftrightsquigarrow$}  at 460 330
\pinlabel{{\scriptsize \eqref{eq:lemma2}}}  at 465 400
\pinlabel{$\leftrightsquigarrow$}  at 980 330
\pinlabel{{\scriptsize $\Omega 3h5$}}  at 985 400
\pinlabel{$\leftrightsquigarrow$}  at 1500 330
\pinlabel{{\scriptsize \eqref{eq:lemma2}}}  at 1505 400
\endlabellist
\centering
\includegraphics[width=0.7\textwidth]{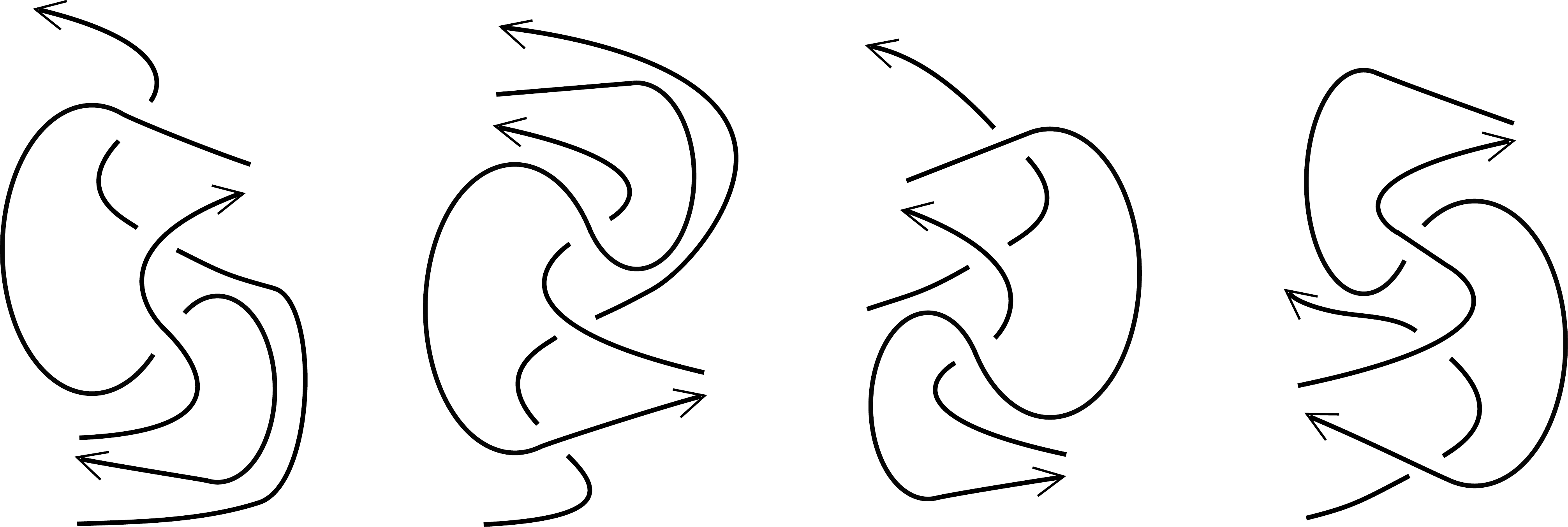}}.
\end{equation*}

We realise $\Omega 3h3$ from $\Omega 3h4$ and $\Omega 0b$ via the moves
     \begin{equation*}
\centre{
\labellist \small \hair 2pt
\pinlabel{$\leftrightsquigarrow$}  at 460 330
\pinlabel{{\scriptsize $\Omega 0b$}}  at 465 400
\pinlabel{$\leftrightsquigarrow$}  at 950 330
\pinlabel{{\scriptsize $\Omega 3h4$}}  at 955 400
\pinlabel{$\leftrightsquigarrow$}  at 1500 330
\pinlabel{{\scriptsize $\Omega 0b$}}  at 1505 400
\endlabellist
\centering
\includegraphics[width=0.7\textwidth]{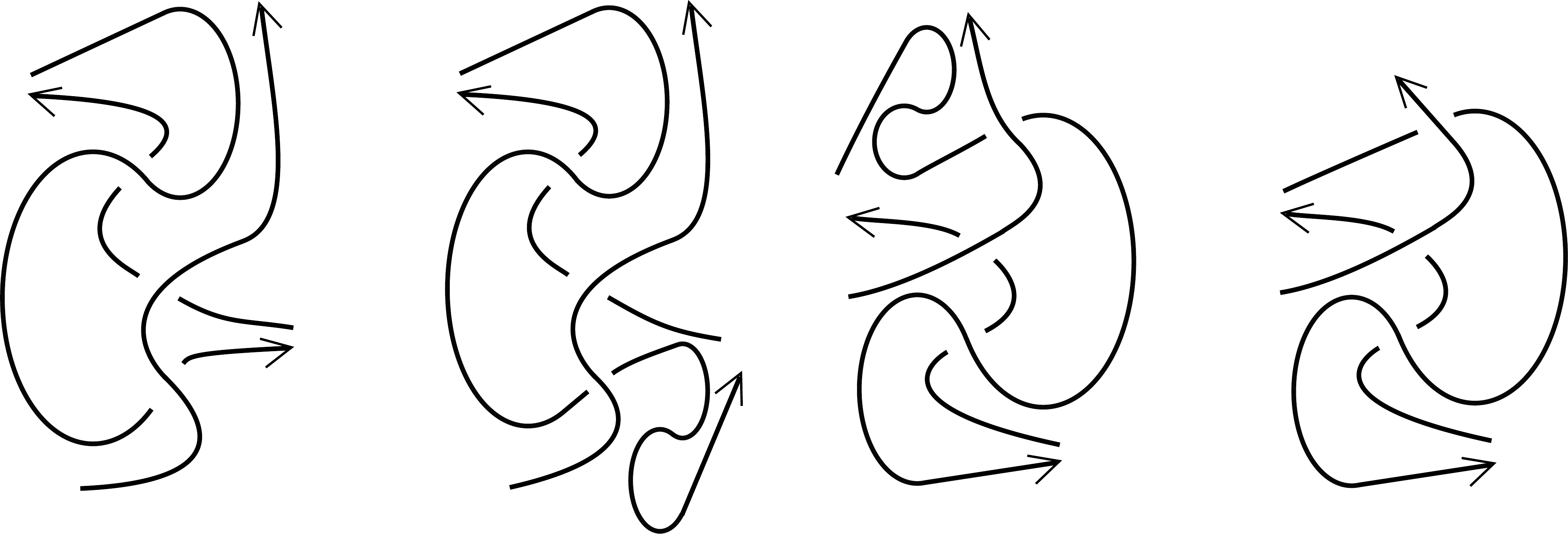}}.
\end{equation*}

We realise $\Omega 3h2$ from $\Omega 3h3$ and \eqref{eq:lemma1} via the moves
     \begin{equation*}
\centre{
\labellist \small \hair 2pt
\pinlabel{$\leftrightsquigarrow$}  at 460 330
\pinlabel{{\scriptsize \eqref{eq:lemma2}}}  at 465 400
\pinlabel{$\leftrightsquigarrow$}  at 980 330
\pinlabel{{\scriptsize $\Omega 3h3$}}  at 985 400
\pinlabel{$\leftrightsquigarrow$}  at 1500 330
\pinlabel{{\scriptsize \eqref{eq:lemma2}}}  at 1505 400
\endlabellist
\centering
\includegraphics[width=0.7\textwidth]{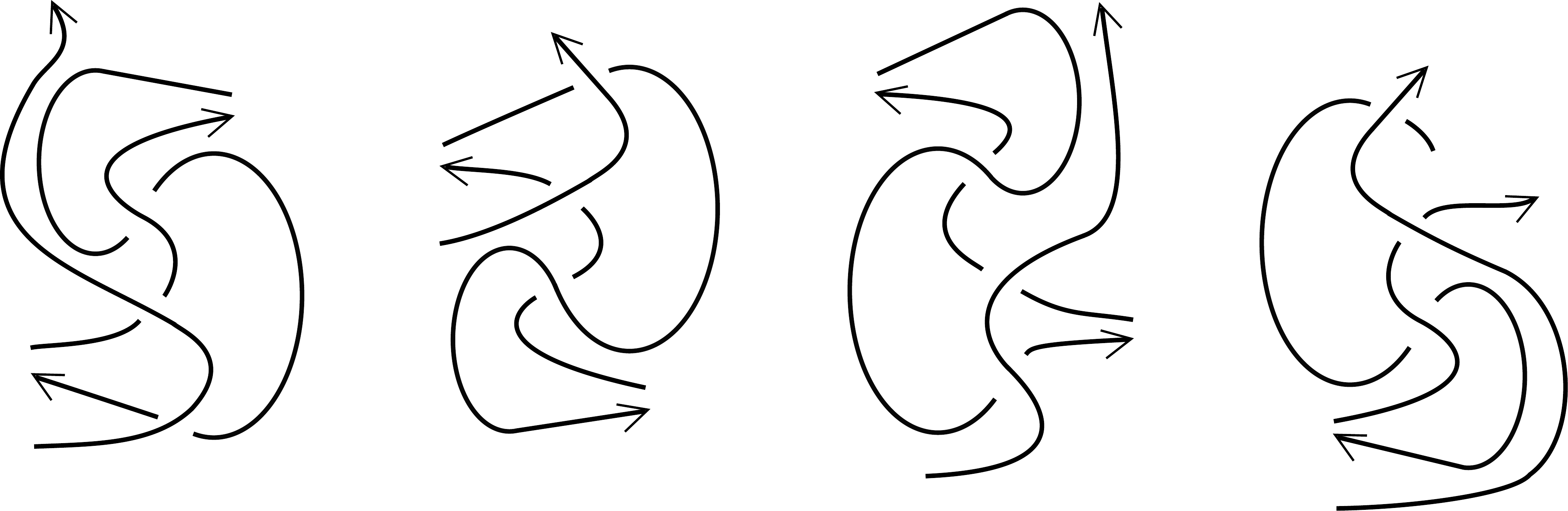}}.
\end{equation*}

\end{proof}

\begin{remark}\label{rem:O3h1_from_O3h2}
    Note that we can realize $\Omega 3h1$ from $\Omega 3h2$ and $\Omega 0b$ via the moves
        \begin{equation*}
\centre{
\labellist \small \hair 2pt
\pinlabel{$\leftrightsquigarrow$}  at 420 330
\pinlabel{{\scriptsize $\Omega 0b$}}  at 425 400
\pinlabel{$\leftrightsquigarrow$}  at 980 330
\pinlabel{{\scriptsize $\Omega 3h2$}}  at 985 400
\pinlabel{$\leftrightsquigarrow$}  at 1500 330
\pinlabel{{\scriptsize $\Omega 0b$}}  at 1505 400
\endlabellist
\centering
\includegraphics[width=0.7\textwidth]{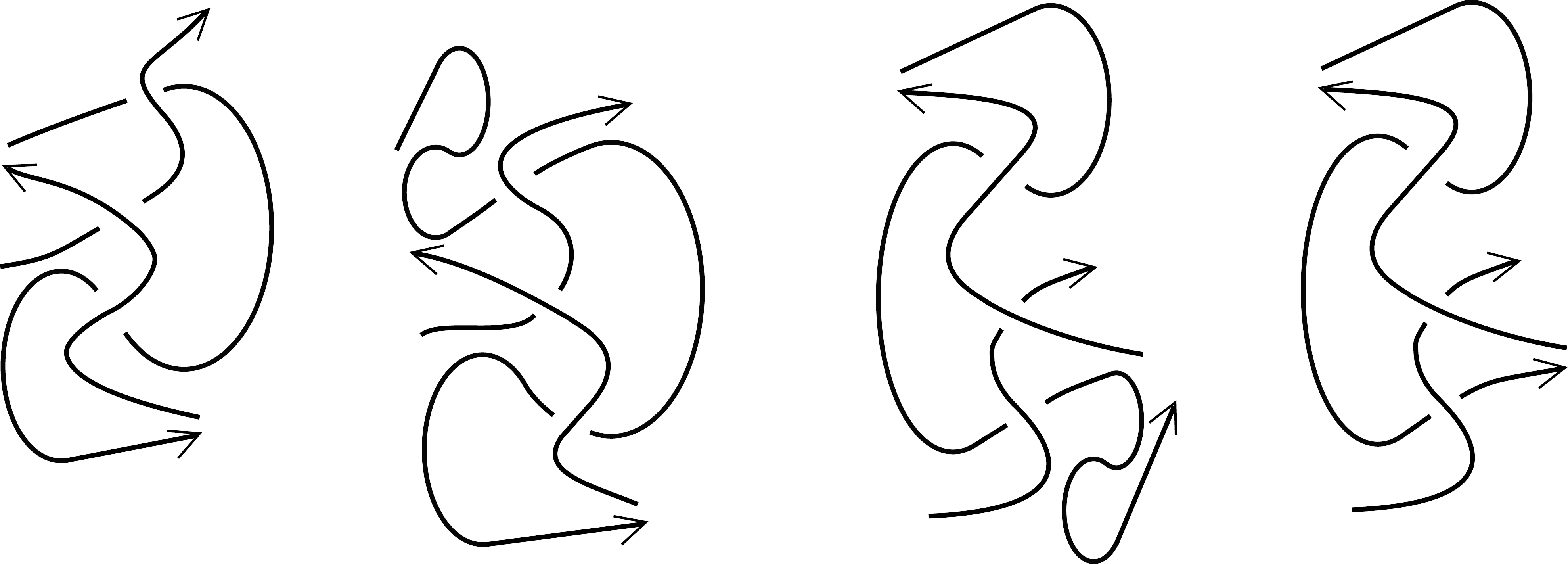}},
\end{equation*}
so given $\Omega 0b$ and \eqref{eq:lemma1}, we can realize all of the $\Omega 3hk$ ($1 \leq k \leq 6$) from just one of them.
\end{remark}

These lemmas, together with \cref{cor:planar->Morse}, conclude the proof of \cref{thm:1}.












\subsection{Other generating sets of rotational Reidemeister moves}

We now pass to describe different generating sets of rotational Reidemeister moves. As it was mentioned in the introduction, Caprau and Scott explicitly described in \cite{CS} all minimal sets of oriented Reidemeister moves. We can easily adapt their arguments in the rotational setting.

Consider the following three sets,
$$\begin{aligned}
\mathcal{O} := &\{ \Omega 0 a, \Omega 0 b, \Omega 0 c, \Omega 0 d   \}, \\
\mathcal{A} :=  &\{\{\Omega1a,\Omega1c,\Omega2a,\Omega3ak\},\{\Omega1a,\Omega1c,\Omega2b,\Omega3ak\},\{\Omega1b,\Omega1d,\Omega2a,\Omega3ak\},\\
&\{\Omega1b,\Omega1d,\Omega2b,\Omega3ak\},\{\Omega1a,\Omega1b,\Omega2a,\Omega3ak\},\{\Omega1a,\Omega1b,\Omega2b,\Omega3ak\}\}_{1 \leq k \leq 6}\\
\mathcal{H}:= &\{\{\Omega1a,\Omega1c,\Omega2a,\Omega3hk\},\{\Omega1a,\Omega1c,\Omega2b,\Omega3hk\},\{\Omega1b,\Omega1d,\Omega2a,\Omega3hk\},\\
&\{\Omega1b,\Omega1d,\Omega2b,\Omega3hk\},\{\Omega1c,\Omega1d,\Omega2a,\Omega3hk\},\{\Omega1c,\Omega1d,\Omega2b,\Omega3hk\}\}_{1 \leq k \leq 6},
\end{aligned}$$
where we remark that $\mathcal{A}$ and $\mathcal{H}$ are 36-element sets each of them.

\begin{theorem}\label{thm:all_min_sets}
Let $S$ be a minimal (i.e. 8-element) generating set of rotational Reidemeister moves. Then $S=\mathcal{O} \amalg X$ where $X \in \mathcal{A} \amalg \mathcal{H}$.
\end{theorem}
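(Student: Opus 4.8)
The plan is to isolate the two ``orthogonal'' pieces of a generating set---the swirl moves $\Omega 0a$--$\Omega 0d$, which govern the passage from Morse planar isotopy to planar isotopy, and the genuine Reidemeister moves, which govern the passage from planar isotopy to tangle isotopy---and to reduce the second piece to the oriented classification of Caprau and Scott \cite{CS}. Concretely, I would first prove that $\mathcal{O}\subseteq S$ for \emph{every} generating set $S$, so that for a minimal ($8$-element) $S$ the complement $X:=S\setminus\mathcal{O}$ consists of exactly four moves, each of type $\Omega 1$, $\Omega 2$ or $\Omega 3$. I would then show that projecting away the rotational (Morse) data sends $X$ to a minimal generating set of \emph{oriented} Reidemeister moves, invoke \cite{CS} to list the possibilities, and finally lift each possibility back to pin down $X$ as an element of $\mathcal{A}\amalg\mathcal{H}$.

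For the first step I would argue that the four swirls are necessary and mutually irreducible even in the presence of all $\Omega 1,\Omega 2,\Omega 3$ moves. The passage from Morse planar isotopy to planar isotopy is governed precisely by $\Omega 0a$--$\Omega 0d$ (\cref{cor:planar->Morse}), and the remark following that corollary already records that none of the four can be obtained from the other three using only swirls and Morse isotopy. To upgrade this to irreducibility against the full Reidemeister repertoire, I would track numerical quantities that $\Omega 1,\Omega 2,\Omega 3$ and Morse isotopy cannot adjust the way the swirls do: the total rotation (turning) number separates the single-strand swirls $\Omega 0a,\Omega 0b$ from the crossing type and fixes their handedness, while a crossing-sign-sensitive refinement (recording the writhe carried around a full rotation) is needed to tell $\Omega 0c$ apart from $\Omega 0d$. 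I expect this to be the conceptually subtle part, since the crude pair (writhe, rotation number) alone does not separate all four swirl types and the bidirectionality of the moves forbids a naive counting argument.

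With $\mathcal{O}\subseteq S$ in hand, let $\pi$ denote the forgetful assignment sending a rotational diagram to its underlying oriented diagram up to planar isotopy; it sends each $\Omega 0$ move to a planar isotopy and each $\Omega 1,\Omega 2,\Omega 3$ move to the corresponding oriented Reidemeister move. The key lemma is that $\pi$ preserves ``generating'': given two oriented diagrams of one tangle, lift them to rotational diagrams by \cref{lem:rot_diagram}, apply an $S$-derivation, and push it down through $\pi$ to obtain an oriented derivation using only moves in $\pi(X)$. Hence $\pi(X)$ is an oriented generating set, so $|\pi(X)|\ge 4$ by the Polyak bound \cite{polyak10}; since $|X|=4$ this forces $\pi|_X$ to be injective and $\pi(X)$ to be a minimal oriented generating set. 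By \cite{CS} there are exactly twelve such sets, each consisting of two $\Omega 1$ moves, one $\Omega 2$ move, and one $\Omega 3$ move of cyclic type $\Omega 3a$ or of its mirror $\Omega 3h$.

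It remains to lift. The six rotational moves $\Omega 3a1$--$\Omega 3a6$ (resp. $\Omega 3h1$--$\Omega 3h6$) all project to the single oriented move $\Omega 3a$ (resp. $\Omega 3h$), so the $\Omega 3$ factor of $X$ is determined only up to the rotation index $k$, producing exactly the free parameter $1\le k\le 6$ appearing in $\mathcal{A}$ and $\mathcal{H}$. The four rotational $\Omega 1$ moves project bijectively to the four oriented $\Omega 1$ moves, so the $\Omega 1$ pair in $X$ is the unique lift of the Caprau--Scott pair; matching these against \cite{CS} should yield precisely the pairs $\{\Omega 1a,\Omega 1c\},\{\Omega 1b,\Omega 1d\},\{\Omega 1a,\Omega 1b\}$ for the $\Omega 3a$ sets and $\{\Omega 1a,\Omega 1c\},\{\Omega 1b,\Omega 1d\},\{\Omega 1c,\Omega 1d\}$ for the $\Omega 3h$ sets. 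Finally one checks that the oriented $\Omega 2$ move occurring in any Caprau--Scott set is of the type lifting \emph{uniquely} to $\Omega 2a$ or $\Omega 2b$ (never to the doubly-covered $\Omega 2c$ or $\Omega 2d$), so the $\Omega 2$ factor is forced into $\{\Omega 2a,\Omega 2b\}$. Assembling these constraints shows $X\in\mathcal{A}$ when the type is $\Omega 3a$ and $X\in\mathcal{H}$ when it is $\Omega 3h$. The main bookkeeping obstacle is establishing this precise dictionary between the paper's rotational pictures and the oriented moves of \cite{CS}; here I would exploit the mirror symmetry $\Omega 1a\leftrightarrow\Omega 1d$, $\Omega 1b\leftrightarrow\Omega 1c$, $\Omega 3a\leftrightarrow\Omega 3h$ to deduce the $\mathcal{H}$ case from the $\mathcal{A}$ case, roughly halving the verification.
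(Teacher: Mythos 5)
Your second half is sound and is essentially the paper's route: the paper's (very terse) proof is exactly ``Caprau--Scott Theorem 2.24, plus minimality considerations for $\mathcal{O}$, plus adapting the realisations of \cref{sec:proof_1.1}''. Projecting the non-swirl moves of $S$ to oriented diagrams, noting that swirls project to planar isotopies, invoking Polyak's lower bound to force injectivity and minimality of the projected set, quoting the twelve Caprau--Scott sets, and then lifting back (unique lifts for $\Omega 1$, $\Omega 2a$, $\Omega 2b$; a sixfold ambiguity $1\le k\le 6$ for $\Omega 3a$ and $\Omega 3h$) is a correct and more explicit account of that reduction, and your dictionary of $\Omega 1$ pairs matches $\mathcal{A}$ and $\mathcal{H}$.

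The genuine gap is your first step. You propose to prove that $\mathcal{O}\subseteq S$ for \emph{every} generating set $S$, by exhibiting numerical invariants (rotation number, a crossing-sign refinement) showing the four swirls are irreducible ``even in the presence of all $\Omega 1,\Omega 2,\Omega 3$ moves''. This statement is false, and the paper itself supplies the counterexample: in the discussion following \cref{thm:polyak_1.2_rot} it is observed that $\Omega 0a$ is a consequence of $\Omega 1b$, $\Omega 1c$ and $\Omega 2d1$ (read the proof of \cref{lem:O1c_O1d} backwards), and $\Omega 0b$ of $\Omega 1a$, $\Omega 1d$ and $\Omega 2c1$, so that the $9$-element set $\{\Omega 0c,\Omega 0d,\Omega 1a,\Omega 1b,\Omega 1c,\Omega 1d,\Omega 2c1,\Omega 2d1,\Omega 3b\}$ is generating while omitting $\Omega 0a$ and $\Omega 0b$. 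Accordingly, no invariant of the kind you describe can exist for the single-strand swirls: a rotational $\Omega 1$ kink changes the rotation number of its component by $\pm 1$ exactly as $\Omega 0a$, $\Omega 0b$ do, and this is precisely what the derivation above exploits. What the theorem actually needs is the weaker, cardinality-sensitive statement that an \emph{eight}-element generating set must contain $\mathcal{O}$; this cannot follow from irreducibility alone but requires a counting argument (deriving $\Omega 0a$ or $\Omega 0b$ without having them costs additional type~1 and type~2 moves beyond the four non-swirl moves already forced by Polyak's bound, pushing the total past eight), together with an irreducibility argument reserved for the crossing swirls $\Omega 0c$, $\Omega 0d$ only. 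The minimality recorded after \cref{cor:planar->Morse} is also weaker than what you invoke: it only says no swirl is a consequence of the other three swirls, not of the Reidemeister moves. As written, your step~1 sets out to prove a false statement, so the plan cannot be carried out without replacing that step.
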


This follows directly from \cite[Theorem 2.24]{CS} and the minimality of the set $\mathcal{O}$ derived in \cref{cor:planar->Morse}, carefully adapting the realisations of the moves to the rotational setting as in \cref{sec:proof_1.1}.

In a similar fashion, we can adapt \cite[Theorem 1.2]{polyak10} with little effort to the rotational setting.

\begin{theorem}\label{thm:polyak_1.2_rot}
Let $S = \mathcal{O} \amalg X$ where $X$ is a set of at most five rotational Reidemeister moves of type 1,2 and 3  which contains $\Omega 3b$ as the only rotational Reidemeister move of type 3.

Then $S$ is generating if and only if $X$ contains one move of type $\Omega 2c$ (i.e. $\Omega 2c1$ or $\Omega 2c2$), one move of type $\Omega 2d$, and 
one of the pairs $$ (\Omega 1a, \Omega 1b) \quad , \quad (\Omega 1a, \Omega 1c) \quad , \quad (\Omega 1b, \Omega 1d) \quad , \quad (\Omega 1c, \Omega 1d). $$
\end{theorem}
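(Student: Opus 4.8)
The statement is an equivalence, so my plan is to treat sufficiency and necessity separately. I will use throughout that $\mathcal{O}\subseteq S$, so that by \cref{cor:planar->Morse} all four $\Omega 0$ moves are available, and hence so are the auxiliary moves \eqref{eq:lemma1} and \eqref{eq:lemma2}: both follow from $\mathcal{O}$ via \cref{lem:the_lemma}, using that $Sw^-(\PC)$ is itself a consequence of $\mathcal{O}$. I will also use repeatedly that each rotational Reidemeister move is a reversible equivalence, so that any realisation from \cref{sec:proof_1.1} may be read backwards to extract one of its intermediate moves from the others.

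For sufficiency I would aim to derive one of the minimal generating sets classified in \cref{thm:all_min_sets}, namely $\mathcal{O}\amalg\{\Omega1a,\Omega1b,\Omega2a,\Omega3a1\}$. First, from the single $\Omega2c$ move in $X$ I recover both $\Omega2c1$ and $\Omega2c2$ through \cref{lem:O2c2_O2d2} and its reverse (at the cost only of $\Omega0a$ and \eqref{eq:lemma2}), and symmetrically both $\Omega2d$ moves. Reading \cref{lem:O3b} backwards then produces $\Omega3a4$ from $\Omega3b$, $\Omega0a$, $\Omega2c1$ and $\Omega2d2$, after which \cref{lem:O3a} and \cref{rem:O3a1_from_O3a6} deliver every $\Omega3ak$, in particular $\Omega3a1$ and $\Omega3a3$. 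Next I would check, using \cref{lem:O1c_O1d} and its reverse, that each of the four admissible pairs yields both $\Omega1a$ and $\Omega1b$ (the pair $(\Omega1a,\Omega1b)$ directly, and otherwise recovering the missing move, e.g.\ $\Omega1b$ from $\Omega0a$, $\Omega2d1$, $\Omega1c$, or $\Omega1a$ from $\Omega0b$, $\Omega2c1$, $\Omega1d$). Finally, reversing the realisation of $\Omega2c1$ as a combination of $\Omega1a$, $\Omega2a$ and $\Omega3a3$ gives $\Omega2a$. Thus $S$ derives the generating set $\mathcal{O}\amalg\{\Omega1a,\Omega1b,\Omega2a,\Omega3a1\}$, and is therefore itself generating.

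For necessity I would reduce to \cite[Theorem 1.2]{polyak10} by forgetting the rotational structure. Sending a rotational diagram to its underlying oriented diagram up to planar isotopy turns Morse isotopies and the $\Omega0$ moves into planar isotopies (\cref{prop:swirls}) and each move of $X$ into its ordinary oriented counterpart, giving a set $\bar X$. If $S$ is generating then, given two ordinary diagrams of the same tangle, I make them rotational by \cref{lem:rot_diagram}, connect them by $S$, and push forward; hence $\bar X$ is a generating set of ordinary oriented Reidemeister moves with $\Omega3b$ as its only type-3 move and at most five elements. Polyak's theorem forces $\bar X$ to contain one $\Omega2c$ and one $\Omega2d$ move, which pull back to the required rotational moves. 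For the $\Omega1$ pair I prefer a direct invariant: the writhe $w$ and the rotation number $\rot$ are fixed by $\mathcal{O}$ and by every $\Omega2$ and $\Omega3$ move, while \cref{lem:O1c_O1d} shows the four $\Omega1$ moves split into two classes $\{\Omega1a,\Omega1d\}$ and $\{\Omega1b,\Omega1c\}$, interderivable within each class but not across. Concretely these classes move $w+\rot$ and $w-\rot$ independently, so realising both of the admissible differences $(\Delta w,\Delta\rot)=(1,1)$ and $(1,-1)$ between equivalent diagrams (e.g.\ a straight arc against arcs carrying a single curl of either kind) requires at least one move from each class; this excludes precisely the diagonal pairs $(\Omega1a,\Omega1d)$ and $(\Omega1b,\Omega1c)$ and leaves the four listed pairs.

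The main obstacle I anticipate is the necessity direction's bookkeeping, rather than any single hard computation: one must verify that the forgetful map is type-preserving, so that Polyak's ordinary classification transfers without loss to the rotational types $\Omega2c1/\Omega2c2$ and $\Omega2d1/\Omega2d2$, and that the count of five is sharp once $X$ is identified with $\bar X$. Should one wish to argue intrinsically instead, the obstacle becomes the construction of two $\Omega2$-detecting invariants—changed by $\Omega2c$ (resp.\ $\Omega2d$) but fixed by $\mathcal{O}$, all $\Omega1$ moves, $\Omega3b$ and the opposite-handed $\Omega2$ moves—together with the routine check that they are $\Omega0$-invariant; this is the only point where the rotational setting asks for more than a transcription of \cite{polyak10}.
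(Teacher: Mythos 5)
Your necessity direction is sound: projecting a rotational derivation down to ordinary oriented diagrams (Morse isotopies and $\Omega 0$ moves become planar isotopies, each move of $X$ becomes its oriented counterpart) and then invoking the necessity half of \cite[Theorem 1.2]{polyak10} is a legitimate alternative to the paper's in-place (``verbatim'') adaptation of Polyak's invariant arguments, and your writhe/rotation-number analysis of the $\Omega 1$ pairs is essentially the paper's. The problem lies entirely in your sufficiency direction, which rests on the principle announced in your first paragraph: that ``any realisation from \cref{sec:proof_1.1} may be read backwards to extract one of its intermediate moves from the others.'' Reversibility of each individual move $L \leftrightsquigarrow R$ does not give this. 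If a chain deriving a move $M$ from $T \cup \{N\}$ uses $N$ exactly once, at a step $C[L_N] \leftrightsquigarrow C[R_N]$ where $C$ is the context built up earlier in the chain (swirls, kinks, extra crossings threaded through the $N$-pattern), then reading the chain backwards shows only that $C[L_N]$ and $C[R_N]$ are related by $T \cup \{M\}$. That is a statement about the pattern \emph{together with its context}, not about the local move $N$: to conclude that $N$ itself is derivable you must additionally create the context $C$ around an arbitrary occurrence of $L_N$, and destroy it around $R_N$, using only $T$-moves, and this is never verified in your argument (verifying it amounts to exhibiting exactly the kind of explicit chain the paper draws). The sensitivity of such reversals is visible already in \cref{lem:O2b}: there $\Omega 1d$ must occur an even number of times in any realisation of $\Omega 2b$, because a single (net) use, if it could be ``read backwards,'' would derive the writhe-changing move $\Omega 1d$ from writhe-preserving ones.

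You invoke this unsound principle at the three load-bearing steps of sufficiency: extracting $\Omega 3a4$ from \cref{lem:O3b}; extracting $\Omega 2a$ by reversing the realisation of $\Omega 2c1$ from $\Omega 1a$, $\Omega 2a$, $\Omega 3a3$; and extracting the missing $\Omega 1$ moves by reversing \cref{lem:O1c_O1d}. The paper's proof supplies new \emph{forward} derivations at precisely these points: $\Omega 3a6$ is realised from $\Omega 2c2$, $\Omega 3b$ and $\Omega 2d1$ by conjugating $\Omega 3b$ with an antiparallel Reidemeister~2 pair (a new figure, not a reversal of \cref{lem:O3b}), after which \cref{rem:O3a1_from_O3a6} and \cref{lem:O3a} give all $\Omega 3a$ moves; $\Omega 2a$ is realised from $\Omega 1c$, $\Omega 2d1$ and $\Omega 3c$ by another explicit chain; and the missing Reidemeister~1 moves are obtained from the chosen pair through direct Polyak-style chains using the types $\Omega 2c$ and $\Omega 2d$. (Your passage between $\Omega 2c1$ and $\Omega 2c2$ via \cref{lem:O2c2_O2d2} ``and its reverse'' has the same formal defect, though there a symmetric chain supplies the missing direction, which is what the paper implicitly relies on.) Without these explicit derivations, or equivalents, your sufficiency argument does not go through; with them, your route and the paper's coincide, since both conclude by assembling a generating set as in \cref{thm:all_min_sets}.
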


Please note that we require all four $\Omega 0$ moves to be in the generating set in order to mimic Polyak's argument. However, there are other 9-element generating sets that do not include all four   $\Omega 0$ moves. For instance, according to the proof of \cref{lem:O1c_O1d}, the move $\Omega 0a$ can be obtained by a sequence of the moves $\Omega 1b$, $\Omega 1c$ and $\Omega 2d1$, and similarly the move $\Omega 0b$ can be obtained by a sequence of the moves $\Omega 1a$, $\Omega 1d$ and $\Omega 2c1$. Therefore, the 9-element set $$ S= \{ \Omega 0c, \Omega 0d, \Omega 1 a, \Omega 1b, \Omega 1c, \Omega 1d, \Omega 2c1, \Omega 2d1, \Omega 3b   \}$$ is also generating.

\begin{proof}
The proof runs in parallel to \cite[Theorem 1.2]{polyak10}, in particular the necessity part goes verbatim taking into account the four $\Omega 0$ moves. By \cref{lem:O2c2_O2d2} we can take any move of type $\Omega 2c$ and $\Omega 2d$ to get the other one. In order to obtain the $\Omega 3a$ moves, we first realise $\Omega 3a6 $ from $\Omega 3b$, $\Omega 2c2$ and $\Omega 2d1$ as follows:
\begin{equation*}
\centre{
\labellist \small \hair 2pt
\pinlabel{$\leftrightsquigarrow$}  at 420 210
\pinlabel{{\scriptsize $\Omega 2c2$}}  at 425 280
\pinlabel{$\leftrightsquigarrow$}  at 950 210
\pinlabel{{\scriptsize $\Omega 3b$}}  at 955 280
\pinlabel{$\leftrightsquigarrow$}  at 1520 210
\pinlabel{{\scriptsize $\Omega 2d1$}}  at 1525 280
\endlabellist
\centering
\includegraphics[width=0.7\textwidth]{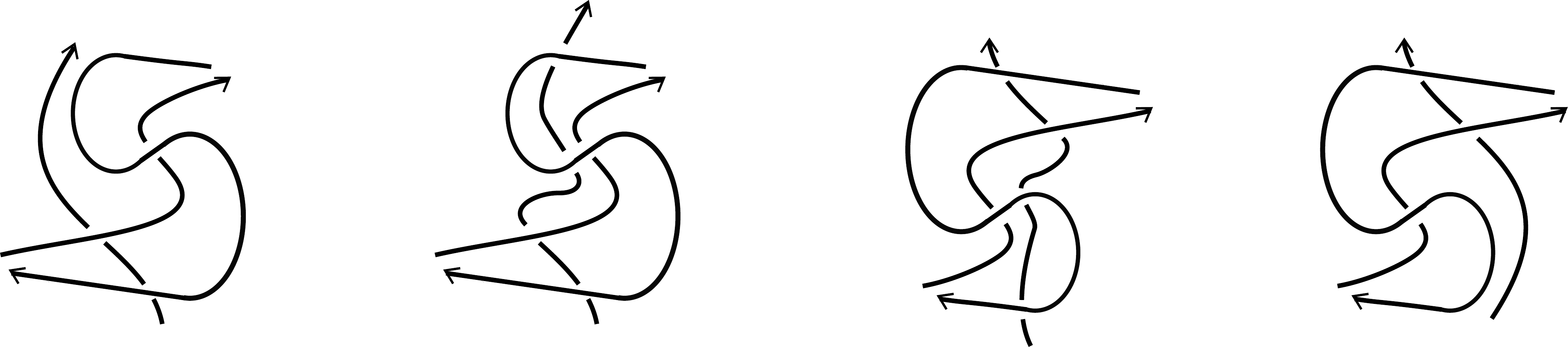}}.
\end{equation*}
Then the move $\Omega 3a1$ can be obtained from $\Omega 3a6$ and $\Omega 0$ moves as in \cref{rem:O3a1_from_O3a6}, and the rest of moves of type $\Omega 3a$ follows from \cref{lem:O3a}. Also $\Omega 3c$ is realised as in \cref{lem:O3c}. Independently on the pair of Reidemeister 1 moves that we picked, we can get the other two using the types $\Omega 2c$ and $\Omega 2d$. Then $\Omega 2b$ can be realised as in \cref{lem:O2b} and $\Omega 2a$ as
\begin{equation*}
\centre{
\labellist \small \hair 2pt
\pinlabel{$\leftrightsquigarrow$}  at 330 200
\pinlabel{{\scriptsize $\Omega 1c$}}  at 335 270
\pinlabel{$\leftrightsquigarrow$}  at 1000 200
\pinlabel{{\scriptsize $\Omega 2d1$}}  at 1005 270
\pinlabel{$\leftrightsquigarrow$}  at 1550 200
\pinlabel{{\scriptsize $\Omega 3c$}}  at 1555 270
\pinlabel{$\leftrightsquigarrow$}  at 2140 200
\pinlabel{{\scriptsize $\Omega 1c$}}  at 2145 270
\endlabellist
\centering
\includegraphics[width=0.8\textwidth]{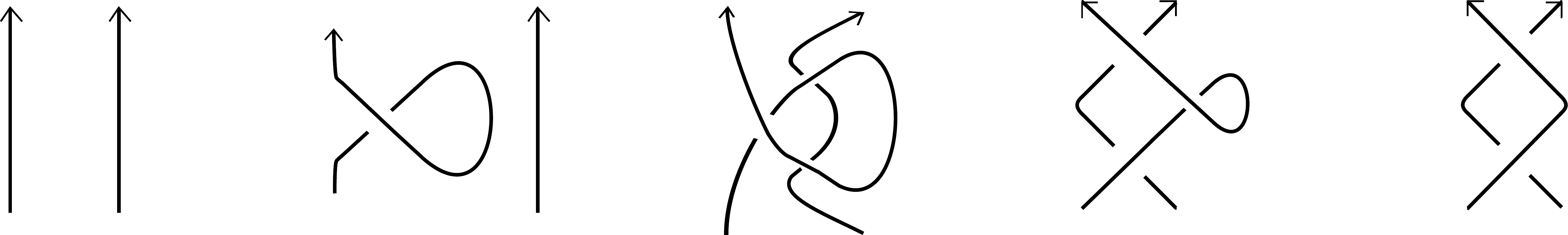}}.
\end{equation*}
The remaining Reidemeister 3 moves are obtained as in Lemmas \ref{lem:O3d_g} -- \ref{lem:O3h}.
\end{proof}

\section{Rotational Reidemeister moves for framed, oriented tangles}

In this section, all tangles will be assumed to be framed and oriented.

\subsection{Proof of \cref{thm:2} }

The aim of this subsection is to prove \cref{thm:2}. We give a detailed proof adapting the (necessary) lemmas of \cref{sec:3} to the framed setting. It follows from Lemmas \ref{lem:O3a}, \ref{lem:O2c2_O2d2} and \ref{lem:O3h} that, in the presence of all $\Omega 0$ moves, a single rotational Reidemeister move of type  $\Omega 2c$, $\Omega 2d$, $\Omega 3a$ or $\Omega 3h$ is enough to realise the rest of the given type.

We start by realising a third Reidemeister 2 move.

\begin{lemma}
The move $\Omega 2c1$ can be realised as a combination of the moves $\Omega 0a$, $\Omega 2a$, $\Omega 2d2$, $\Omega 3a3$, $\Omega 3h3$ and either $\Omega 1\textup{f}a$ or $\Omega 1\textup{f}c$. Similarly, $\Omega 2c2$ can be realised as a combination of the moves $\Omega 0a$, $\Omega 2b$, $\Omega 2d1$, $\Omega 3a1$, $\Omega 3h5$ and either $\Omega 1\textup{f}a$ or $\Omega 1\textup{f}c$.

On the other hand, the move $\Omega 2d1$ can be realised as a combination of the moves  $\Omega 0b$, $\Omega 2a$, $\Omega 2c2$, $\Omega 3a6$, $\Omega 3h4$ and either $\Omega 1\textup{f}b$ or $\Omega 1\textup{f}d$. Similarly, the  move $\Omega 2d2$ can be obtained as a combination of the moves  $\Omega 0b$, $\Omega 2b$, $\Omega 2c1$, $\Omega 3a2$, $\Omega 3h2$ and either $\Omega 1\textup{f}b$ or $\Omega 1\textup{f}d$.
\end{lemma}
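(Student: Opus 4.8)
The plan is to adapt to the framed category the unframed realisation of $\Omega 2c1$ obtained earlier (as a combination of $\Omega 1a$, $\Omega 2a$ and $\Omega 3a3$), the essential difficulty being that the curl-deleting move $\Omega 1a$ is no longer admissible, so its two uses must be replaced by framing-preserving manipulations. I would establish the first assertion in full by writing down an explicit finite sequence of rotational tangle diagrams from the left-hand to the right-hand side of $\Omega 2c1$, labelling each elementary step by one of the six permitted moves, and then obtain the other three assertions from this one by the mirror-image and orientation-reversal symmetries of rotational diagrams.

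Concretely, at the two places where the unframed argument deletes a curl, I would instead create an auxiliary full rotation with $\Omega 0a$ and convert it, using the framed Reidemeister~1 move (either $\Omega 1\textup{f}a$ or $\Omega 1\textup{f}c$, whichever is present---both work, up to a harmless reflection), into a curl that can be pushed against the crossing of $\Omega 2c1$. The crossing produced in this way is then reabsorbed using the second Reidemeister~2 move $\Omega 2d2$ already at our disposal, after it has been slid into place. Because the auxiliary curl and that crossing have opposite handedness, transporting one past the other calls for \emph{both} type~3 moves: $\Omega 3a3$ plays exactly the role it did in the unframed proof, while $\Omega 3h3$ is forced in to move the strand of the opposite crossing sign. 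A final $\Omega 0a$ destroys the spurious full rotation, leaving the right-hand side of $\Omega 2c1$. Since every move in the chain ($\Omega 0a$, $\Omega 2a$, $\Omega 2d2$, $\Omega 3a3$, $\Omega 3h3$ and the framed $\Omega 1$ move) preserves the blackboard framing, the equivalence is valid for framed tangles.

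The delicate point, and the main obstacle, is the framing bookkeeping: one cannot simply erase the loop that appears midway, so the auxiliary curl introduced at the start must be routed through the whole diagram until it can be cancelled, and at each stage one must check that the diagram stays rotational (all endpoints and crossings pointing up, all maxima and minima occurring in full-rotation pairs). Confirming that the two available type~3 moves genuinely suffice to carry crossings of both signs past the auxiliary curl---rather than leaving one stuck---is what pins down the precise list of generators. Once the realisation of $\Omega 2c1$ is written out, the statement for $\Omega 2c2$ follows from the entirely analogous computation, now using $\Omega 2b$, $\Omega 2d1$, $\Omega 3a1$, $\Omega 3h5$ in place of $\Omega 2a$, $\Omega 2d2$, $\Omega 3a3$, $\Omega 3h3$; and the two $\Omega 2d$ statements are obtained by applying the reflection that exchanges left and right curls, which sends $\Omega 0a \mapsto \Omega 0b$ and the type-$a$ framed $\Omega 1$ moves to the type-$b$ ones, leaving the shape of the argument unchanged.
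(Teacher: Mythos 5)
Your proposal is correct and takes essentially the same route as the paper: the paper's proof also adapts the unframed realisation by letting the framed move $\Omega 1\textup{f}a$ (or $\Omega 1\textup{f}c$, which only swaps the order of the two type-3 moves) create a cancelling pair of opposite-handed kinks, pushes the other strand through with $\Omega 2a$, transports it past the two crossings using exactly one $\Omega 3a$ and one $\Omega 3h$ move, absorbs the leftover clasp with $\Omega 2d2$ and the $\Omega 0a$ swirls, and closes with a second application of the framed Reidemeister 1 move. The paper likewise disposes of the remaining three assertions as you do, by the parallel ``slide under'' computation and by mirror image (the realisation of $\Omega 2d2$ being the mirror of that of $\Omega 2c1$, and that of $\Omega 2c2$ the mirror of that of $\Omega 2d1$).
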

\begin{proof}
Let us realise first $\Omega 2c1$. In the presence of $\Omega 1\textup{f}a$, we have
        \begin{equation*}
\centre{
\labellist \small \hair 2pt
\pinlabel{$\leftrightsquigarrow$}  at 380 430
\pinlabel{{\scriptsize $\Omega 1\text{f}a$}}  at 385 500
\pinlabel{$=$}  at 960 430
\pinlabel{$\leftrightsquigarrow$}  at 1580 430
\pinlabel{{\scriptsize $\Omega 2a$}}  at 1585 500
\pinlabel{{\scriptsize $(\times 2)$}}  at 1585 370
\pinlabel{$\leftrightsquigarrow$}  at 2260 430
\pinlabel{{\scriptsize $\Omega 3h3$}}  at 2265 500
\endlabellist
\centering
\includegraphics[width=0.9\textwidth]{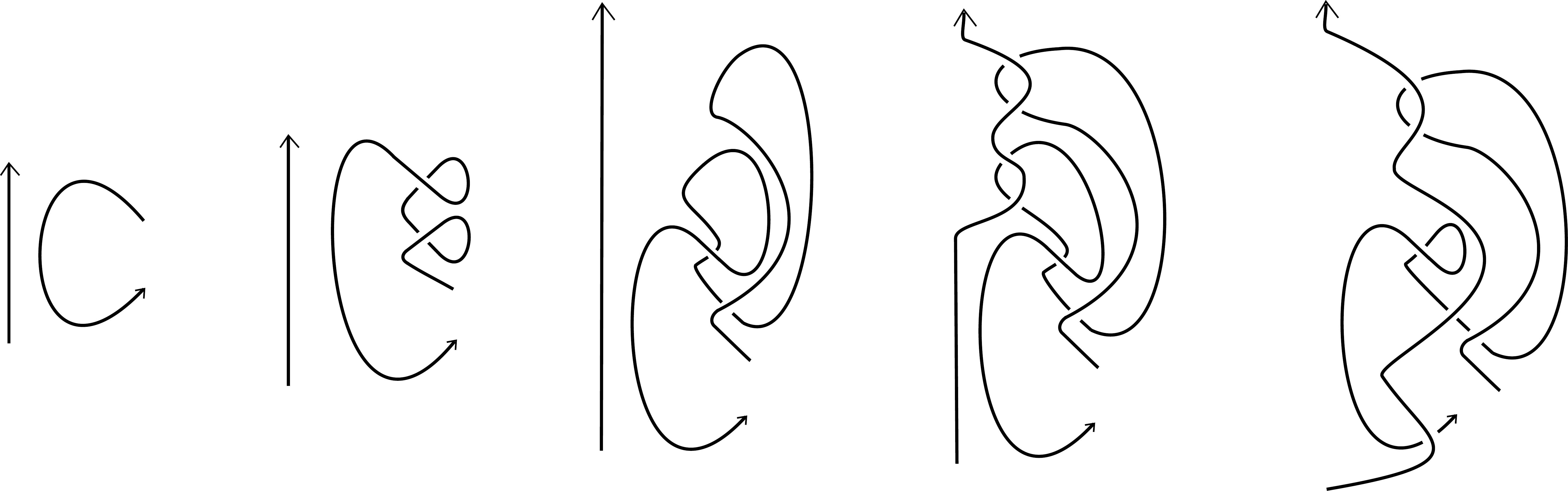}}
\end{equation*}
\begin{equation*} \phantom{------}   
\centre{
\labellist \small \hair 2pt
\pinlabel{$\leftrightsquigarrow$}  at -90 350
\pinlabel{{\scriptsize $\Omega 0a$}}  at -85 430
\pinlabel{$\leftrightsquigarrow$}  at 820 350
\pinlabel{{\scriptsize $\Omega 3a3$}}  at 825 430
\pinlabel{$\leftrightsquigarrow$}  at 1790 350
\pinlabel{{\scriptsize $\Omega 2d2$}}  at 1795 430
\endlabellist
\centering
\includegraphics[width=0.8\textwidth]{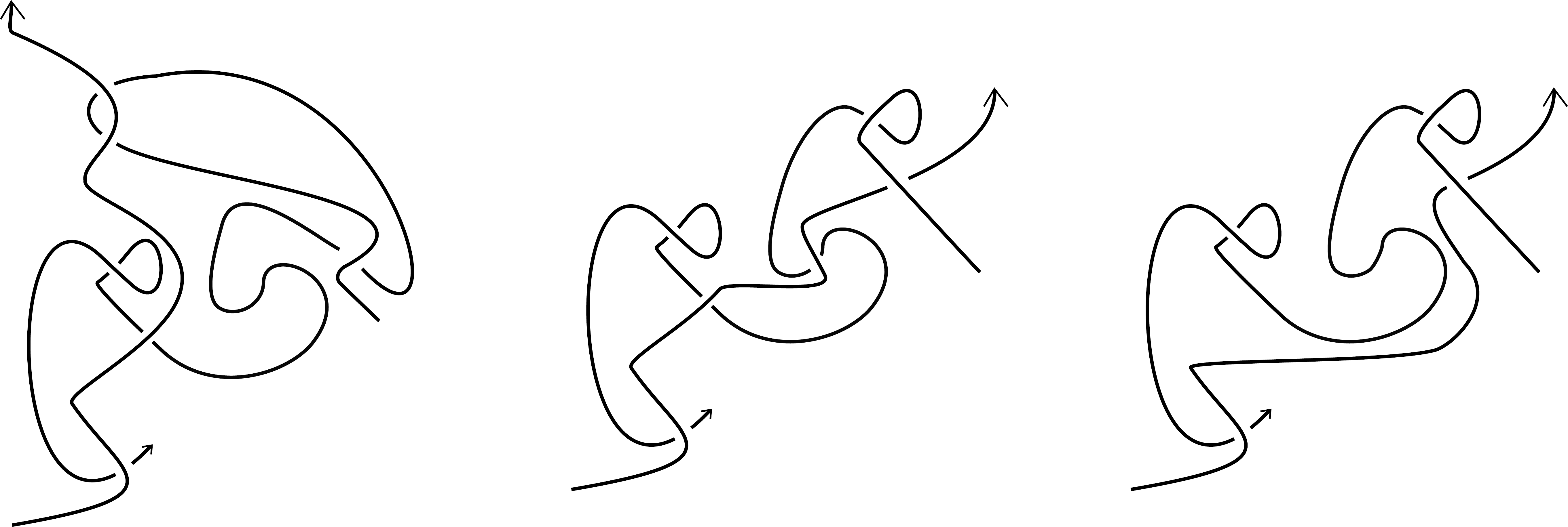}}
\end{equation*}
\begin{equation*}   
\centre{
\labellist \small \hair 2pt
\pinlabel{$\leftrightsquigarrow$}  at -90 400
\pinlabel{{\scriptsize $\Omega 0a$}}  at -85 470
\pinlabel{$\leftrightsquigarrow$}  at 500 400
\pinlabel{{\scriptsize $\Omega 1\text{f}a$}}  at 505 470
\endlabellist
\centering
\includegraphics[width=0.3\textwidth]{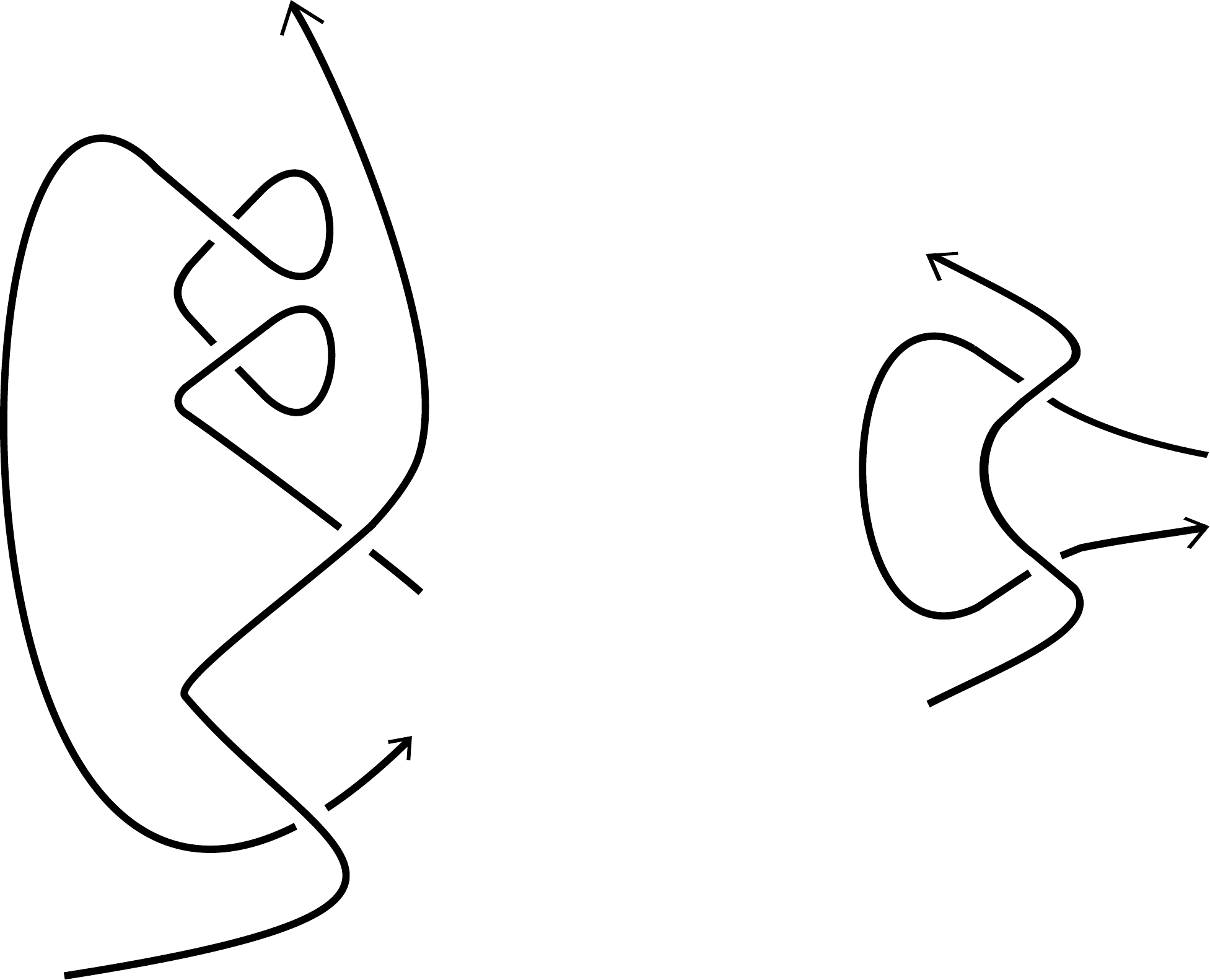}},
\phantom{-----------------} 
\end{equation*}
and if we had included $\Omega 1\textup{f}c$ instead then the only change would be that the order of the moves $\Omega 3a3$ and $\Omega 3h3$ would be exchanged. The realisation of $\Omega 2c2$ goes in parallel   slicing the unknotted, upwards component under the $C$-shaped component.

Similarly, in the presence of $\Omega 2c2$ and $\Omega 1\textup{f}b$ we can realise $\Omega 2d1$ as follows:
        \begin{equation*}
\centre{
\labellist \small \hair 2pt
\pinlabel{$\leftrightsquigarrow$}  at 380 430
\pinlabel{{\scriptsize $\Omega 1\text{f}b$}}  at 385 500
\pinlabel{$=$}  at 960 430
\pinlabel{$\leftrightsquigarrow$}  at 1580 430
\pinlabel{{\scriptsize $\Omega 2a$}}  at 1585 500
\pinlabel{{\scriptsize $(\times 2)$}}  at 1585 370
\pinlabel{$\leftrightsquigarrow$}  at 2260 430
\pinlabel{{\scriptsize $\Omega 3h4$}}  at 2265 500
\endlabellist
\centering
\includegraphics[width=0.9\textwidth]{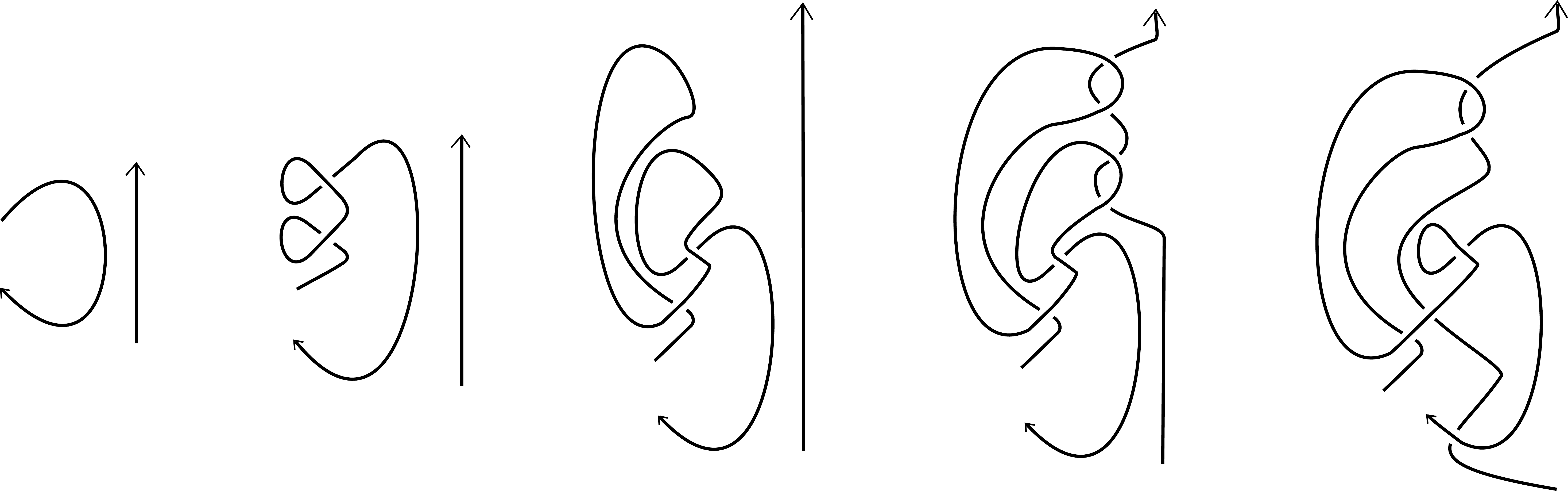}}
\end{equation*}
\begin{equation*} \phantom{------}   
\centre{
\labellist \small \hair 2pt
\pinlabel{$\leftrightsquigarrow$}  at -90 350
\pinlabel{{\scriptsize $\Omega 0b$}}  at -85 430
\pinlabel{$\leftrightsquigarrow$}  at 820 350
\pinlabel{{\scriptsize $\Omega 3a6$}}  at 825 430
\pinlabel{$\leftrightsquigarrow$}  at 1790 350
\pinlabel{{\scriptsize $\Omega 2c2$}}  at 1795 430
\endlabellist
\centering
\includegraphics[width=0.8\textwidth]{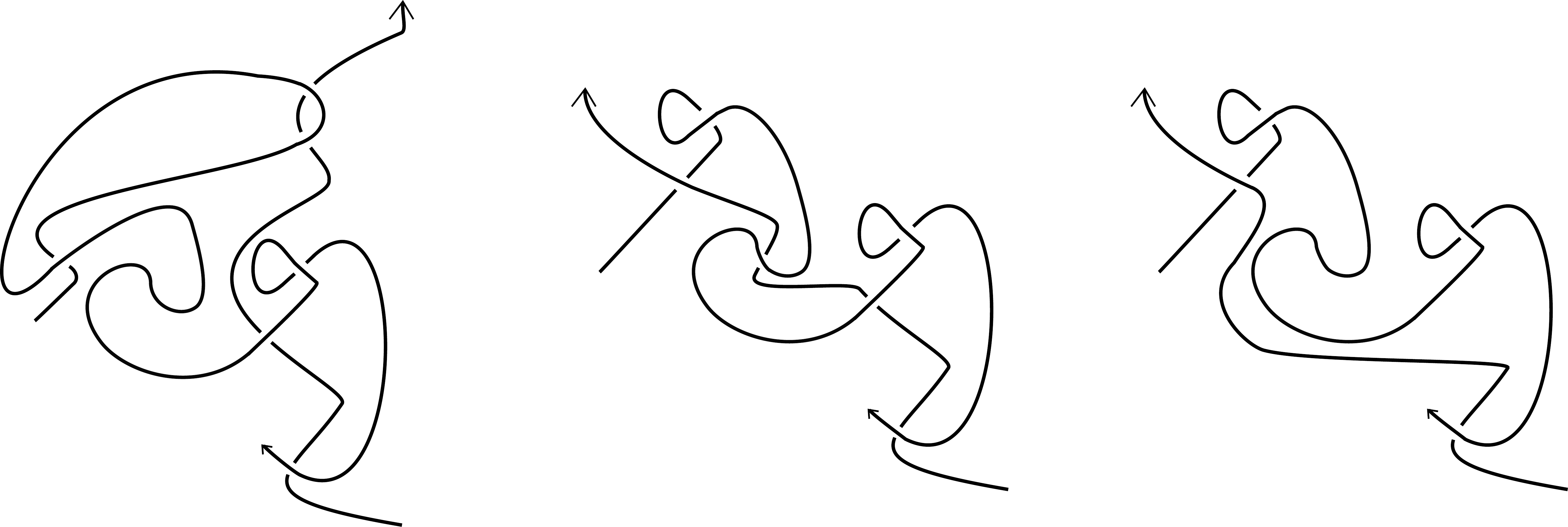}}
\end{equation*}
\begin{equation*}   
\centre{
\labellist \small \hair 2pt
\pinlabel{$\leftrightsquigarrow$}  at -90 400
\pinlabel{{\scriptsize $\Omega 0b$}}  at -85 470
\pinlabel{$\leftrightsquigarrow$}  at 500 400
\pinlabel{{\scriptsize $\Omega 1\text{f}b$}}  at 505 470
\endlabellist
\centering
\includegraphics[width=0.3\textwidth]{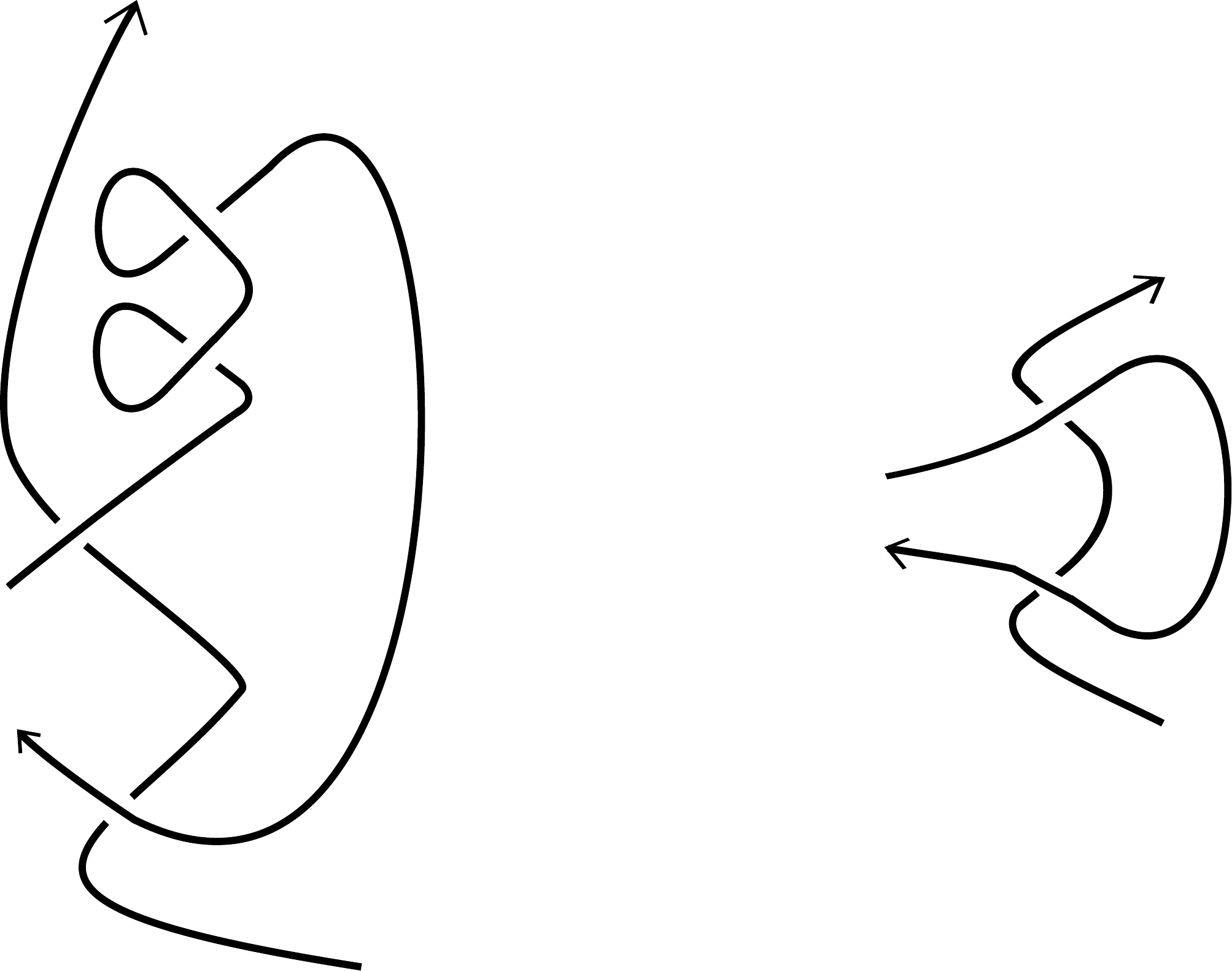}},
\phantom{-----------------} 
\end{equation*}
and if we had included $\Omega 1\textup{f}d$ instead then the only change would be that the order of the moves $\Omega 3a6$ and $\Omega 3h4$ would be exchanged. Realising $\Omega 2d2$ is very similar, slicing the right-hand side component under (it is in the fact the mirror image of the realisation of $\Omega 2c1$ above, and likewise  the realisation of $\Omega 2c2$ is the mirror image of the realisation of $\Omega 2d1$).
\end{proof}

Once we are equipped with the moves of type $\Omega 2c$ and $\Omega 2d$, we can obtain all Reidemeister 3 moves from the moves of type  $\Omega 3a$ and $\Omega 3h$. Indeed,  Lemmas \ref{lem:O3b} and \ref{lem:O3c} show that we can obtain $\Omega 3b$ and $\Omega 3c$, respectively. 

It turns out that the rest of Reidemeister 3 moves can be obtained from the types $\Omega 0$, $\Omega 3h$, $\Omega 2c$ and $\Omega 2d$.

\begin{lemma}
The moves $\Omega 3d$, $\Omega 3e$  and $\Omega 3g$  can be obtained from the types $\Omega 0$, $\Omega 2c$, $\Omega 2d$ and $\Omega 3h$. Similarly, the move  $\Omega 3f$ can be obtained from the types  $\Omega 0$, $\Omega 2c$, $\Omega 2d$ and $\Omega 3a$.
\end{lemma}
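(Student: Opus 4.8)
The plan is to mimic the realisations of $\Omega 3b$ and $\Omega 3c$ given in Lemmas \ref{lem:O3b} and \ref{lem:O3c}, producing for each of the four braid-like moves an explicit sequence of the allowed generators. Recall that the proof of \cref{lem:O3b} proceeds by using a swirl move to insert a full rotation on the appropriate strand of the braid-like diagram, sliding the outer crossings across that rotation with moves of type $\Omega 2c$ and $\Omega 2d$ until the three crossings are brought into the cyclic ``star-shaped'' configuration of an $\Omega 3a$ move, applying that cyclic move, and finally undoing the slides and the swirl. I would carry out the same detour-through-cyclic-form strategy for $\Omega 3d$, $\Omega 3e$, $\Omega 3f$ and $\Omega 3g$, in each case reading off from the resulting cyclic configuration whether it is of type $\Omega 3a$ or of type $\Omega 3h$. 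Note that this route is forced upon us in the framed setting: the unframed realisation of \cref{lem:O3d_g} used \emph{both} $\Omega 2a$ and $\Omega 2b$, whereas here only one of the two is available, so the braid-like moves must instead be produced from the cyclic moves together with $\Omega 2c$, $\Omega 2d$ and the swirls.

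Concretely, for each move I would first apply a swirl ($\Omega 0a$ or its mirror $\Omega 0b$, depending on which strand must be turned around) to create a clockwise/counterclockwise pair of full rotations; then repeatedly apply the $\Omega 2c$ and $\Omega 2d$ moves---whose availability in both subtypes $1$ and $2$ is guaranteed by \cref{lem:O2c2_O2d2} and the preceding lemma---to transport each of the three crossings past the inserted rotation. At this point the diagram has all three crossings incident to a single disc in cyclic position, and I apply the relevant rotational cyclic move. For $\Omega 3d$, $\Omega 3e$ and $\Omega 3g$ the induced orientations force the cyclic crossing pattern to be of the mirror chirality, so the applicable move is of type $\Omega 3h$; for $\Omega 3f$ the orientations instead produce a pattern of type $\Omega 3a$. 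Reversing the slides and the swirl then returns the diagram to the target braid-like form.

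The main obstacle is bookkeeping rather than conceptual: one must track the induced orientations at each of the three crossings throughout the isotopy in order to certify \emph{which} chirality of cyclic move---$\Omega 3a$ versus $\Omega 3h$---appears after the rotation is inserted, since this is precisely what separates the $\{\Omega 3d,\Omega 3e,\Omega 3g\}$ case from the $\Omega 3f$ case. A secondary point, essential in the framed setting, is that every move used ($\Omega 0$, $\Omega 2c$, $\Omega 2d$, $\Omega 3a$, $\Omega 3h$) is framing-neutral, so no $\Omega 1$-type correction is ever introduced and the blackboard framing is preserved throughout the realisation; this is exactly the feature that lets us dispense with the $\Omega 2a$/$\Omega 2b$-based argument of \cref{lem:O3d_g}.
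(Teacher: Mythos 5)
Your proposal is correct and takes essentially the same approach as the paper: the paper's proof realises each of the four moves by exactly the detour you describe, namely a swirl ($\Omega 0a$ or $\Omega 0b$), a slide of type $\Omega 2c$ or $\Omega 2d$, the appropriate cyclic move ($\Omega 3h4$ for $\Omega 3d$, $\Omega 3h3$ for $\Omega 3e$, $\Omega 3a2$ for $\Omega 3f$, $\Omega 3h1$ for $\Omega 3g$), a second slide, and undoing the swirl. Your chirality bookkeeping ($\Omega 3h$ for $\Omega 3d$, $\Omega 3e$, $\Omega 3g$ versus $\Omega 3a$ for $\Omega 3f$) agrees with the paper's explicit sequences.
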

\begin{proof}
We can realise $\Omega 3d$ as
\begin{equation*}
\centre{
\labellist \small \hair 2pt
\pinlabel{$\leftrightsquigarrow$}  at 330 370
\pinlabel{{\scriptsize $\Omega 0a$}}  at 335 430
\pinlabel{$\leftrightsquigarrow$}  at 980 370
\pinlabel{{\scriptsize $\Omega 2c1$}}  at 985 430
\pinlabel{$\leftrightsquigarrow$}  at 1610 370
\pinlabel{{\scriptsize $\Omega 3h4$}}  at 1615 430
\endlabellist
\centering
\includegraphics[width=0.7\textwidth]{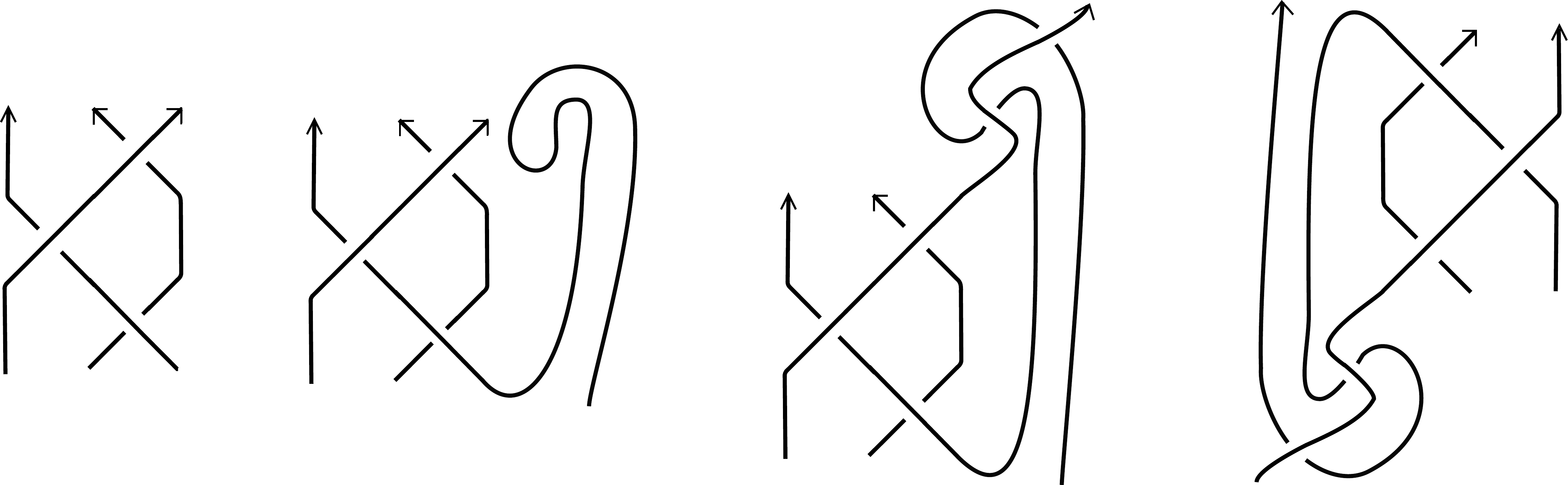}}
\end{equation*}
\begin{equation*}
\centre{
\labellist \small \hair 2pt
\pinlabel{$\leftrightsquigarrow$}  at -60 225
\pinlabel{{\scriptsize $\Omega 2d2$}}  at -55 295
\pinlabel{$\leftrightsquigarrow$}  at 580 225
\pinlabel{{\scriptsize $\Omega 0a$}}  at 585 295
\endlabellist
\centering
\includegraphics[width=0.3\textwidth]{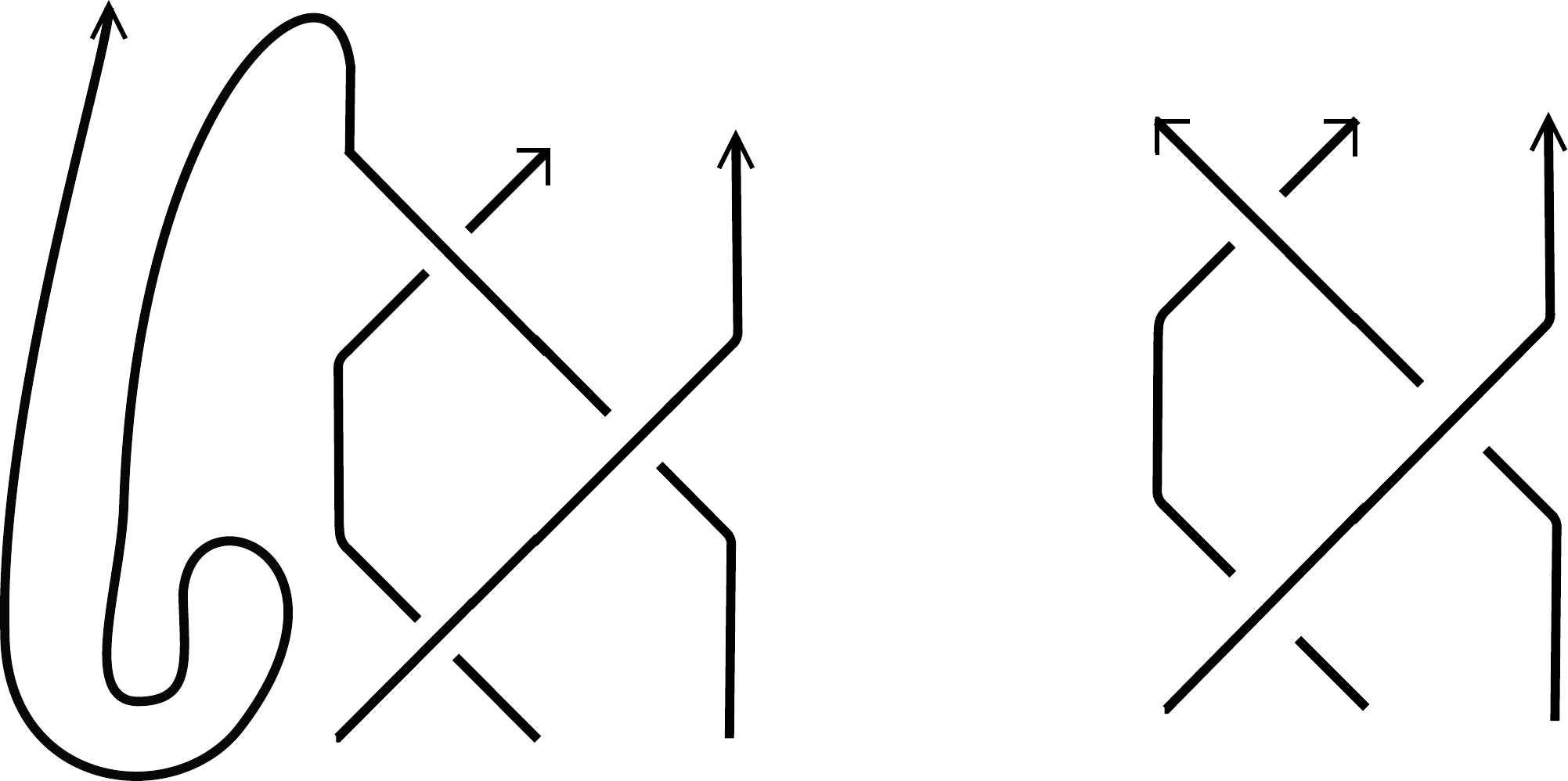}}. \phantom{-------}
\end{equation*}
Similarly, we obtain $\Omega 3e$ as 
\begin{equation*}
\centre{
\labellist \small \hair 2pt
\pinlabel{$\leftrightsquigarrow$}  at 330 370
\pinlabel{{\scriptsize $\Omega 0b$}}  at 335 430
\pinlabel{$\leftrightsquigarrow$}  at 980 370
\pinlabel{{\scriptsize $\Omega 2d1$}}  at 985 430
\pinlabel{$\leftrightsquigarrow$}  at 1610 370
\pinlabel{{\scriptsize $\Omega 3h3$}}  at 1615 430
\endlabellist
\centering
\includegraphics[width=0.7\textwidth]{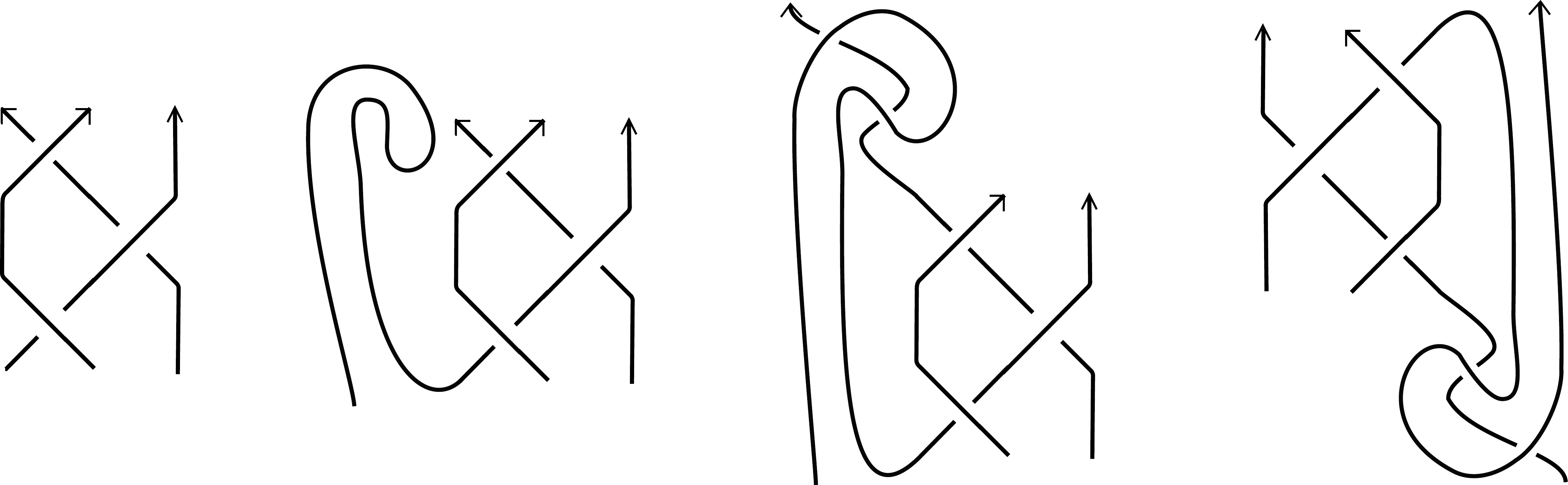}}
\end{equation*}
\begin{equation*}
\centre{
\labellist \small \hair 2pt
\pinlabel{$\leftrightsquigarrow$}  at -90 225
\pinlabel{{\scriptsize $\Omega 2c2$}}  at -85 295
\pinlabel{$\leftrightsquigarrow$}  at 580 225
\pinlabel{{\scriptsize $\Omega 0b$}}  at 585 295
\endlabellist
\centering
\includegraphics[width=0.3\textwidth]{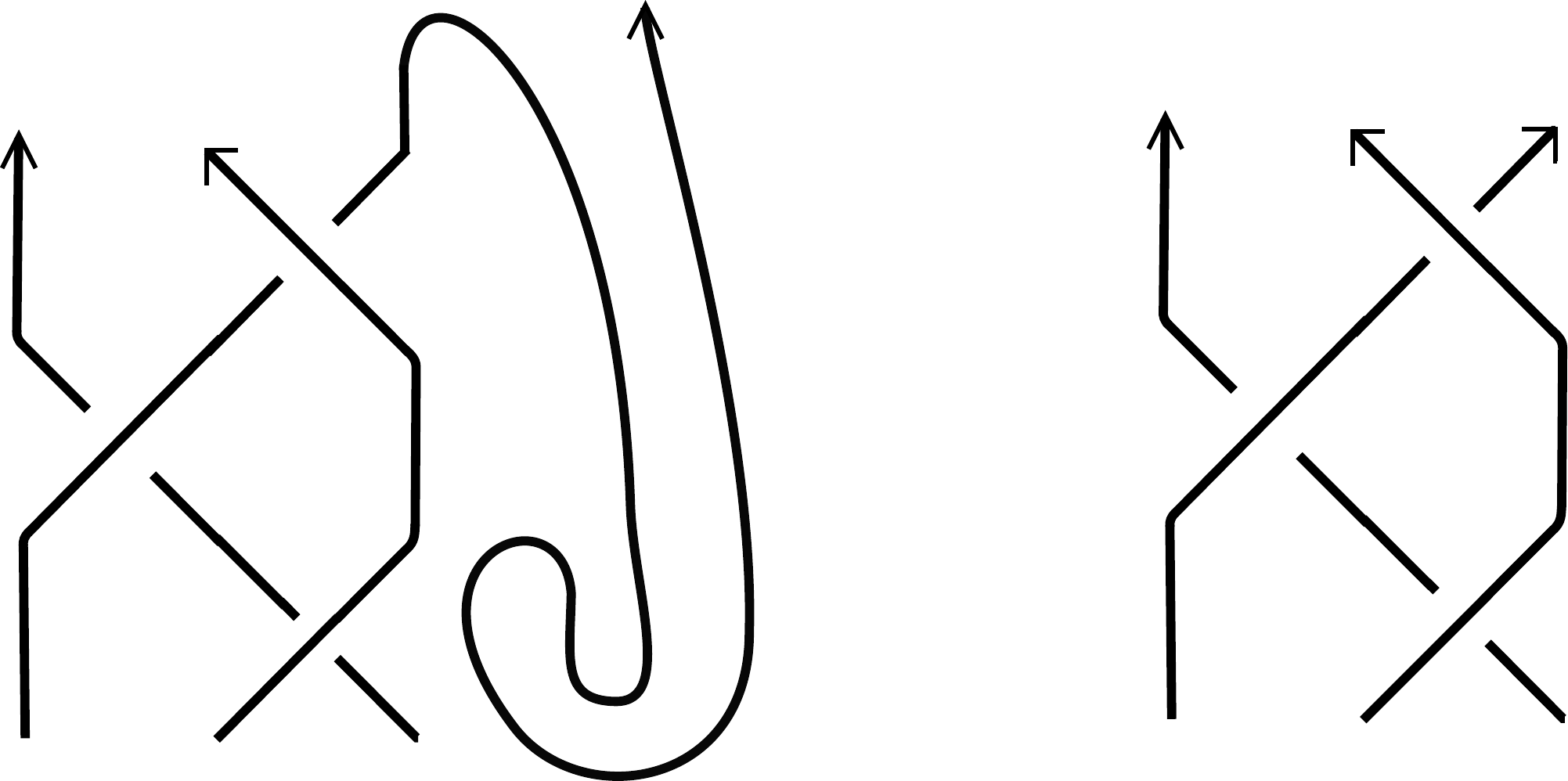}}. \phantom{------}
\end{equation*}
To realise  $\Omega 3f$ we do
\begin{equation*}
\centre{
\labellist \small \hair 2pt
\pinlabel{$\leftrightsquigarrow$}  at 330 370
\pinlabel{{\scriptsize $\Omega 0a$}}  at 335 430
\pinlabel{$\leftrightsquigarrow$}  at 980 370
\pinlabel{{\scriptsize $\Omega 2d1$}}  at 985 430
\pinlabel{$\leftrightsquigarrow$}  at 1610 370
\pinlabel{{\scriptsize $\Omega 3a2$}}  at 1615 430
\endlabellist
\centering
\includegraphics[width=0.7\textwidth]{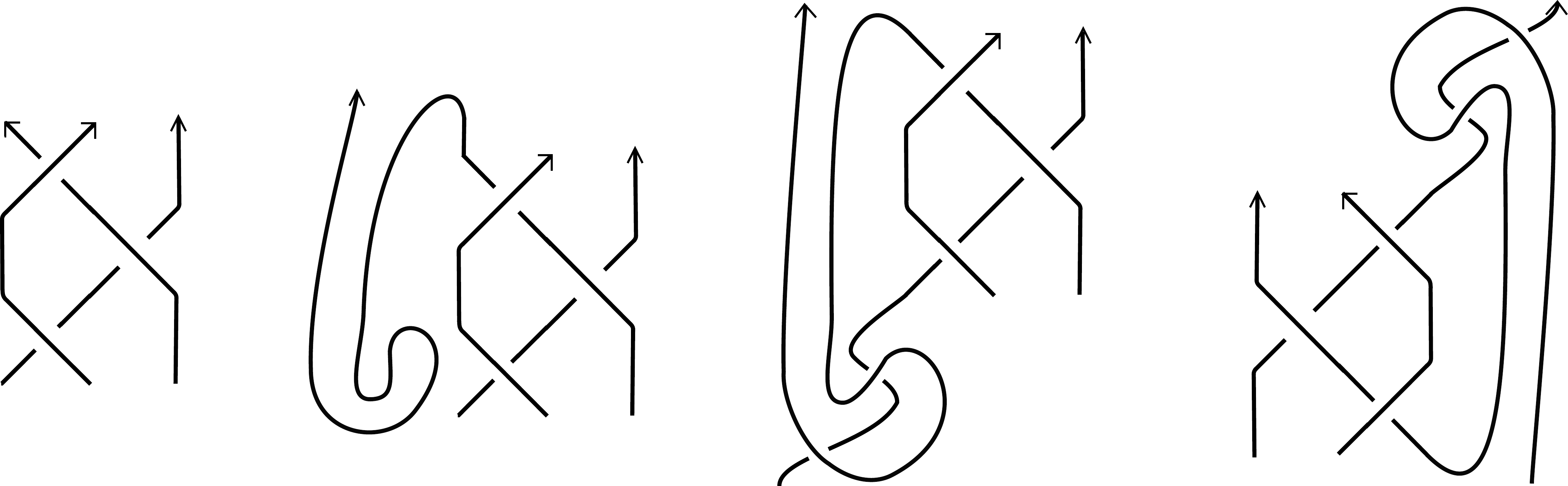}}
\end{equation*}
\begin{equation*}
\centre{
\labellist \small \hair 2pt
\pinlabel{$\leftrightsquigarrow$}  at -90 225
\pinlabel{{\scriptsize $\Omega 2c2$}}  at -85 295
\pinlabel{$\leftrightsquigarrow$}  at 580 225
\pinlabel{{\scriptsize $\Omega 0a$}}  at 585 295
\endlabellist
\centering
\includegraphics[width=0.3\textwidth]{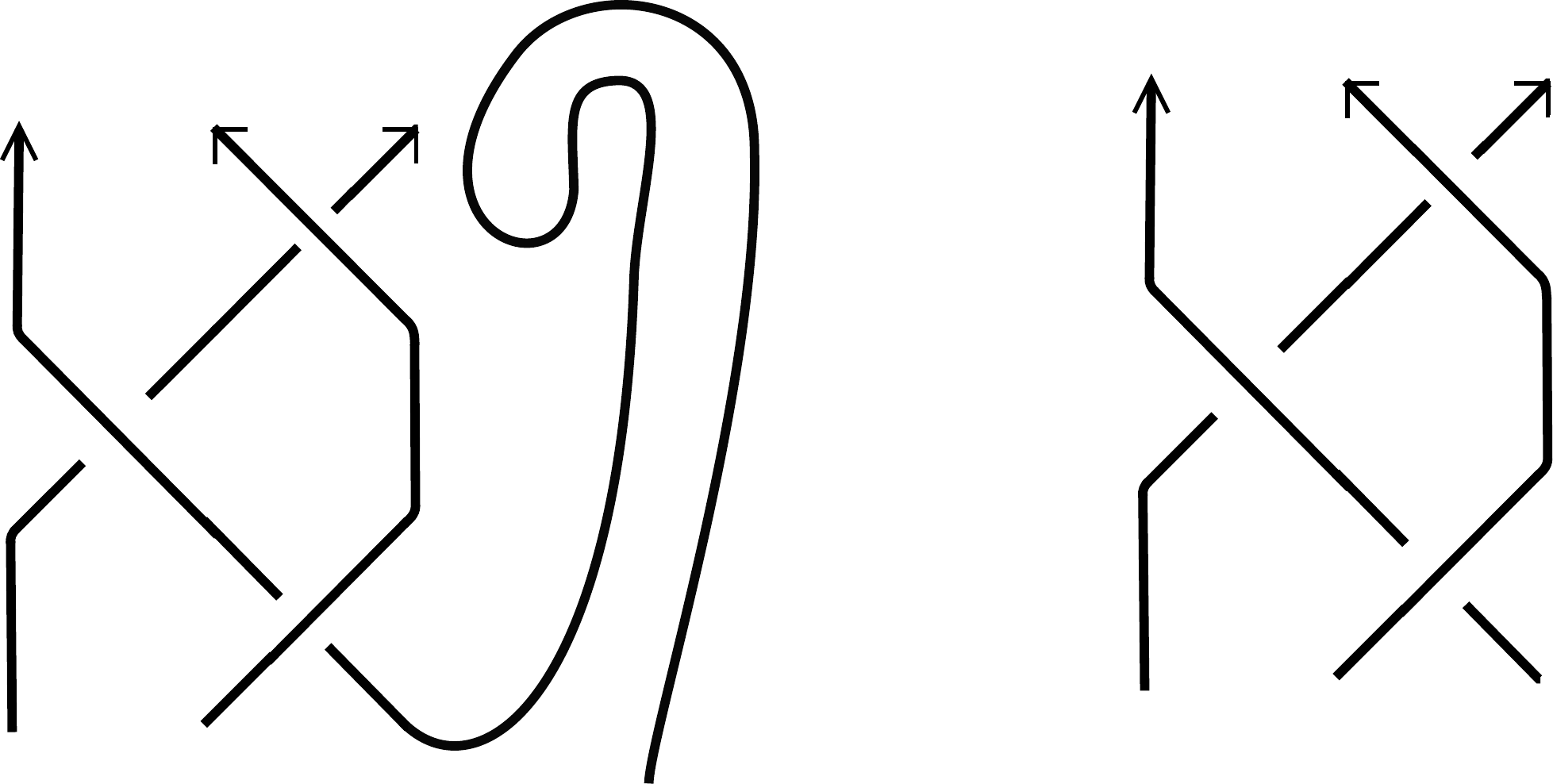}}. \phantom{------}
\end{equation*}
Lastly, $\Omega 3g$ can be realised as
\begin{equation*}
\centre{
\labellist \small \hair 2pt
\pinlabel{$\leftrightsquigarrow$}  at 330 370
\pinlabel{{\scriptsize $\Omega 0b$}}  at 335 430
\pinlabel{$\leftrightsquigarrow$}  at 980 370
\pinlabel{{\scriptsize $\Omega 2d2$}}  at 985 430
\pinlabel{$\leftrightsquigarrow$}  at 1610 370
\pinlabel{{\scriptsize $\Omega 3h1$}}  at 1615 430
\endlabellist
\centering
\includegraphics[width=0.7\textwidth]{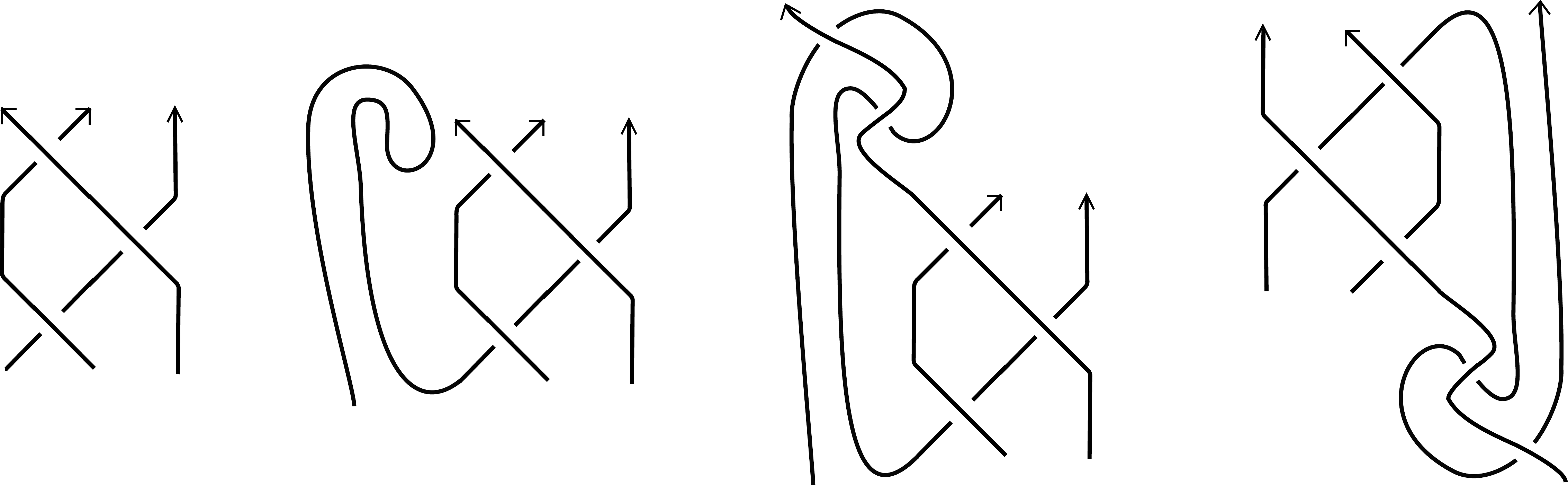}}
\end{equation*}
\begin{equation*}
\centre{
\labellist \small \hair 2pt
\pinlabel{$\leftrightsquigarrow$}  at -90 225
\pinlabel{{\scriptsize $\Omega 2c1$}}  at -85 295
\pinlabel{$\leftrightsquigarrow$}  at 580 225
\pinlabel{{\scriptsize $\Omega 0b$}}  at 585 295
\endlabellist
\centering
\includegraphics[width=0.3\textwidth]{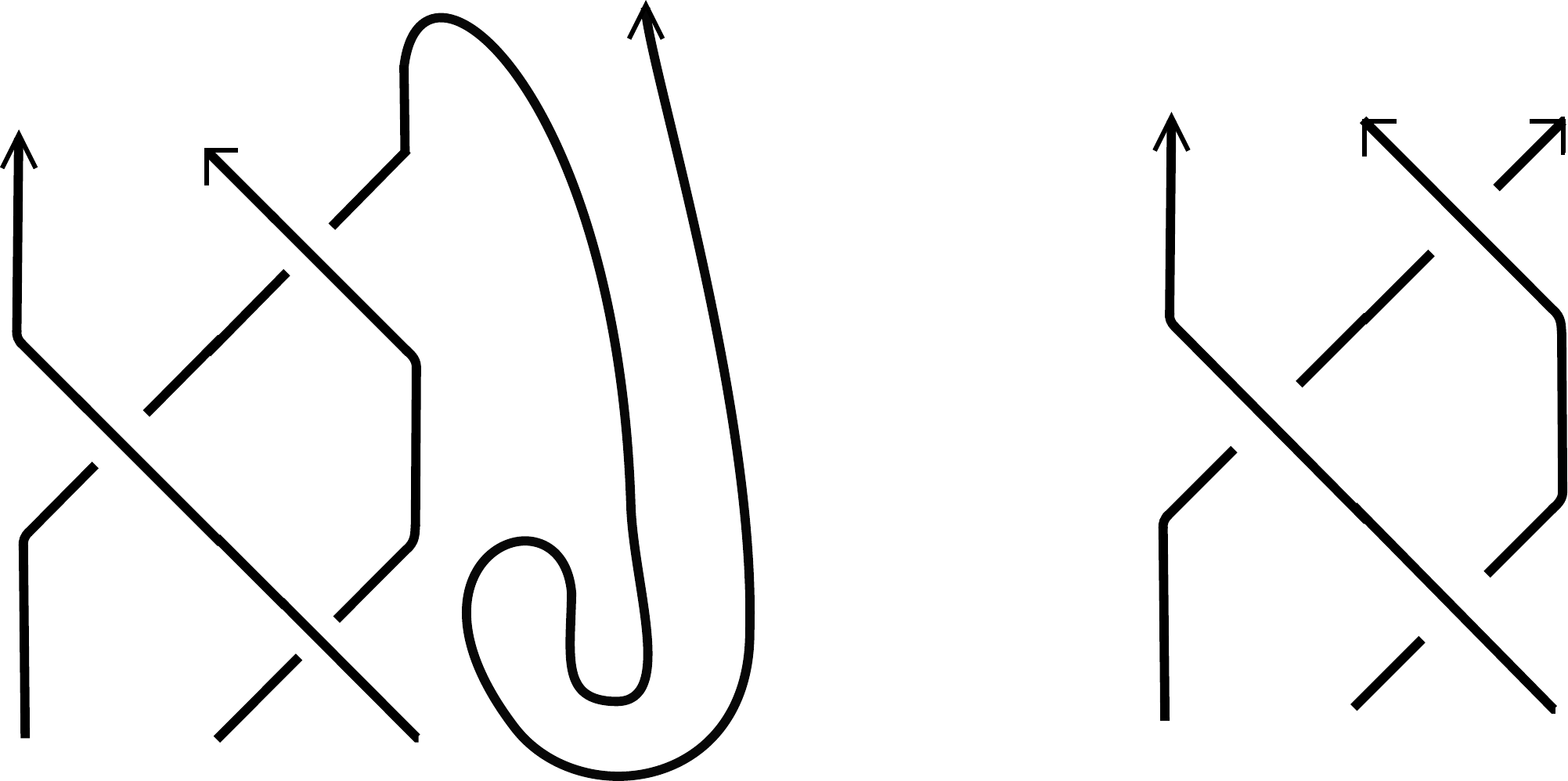}}. \phantom{------}
\end{equation*}
\end{proof}

Endowed with all Reidemeister 3 moves, obtaining the remaining Reidemeister 2 move is easy.

\begin{lemma}
The $\Omega 2b$ move can be realised by a sequence of the moves $\Omega 2d2$, $\Omega 3e$, $\Omega 3f$, $\Omega 2a$ and either $\Omega 1 \textup{f}a$ or $\Omega 1 \textup{f}c$.

Similarly, the $\Omega 2a$ move can be obtained as a sequence of the moves $\Omega 2c1$, $\Omega 3e$, $\Omega 3f$, $\Omega 2b$ and either $\Omega 1 \textup{f}b$ or $\Omega 1 \textup{f}d$.
\end{lemma}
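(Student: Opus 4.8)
The plan is to prove the statement diagrammatically, following the same template as the unframed \cref{lem:O2b}, in which $\Omega 2b$ was realised from $\Omega 1d$, $\Omega 2c2$ and $\Omega 3c$ by growing an auxiliary kink, performing one Reidemeister 2 and one Reidemeister 3 move, and then removing the kink. The decisive new feature of the framed setting is that the unframed $\Omega 1$ move, which caps off a single kink for free, is unavailable; we may only use a framed Reidemeister 1 move, which exchanges a straight strand for a curl while preserving the blackboard framing. Accordingly, the auxiliary curl grown at the start of the sequence must be cancelled by a second application of the \emph{same} framed $\Omega 1$ move at the end, and the extra crossing carried by that curl is what forces two Reidemeister 3 moves into the argument instead of one.

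Concretely, I would start from the left-hand side of $\Omega 2b$ and apply $\Omega 1\textup{f}a$ (or, for the alternative generating set, $\Omega 1\textup{f}c$) to grow a curl on the appropriate strand. I would then use $\Omega 2a$ to pass one strand through the bigon created, apply the braid-like moves $\Omega 3e$ and $\Omega 3f$ to carry the remaining strand across the two crossings coming from the curl, and use $\Omega 2d2$ to reorganise what is left. A final application of $\Omega 1\textup{f}a$ (resp.\ $\Omega 1\textup{f}c$) then deletes the auxiliary curl and produces the right-hand side of $\Omega 2b$. Each arrow of the resulting chain of diagrams is labelled by precisely one of the listed moves, so that the whole sequence uses only $\Omega 2d2$, $\Omega 3e$, $\Omega 3f$, $\Omega 2a$ and $\Omega 1\textup{f}a$ (or $\Omega 1\textup{f}c$).

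For the second assertion I would simply take the mirror image of the entire sequence across a vertical axis. As recorded earlier in this section, this reflection interchanges $\Omega 2c1 \leftrightarrow \Omega 2d2$ and $\Omega 2a \leftrightarrow \Omega 2b$, sends $\Omega 1\textup{f}a, \Omega 1\textup{f}c$ to $\Omega 1\textup{f}b, \Omega 1\textup{f}d$, and preserves the pair $\{\Omega 3e, \Omega 3f\}$. Hence the mirrored chain realises $\Omega 2a$ using $\Omega 2c1$, $\Omega 3e$, $\Omega 3f$, $\Omega 2b$ and either $\Omega 1\textup{f}b$ or $\Omega 1\textup{f}d$, exactly as claimed.

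The hard part will be the framing bookkeeping together with matching the crossing data. One must check that the curl introduced at the beginning and the curl removed at the end are genuinely of the same framed $\Omega 1$ type, so that the framing is preserved overall, and that at each intermediate stage the crossing signs and the upward orientations agree with the precise forms of $\Omega 2a$, $\Omega 2d2$, $\Omega 3e$ and $\Omega 3f$. In particular, placing the curl on the correct side of the bigon, so that the two Reidemeister 3 moves apply with exactly the stated crossings, is the delicate point; once the intermediate diagrams are drawn correctly, the remaining verifications are routine.
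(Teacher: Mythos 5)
Your plan matches the paper's proof essentially move for move: the paper realises $\Omega 2b$ by the chain $\Omega 1\textup{f}a$, $\Omega 2d2$ (applied twice), $\Omega 3e$, $\Omega 3f$, $\Omega 2a$, $\Omega 1\textup{f}a$ (with $\Omega 1\textup{f}c$ merely exchanging the order of the two Reidemeister 3 moves), and it obtains $\Omega 2a$ by the mirror-image chain $\Omega 1\textup{f}d$, $\Omega 2c1$ (twice), $\Omega 3f$, $\Omega 3e$, $\Omega 2b$, $\Omega 1\textup{f}d$, which is exactly your mirror-symmetry argument with the dictionary $\Omega 2d2\leftrightarrow\Omega 2c1$, $\Omega 2a\leftrightarrow\Omega 2b$, $\Omega 3e\leftrightarrow\Omega 3f$, $\Omega 1\textup{f}a\leftrightarrow\Omega 1\textup{f}d$. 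The only cosmetic differences from the paper are the position and multiplicity of the $\Omega 2$ applications within the chain.
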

\begin{proof}
If we had $\Omega 1 \text{f}a$, we can realise $\Omega 2b$ as
\begin{equation*}
\centre{
\labellist \small \hair 2pt
\pinlabel{$\leftrightsquigarrow$}  at 310 240
\pinlabel{{\scriptsize $\Omega 1 \text{f}a$}}  at 315 310
\pinlabel{{\scriptsize $\Omega 2d2$}}  at 840 310
\pinlabel{$\leftrightsquigarrow$}  at 845 240
\pinlabel{{\scriptsize $(\times 2)$}}  at 840 170
\pinlabel{{\scriptsize $\Omega 3e$}}  at 1240 310
\pinlabel{$\leftrightsquigarrow$}  at 1245 240
\pinlabel{{\scriptsize $\Omega 3f$}}  at 1670 310
\pinlabel{$\leftrightsquigarrow$}  at 1675 240
\pinlabel{{\scriptsize $\Omega 2a$}}  at 2140 310
\pinlabel{$\leftrightsquigarrow$}  at 2145 240
\pinlabel{{\scriptsize $\Omega 1 \text{f}a$}}  at 2610 310
\pinlabel{$\leftrightsquigarrow$}  at 2615 240
\endlabellist
\centering
\includegraphics[width=0.95\textwidth]{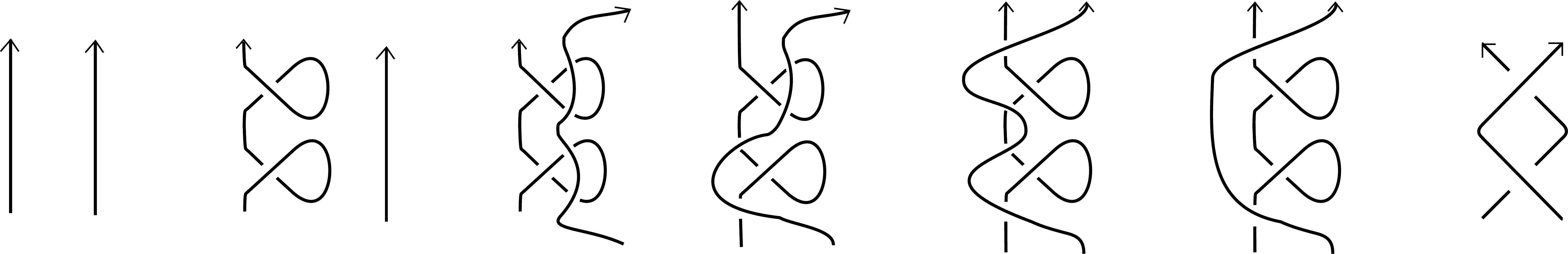}}
\end{equation*}
and with $\Omega 1 \text{f}c$ the only change would be the order of the moves $\Omega 3e$ and $\Omega 3f$.

On the other hand, we can obtain $\Omega 2a$ using  $\Omega 1 \text{f}d$ as
\begin{equation*}
\centre{
\labellist \small \hair 2pt
\pinlabel{$\leftrightsquigarrow$}  at 310 240
\pinlabel{{\scriptsize $\Omega 1 \text{f}d$}}  at 315 310
\pinlabel{{\scriptsize $\Omega 2c1$}}  at 840 310
\pinlabel{$\leftrightsquigarrow$}  at 845 240
\pinlabel{{\scriptsize $(\times 2)$}}  at 840 170
\pinlabel{{\scriptsize $\Omega 3f$}}  at 1240 310
\pinlabel{$\leftrightsquigarrow$}  at 1245 240
\pinlabel{{\scriptsize $\Omega 3e$}}  at 1670 310
\pinlabel{$\leftrightsquigarrow$}  at 1675 240
\pinlabel{{\scriptsize $\Omega 2b$}}  at 2140 310
\pinlabel{$\leftrightsquigarrow$}  at 2145 240
\pinlabel{{\scriptsize $\Omega 1 \text{f}d$}}  at 2610 310
\pinlabel{$\leftrightsquigarrow$}  at 2615 240
\endlabellist
\centering
\includegraphics[width=0.95\textwidth]{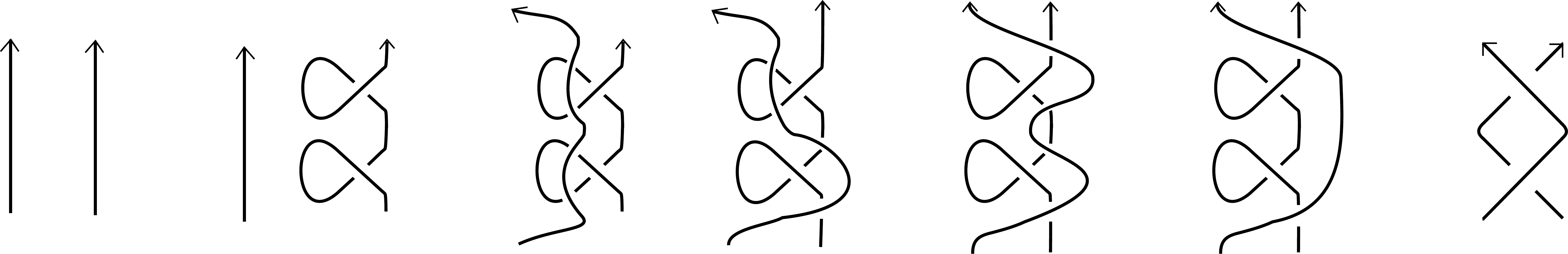}}
\end{equation*}
and with $\Omega 1 \text{f}b$ the order of the Reidemeister 3 moves would be swapped.
\end{proof}

We have now realised all rotational Reidemeister moves of type 2 and 3. In order to realise all framed Reidemeister 1 moves from one of them, we first need the following

\begin{lemma}\label{lem:O23}
    Each of the diagrams
\begin{equation*}
\centre{
\centering
\includegraphics[width=0.65\textwidth]{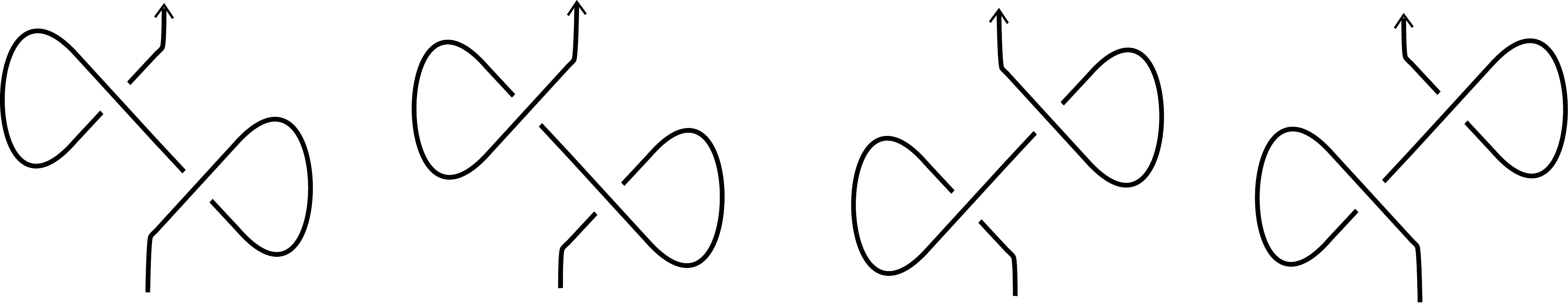}}
\end{equation*}
can be turned into a crosingless, vertical strand by a sequence of rotational Reidemeister moves that does not contain any $\Omega 1 \textup{f}$ move.
\end{lemma}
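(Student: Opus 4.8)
The plan is to treat each of the displayed diagrams separately and, in each case, to exhibit an explicit finite sequence of moves that straightens it, using \emph{only} moves we have already shown to be available in the framed setting: the four $\Omega 0$ moves, every move of type $\Omega 2$ and of type $\Omega 3$ (all of which were realised in the preceding lemmas), the auxiliary moves \eqref{eq:lemma1} and \eqref{eq:lemma2} from \cref{lem:the_lemma}, and Morse planar isotopy together with \cref{cor:planar->Morse}. The reason for forbidding the $\Omega 1\textup{f}$ moves is that this lemma is the tool that will bootstrap all framed Reidemeister~1 moves from a single one, so its reductions must not secretly invoke an $\Omega 1\textup{f}$ move. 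I first note that the statement is not only reasonable but forced: since the target is a crossingless vertical strand, which carries zero framing, each diagram must already have \emph{net writhe zero}, so one should expect it to be undoable by framing-preserving moves alone.

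Concretely, for each diagram I would begin with an $\Omega 0$ swirl to slide the relevant maximum or minimum next to a crossing, bringing the picture into a position where a crossing sits directly above or below a full rotation. The slide move \eqref{eq:lemma1} (or its mirror \eqref{eq:lemma2}), which relates a crossing adjacent to a full rotation to its repositioned form, then isolates that crossing against the second crossing of the diagram. Since the two crossings have opposite sign, a single Reidemeister~2 move cancels them, after possibly a Reidemeister~3 slide to make the two crossings adjacent; the leftover cup--cap pairs are then removed by Morse planar isotopy together with the $\Omega 0$ moves, leaving the bare strand $\uparrow$. The mirror-symmetric diagrams are handled by the mirror sequence, replacing $\Omega 0a,\Omega 0c$ by $\Omega 0b,\Omega 0d$ and \eqref{eq:lemma1} by \eqref{eq:lemma2} throughout. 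Because every orientation variant of $\Omega 2$ and $\Omega 3$ is at our disposal, the particular orientations occurring in each diagram never obstruct the application of the required variant.

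The main obstacle, and really the only delicate point, is to guarantee that the framing is genuinely preserved at every step, i.e. that no step is a disguised $\Omega 1\textup{f}$ move. This is exactly where the net-writhe-zero property is used: each crossing must be eliminated in a \emph{cancelling pair} via a Reidemeister~2 move (after repositioning by $\Omega 0$, \eqref{eq:lemma1}/\eqref{eq:lemma2}, and possibly a Reidemeister~3 slide), and never by collapsing a lone kink, which would alter the writhe and thus require a framed Reidemeister~1 move. I would therefore verify, for each diagram, that its two crossings carry opposite signs, so that the entire reduction proceeds through the framing-preserving moves $\Omega 0$, $\Omega 2$ and $\Omega 3$, confirming that no $\Omega 1\textup{f}$ move is ever invoked.
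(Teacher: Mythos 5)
Your proposal has a genuine gap at its central step. You claim that, after repositioning by $\Omega 0$ moves and \eqref{eq:lemma1}/\eqref{eq:lemma2}, the two kink crossings ``cancel by a single Reidemeister~2 move, after possibly a Reidemeister~3 slide''. This cannot work. First, a Reidemeister~3 move needs three crossings in the diagram, while each of the diagrams in question has only two, and every one of your repositioning moves ($\Omega 0$, \eqref{eq:lemma1}, \eqref{eq:lemma2}, Morse planar isotopy) preserves the crossing number; so no $\Omega 3$ move is ever applicable in your scheme. Second, and more fundamentally, an $\Omega 2$ cancellation requires the two crossings to bound a bigon, i.e.\ to be joined by \emph{two} arcs of the diagram. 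In a double-kink diagram the two self-crossings are joined by exactly \emph{one} arc (the piece of strand between the kinks); the remaining arcs at the crossings are the two loops and the two ends of the strand. The repositioning moves only rearrange cups and caps and never change the underlying $4$-valent graph of the diagram, so this one-arc property persists after any number of them, and the two crossings never form an $\Omega 2$ bigon. Your proposed crossing-count trajectory $2 \to 2 \to \cdots \to 2 \to 0$ is therefore combinatorially impossible, and the writhe-zero observation, while a correct necessary condition and fine motivation, does not supply a sequence of moves.

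What is missing is exactly the key idea that the paper's proof implements (the classical Whitney trick): one must first \emph{increase} the number of crossings. In each of the four cases the paper first applies an $\Omega 2$ move ($\Omega 2a$ or $\Omega 2b$) to push a strand of one kink across the other kink, creating two new crossings; only then does an $\Omega 3$ move (of type $\Omega 3c$, $\Omega 3d$, $\Omega 3e$ or $\Omega 3f$) become applicable; after it, the four crossings fall into two bigon pairs, which are removed by one $\Omega 2c$ and one $\Omega 2d$ move; what remains is a crossingless full rotation, erased by a single $\Omega 0$ move. The crossing count runs $2 \to 4 \to 4 \to 2 \to 0$, not $2 \to 0$, and choosing the correct orientation variants of the $\Omega 2$ and $\Omega 3$ moves in each of the four cases is precisely the content of the paper's proof.
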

\begin{proof}
    We have
\begin{equation*}
\centre{
\labellist \small \hair 2pt
\pinlabel{$\leftrightsquigarrow$}  at 470 210
\pinlabel{{\scriptsize $\Omega 2b$}}  at 475 280
\pinlabel{$\leftrightsquigarrow$}  at 850 210
\pinlabel{{\scriptsize $\Omega 3d$}}  at 855 280
\pinlabel{$\leftrightsquigarrow$}  at 1250 210
\pinlabel{{\scriptsize $\Omega 2d1$}}  at 1255 280
\pinlabel{$\leftrightsquigarrow$}  at 1680 210
\pinlabel{{\scriptsize $\Omega 2c1$}}  at 1685 280
\pinlabel{$\leftrightsquigarrow$}  at 2170 210
\pinlabel{{\scriptsize $\Omega 0b$}}  at 2175 280
\endlabellist
\centering
\includegraphics[width=0.8\textwidth]{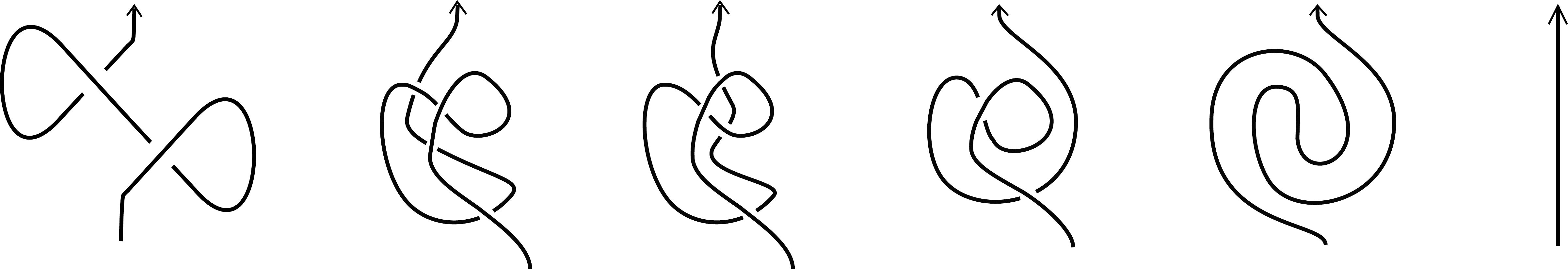}},
\end{equation*}
\begin{equation*}
\centre{
\labellist \small \hair 2pt
\pinlabel{$\leftrightsquigarrow$}  at 470 210
\pinlabel{{\scriptsize $\Omega 2a$}}  at 475 280
\pinlabel{$\leftrightsquigarrow$}  at 850 210
\pinlabel{{\scriptsize $\Omega 3f$}}  at 855 280
\pinlabel{$\leftrightsquigarrow$}  at 1250 210
\pinlabel{{\scriptsize $\Omega 2d2$}}  at 1255 280
\pinlabel{$\leftrightsquigarrow$}  at 1680 210
\pinlabel{{\scriptsize $\Omega 2c2$}}  at 1685 280
\pinlabel{$\leftrightsquigarrow$}  at 2170 210
\pinlabel{{\scriptsize $\Omega 0b$}}  at 2175 280
\endlabellist
\centering
\includegraphics[width=0.8\textwidth]{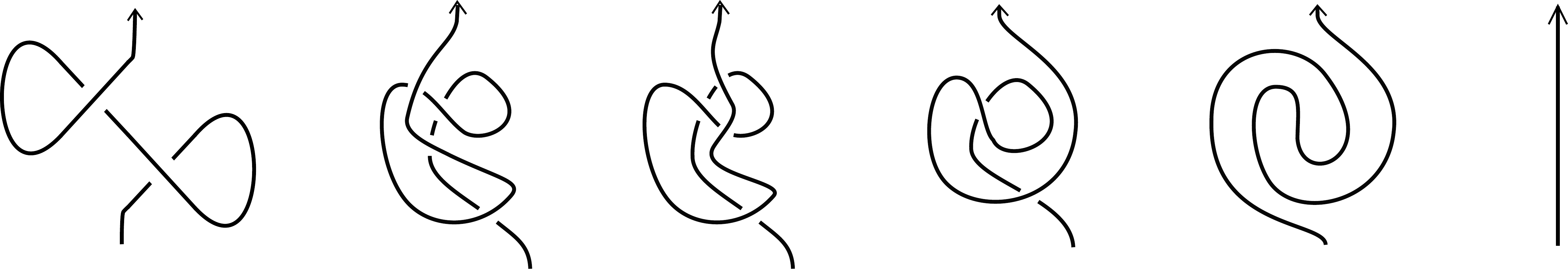}},
\end{equation*}
\begin{equation*}
\centre{
\labellist \small \hair 2pt
\pinlabel{$\leftrightsquigarrow$}  at 470 210
\pinlabel{{\scriptsize $\Omega 2b$}}  at 475 280
\pinlabel{$\leftrightsquigarrow$}  at 850 210
\pinlabel{{\scriptsize $\Omega 3e$}}  at 855 280
\pinlabel{$\leftrightsquigarrow$}  at 1250 210
\pinlabel{{\scriptsize $\Omega 2c1$}}  at 1255 280
\pinlabel{$\leftrightsquigarrow$}  at 1680 210
\pinlabel{{\scriptsize $\Omega 2d1$}}  at 1685 280
\pinlabel{$\leftrightsquigarrow$}  at 2170 210
\pinlabel{{\scriptsize $\Omega 0a$}}  at 2175 280
\endlabellist
\centering
\includegraphics[width=0.8\textwidth]{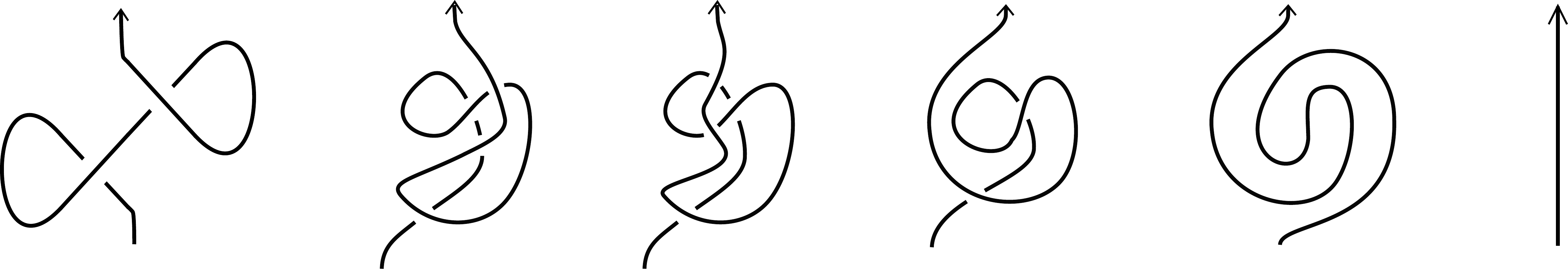}},
\end{equation*}
\begin{equation*}
\centre{
\labellist \small \hair 2pt
\pinlabel{$\leftrightsquigarrow$}  at 470 210
\pinlabel{{\scriptsize $\Omega 2a$}}  at 475 280
\pinlabel{$\leftrightsquigarrow$}  at 850 210
\pinlabel{{\scriptsize $\Omega 3c$}}  at 855 280
\pinlabel{$\leftrightsquigarrow$}  at 1250 210
\pinlabel{{\scriptsize $\Omega 2c2$}}  at 1255 280
\pinlabel{$\leftrightsquigarrow$}  at 1680 210
\pinlabel{{\scriptsize $\Omega 2d2$}}  at 1685 280
\pinlabel{$\leftrightsquigarrow$}  at 2170 210
\pinlabel{{\scriptsize $\Omega 0a$}}  at 2175 280
\endlabellist
\centering
\includegraphics[width=0.8\textwidth]{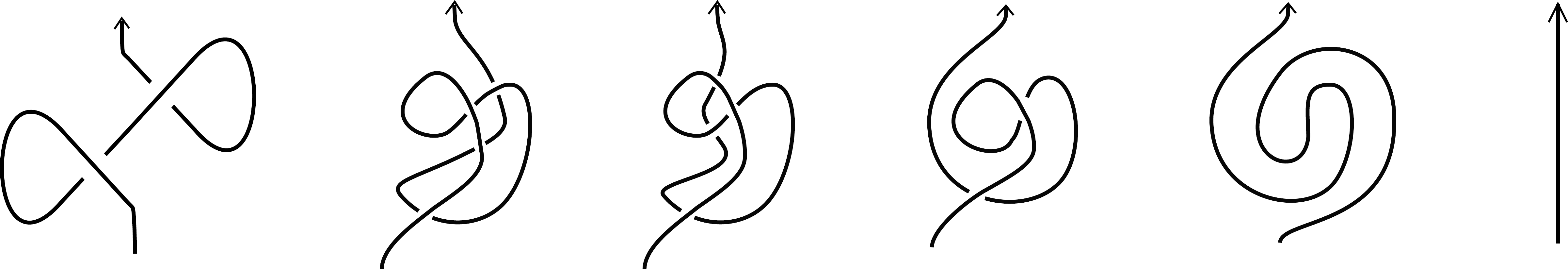}}.
\end{equation*}
\end{proof}

\begin{lemma}\label{lem:23->1f}
In the presence of all Reidemeister 2 and 3 moves, a single $\Omega 1 \textup{f}$ move is enough to generate the rest of $\Omega 1 \textup{f}$ moves.
\end{lemma}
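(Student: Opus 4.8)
The plan is to derive each of the remaining five framed Reidemeister one moves from the single given one, using the four auxiliary simplifications established in \cref{lem:O23}. The essential point is that, in contrast with the unframed situation, a framed Reidemeister one move does not create or destroy a kink but merely transports it (around a maximum or minimum, or between the two sides of a strand), so the total kink count is preserved and one cannot argue by cancellation. The four configurations of \cref{lem:O23}, each of which reduces to a crossingless vertical strand by a sequence of $\Omega 2$, $\Omega 3$ and $\Omega 0$ moves containing no $\Omega 1 \textup{f}$ move, are precisely the tool that enables this transport.

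Concretely, for each target move $\Omega 1 \textup{f}x$ that we wish to realise, I would proceed as follows. Starting from one side of $\Omega 1 \textup{f}x$, I first attach, via Morse isotopy, one of the trivial diagrams of \cref{lem:O23}; since that diagram equals a vertical strand through $\Omega 2$ and $\Omega 3$ moves, this insertion is legitimate and changes nothing. I then apply the single given $\Omega 1 \textup{f}$ move once to reposition the kink, and finally simplify the resulting configuration back to a kink in its new location by removing a (possibly different) trivial diagram from \cref{lem:O23}, landing on the other side of $\Omega 1 \textup{f}x$. Running this insert--apply--simplify scheme for each of the five remaining moves, and recording in each case which of the four configurations of \cref{lem:O23} is inserted and which is removed, completes the derivation.

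The main obstacle is the diagrammatic bookkeeping in this case analysis: for each target move one must select the correct auxiliary configuration and the correct place to insert it, so that a single application of the known $\Omega 1 \textup{f}$ move, flanked by the $\Omega 2$ and $\Omega 3$ simplifications, yields exactly the desired target and not some other kinked diagram. The orientations and the fixed vertical direction must be tracked carefully throughout, and, because framing is preserved, one must check at every stage that the framing contribution of the kink agrees on the two sides of the constructed sequence; this matching is what forces the use of the auxiliary diagrams of \cref{lem:O23} rather than a naive isotopy, and must be verified in each of the remaining cases.
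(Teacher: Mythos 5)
Your proposal follows essentially the same route as the paper: its proof is exactly the insert--apply--simplify scheme you describe, using the four configurations of \cref{lem:O23} to pass between kink configurations by $\Omega 2$, $\Omega 3$ and $\Omega 0$ moves, with a single application of the given $\Omega 1\textup{f}$ move (chained, where necessary, through moves already derived). One correction, however: your claim that every framed Reidemeister 1 move preserves the number of kinks and ``merely transports'' them is false for $\Omega 1\textup{f}a$--$\Omega 1\textup{f}d$, which create or annihilate a pair of oppositely signed kinks (only $\Omega 1\textup{f}e$ and $\Omega 1\textup{f}f$ are single-kink transport moves); what all six moves preserve is the writhe, not the kink count, and the paper's derivations argue precisely by cancellation --- for instance $\Omega 1\textup{f}e$ is realised from $\Omega 1\textup{f}a$ by inserting a \cref{lem:O23} pair next to the kink and then cancelling two kinks with $\Omega 1\textup{f}a$. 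This slip is harmless because your scheme never actually relies on it, but be aware that the case-by-case diagrammatic verification you defer as ``bookkeeping'' is where the substance of the proof lies, and it is organised in the paper by first deriving the cancellation moves from a transport move (or vice versa) and then chaining.
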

\begin{proof}
Let us start by supposing we have the $\Omega 1 \textup{f}e$ move. Then it follows at once from \cref{lem:O23} that we can realise the moves $\Omega 1 \textup{f}a$ -- $\Omega 1 \textup{f}d$ by applying $\Omega 1 \textup{f}e$ in one of the kinks. In order to obtain $\Omega 1 \textup{f}f$ we simply do
\begin{equation*}
\centre{
\labellist \small \hair 2pt
\pinlabel{$\leftrightsquigarrow$}  at 350 230
\pinlabel{{\scriptsize Lem.\ref{lem:O23}}}  at 355 300
\pinlabel{$\leftrightsquigarrow$}  at 960 230
\pinlabel{{\scriptsize $\Omega 1 \textup{f}c$}}  at 965 300
\endlabellist
\centering
\includegraphics[width=0.5\textwidth]{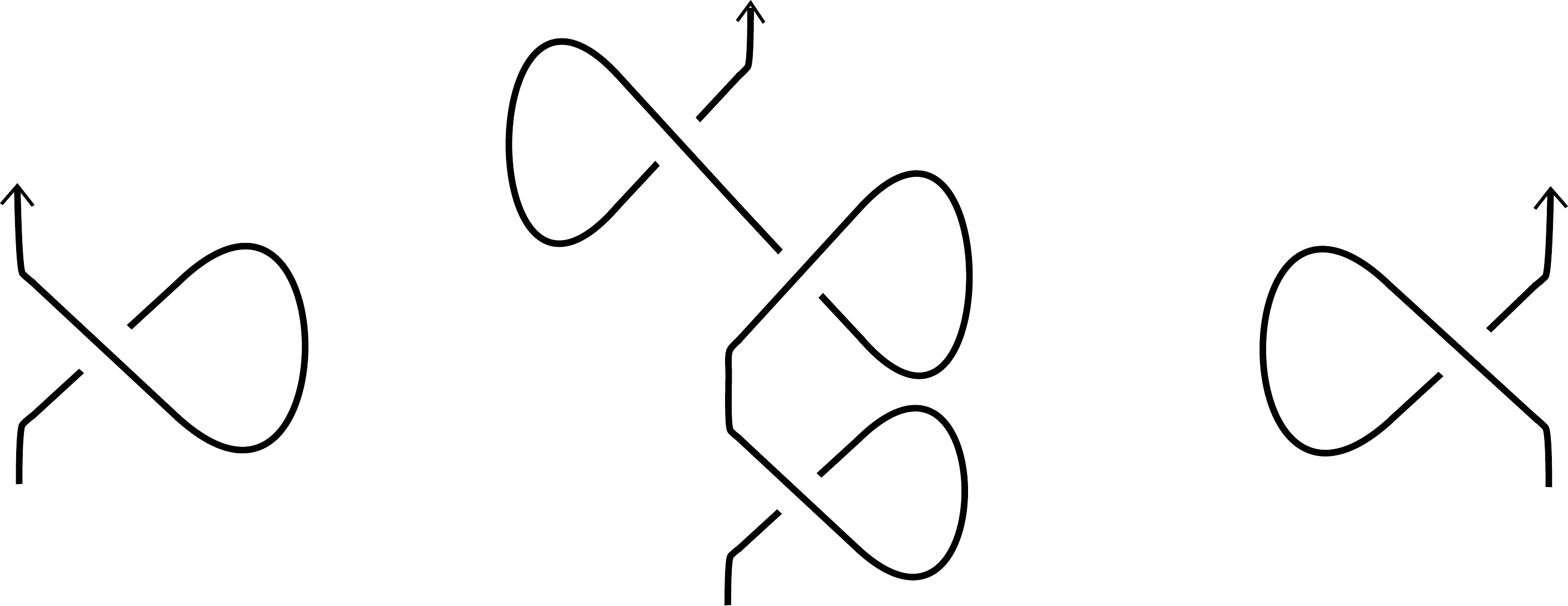}}.
\end{equation*}
If we start with $\Omega 1 \textup{f}f$,  the argument is completely analogous:
\begin{equation*}
\centre{
\labellist \small \hair 2pt
\pinlabel{$\leftrightsquigarrow$}  at 350 230
\pinlabel{{\scriptsize Lem.\ref{lem:O23}}}  at 355 300
\pinlabel{$\leftrightsquigarrow$}  at 960 230
\pinlabel{{\scriptsize $\Omega 1 \textup{f}b$}}  at 965 300
\endlabellist
\centering
\includegraphics[width=0.5\textwidth]{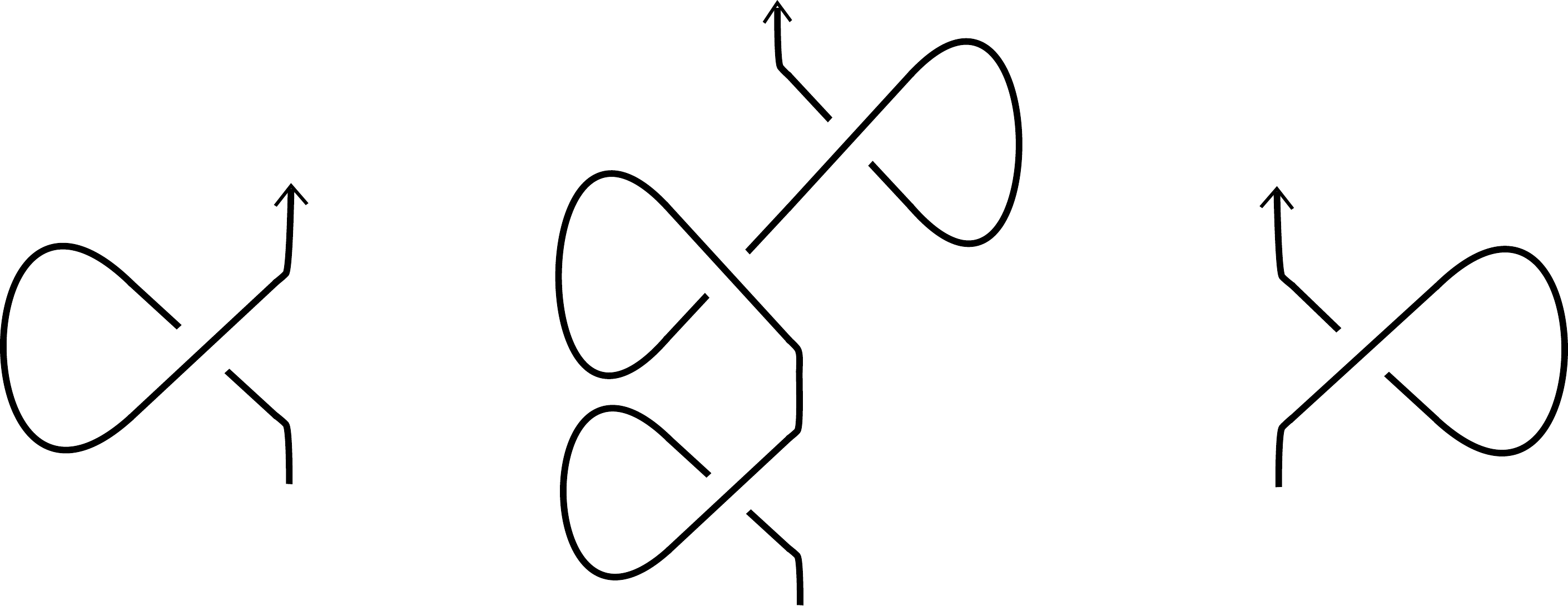}}.
\end{equation*}

Now suppose that we start with $\Omega 1 \textup{f}a$. Then we can first realise  $\Omega 1 \textup{f}e$ as
\begin{equation*}
\centre{
\labellist \small \hair 2pt
\pinlabel{$\leftrightsquigarrow$}  at 350 230
\pinlabel{{\scriptsize Lem.\ref{lem:O23}}}  at 355 300
\pinlabel{$\leftrightsquigarrow$}  at 960 230
\pinlabel{{\scriptsize $\Omega 1 \textup{f}a$}}  at 965 300
\endlabellist
\centering
\includegraphics[width=0.5\textwidth]{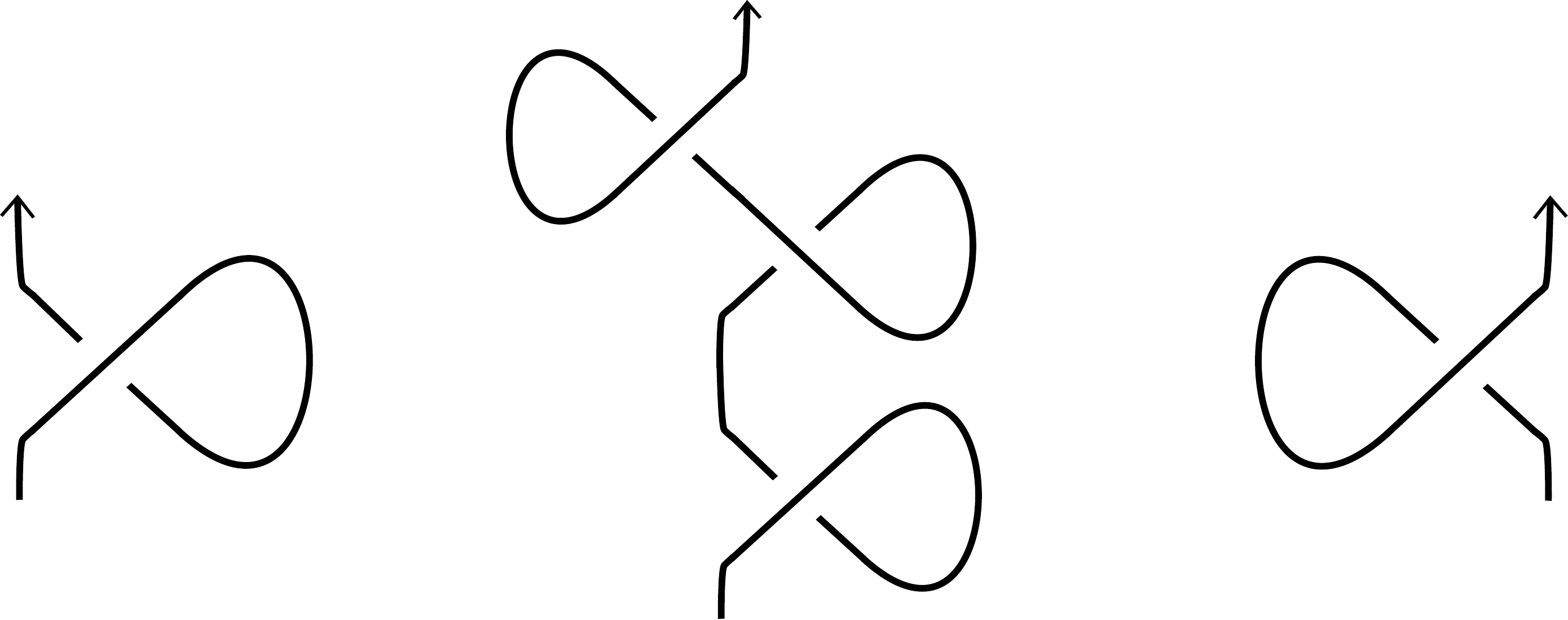}}.
\end{equation*}
which by the argument above allows us to obtain the remaining  $\Omega 1 \textup{f}$ moves. The argument starting with  $\Omega 1 \textup{f}b$ --  $\Omega 1 \textup{f}d$ is completely analogous.
\end{proof}

This concludes the proof of \cref{thm:2}.

\begin{remark}
    We ask the reader to be mindful that we do not claim whatsover that a generating set of (rotational) framed Reidemeister moves must contain four moves of type 2. For instance, the 10-element sets
    $$ S= \mathcal{O} \amalg \{ \Omega 1 \textup{f}a, \Omega 1 \textup{f}b, \Omega 2a, \Omega 2d,  \Omega 3a, \Omega 3h   \}$$ 
    or
        $$ S'= \mathcal{O} \amalg \{ \Omega 1 \textup{f}a, \Omega 2a, \Omega 2b, \Omega 2d,  \Omega 3a, \Omega 3h   \}$$ 
        can be proven to be generating as well. What will be true is that, in the framed setting, we need at least two type 2 moves, see \cref{lem:2_O2} below. This contrasts with the unframed situation, where an easy argument involving the rotation number and writhe of the diagrams shows that we need at least two (unframed) moves of type 1, see \cite[Lemma 3.1]{polyak10}.
\end{remark}



As an immediate consequence, we obtain a framed version of Polyak's minimal set \cite[Theorem 1.1]{polyak10} in the non-rotational setting.

\begin{corollary}\label{cor:pol_fr}
Let $S$ be a set of Reidemeister moves formed by one of the moves $\Omega 2a$ or $\Omega 2b$, one of the pairs
$$(\Omega 1 \textup{f}a, \Omega 2d)  \qquad , \qquad (\Omega 1 \textup{f}c, \Omega 2d)  \qquad , \qquad  (\Omega 1 \textup{f}b, \Omega 2c)  \qquad , \qquad  (\Omega 1 \textup{f}d, \Omega 2c)  $$ and the moves $\Omega 3a$ and $\Omega 3h$.

Then $S$ is generating. That is, two link diagrams represent the same oriented, framed link if and only if they are related by  planar isotopy and a finite sequence of the  Reidemeister moves in $S$.
\end{corollary}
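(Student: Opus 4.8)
The plan is to deduce the corollary directly from \cref{thm:2} by discarding the rotational structure, using \cref{lem:rot_diagram} and \cref{cor:planar->Morse} as the bridge. Soundness of $S$ is immediate, since every move listed is a legitimate oriented, framed Reidemeister move; the work lies entirely in completeness. So suppose two link diagrams $D,D'$ represent the same oriented, framed link. By \cref{lem:rot_diagram} I would first planar-isotope each of them to a rotational diagram $D_{\mathrm{rot}}$, $D'_{\mathrm{rot}}$. These still represent the same framed link, so \cref{thm:2} applies and connects them by a finite sequence of Morse planar isotopies and the rotational moves of the corresponding set from that theorem. It then remains to reinterpret every step of this rotational sequence inside the non-rotational calculus generated by planar isotopy and $S$.

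There are three kinds of steps to handle. Morse planar isotopies are a special case of planar isotopy, so they transfer for free. For the genuine rotational Reidemeister moves occurring in the sequence, I would use that in the non-rotational setting a local move may be applied after rotating its neighbourhood through an arbitrary angle; hence a single unindexed move of $S$ subsumes all of its rotational incarnations. Explicitly, $\Omega 3a$ yields $\Omega 3a1,\dots,\Omega 3a6$, the move $\Omega 3h$ yields $\Omega 3h1,\dots,\Omega 3h6$, and $\Omega 2c$ (resp.\ $\Omega 2d$) yields $\Omega 2c1,\Omega 2c2$ (resp.\ $\Omega 2d1,\Omega 2d2$), all up to planar isotopy, so whichever indices $i$ are selected by \cref{thm:2}, the required rotational move is available; the framed move $\Omega 1\mathrm{f}a$ (or whichever of $\Omega 1\mathrm{f}b,\Omega 1\mathrm{f}c,\Omega 1\mathrm{f}d$ was chosen) is inherited verbatim. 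Stringing these translations together replaces the rotational sequence relating $D_{\mathrm{rot}}$ and $D'_{\mathrm{rot}}$ by a sequence of planar isotopies and moves from $S$ relating $D$ and $D'$, which is exactly completeness.

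The step I expect to require the most care --- though it remains routine --- is the justification that the four $\Omega 0$ moves may simply be dropped. This rests on the observation that the swirl moves are, by \cref{prop:swirls}, \emph{themselves} planar isotopies between rotational diagrams; hence once arbitrary planar isotopy is allowed in the non-rotational link calculus, each $\Omega 0$ move is automatically available and costs nothing. One need only confirm that this planar-isotopy triviality, established in the bigon $D^2 - \{(0,\pm 1)\}$, persists for closed components regarded in the plane, which it does since a swirl is a local contractible loop carrying no essential winding around the punctures $(0,\pm 1)$. With this in hand the transfer from \cref{thm:2} is complete, and the same reasoning specialises the family of minimal generating sets to the classical, non-rotational framed Reidemeister calculus.
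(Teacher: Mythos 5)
Your proposal is correct and follows exactly the route the paper intends: the paper states \cref{cor:pol_fr} as an immediate consequence of \cref{thm:2}, the implicit argument being precisely yours --- rotationalize via \cref{lem:rot_diagram}, apply \cref{thm:2}, and then absorb the rotational bookkeeping (Morse isotopy, the $\Omega 0$ swirls, and the rotational variants of each move) into ordinary planar isotopy. Your write-up simply makes explicit the collapsing argument that the paper leaves unstated.
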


\subsection{Minimality}

In this subsection, we will prove that any of the 5-element generating sets from \cref{cor:pol_fr} is minimal, which will also imply the minimality of the 9-element set of rotational Reidemeister moves from \cref{thm:2}. In particular, this implies that a generating set of framed, oriented Reidemeister moves must contain at least 5 elements  which, to the authors' knowledge, was not known. In the spirit of this paper, we divide the proof in several lemmas.

\begin{lemma}\label{lem:2_O2}
Any generating set of framed, oriented Reidemeister moves contains at least two $\Omega 2$ moves. More precisely, it contains either $\Omega 2a$ or  $\Omega 2b$ and either $\Omega 2c$ or $\Omega 2d$.
\end{lemma}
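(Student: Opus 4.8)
My plan is to prove the lower bound by exhibiting two invariants of rotational (or planar) tangle diagrams, each preserved by all Reidemeister moves \emph{except} the $\Omega 2$ moves in a prescribed subfamily, and to show that each invariant \emph{is} changed by at least one move in that subfamily. The natural candidates are the two writhe-type quantities that count crossings according to their orientation pattern. Recall that for an oriented crossing one records both its sign $\pm$ (the usual writhe contribution) and the relative orientation of the two strands, i.e.\ whether locally the strands are \emph{co-oriented} (both pointing, say, upward) or \emph{anti-oriented}. The $\Omega 2$ moves come in two flavours: the moves $\Omega 2a, \Omega 2b$ involve a pair of \emph{co-oriented} strands (two strands running in parallel), whereas $\Omega 2c, \Omega 2d$ involve a pair of \emph{anti-oriented} strands. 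Each removes or creates one positive and one negative crossing of the corresponding orientation type.

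\textbf{The key invariants.} First I would define, for a tangle diagram $D$, the quantity $w_{\parallel}(D)$ obtained by summing the signs only over those crossings where the two strands are co-oriented, and $w_{\times}(D)$ summing signs only over anti-oriented crossings. The plan is to verify that all $\Omega 0$ moves, all $\Omega 1$ (or $\Omega 1\mathrm{f}$) moves, and all $\Omega 3$ moves leave both $w_\parallel$ and $w_\times$ unchanged: the $\Omega 0$ and $\Omega 1$ moves create or destroy no crossings of either mixed type that would unbalance the count (an $\Omega 1$ kink is a self-crossing, and one checks its co/anti-oriented type and sign contribution separately), and every $\Omega 3$ move is crossing-preserving up to sliding, hence leaves both sums invariant. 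By contrast, $\Omega 2a$ and $\Omega 2b$ each change $w_\parallel$ (since they create/destroy a $\pm$ co-oriented pair, but the two crossings produced actually have \emph{opposite} sign only if the orientation forces it — I would check the precise pictures to confirm the net change is nonzero for at least the relevant oriented version), while $\Omega 2c$ and $\Omega 2d$ each change $w_\times$.

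\textbf{The argument proper.} Given these invariance statements, the proof is immediate. If a generating set $S$ contained no move of type $\Omega 2a$ or $\Omega 2b$, then every move in $S$ would preserve $w_\parallel$; but there exist two diagrams of the same framed oriented tangle with different $w_\parallel$ (exhibit an explicit $\Omega 2a$-related pair), contradicting that $S$ generates. Symmetrically, absence of any $\Omega 2c$ or $\Omega 2d$ move forces preservation of $w_\times$, again contradicted by an explicit example. Hence $S$ must contain at least one move from each of the two families, giving at least two $\Omega 2$ moves in total.

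\textbf{The main obstacle} will be verifying the invariance under the $\Omega 3$ moves and the $\Omega 1\mathrm{f}$ moves carefully: one must confirm that each of these moves genuinely preserves both $w_\parallel$ and $w_\times$ \emph{separately}, not merely the total writhe. For $\Omega 3$ this amounts to checking that sliding a strand across a crossing permutes crossings without altering any individual crossing's co/anti-oriented type or sign — true because the three local strands keep their orientations. The delicate point is that in the framed setting the $\Omega 1\mathrm{f}$ moves add a \emph{pair} of kinks; I must ensure the two kinks contribute cancelling amounts to whichever of $w_\parallel, w_\times$ their crossings belong to, so that the framing-corrected count is the right invariant. Once these local checks are dispatched, the separation of the $\Omega 2$ moves into the co-oriented and anti-oriented families, each detected by its own invariant, yields the claim.
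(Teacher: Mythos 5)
There is a genuine gap, and it sits at the foundation of your approach: the classification of an individual crossing as ``co-oriented'' versus ``anti-oriented'' is not invariant under planar isotopy, so the quantities $w_\parallel$ and $w_\times$ are not well-defined on the objects the lemma speaks about. At a transversal crossing the two tangent vectors are merely linearly independent; a local rotation (a planar isotopy) can make any crossing look parallel, and equally well antiparallel, with respect to the vertical direction, so a crossing has no intrinsic parallel/antiparallel type --- this is precisely why Gauss-diagram data records only signs and over/under information. The distinction parallel/antiparallel is a property of the $\Omega 2$ \emph{move pictures}, not of crossings. (Passing to the rotational setting does not help: there \emph{all} crossings point upwards by definition, so $w_\times \equiv 0$ and $w_\parallel$ is the ordinary writhe, which is preserved by every $\Omega 2$ move.) Moreover, even if the type were well-defined, your invariants are \emph{signed} sums, and every $\Omega 2$ move of either family creates one positive and one negative crossing of the same local type, so $w_\parallel$ and $w_\times$ would be unchanged by the very moves they are supposed to detect; the doubt you flag (``confirm the net change is nonzero'') resolves in the unfavourable direction. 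Switching to unsigned counts would cure the cancellation but not the well-definedness, and there is the further problem that the two kinks inserted by an $\Omega 1\textup{f}$ move can themselves be drawn with either local type.

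The paper proves the lemma by a different mechanism: a forced-move, combinatorial analysis of sequences, adapted from \cite[Lemma 2.18]{CS}. Suppose $\Omega 2a$ is the only type 2 move available, together with all $\Omega 1\textup{f}$ and all $\Omega 3$ moves. To realise $\Omega 2c$ one must locally reverse the direction of a strand, which in the framed setting can only be achieved by an $\Omega 1\textup{f}$ move inserting a \emph{pair} of opposite kinks; sliding the other strand across those two kinks by $\Omega 3$ moves then forces an antiparallel move $\Omega 2d$ to occur somewhere in the sequence, and symmetrically in the other cases. In other words, the obstruction is detected not by evaluating a numerical invariant on diagrams but by analysing what any realising sequence of moves must contain. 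This is exactly where the framed case differs from the unframed one: Polyak-style counting invariants (writhe, rotation number) do settle the $\Omega 1$ statement in \cite[Lemma 3.1]{polyak10}, but the $\Omega 2$ statements already require move-sequence arguments there as well. To rescue an invariant-style proof you would need a quantity attached intrinsically to planar isotopy classes of diagrams rather than to a choice of local picture, and no such quantity appears in your proposal.
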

\begin{proof}
The idea is to prove that a set $S$ of Reidemeister moves formed by all $\Omega 1 \text{f}$ and $\Omega 3$ moves and a single $\Omega 2$ move is not generating.  To this end we will adapt that argument of \cite[Lemma 2.18]{CS}.

Suppose that $S$ contains $\Omega 2a$ as the single Reidemeister 2 move. If we wanted to realise the $\Omega 2c$ move, as it involves oppositely oriented strands, we need to locally change the orientation of one of the strands by means of the $\Omega 1 \text{f}a $ or $\Omega 1 \text{f}c$ move. After applying two Reidemeister 3 moves (concretely $\Omega 3a$ and $\Omega 3h$) to pass the crossingless component over the crossings of the kinks of the other component, we are forced to apply an $\Omega 2d$ move to pass the remaining piece of the component between the two kinks. An argument of the same type shows that to realise $\Omega 2d$ from $\Omega 2a$, we need to apply $\Omega 2c$. Hence $S$ is not generating. The other cases are argued in a similar fashion (adapting \cite[Lemma 2.16]{CS} when starting with $\Omega 2c$ or $\Omega 2d$).
\end{proof}

\begin{lemma}
    A generating set of framed, oriented Reidemeister moves with exactly two $\Omega 2$ moves must contain at least two  $\Omega 3$ moves.

\end{lemma}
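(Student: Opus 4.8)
The plan is to obtain the lower bound by the obstruction method already used in \cref{lem:2_O2}, adapting the weightings of Caprau and Scott \cite[Lemmas 2.16 and 2.18]{CS}. By \cref{lem:2_O2} any generating set carries at least two moves of type $2$, so the hypothesis ``exactly two $\Omega 2$ moves'' means $S$ contains precisely one of $\{\Omega 2a,\Omega 2b\}$ and precisely one of $\{\Omega 2c,\Omega 2d\}$. Enlarging $S$ by all $\Omega 1\textup{f}$ moves only makes generation easier, so it suffices to prove that such a set together with \emph{at most one} move of type $3$ is not generating. The case of no type-$3$ move is immediate, since it is classical that the third Reidemeister move is not a consequence of the first two: for instance the braid move $\Omega 3b$, which is valid for framed isotopy, cannot be produced from $\Omega 1\textup{f}$ and type-$2$ moves alone.

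Assume then that $S$ contains a single type-$3$ move $R$, and, after a mirror reflection, that the two type-$2$ moves of $S$ are $\Omega 2a$ and $\Omega 2c$. Since $S$ is assumed generating it must be able to realise every move of the generating set of \cref{cor:pol_fr}; in particular it must realise one move of the $\Omega 3a$ class and one move of the $\Omega 3h$ class, these two mirror families being the essential type-$3$ classes identified in \cite{CS}. I would therefore isolate the following core claim: \emph{from $\Omega 2a$, $\Omega 2c$, all $\Omega 1\textup{f}$ moves and type-$3$ moves of a single handedness class one cannot realise a type-$3$ move of the opposite class.}

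Granting the core claim the lemma is immediate. The single move $R$ belongs to (at most) one of the two handedness classes, so the move of the opposite class required by \cref{cor:pol_fr} is unrealisable and $S$ fails to be generating. Hence a generating set with exactly two $\Omega 2$ moves must contain at least two moves of type $3$, necessarily one from each of the $\Omega 3a$ and $\Omega 3h$ classes. By the mirror and orientation-reversal symmetries of the configuration it suffices to prove the core claim for the representative pair $\{\Omega 2a,\Omega 2c\}$, the other choices of the two type-$2$ moves following formally.

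The heart of the matter, and the main obstacle, is the core claim, which must exclude \emph{every} realising sequence rather than the particular ones appearing in the realisation lemmas above. To this end I would attach to each diagram a handedness statistic $I$ --- a $\Z$- or $\Z/2$-valued function assembled from signed \emph{pairs} of crossings, in the spirit of \cite[Lemma 2.16]{CS} --- chosen so that it is unchanged by all $\Omega 1\textup{f}$ moves, by $\Omega 2a$ and $\Omega 2c$, and by every type-$3$ move except those of the $\Omega 3h$ class, while a move of the $\Omega 3h$ class changes it by a fixed nonzero amount; a mirror statistic handles the case in which $R$ lies in the $\Omega 3h$ class. The delicate points are that invariance under the two chosen type-$2$ moves forces $I$ to be a degree-two pair count --- a naive signed crossing count cannot distinguish the two classes because an $\Omega 2$ move always introduces an oppositely-signed crossing pair --- and that one must still verify invariance under all the framed type-$1$ moves (checking both handednesses of a curl) as well as under the braid-like type-$3$ moves, while retaining sensitivity to the $\Omega 3h$ class. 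This is exactly the framed specialisation of the Caprau--Scott weighting, and carrying it out in full is where the real work lies.
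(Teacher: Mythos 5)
Your argument is a \emph{reduction}, not a proof: everything is funnelled into the ``core claim'' (that from $\Omega 2a$, $\Omega 2c$, all $\Omega 1\textup{f}$ moves and the $\Omega 3$ moves of one handedness class one cannot realise an $\Omega 3$ move of the opposite class), and you explicitly leave the proof of that claim --- the construction of the handedness statistic $I$ --- as ``the real work''. But that claim \emph{is} the mathematical content of the lemma, and its proof is genuinely delicate. Note the constraint it must satisfy: by Polyak's Theorem 1.1, the unframed set $\{\Omega 1a,\Omega 1b,\Omega 2a,\Omega 3a\}$ generates all oriented Reidemeister moves, so any quantity invariant under $\Omega 2a$, the $\Omega 3a$ class \emph{and} the unframed $\Omega 1$ moves is automatically invariant under $\Omega 3h$ as well. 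Hence your statistic must fail invariance under some unframed $\Omega 1$ move while remaining invariant under all six framed moves $\Omega 1\textup{f}a$--$\Omega 1\textup{f}f$ (and under $\Omega 2a$, $\Omega 2c$ and the braid-like $\Omega 3$ moves), yet still change under $\Omega 3h$. No candidate with these properties is exhibited, and producing one amounts to reproving a framed analogue of the Caprau--Scott obstruction lemmas from scratch. As written, the proposal therefore has a genuine gap at its central step. (A secondary, fixable issue: your ``after a mirror reflection the two type-2 moves are $\Omega 2a$ and $\Omega 2c$'' only covers two of the four possible pairs; the appeal to orientation reversal for the remaining pairs $(\Omega 2a,\Omega 2d)$ and $(\Omega 2b,\Omega 2c)$ needs to be checked, since these symmetries also permute the $\Omega 3$ classes and the $\Omega 1\textup{f}$ moves.)

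For comparison, the paper sidesteps the construction of any new invariant by citing \cite[Lemma 2.19]{CS}: from a \emph{single} $\Omega 3$ move one can realise a second $\Omega 3$ move (of any class, not just the mirror class) only if both moves of one of the pairs $(\Omega 2a,\Omega 2b)$ or $(\Omega 2c,\Omega 2d)$ are available; this applies a fortiori in the framed setting because every $\Omega 1\textup{f}$ move is a consequence of unframed $\Omega 1$ moves. Since \cref{lem:2_O2} forces a set with exactly two $\Omega 2$ moves to contain one move from each pair, neither pair is available, so a second $\Omega 3$ move can only appear after a third $\Omega 2$ move has been realised; but the argument of \cref{lem:2_O2} shows that realising a third $\Omega 2$ move already requires two distinct $\Omega 3$ moves. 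Looking at the first forbidden move appearing in any derivation then gives the contradiction. If you want to salvage your route, the most economical fix is to replace your unproven core claim by this citation, which is both stronger (it obstructs \emph{all} second $\Omega 3$ moves) and already established.
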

\begin{proof}
    Let us suppose that our set contains a single Reidemeister 3 move. In order to realise a second one, \cite[Lemma 2.19]{CS} shows that we must have at our disposal one of the pairs $(\Omega 2a, \Omega 2b)$ or $(\Omega 2c, \Omega 2d)$. By the previous lemma, none of these pairs can be part of a generating set with exactly two Reidemeister 2 moves, which means that before realising a second Reidemeister 3 move we must have realised a third Reidemeister 2 move. But the argument of the previous lemma shows that realising a type 2 move involves two different Reidemeister 3 moves. Hence our starting set is not generating, which yields a contradiction.   
\end{proof}




The two previous lemmas allow us to conclude

\begin{theorem}\label{thm:1.2_minimal}
Any generating set of framed Reidemeister moves contains at least five elements.

Hence, the generating set of \cref{cor:pol_fr} (and therefore  of  \cref{thm:2} in the rotational setting) is minimal.
\end{theorem}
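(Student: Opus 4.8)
The plan is to bound the number of moves of each Reidemeister type from below and then feed these bounds into the two lemmas just established. For an arbitrary generating set $S$ of framed, oriented Reidemeister moves, write $n_1$, $n_2$, $n_3$ for the number of moves of type $\Omega 1\textup{f}$, $\Omega 2$ and $\Omega 3$ in $S$, so that $|S| = n_1 + n_2 + n_3$. \cref{lem:2_O2} already gives $n_2 \geq 2$, and the immediately preceding lemma (on generating sets with exactly two $\Omega 2$ moves) gives the conditional bound $n_2 = 2 \Rightarrow n_3 \geq 2$. What remains is to supply two elementary bounds $n_1 \geq 1$ and $n_3 \geq 1$, after which a one-line case analysis on $n_2$ closes the argument.

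For $n_1 \geq 1$ I would track the rotation number (Whitney degree) of the diagram, in the spirit of \cite[Lemma 3.1]{polyak10} but monitoring a single invariant. Both $\Omega 2$ and $\Omega 3$ moves leave the rotation number unchanged, whereas an $\Omega 1\textup{f}$ move alters it (by $\pm 2$) while preserving the writhe. Since there are diagrams representing one and the same framed link but carrying different rotation numbers (the two sides of any $\Omega 1\textup{f}$ move already provide an example), no sequence of $\Omega 2$ and $\Omega 3$ moves can relate them, and therefore $S$ must contain at least one $\Omega 1\textup{f}$ move. This is exactly the framed shadow of Polyak's computation: here the writhe is frozen as the framing, so only the rotation number can vary, which is why a single type-$1$ move is forced rather than the two needed in the unframed case.

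For $n_3 \geq 1$ I would appeal to the classical fact that no $\Omega 3$ move is a consequence of $\Omega 1\textup{f}$ and $\Omega 2$ moves alone, so that every generating set contains at least one move of type $\Omega 3$. With both bounds available I finish by cases on $n_2$. If $n_2 = 2$, the immediately preceding lemma gives $n_3 \geq 2$, whence $|S| \geq 1 + 2 + 2 = 5$; if instead $n_2 \geq 3$, then already $|S| \geq 1 + 3 + 1 = 5$. In either case $|S| \geq 5$, which is the claim.

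Minimality then follows at once. Each set in \cref{cor:pol_fr} is generating and has exactly five elements, so it attains the bound and is minimal. For the rotational set of \cref{thm:2}, one writes it as the four $\Omega 0$ moves together with five framed Reidemeister moves: the former are necessary and mutually independent by \cref{cor:planar->Morse}, while the latter are minimal by the bound above, giving a total of nine. The main obstacle I anticipate is making the two elementary bounds fully rigorous—chiefly pinning down that the rotation number is genuinely preserved by every $\Omega 2$ and $\Omega 3$ move and shifted by every $\Omega 1\textup{f}$ move, and isolating a clean regular-isotopy quantity obstructing the generation of $\Omega 3$. A secondary delicate point is the independence underlying the count of nine in the rotational setting, since certain $\Omega 0$ moves can be re-expressed through Reidemeister moves; one must check that any such substitution only ever increases the total, so that nine cannot be beaten.
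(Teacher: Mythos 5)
Your proposal is correct and follows essentially the same route as the paper: the paper's own proof simply invokes \cref{lem:2_O2} and the lemma on sets with exactly two $\Omega 2$ moves, leaving implicit exactly the two elementary bounds you supply (at least one $\Omega 1\textup{f}$ move, forced by the rotation number since every $\Omega 1\textup{f}$ move in the paper's list shifts it by $\pm 2$ while $\Omega 2$ and $\Omega 3$ preserve it, and at least one $\Omega 3$ move, by the classical independence of $\Omega 3$ from type~1 and type~2 moves) before the same case analysis on the number of $\Omega 2$ moves. Your closing caveat about the rotational count of nine (that $\Omega 0$ moves might in principle be traded for Reidemeister moves) is a fair flag, but the paper glosses over this point in exactly the same way, so your treatment is on par with, indeed slightly more careful than, the paper's.
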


\subsection{Other generating sets of framed rotational Reidemeister moves}

For the sake of completeness, we also describe a family of generating sets of framed rotational Reidemeister moves that includes a single Reidemeister 3 move.

\begin{theorem}\label{thm:another_fr}
Let $S$ be a set of rotational Reidemeister moves formed by all $\Omega 0$ moves, an  $\Omega 2$ move of each type $a$ -- $d$, a single, arbitrary move $\Omega 1 \textup{f}$ and another single, arbitrary $\Omega 3$ move. Then $S$ is generating (an instance is shown below):
\begin{equation*}
\centre{
\labellist \small \hair 2pt
\pinlabel{$\leftrightsquigarrow$}  at 439 190
\pinlabel{{\scriptsize $\Omega 0a$}}  at 445 260
\pinlabel{$\leftrightsquigarrow$}  at 1625 190
\pinlabel{{\scriptsize $\Omega 0b$}}  at 1630 260
\pinlabel{$\leftrightsquigarrow$}  at 2865 190
\pinlabel{{\scriptsize $\Omega 0c$}}  at 2869 260
\endlabellist
\centering
\includegraphics[width=0.9\textwidth]{figures/gens_1}}
\end{equation*}
\begin{equation*}
\centre{
\labellist \small \hair 2pt
\pinlabel{$\leftrightsquigarrow$}  at 439 190
\pinlabel{{\scriptsize $\Omega 0d$}}  at 445 260
\pinlabel{$\leftrightsquigarrow$}  at 1625 190
\pinlabel{{\scriptsize $\Omega 1 \textup{f}e$}}  at 1630 260
\pinlabel{$\leftrightsquigarrow$}  at 2865 190
\pinlabel{{\scriptsize $\Omega 2a$}}  at 2869 260
\endlabellist
\centering
\includegraphics[width=0.9\textwidth]{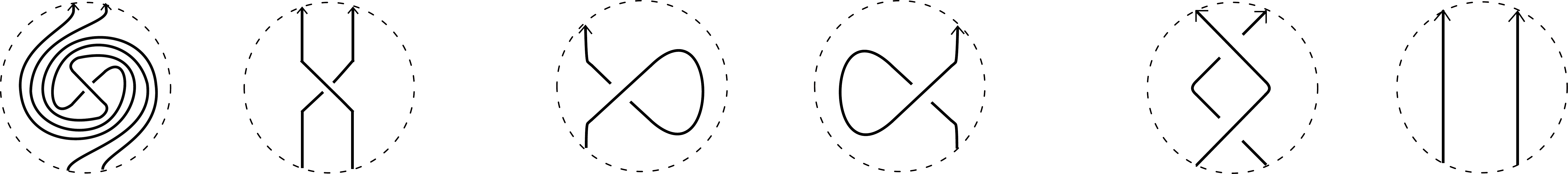}}
\end{equation*}
\begin{equation*}
\centre{
\labellist \small \hair 2pt
\pinlabel{$\leftrightsquigarrow$}  at 439 190
\pinlabel{{\scriptsize $\Omega 2b$}}  at 445 260
\pinlabel{$\leftrightsquigarrow$}  at 1635 190
\pinlabel{{\scriptsize $\Omega 2c1$}}  at 1650 260
\pinlabel{$\leftrightsquigarrow$}  at 2885 190
\pinlabel{{\scriptsize $\Omega 2d1$}}  at 2895 260
\endlabellist
\centering
\includegraphics[width=0.9\textwidth]{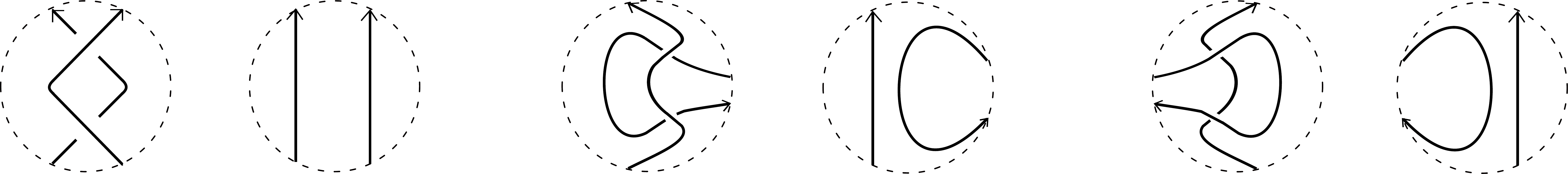}}
\end{equation*}
\begin{equation*}
\centre{
\labellist \small \hair 2pt
\pinlabel{$\leftrightsquigarrow$}  at 439 190
\pinlabel{{\scriptsize $\Omega 3b$}}  at 445 260
\endlabellist
\centering
\includegraphics[width=0.25\textwidth]{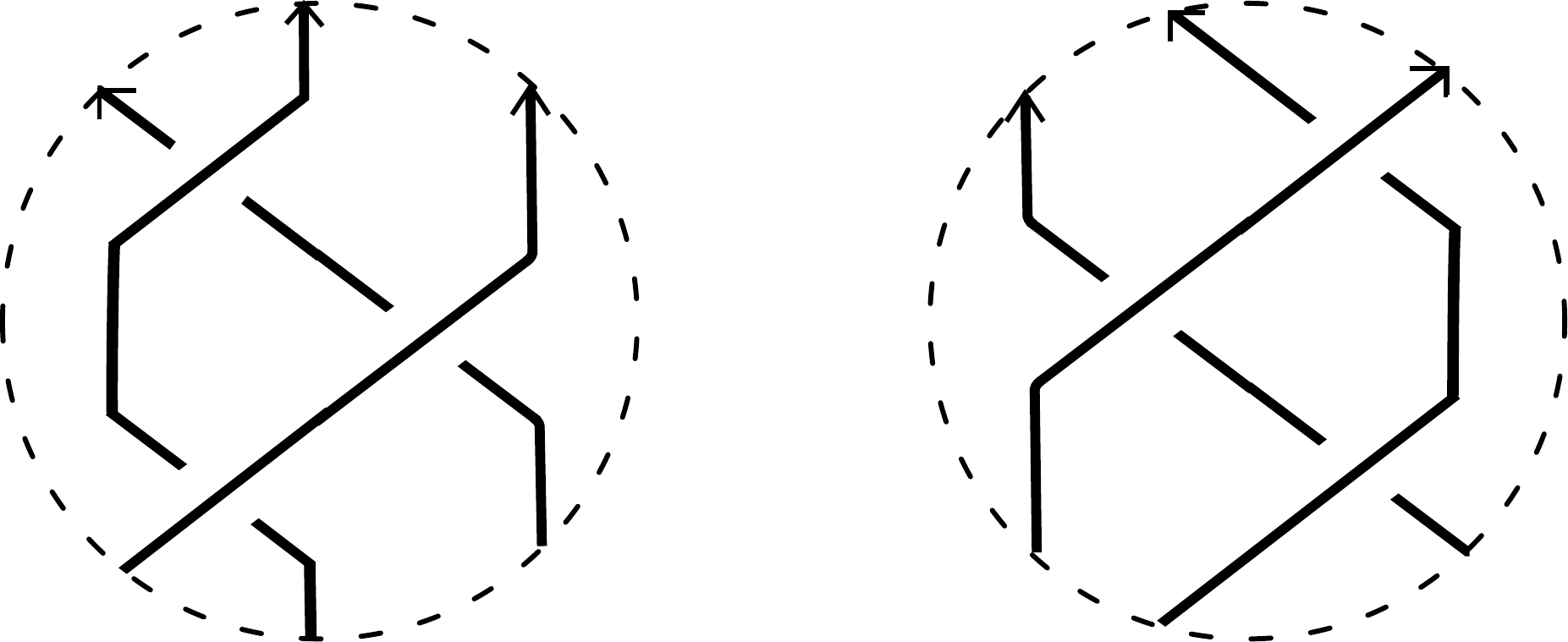}}
\end{equation*}
\end{theorem}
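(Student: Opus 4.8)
The plan is to show that $S$ realises \emph{every} rotational Reidemeister move; since the collection of all moves contains, in particular, one of the generating sets of \cref{thm:2}, it will follow that $S$ is generating. Because $S$ already contains all four $\Omega 0$ moves, \cref{lem:O2c2_O2d2} immediately upgrades the chosen type $\Omega 2c$ and type $\Omega 2d$ moves to both of their variants, so $S$ realises all six Reidemeister $2$ moves $\Omega 2a$, $\Omega 2b$, $\Omega 2c1$, $\Omega 2c2$, $\Omega 2d1$, $\Omega 2d2$. The two remaining tasks are to realise all eighteen Reidemeister $3$ moves from the single $\Omega 3$ move of $S$, and then all six $\Omega 1\textup{f}$ moves from the single $\Omega 1\textup{f}$ move of $S$.

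First I would treat the Reidemeister $3$ moves. The key observation is that, in the presence of all $\Omega 0$ moves and of all four types of $\Omega 2$ move (both now available in $S$), the relation ``$X$ can be realised from $Y$ together with these auxiliary moves'' is symmetric on the set of type $3$ moves: each realisation appearing in \cref{lem:O3a,lem:O3b,lem:O3c,lem:O3d_g,lem:O3h1,lem:O3h} and in the framed lemmas of this section uses its input type $3$ move \emph{exactly once} and locally, while every other move in the sequence ($\Omega 0$, $\Omega 2$, Morse isotopy) is an invertible two-sided equivalence; reading such a local sequence backwards therefore exhibits the reverse implication. Consequently these lemmas endow the eighteen type $3$ moves with a single equivalence class. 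Explicitly, \cref{lem:O3a} with \cref{rem:O3a1_from_O3a6} makes all $\Omega 3a$ moves equivalent, \cref{lem:O3h} with \cref{rem:O3h1_from_O3h2} makes all $\Omega 3h$ moves equivalent, \cref{lem:O3b} together with the proof of \cref{thm:polyak_1.2_rot} gives $\Omega 3a \leftrightsquigarrow \Omega 3b$, \cref{lem:O3c} gives $\Omega 3a \leftrightsquigarrow \Omega 3c$, \cref{lem:O3d_g} merges $\{\Omega 3b,\Omega 3d,\Omega 3e,\Omega 3f,\Omega 3g\}$ into one class, and the framed lemma realising $\Omega 3d,\Omega 3e,\Omega 3g$ from $\Omega 3h$, combined with \cref{lem:O3h1}, links the $\Omega 3h$ moves to that class. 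Tracing the implications, every type $3$ move lies in the same class as $\Omega 3b$, so a single arbitrary $\Omega 3$ move together with the $\Omega 0$ and $\Omega 2$ moves of $S$ realises all eighteen.

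With all Reidemeister $2$ and all Reidemeister $3$ moves now realised, \cref{lem:23->1f} shows that the single $\Omega 1\textup{f}$ move of $S$ realises the remaining five $\Omega 1\textup{f}$ moves. Hence $S$ realises all $\Omega 0$, $\Omega 1\textup{f}$, $\Omega 2$ and $\Omega 3$ moves; in particular it realises one of the generating sets of \cref{thm:2}, and therefore $S$ is generating.

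The main obstacle is the reversibility and connectivity bookkeeping of the second paragraph: one must check that in each cited realisation the input type $3$ move genuinely occurs a single time in a local neighbourhood, so that the associated equivalence is truly symmetric, and then verify that the resulting graph on the eighteen type $3$ moves is connected no matter which single move one begins from. All the underlying diagrammatic manipulations have already been carried out in the lemmas above, so this final step amounts to assembling them correctly rather than producing any new pictures.
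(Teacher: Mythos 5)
Your overall skeleton is exactly the paper's: first obtain all six $\Omega 2$ moves from \cref{lem:O2c2_O2d2} and the $\Omega 0$ moves, then obtain all eighteen $\Omega 3$ moves from the single given one, and finally invoke \cref{lem:23->1f} for the $\Omega 1\textup{f}$ moves. The first and third steps are cited correctly. The problem is the meta-principle carrying your second step: ``the input type $3$ move is used exactly once and locally, every other move is a two-sided equivalence, hence reading the sequence backwards exhibits the reverse implication.'' That principle is false as stated. Reading a derivation backwards only shows that one \emph{particular embedded instance} of the input move is a consequence of the output move plus auxiliaries, namely the instance occurring at the single step where it was used, surrounded by the specific context $W$ (swirls, kink pairs) that the auxiliary moves have built around it. To conclude that the input \emph{move} itself is realisable --- i.e.\ that it can be performed inside an arbitrary diagram --- one must additionally check that this context $W$ can be created around any occurrence of the move's left-hand side, and destroyed afterwards, using only moves of $S$. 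Without that extra condition the principle fails: take the auxiliary set empty, let $Y$ be a Reidemeister $1$ move, and let $X$ be the move whose two sides are those of $Y$ together with a disjoint circle inside the same disc; then $X$ follows from a single local use of $Y$, but $Y$ does not follow from $X$, since $X$ cannot even be applied to a diagram with no closed component nearby.

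In the lemmas you cite the required condition does hold, because each derivation has the conjugation form $L_X \leftrightsquigarrow W \cup L_Y \leftrightsquigarrow W \cup R_Y \leftrightsquigarrow R_X$ with $W$ built onto the boundary strands by $\Omega 0$ and $\Omega 2$ moves; but verifying this is precisely the content of the explicit reverse realisations that the paper draws instead of appealing to a symmetry principle: \cref{rem:O3a1_from_O3a6} and \cref{rem:O3h1_from_O3h2} close up the cyclic chains of $\Omega 3a$ and $\Omega 3h$ moves, the figure in the proof of \cref{thm:polyak_1.2_rot} realises $\Omega 3a6$ from $\Omega 3b$, $\Omega 2c2$ and $\Omega 2d1$ (note this is \emph{not} the word-by-word reversal of \cref{lem:O3b}; it uses different auxiliary moves), and the cases $\Omega 3c$ and $\Omega 3d$--$\Omega 3g$ are disposed of by ``similar'' explicit sequences adapting \cref{lem:O3c}, \cref{lem:O3d_g} and \cref{lem:O3h}. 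The same caveat already touches your first step: \cref{lem:O2c2_O2d2} as stated derives $\Omega 2c2$ from $\Omega 2c1$, so if the chosen type $c$ move of $S$ is $\Omega 2c2$ you again need a reverse realisation rather than a formal symmetry. So your conclusion and your connectivity graph are correct, but the step you dismiss as bookkeeping is where the actual diagrammatic work lives, and as written your argument has a genuine logical gap there.
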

\begin{proof}
The main feature of this family of sets is that they all contain $\Omega 2a$, $\Omega 2b$, either $\Omega 2c1$ or $\Omega 2c2$ and either $\Omega 2d1$ or $\Omega 2d2$. Since all $\Omega 0$ moves are also included, \cref{lem:O2c2_O2d2} allows us to realise the remaining two $\Omega 2$ moves. Furthermore, the arguments of \cref{lem:O3a} and  Lemmas \ref{lem:O3b} -- \ref{lem:O3h} show that in the presence of all $\Omega 2$ moves, a single $\Omega 3$ move is enough to realise the rest. Indeed a direct application of the lemmas show that $\Omega 3a1$ (or any other $\Omega 3a$ really) can be used to realise all $\Omega 3$ moves. If the $\Omega 3b$ was included in the set $S$ instead, then one would use the sequence of moves from the proof of \cref{thm:polyak_1.2_rot} to realise $\Omega 3a1$ and from there obtain the remaining Reidemeister 3 moves using Lemmas \ref{lem:O3b} -- \ref{lem:O3h}. If the move $\Omega 3c$ was included instead, then a similar argument allows to realise one of the $\Omega 3a$ moves from it. It is readily seen that a similar argument can be done for the remaining Reidemeister 3 moves adapting \cref{lem:O3d_g} or \cref{lem:O3h}. Once we have realised all $\Omega 3$ moves, we conclude by \cref{lem:23->1f}.
\end{proof}

\subsection{Application: a universal tangle invariant}\label{sec:application}

We would like to finish off with the main application of the results of this paper, which was in fact the main motivation for it.

A rotational tangle diagram is, by definition, built out of the following five elementary pieces up to planar Morse isotopy:
\begin{equation}\label{eq:el_pieces}
\centre{
\centering
\includegraphics[width=0.5\textwidth]{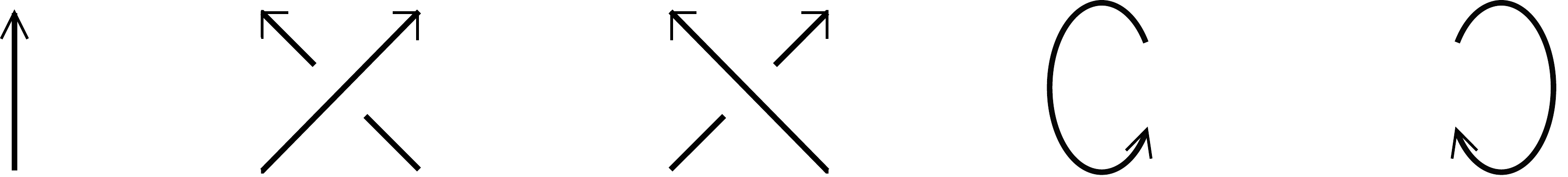}}
\end{equation}
Theorems \ref{thm:1} and \ref{thm:2} above (as well as Theorems \ref{thm:all_min_sets}, \ref{thm:polyak_1.2_rot} and \ref{thm:another_fr}) state that two rotational tangle diagrams represent the same unframed or framed oriented tangle if and only if they are related by some rotational Reidemeister moves in terms of these elementary pieces. Put in a different way, they state that certain combinations of these elementary pieces represent the same tangle, and any other different combinations that represent the same tangle can be derived from those.

We can regard these elementary pieces and the Reidemeister moves, of course, as sets of generators and relations at the diagrammatic level. The success of the rotational diagram approach is that it allows for a straightforward algebraic counterpart.

Let $A$ be an algebra over some commutative ring $\Bbbk$. We aim to associate, to every rotational tangle $D$ with $n$ ordered open components, an element $\mathfrak{Z}_A(D) \in A^{\otimes n}$ as follows: first we choose four elements $$ R= \sum_i \alpha_i \otimes \beta_i \in A \otimes A \quad , \quad  \bar{R}= \sum_i \bar{\alpha}_i \otimes \bar{\beta}_i \in A \otimes A \quad , \quad  \kappa \in A \quad , \quad  \bar{\kappa} \in A$$
and we decorate the elementary pieces \eqref{eq:el_pieces} with beads in each of the strands in the following manner:

\begin{equation}\label{eq:beads}
\centre{
\labellist \small \hair 2pt
\pinlabel{$ \color{violet} \bullet$} at 16 30
\pinlabel{$ \color{violet} 1$} at -26 30
\pinlabel{$ \color{violet} \bullet$} at 294 30
\pinlabel{$ \color{violet} \bullet$} at 406 30
\pinlabel{$ \color{violet} \alpha_i$} [r] at 279 30
\pinlabel{$ \color{violet} \beta_i$} [l] at 421 30
\pinlabel{$ \color{violet} \bullet$} at 711 30
\pinlabel{$ \color{violet} \bullet$} at 825 30
\pinlabel{$ \color{violet} \bar{\beta}_i$} [r] at 696 30
\pinlabel{$ \color{violet} \bar{\alpha}_i$} [l] at 850 30
\pinlabel{$ \color{violet} \bullet$} at 1083 92
\pinlabel{$ \color{violet} \bar{\kappa}$} [r] at 1070 120
\pinlabel{$ \color{violet} \bullet$} at 1599 92
\pinlabel{$ \color{violet} \kappa$} [l] at 1614 104
\endlabellist
\centering
\includegraphics[width=0.5\textwidth]{figures/building_blockss}}
\end{equation}
(note that the ``alpha'' is always on the overstrand). For every $1 \leq i \leq n$, let $\mathfrak{Z}_A(D)_{(i)}$ be the (formal) word given by writing from right  to left the labels of the beads in the $i$-th component according to the  orientation of the strand. Then put
\begin{equation}\label{eq:def_mathfrak_Z}
\mathfrak{Z}_A(D) = \sum \mathfrak{Z}_A(D)_{(1)} \otimes \cdots \otimes \mathfrak{Z}_A(D)_{(n)} \in A^{\otimes n}
\end{equation}
where the summation runs through all subindices in $R$ and $\bar{R}$ (one for each crossing).

Under this passage, the set of algebraic relations that $R$, $\bar{R}$, $\kappa$ and $\bar{\kappa}$ must satisfy to obtain a well-defined isotopy invariant is precisely the one given by the counterparts of the (say, framed) rotational Reidemeister moves. For instance, $\Omega 0a$ and $\Omega 0b$ imply that $\kappa$ is invertible with $\bar{\kappa} = \kappa^{-1}$, and similarly $\Omega 2a$ and $\Omega 2b$ that  $R$ is invertible with $\bar{R} = R^{-1}$.

According to \cref{thm:another_fr}, the remaining relations required to obtain a well-defined  isotopy invariant of oriented, framed tangles are 
\begin{enumerate}[leftmargin=4\parindent, itemsep=2mm]
\item[(XC0c)] $R=(\kappa \otimes \kappa) \cdot R \cdot (\kappa^{-1} \otimes \kappa^{-1})$,
\item[(XC0d)] $R^{-1}=(\kappa \otimes \kappa) \cdot R^{-1} \cdot (\kappa^{-1} \otimes \kappa^{-1})$,
\item[(XC1fe)] $\mu^{[3]}(R_{31}\cdot  \kappa_2 )=\mu^{[3]}(R_{13}\cdot  \kappa^{-1}_2) $,
\item[(XC2c1)] $  1\otimes \kappa^{-1} = (\mu\otimes\mu^{[3]} )(R_{15}\cdot R_{23}^{-1}\cdot \kappa^{-1}_4 )$,
\item[(XC2d1)] $\kappa \otimes 1 = (\mu^{[3]}\otimes\mu )(R_{15}^{-1}\cdot R_{34}\cdot \kappa_2 )$,
\item[(XC3b)] $R_{12}R_{13}R_{23}=R_{23}R_{13}R_{12}$,
\end{enumerate}
where  we have written $\mu$ for the multiplication map of $A$,  $\mu^{[3]}$ for the 3-fold multiplication map and for $1 \leq i,j \leq n$, $i \neq j$ we have put
\begin{equation}\label{eq:R_ij}
 R_{ij}^{\pm 1}:= \begin{cases}  (1^{\otimes i-1} \otimes \id \otimes 1^{\otimes j-i-1} \otimes \id \otimes 1^{n-j})(R^{\pm 1}), & i<j\\
(1^{\otimes j-1} \otimes \id \otimes 1^{\otimes i-j-1} \otimes \id \otimes 1^{n-i})(\mathrm{flip}_{A,A}(R^{\pm 1})), & i>j
\end{cases} 
\end{equation}
and similarly $\kappa^{\pm 1}_i = (1^{\otimes i-1} \otimes \id \otimes  1^{\otimes n-i})(\kappa^{\pm 1})$.

As in \cite{becerra_refined}, we call a triple $(A, R, \kappa)$ with $R$ and $\kappa$ invertible and satisfying the relations above an \textit{XC-algebra}. In \cite{becerra_thesis} fewer axioms appeared, which makes the definition there incomplete. By construction,

\begin{proposition}\label{prop:XC_invariant}
For any XC-algebra $(A,R, \kappa)$, we have that    $\mathfrak{Z}_A$  gives rise to a well-defined isotopy invariant of framed, oriented open tangles in a disc.
\end{proposition}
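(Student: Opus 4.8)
The plan is to deduce the invariance of $\mathfrak{Z}_A$ from the generating set of \cref{thm:another_fr} by reducing to two claims: that $\mathfrak{Z}_A$ is already well-defined on rotational diagrams modulo Morse planar isotopy, and that it is unchanged by each of the finitely many generating rotational Reidemeister moves, the invariance under each move being exactly one of the XC-algebra axioms. First I would observe that $\mathfrak{Z}_A(D)$ depends only on the following combinatorial data of $D$: for each ordered open component, the sequence of beads read from right to left against the orientation, together with the distribution of the legs of $R$ and $\bar{R}$ over the components coming from the crossings. A Morse planar isotopy creates or destroys no maxima or minima and alters no crossing, so it preserves these data verbatim; hence $\mathfrak{Z}_A$ is constant on Morse-planar-isotopy classes essentially by construction, and \eqref{eq:def_mathfrak_Z} is unambiguous.

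Next I would invoke \cref{thm:another_fr}: two rotational diagrams represent the same framed, oriented tangle if and only if they are related by Morse planar isotopy together with the moves $\Omega 0a$, $\Omega 0b$, $\Omega 0c$, $\Omega 0d$, a single move $\Omega 1\textup{f}e$, the moves $\Omega 2a$, $\Omega 2b$, $\Omega 2c1$, $\Omega 2d1$, and a single move $\Omega 3b$. It therefore suffices to check that $\mathfrak{Z}_A$ takes equal values on the two sides of each of these moves, and here is where the hypothesis that $(A,R,\kappa)$ be an XC-algebra enters. For each move I would decorate both local pictures with beads according to \eqref{eq:beads}, read off the corresponding local elements of $A^{\otimes n}$, and verify that their equality is precisely the relevant axiom: the pair $\Omega 0a$, $\Omega 0b$ yields $\bar\kappa=\kappa^{-1}$ (invertibility of $\kappa$), the pair $\Omega 2a$, $\Omega 2b$ yields $\bar R=R^{-1}$ (invertibility of $R$), while $\Omega 0c$, $\Omega 0d$ yield \textup{(XC0c)}, \textup{(XC0d)}, the move $\Omega 1\textup{f}e$ yields \textup{(XC1fe)}, the moves $\Omega 2c1$, $\Omega 2d1$ yield \textup{(XC2c1)}, \textup{(XC2d1)}, and $\Omega 3b$ yields the Yang--Baxter relation \textup{(XC3b)}. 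Since all of these hold by assumption, $\mathfrak{Z}_A$ is invariant under every generating move, hence under all rotational Reidemeister moves, and so descends to a well-defined invariant of framed, oriented tangles.

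The main obstacle is the careful bead-bookkeeping of this second step, that is, translating each local diagram faithfully into an element of $A^{\otimes n}$. Three conventions must be tracked simultaneously: the right-to-left reading rule along the orientation together with the fact that $\alpha_i$ always sits on the overstrand; the assignment of each bead to a tensor factor, which dictates the leg-labelling $R_{ij}$ and $\kappa_i$ and, via \eqref{eq:R_ij}, the insertion of $\mathrm{flip}_{A,A}$ whenever $i>j$; and the collapse of several beads lying on one and the same component into a single element of $A$ through the multiplication maps, which is exactly why $\mu^{[3]}$ appears in \textup{(XC1fe)}, $\mu\otimes\mu^{[3]}$ in \textup{(XC2c1)}, and $\mu^{[3]}\otimes\mu$ in \textup{(XC2d1)}. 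The substance of the proof is confirming that the naive reading of each of the finitely many local moves reproduces the displayed axiom on the nose; once these conventions are fixed, each individual verification is a short and direct computation, so no genuine difficulty remains beyond the bookkeeping itself.
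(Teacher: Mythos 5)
Your proposal is correct and follows essentially the same route as the paper: the paper's proof is precisely ``by construction,'' since the XC-algebra axioms were defined to be the algebraic counterparts (invertibility of $\kappa$ and $R$ from $\Omega 0a$, $\Omega 0b$, $\Omega 2a$, $\Omega 2b$, plus (XC0c)--(XC3b) for the remaining moves) of the generating set of \cref{thm:another_fr}, exactly as you argue. Your additional remarks on Morse-isotopy invariance and the bead-bookkeeping conventions just make explicit what the paper leaves implicit.
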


The construction of $\mathfrak{Z}_A(D)$ is essentially the so-called \textit{universal invariant}, which was defined by Lawrence \cite{lawrence}  and  was further developed by Reshetikhin \cite{reshetikhin}, Lee \cite{lee1,lee2}, Hennings \cite{hennings}, Ohtsuki \cite{ohtsuki1,ohtsukibook} and Habiro \cite{habiro}. Classically, the universal invariant has as input a ribbon Hopf algebra. However, such an  algebra carries extra structure (namely, the comultiplication, counit and antipode) that does not play a role in constructing the invariant. On the other hand, XC-algebras carry the minimal algebraic data to produce a tangle invariant. Of course, the main example of XC-algebras are ribbon Hopf algebras, this is proven in \cite[Proposition 4.4]{becerra_refined}. What is more, if $V$ is a finite-dimensional representation of a ribbon Hopf algebra, then its endomorphism algebra $\mathrm{End}_\Bbbk(V)$ carries a natural XC-algebra structure whose associated universal invariant is essencially the Reshetikhin-Turaev invariant built from $V$, see \cite[Theorem 5.4]{becerra_refined}. In fact, an XC-algebra (and not a ribbon Hopf algebra) is all what is required to promote $\mathfrak{Z}$ to a balanced monoidal functor (i.e., braiding- and twist-preserving) which also respects a so-called ``open-trace'' defined in the category $\Tup$ of upwards, framed, oriented tangles in a cube \cite[\S 4]{becerra_refined}.






 




\bibliographystyle{halpha-abbrv}
\bibliography{bibliography}

\end{document}